\theoremstyle{plain}
\newtheorem{definition}[equation]{Definition}
\newtheorem{lemma}[equation]{Lemma}
\newtheorem{proposition}[equation]{Proposition}
\newtheorem{theorem}[equation]{Theorem}
\newtheorem{remark}[equation]{Remark}
\newtheorem{example}[equation]{Example}
\numberwithin{equation}{subsection}
\title{Theorem of existence and completeness for holomorphic Poisson structures}
\author{Chunghoon Kim}
\thanks{This paper is based on the first part of the author's Ph.D. thesis \cite{Kim14}. While writing this paper, the author was partially supported by NRF grant 2011-0027969.}
\email{ckim042@gmail.com}            
\begin{document}

\maketitle
\begin{abstract}
In this paper, we define a concept of a family of compact holomorphic Poisson manifolds on the basis of Kodaira-Spencer's deformation theory and deduce the integrability condition. We prove an  analogue of their `Theorem of existence for complex analytic structures' under some analytic assumption, and establish an analogue of their `Theorem of completeness for complex analytic structures' in the context of holomorphic Poisson deformations.
\end{abstract}

\tableofcontents                                      

\section{Introduction}\label{section1}

In this paper, we study deformations of holomorphic Poisson structures in the framework of Kodaira and Spencer's deformation theory of complex analytic structures(\cite{Kod58},\cite{Kod60}). The main difference from Kodaira and Spencer's deformation theory is that  for deformations of a compact holomorphic Poisson manifold, we deform not only its complex structures, but also holomorphic Poisson structures. We will briefly review Kodaira-Spencer's main idea and show how we can extend their idea in the context of deformations of holomorphic Poisson structures.

 Kodaira and Spencer's main idea of deformations of complex analytic structures is as follows \cite[p.182]{Kod05}. A $n$-dimensional compact complex manifold $M$ is obtained by glueing domains $U_1,...,U_n$ in $\mathbb{C}^n:M=\bigcup_{j=1}^n U_j$ where $\mathfrak{U}=\{U_j|j=1,...,n\}$ is a locally finite open covering of $M$, and that each $U_j$ is a polydisk:
 \begin{align*}
 U_j=\{z_j\in \mathbb{C}^n||z_j^1|<1,...,|z_j^n|<1\}
 \end{align*}
and for $p\in U_j\cap U_k$, the coordinate transformation
\begin{align*}
f_{jk}:z_k\to z_j=(z_j^1,...,z_j^n)=f_{jk}(z_k)
\end{align*}
transforming the local coordinates $z_k=(z_k^1,...,z_k^n)=z_k(p)$ into the local coordinates $z_j=(z_j^1,...,z_j^n)=z_j(p)$ is  biholomorphic. According to Kodaira,

 \begin{quote}
 \textit{``A deformation of $M$ is considered to be the glueing of the same polydisks $U_j$ via different identification. In other words, replacing $f_{jk}^{\alpha}(z_k) $ by the functions $f_{jk}^{\alpha}(z_k,t)=f^{\alpha}_{jk}(z_k,t_1,...,t_m),$ $ f_{jk}(z_k,0)=f_{jk}^{\alpha}(z_k)$ of $z_k$, and the parameter $t=(t_1,...,t_m)$, we obtain deformations $M_t$ of $M=M_0$ by glueing the polydisks $U_1,...,U_n$ by identifying $z_k\in U_k$ with $z_j=f_{jk}(z_k,t)\in U_j$"}
\end{quote}

We extend the main idea of Kodaira-Spencer in the context of deformations of holomorphic Poisson structures. A $n$-dimensional compact holomorphic Poisson manifold $M$ is a compact complex manifold such that the structure sheaf $\mathcal{O}_M$ is a sheaf of Poisson algebras (we refer to \cite{Lau13} for general information on Poisson geometry). The holomorphic Poisson structure is encoded in a holomorphic section (a holomorphic bivector field) $\Lambda \in H^0(M,\wedge^2 \Theta_M)$ with $[\Lambda,\Lambda]=0$, where $\Theta_M$ is the sheaf of germs of holomorphic vector fields on $M$ and the bracket $[-,-]$ is the Schouten bracket on $M$. In the sequel a holomorphic Poisson manifold will be denoted by $(M,\Lambda)$. For deformations of a compact holomorphic Poisson manifold $(M,\Lambda)$, we extend the idea of Kodaira and Spencer. A $n$-dimensional compact holomorphic Poisson manifold is obtained by glueing the domains $U_1,...,U_n$ in $\mathbb{C}^n$: $M=\bigcup_{j=1}^n U_j$ where $\mathfrak{U}=\{U_j|j=1,...,n\}$ is a locally finite open covering of $M$ and each $U_j$ is a polydisk 
 \begin{align*}
 U_j=\{z_j\in \mathbb{C}^n||z_j^1|<1,...,|z_j^n|<1\}
 \end{align*}
equipped with a holomorphic bivector field $\Lambda_j=\sum_{\alpha,\beta=1}^n g_{\alpha\beta}^j(z_j) \frac{\partial}{\partial z_j^{\alpha}}\wedge \frac{\partial}{\partial z_j^{\beta}}$ such that $g_{\alpha\beta}^j(z_j)=-g_{\beta\alpha}^j(z_j)$ with $[\Lambda_j,\Lambda_j]=0$ on $U_j$ and for $p\in U_j\cap U_k$, the coordinate transformation
\begin{align*}
f_{jk}:z_k\to z_j=(z_j^1,...,z_j^n)=f_{jk}(z_k)
\end{align*}
transforming the local coordinates $z_k=(z_k^1,...,z_k^n)=z_k(p)$ into the local coordinates $z_j=(z_j^1,...,z_j^n)=z_j(p)$ is  a biholomorphic `Poisson' map.

 Deformations of a compact holomorphic Poisson manifold $(M,\Lambda)$ is the glueing of the Poisson polydisks $(U_j,\Lambda_j(t))$ parametrized by $t$. That is, replacing $f_{jk}^{\alpha}(z_k)$ by $f_{jk}^{\alpha}(z_k,t) ( f_{jk}(z_k,0)=f_{jk}^{\alpha}(z_k)$ of $z_k$), replacing $\Lambda_j=\sum_{\alpha,\beta=1}^n g_{\alpha\beta}^j(z_j) \frac{\partial}{\partial z_j^{\alpha}}\wedge \frac{\partial}{\partial z_j^{\beta}}$ by $\Lambda_j(t)=\sum_{\alpha,\beta=1}^n g_{\alpha\beta}^j(z_j,t) \frac{\partial}{\partial z_j^{\alpha}}\wedge \frac{\partial}{\partial z_j^{\beta}}$ with $[\Lambda_j(t),\Lambda_j(t)]=0$ and $\Lambda_j(0)=\Lambda_j$, and the parmeter $t=(t_1,...,t_m)$, we obtain deformations $(M_t,\Lambda_t)$ by gluing the Poisson polydisks $(U_1,\Lambda_1(t)),...,(U_n,\Lambda_n(t))$ by identifying $z_k\in U_k$ with $z_j=f_{jk}(z_k,t)\in U_j$. The work on deformations of holomorphic Poisson structures is based on this fundamental idea.

In section \ref{section2}, we define a family of compact holomorphic Poisson manifolds, called a Poisson analytic family in the framework of Kodaira-Spencer's deformation theory. In other words, when we ignore Poisson structures, a family of compact holomorphic Poisson manifolds is just a family of compact complex manifolds in the sense of Kodaira and Spencer. So deformations of compact holomorphic Poisson manifolds means that we deform complex structures as well as Poisson structures.

In section \ref{section3}, we show that infinitesimal deformations of a holomorphic Poisson manifold $(M,\Lambda_0)$ in a Poisson analytic family are encoded in the first `degree-shifted by $1$' truncated holomorphic Poisson cohomology group. More precisely, an infinitesimal deformation is realized as an element in the first hypercohomology group $\mathbb{H}^1(M,\Theta_M^\bullet)$ of a complex of sheaves $\Theta_M^\bullet:\Theta_M\to \wedge^2 \Theta_M\to \cdots\to \wedge^n \Theta_M\to 0$ induced by $[\Lambda_0,-]$. Analogously to deformations of complex structure, we define so called Poisson Kodaira-Spencer map where the Kodaira-Spencer map is realized as a component of the Poisson Kodaira-Spencer map. 

In section \ref{section4}, we study the integrability condition for a Poisson analytic family. Kodaira showed that given a family of deformations of  a compact complex manifold $M$, locally the family is represented by a $C^{\infty}$ vector $(0,1)$-form $\varphi(t)\in A^{0,1}(M,T_M)$ with $\varphi(0)=0$ satisfying the integrability condition $\bar{\partial}\varphi(t)-\frac{1}{2}[\varphi(t),\varphi(t)]=0$ (see \cite{Kod05} \S 5.3. Here $T_M$ is the holomorphic tangent bundle of $M$ and we use the notation $A^{0,1}(M,T_M)$ instead of $\mathscr{L}^{0,1}(T_M)$ in \cite{Kod05}). We  show that given a family of deformations of a compact holomorphic Poisson manifold $(M,\Lambda_0)$, locally the family is represented by a $C^{\infty}$ vector $(0,1)$-form $\varphi(t)$ with $\varphi(0)=0$ and a $C^{\infty}$ bivector $\Lambda(t)\in A^{0,0}(M,\wedge^2 T_M)$ with $\Lambda(0)=\Lambda_0$ satisfying the integrability condition $[\Lambda(t),\Lambda(t)]=0, \bar{\partial} \Lambda(t)-[\Lambda(t),\varphi(t)]=0$, and $\bar{\partial}\varphi(t)-\frac{1}{2}[\varphi(t),\varphi(t)]=0$. Replacing $\varphi(t)$ by $-\varphi(t)$ and putting $\Lambda'(t):=\Lambda(t)-\Lambda_0$ so that we have $\Lambda'(0)=0$, the integrability condition is equivalent to $L(\varphi(t)+\Lambda'(t))+\frac{1}{2}[\varphi(t)+\Lambda'(t),\varphi(t)+\Lambda'(t)]=0$ where $L=\bar{\partial}+[\Lambda_0,-]$. Then $\varphi(t)+\Lambda'(t)$ is a solution of Maurer Cartan equation of the following differential graded Lie algebra 
\begin{align}\label{tt76}
\mathfrak{g}=(\bigoplus_{i\geq 0} g_i,g_i=\bigoplus_{p+q-1=i,p\geq 0, q\geq 1} A^{0,p}(M,\wedge^q T_M),L=\bar{\partial}+[\Lambda_0,-],[-,-]),
\end{align}
where $[-,-]$ is the Schouten bracket on $M$, and $A^{0,p}(M,\wedge^q T_M)$ is the global section of $\mathscr{A}^{0,p}(\wedge^q T_M)$ the sheaf of germs of $C^{\infty}$-section of $\wedge^p \bar{T}_M^*\otimes \wedge^q T_M$. Here $\bar{T}_M^*$ is the dual bundle of antiholomorphic tangent bundle $\bar{T}_M$ (see \cite{Kod05} p.108). We remark that the integrability condition was proved in more general context in the language of generalized complex geometry (See \cite{Gua11}). As $H^1(M,\Theta_M)$ is realized as a subspace of the second cohomology group of a compact complex manifold $M$ in the sense of generalized complex geometry, $\mathbb{H}^1(M,\Theta_M^\bullet)$ is realized as a subspace of the second cohomology group of a compact holomorphic Poisson manifold $(M,\Lambda_0)$ in the sense of generalized complex geometry. In this paper, we deduce the integrability condition by extending Kodaira-Spencer's original approach, that is, by starting from a concept of a geometric family (a Poisson analytic family).

In section \ref{section5}, under some  analytic assumption, we establish an analogous theorem to the following theorem of Kodaira and Spencer (\cite{Kodaira58},\cite{Kod05} p.270).
\begin{theorem}[Theorem of existence for complex analytic structures]
Let $M$ be a compact complex manifold and suppose $H^2(M,\Theta)=0$. Then there exists a complex analytic family $(\mathcal{M},B,\omega)$ with $0\in B\subset \mathbb{C}^m$ satisfying the following conditions:
\begin{enumerate}
\item $\omega^{-1}(0)=M$
\item The Kodaira-Spencer map $\rho_0:\frac{\partial}{\partial t}\to \left(\frac{\partial M_t}{\partial t}\right)_{t=0}$ with $M_t=\omega^{-1}(t)$ is an isomorphism of $T_0(B)$ onto $H^1(M,\Theta_M):T_0(B)\xrightarrow{\rho_0} H^1(M,\Theta_M)$.
\end{enumerate}
\end{theorem}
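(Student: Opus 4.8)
The plan is to construct the family by Kodaira's power-series method, realizing it through a convergent $C^{\infty}$ vector $(0,1)$-form $\varphi(t)\in A^{0,1}(M,T_M)$ solving the integrability condition $\bar{\partial}\varphi(t)-\frac{1}{2}[\varphi(t),\varphi(t)]=0$ with $\varphi(0)=0$. Fix a Hermitian metric on $M$ and let $G$ and $H$ denote the Green's operator and harmonic projection for the $\bar{\partial}$-Laplacian acting on $A^{0,q}(M,T_M)$. Choose harmonic forms $\eta_1,\dots,\eta_m$ giving a basis of the harmonic space $\mathbb{H}^{0,1}(M,T_M)\cong H^1(M,\Theta_M)$, where $m=\dim H^1(M,\Theta_M)$, and take the linear term to be $\varphi_1(t)=\sum_{\nu=1}^m t_\nu\eta_\nu$.

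First I would solve the integrability condition formally, writing $\varphi(t)=\sum_{\mu\geq 1}\varphi_\mu(t)$ as a sum of homogeneous polynomials in $t$ with coefficients in $A^{0,1}(M,T_M)$. Collecting terms of degree $\mu\geq 2$, the equation reads $\bar{\partial}\varphi_\mu=\psi_\mu$, where $\psi_\mu:=\frac{1}{2}\sum_{\lambda=1}^{\mu-1}[\varphi_\lambda,\varphi_{\mu-\lambda}]\in A^{0,2}(M,T_M)$ depends only on lower-order terms. The core inductive step is to verify $\bar{\partial}\psi_\mu=0$; this follows from the graded Jacobi identity for the Schouten bracket together with the relations $\bar{\partial}\varphi_\lambda=\psi_\lambda$ already secured for $\lambda<\mu$, after which the cross terms cancel in pairs. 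Granting $\bar{\partial}\psi_\mu=0$, the Hodge decomposition gives $\psi_\mu=H\psi_\mu+\bar{\partial}\bar{\partial}^*G\psi_\mu+\bar{\partial}^*\bar{\partial}G\psi_\mu$, and since $G$ commutes with $\bar{\partial}$ the last summand vanishes. The hypothesis $H^2(M,\Theta)=0$ then forces $H\psi_\mu=0$, so setting $\varphi_\mu:=\bar{\partial}^*G\psi_\mu$ yields $\bar{\partial}\varphi_\mu=\psi_\mu$ and completes the recursion. This is exactly where the vanishing assumption is indispensable: it removes the cohomological obstruction to prolonging the deformation at every order.

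The hard part will be establishing convergence of the formal series $\varphi(t)$ on a genuine polydisk $B\subset\mathbb{C}^m$. I would follow the method of majorants of Kodaira-Nirenberg-Spencer in the H\"older norms $\|\cdot\|_{k+\alpha}$ on $M$. The two analytic inputs are the elliptic estimate $\|\bar{\partial}^*G\psi\|_{k+1+\alpha}\leq C_1\|\psi\|_{k+\alpha}$ for the Green's operator and the bilinear estimate $\|[\varphi,\psi]\|_{k+\alpha}\leq C_2\|\varphi\|_{k+1+\alpha}\|\psi\|_{k+1+\alpha}$ for the bracket. Introducing a scalar majorant series of the form $A(t)=\frac{b}{c}\sum_{\nu=1}^{\infty}\frac{c^\nu}{\nu^2}\bigl(t_1+\cdots+t_m\bigr)^\nu$, which enjoys the key domination $A(t)^2\ll\frac{b}{c}A(t)$, I would prove by induction that $\varphi_\mu(t)\ll(\mathrm{const})\,A(t)$ coefficientwise, so that the series converges in $\|\cdot\|_{k+\alpha}$ for small $|t|$. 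A standard regularity and bootstrap argument then upgrades this to convergence in $C^{\infty}$ and shows that $\varphi(t)$ depends holomorphically on $t$.

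Finally, I would assemble the family and read off the Kodaira-Spencer map. For $t$ near $0$ the form $\varphi(t)$ defines an integrable almost-complex structure on the underlying differentiable manifold, so by the Newlander-Nirenberg theorem it determines a compact complex manifold $M_t$; the holomorphic dependence on $t$ makes the total space $\mathcal{M}=\bigcup_{t\in B}M_t$ a complex analytic family $\omega:\mathcal{M}\to B$ with $\omega^{-1}(0)=M$, which is condition (1). Since the linear term is $\varphi_1(t)=\sum_\nu t_\nu\eta_\nu$, the Kodaira-Spencer map sends $\partial/\partial t_\nu$ to the class $[\eta_\nu]\in H^1(M,\Theta_M)$; as the $\eta_\nu$ form a basis, $\rho_0:T_0(B)\to H^1(M,\Theta_M)$ is an isomorphism, which is condition (2).
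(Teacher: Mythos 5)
Your proposal is correct, and it is essentially the classical Kodaira--(Nirenberg)--Spencer argument that the paper cites for this statement (\cite{Kod05} p.270): harmonic linear term, order-by-order solution of $\bar{\partial}\varphi_\mu=\psi_\mu$ with $\varphi_\mu=\bar{\partial}^*G\psi_\mu$ after checking $\bar{\partial}\psi_\mu=0$, vanishing of $H\psi_\mu$ from $H^2(M,\Theta)=0$, and convergence by the majorant $A(t)$.

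It is worth noting, however, that the paper's own existence proof (Section \ref{section5}, for the holomorphic Poisson analogue) is organized differently, following Kuranishi's method as in \cite{Mor71}: instead of solving the Maurer--Cartan equation degree by degree and verifying $\bar{\partial}\psi_\mu=0$ at each step, one first solves the auxiliary fixed-point equation $\beta(t)=\beta_1(t)-\tfrac{1}{2}L^*G[\beta(t),\beta(t)]$ unconditionally (its formal power series solution exists and converges by the same elliptic and bracket estimates you invoke), and only afterwards shows that this $\beta(t)$ satisfies the integrability condition if and only if $H[\beta(t),\beta(t)]=0$, which is where the hypothesis $\mathbb{H}^2=0$ enters. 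The Kuranishi formulation buys a cleaner separation between the analytic construction and the cohomological obstruction (and generalizes immediately to the case $\mathbb{H}^2\neq 0$, yielding the Kuranishi space), whereas your order-by-order route makes the role of the vanishing hypothesis visible at every inductive step; for the theorem as stated the two are equivalent in content and difficulty. Both routes share the final assembly step via Newlander--Nirenberg and the identification of the Kodaira--Spencer map with $t\mapsto[(\partial\varphi/\partial t_\nu)_{t=0}]$, which you correctly reduce to the linear term $\varphi_1(t)=\sum_\nu t_\nu\eta_\nu$.
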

Similarly, we prove `Theorem of existence for deformations of holomorphic Poisson structures' (see Theorem \ref{theorem of existence}).
\begin{theorem}[Theorem of existence for  holomorphic Poisson structures]\label{theorem of existence}
Let $(M,\Lambda_0)$ be a compact holomorphic Poisson manifold such that the associated Laplacian operator $\Box$ $($induced from the operator $\bar{\partial}+[\Lambda_0,-]$$)$ is strongly elliptic and of diagonal type. Suppose that $\mathbb{H}^2(M,\Theta^\bullet)=0$. Then there exists a Poisson analytic family $(
\mathcal{M},\Lambda,B,\omega)$ with $0\in B\subset \mathbb{C}^m$ satisfying the following conditions:
\begin{enumerate}
\item $\omega^{-1}(0)=(M,\Lambda_0)$
\item The Poisson Kodaira-Spencer map $\varphi_0:\frac{\partial}{\partial t}\to\left(\frac{\partial (M_t,\Lambda_t)}{\partial t}\right)_{t=0}$ with $(M_t,\Lambda_t)=\omega^{-1}(t)$ is an isomorphism of $T_0(B)$ onto $\mathbb{H}^1(M,\Theta_M^\bullet):T_0 B\xrightarrow{\rho_0} \mathbb{H}^1(M,\Theta_M^\bullet)$.
\end{enumerate}
\end{theorem}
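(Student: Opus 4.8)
The plan is to adapt the power-series method of Kodaira, Nirenberg and Spencer to the differential graded Lie algebra $\mathfrak{g}$ of \eqref{tt76}. By the reduction explained just before the theorem, producing a Poisson analytic family with the prescribed central fibre and Kodaira--Spencer map amounts to constructing, for small $t=(t_1,\dots,t_m)\in\mathbb{C}^m$ with $m=\dim_{\mathbb{C}}\mathbb{H}^1(M,\Theta_M^\bullet)$, a convergent solution $\psi(t)=\varphi(t)+\Lambda'(t)\in g_1=A^{0,1}(M,T_M)\oplus A^{0,0}(M,\wedge^2 T_M)$ of the Maurer--Cartan equation
\begin{equation*}
L\psi(t)+\tfrac12[\psi(t),\psi(t)]=0,\qquad \psi(0)=0,
\end{equation*}
whose first-order term realizes a prescribed isomorphism $T_0B\xrightarrow{\sim}\mathbb{H}^1(M,\Theta_M^\bullet)$. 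Splitting $\psi$ into its $A^{0,1}(M,T_M)$- and $A^{0,0}(M,\wedge^2 T_M)$-components, and the equation into bidegrees, recovers exactly the three integrability conditions deduced in Section \ref{section4}, from which a genuine deformation of the complex and Poisson structures can be reconstructed.

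First I would invoke the analytic hypothesis. Because the Laplacian $\Box=LL^*+L^*L$ attached to $L=\bar\partial+[\Lambda_0,-]$ is strongly elliptic and of diagonal type, Hodge theory applies to the elliptic complex $(g_\bullet,L)$: there are a finite-dimensional harmonic space $\mathbb{H}^i\cong\mathbb{H}^i(M,\Theta_M^\bullet)$, an orthogonal projection $H$ onto it, and a Green operator $G$ satisfying $\mathrm{id}=H+\Box G$, all commuting with $L$ and $L^*$. Choosing a basis $\beta_1,\dots,\beta_m$ of the harmonic representatives in $g_1$ and setting $\psi_1(t)=\sum_{\nu=1}^m t_\nu\beta_\nu$, I would then seek a solution of the integral equation
\begin{equation*}
\psi(t)=\psi_1(t)-\tfrac12\,L^*G\,[\psi(t),\psi(t)].
\end{equation*}
Expanding $\psi=\sum_{\nu\ge1}\psi_\nu$ into pieces homogeneous of degree $\nu$ in $t$ turns this into the recursion $\psi_\nu=-\tfrac12 L^*G[\psi,\psi]_\nu$, where $[\psi,\psi]_\nu=\sum_{\mu=1}^{\nu-1}[\psi_\mu,\psi_{\nu-\mu}]$ depends only on lower-order terms. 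A standard induction using the graded Jacobi identity and $L^2=0$ shows $[\psi,\psi]_\nu$ is $L$-closed, so the sole obstruction to carrying out step $\nu$ is its harmonic projection $H[\psi,\psi]_\nu\in\mathbb{H}^2$. Here the hypothesis $\mathbb{H}^2(M,\Theta_M^\bullet)=0$ enters decisively: every obstruction is forced to vanish, the recursion never stalls, and a formal power-series solution exists.

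The main work, and the step I expect to be the principal obstacle, is convergence. I would fix a Hölder norm $\|\cdot\|_{k+\alpha}$ on each $A^{0,p}(M,\wedge^q T_M)$ and establish the two a priori estimates $\|L^*Gu\|_{k+1+\alpha}\le c_1\|u\|_{k+\alpha}$ and $\|[\phi,\chi]\|_{k+\alpha}\le c_2\|\phi\|_{k+1+\alpha}\|\chi\|_{k+1+\alpha}$; the first is an elliptic (Schauder) estimate whose validity in these component norms is precisely what the strongly-elliptic, diagonal-type assumption guarantees, while the second reflects that the Schouten bracket is a first-order bidifferential operator. Feeding these into the classical majorant argument---comparing $\psi(t)$ termwise with a dominating series of the form $A(t)=\frac{b}{c}\sum_\nu \frac{c^\nu}{\nu^2}(t_1+\cdots+t_m)^\nu$---would yield convergence of $\psi(t)$ in $\|\cdot\|_{k+\alpha}$ on a small polydisk $B$, holomorphically in $t$, and elliptic regularity would make each $\psi(t)$ smooth on $M$.

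Finally I would assemble the family from the convergent datum $\psi(t)$. Integrating the $A^{0,1}(M,T_M)$-component $\varphi(t)$ via the Newlander--Nirenberg theorem produces the deformed complex structures $M_t$; the conditions in bidegree $(1,2)$ and $(0,3)$ make $\Lambda(t)=\Lambda_0+\Lambda'(t)$ a holomorphic bivector field on $M_t$ with $[\Lambda(t),\Lambda(t)]=0$, hence a holomorphic Poisson structure; and the formalism of Sections \ref{section2}--\ref{section4} identifies $(\mathcal{M},\Lambda,B,\omega)$ as a Poisson analytic family with $\omega^{-1}(0)=(M,\Lambda_0)$. Since $(\partial\psi/\partial t_\nu)_{t=0}=\beta_\nu$ by construction, the Poisson Kodaira--Spencer map is the chosen isomorphism onto $\mathbb{H}^1(M,\Theta_M^\bullet)$, which completes the proof.
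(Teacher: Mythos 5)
Your proposal follows essentially the same route as the paper: the reduction to the Maurer--Cartan equation for the DGLA $(\mathfrak{g},L=\bar{\partial}+[\Lambda_0,-],[-,-])$, Kuranishi's integral equation $\psi=\psi_1-\tfrac12 L^*G[\psi,\psi]$ with harmonic leading term, H\"older-norm estimates for $L^*G$ and the Schouten bracket feeding a majorant argument, vanishing of the obstruction via $\mathbb{H}^2(M,\Theta_M^\bullet)=0$, and assembly of the family through Newlander--Nirenberg. The only nuance is that the recursion for the integral equation is unobstructed by construction; what $\mathbb{H}^2=0$ actually guarantees (the paper's Proposition \ref{yui}) is that the resulting $\psi(t)$ solves the Maurer--Cartan equation, a point your phrasing slightly conflates but which does not affect correctness.
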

The proof is rather formal. The proof follows from the Kuranishi's method presented in \cite{Mor71}. The reason for the assumption on the associated Laplacian operator $\Box$ (induced from the operator $\bar{\partial}+[\Lambda_0,-]$) is for applying the Kuranishi's method in the holomorphic Poisson context.

In section \ref{section6}, we establish an analogous theorem to the following theorem of Kodaira and Spencer (\cite{KS58},\cite{Kod05} p.284).

\begin{theorem}[Theorem of completeness for complex analytic structures]\label{kodairacomplete}
Let $(\mathcal{M},B,\omega)$ be a complex analytic family of deformations of a compact complex manifold $M_0=\omega^{-1}(0)$, $B$ a domain of $\mathbb{C}^m$ containing $0$. If the Kodaira-Spencer map $\rho_0:T_0 (B)\to H^1(M_0,\Theta_{M_0})$ is surjective, the complex analytic family $(\mathcal{M},B,\omega)$ is complete at $0\in B$.
\end{theorem}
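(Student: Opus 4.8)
The plan is to work on the fixed underlying differentiable manifold of $M_0$ and to represent both families by Beltrami differentials, following Kodaira--Spencer's method. After shrinking $B$ and $D$, the given family $(\mathcal{M},B,\omega)$ is described by a family $\varphi(t)\in A^{0,1}(M_0,T_{M_0})$ depending holomorphically on $t$, with $\varphi(0)=0$ and $\bar\partial\varphi(t)-\tfrac12[\varphi(t),\varphi(t)]=0$, and an arbitrary comparison family $(\mathcal{N},D,\pi)$ with $\pi^{-1}(0)=M_0$ is described by $\psi(s)\in A^{0,1}(M_0,T_{M_0})$, $\psi(0)=0$, satisfying the same integrability equation. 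Completeness at $0$ means that for every such $(\mathcal{N},D,\pi)$ there is a holomorphic map $h\colon D\to B$ with $h(0)=0$ and $\mathcal{N}\cong h^\ast\mathcal{M}$ near $0$. I would reduce this to finding $h(s)$ together with a smooth family of vector fields $v(s)\in A^{0,0}(M_0,T_{M_0})$, $v(0)=0$, such that the diffeomorphism generated by $v(s)$ carries the complex structure determined by $\psi(s)$ to that determined by $\varphi(h(s))$. This amounts to a single functional equation of the schematic form $\psi(s)=\varphi(h(s))+\bar\partial v(s)+Q(\varphi(h(s)),v(s))$, where $Q$ collects the quadratic-and-higher contributions of the diffeomorphism action.

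I would solve this equation formally as a power series in $s$ by induction on degree. Expanding $\psi(s)=\sum_{|\nu|\geq1}\psi_\nu s^\nu$, $h(s)=\sum h_\nu s^\nu$, $v(s)=\sum v_\nu s^\nu$, the first-order terms give $\psi_1=\sum_\mu (\partial\varphi/\partial t^\mu)|_0\,(\text{linear part of }h^\mu)+\bar\partial v_1$; passing to cohomology, the class $[\psi_1]$ must be expressed through the classes $\rho_0(\partial/\partial t^\mu)=[(\partial\varphi/\partial t^\mu)|_0]$, which span $H^1(M_0,\Theta_{M_0})$ precisely because $\rho_0$ is surjective. This determines the linear part of $h$, and then $v_1$ is recovered by solving a $\bar\partial$-equation. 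At order $\mu$, assuming $h,v$ constructed modulo degree $\mu$, the degree-$\mu$ terms produce a $\bar\partial$-closed obstruction $\Psi_\mu\in A^{0,1}(M_0,T_{M_0})$ built from the lower-order data; surjectivity of $\rho_0$ lets me choose the degree-$\mu$ part of $h$ so that the harmonic projection $\mathcal{H}\Psi_\mu$ is cancelled, after which $\bar\partial v_\mu=\Psi_\mu-(\text{matched term})$ is solvable via the Green operator $G$, giving $v_\mu$ explicitly. Thus all coefficients are determined, with the surjectivity hypothesis entering at every step as the statement that no obstruction class survives in $H^1(M_0,\Theta_{M_0})$.

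The main obstacle is convergence of this formal solution. Here I would use the method of majorants: fix Hölder norms on the spaces $A^{0,q}(M_0,T_{M_0})$, use the elliptic a priori estimate $\|v_\mu\|_{k+1+\alpha}\leq C\|\bar\partial v_\mu\|_{k+\alpha}$ for $G$ together with the boundedness of $\mathcal{H}$, and bound the nonlinear terms $Q$ and the Schouten brackets multiplicatively in these norms. Comparing the resulting recursion for $\|h_\mu\|$ and $\|v_\mu\|$ with the coefficients of an explicit convergent majorant series of Kodaira's type $A(s)=\frac{b}{c}\sum_{k\geq1}k^{-2}\bigl(c\sum_\kappa s_\kappa\bigr)^{k}$, one shows by induction that the formal series are dominated by a convergent one, hence converge in a fixed Hölder space on a neighborhood of $0$; holomorphy of $h$ and smoothness of $v(s)$ then follow from elliptic regularity and the holomorphic dependence of the equations on $s$.

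Once convergence is established, the vector fields $v(s)$ integrate to a smooth family of diffeomorphisms $g_s$ of $M_0$ with $g_0=\mathrm{id}$, carrying the complex structure of $\psi(s)$ to that of $\varphi(h(s))$; assembling the $g_s$ over $s\in D$ yields a biholomorphism between $\mathcal{N}$ and the pullback family $h^\ast\mathcal{M}$ over a neighborhood of $0\in D$, which is exactly the assertion that $(\mathcal{M},B,\omega)$ is complete at $0$. I expect the formal and cohomological steps to be essentially routine given surjectivity of $\rho_0$, with the convergence estimates being the genuinely delicate part, since they require careful control of the nonlinearities against the loss of derivatives inherent in inverting $\bar\partial$.
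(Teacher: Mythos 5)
Your plan is correct in outline, but it takes a genuinely different route from the paper. The paper does not reprove this classical statement — it cites Kodaira--Spencer and then adapts Kodaira's \emph{elementary} method to the Poisson analogue (Theorem \ref{complete9}) — so the relevant comparison is with that adapted argument. You work globally with Beltrami differentials and solve a single gauge-equivalence equation $\psi(s)=\varphi(h(s))+\bar\partial v(s)+Q$ by harmonic theory: surjectivity of $\rho_0$ kills the harmonic part of each obstruction, the Green operator inverts $\bar\partial$ on the exact part, and convergence follows from H\"older elliptic estimates plus majorants. The paper instead never leaves the \v{C}ech picture: it constructs $h(s)$ and local holomorphic maps $g_j(z_j,s)$ as power series satisfying the gluing identity $g_j(f_{jk}(z_k,s),s)=g_{jk}(g_k(z_k,s),h(s))$ degree by degree, identifies the degree-$v$ obstruction as a $1$-cocycle $\Gamma_{jk|v}$, solves the resulting cohomological equation using surjectivity of the Kodaira--Spencer map, and obtains the needed uniform bound on the solutions not from elliptic estimates but from a compactness (Montel-type) contradiction argument (Lemma \ref{lemma3}), with convergence again by the majorant series $A(s)$. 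Your approach buys a cleaner global formulation, direct contact with the Maurer--Cartan/gauge picture, and reuse of the machinery of the existence theorem; the paper's approach buys elementarity (no Green operator or a priori elliptic estimates), and — decisive for its purposes — it extends to the Poisson case without the ``strongly elliptic and of diagonal type'' hypothesis that the harmonic-theoretic existence theorem requires. One step of your plan deserves more care than you give it: fiberwise gauge equivalences $g_s$ only yield biholomorphisms $N_s\cong M_{h(s)}$ for each $s$; completeness requires the assembled map $\mathcal{N}_\Delta\to\mathcal{M}$ to be holomorphic on the total space, so you must arrange and verify that $v(s)$, hence $g_s$, depends holomorphically on $s$ — a point that is automatic in the \v{C}ech method because the $g_j(z_j,s)$ are built directly as convergent power series in $s$ with holomorphic coefficients.
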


Similarly, we prove the following theorem which is an analogue of `Theorem of completeness' by Kodaira-Spencer.  

\begin{theorem}[Theorem of completeness for holomorphic Poisson structures]
Let $(\mathcal{M},\Lambda_{\mathcal{M}},B,\omega)$ be a Poisson analytic family of deformations of a compact holomorphic Poisson manifold $(M,\Lambda_0)=\omega^{-1}(0)$, $B$ a domain of $\mathbb{C}^m$ containing $0$. If the Poisson Kodaira-Spencer map $\varphi_0:T_0 (B) \to \mathbb{H}^1(M,\Theta_M^\bullet)$ is surjective, the Poisson analytic family $(\mathcal{M},\Lambda_{\mathcal{M}}, B,\omega)$ is complete at $0\in B$.
\end{theorem}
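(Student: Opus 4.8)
The plan is to prove the statement as the holomorphic-Poisson analogue of Kodaira--Spencer's proof of Theorem \ref{kodairacomplete}, carried out inside the differential graded Lie algebra $\mathfrak g$ of \eqref{tt76}, with $\bar\partial$ replaced throughout by $L=\bar\partial+[\Lambda_0,-]$ and Dolbeault cohomology replaced by the hypercohomology $\mathbb H^\bullet(M,\Theta_M^\bullet)$. First I would represent both families by convergent power series of Maurer--Cartan elements: after shrinking $B$, the given family $(\mathcal M,\Lambda_{\mathcal M})$ is described by $u(t)=\varphi(t)+\Lambda'(t)\in g_1=A^{0,1}(M,T_M)\oplus A^{0,0}(M,\wedge^2 T_M)$, holomorphic in $t$ with $u(0)=0$ and $Lu(t)+\tfrac12[u(t),u(t)]=0$, while an arbitrary competing family $(\mathcal N,\Lambda_{\mathcal N},D,\pi)$ with $\pi^{-1}(0)=(M,\Lambda_0)$ is described by a similar element $w(s)=\psi(s)+\Sigma'(s)\in g_1$ over $D$. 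With this dictionary, completeness at $0$ is equivalent to producing, on some neighborhood $D'\ni 0$, a holomorphic map $h\colon D'\to B$ with $h(0)=0$ together with a smooth family $a(s)\in g_0=A^{0,0}(M,T_M)$, $a(0)=0$, such that $w(s)$ is the gauge transform of the pulled-back element $u(h(s))$ by $a(s)$; the time-one flow of $a(s)$ then supplies the fiberwise Poisson-biholomorphic map $\Phi\colon\mathcal N|_{D'}\to\mathcal M$ covering $h$, hence the isomorphism $(\mathcal N,\Lambda_{\mathcal N})|_{D'}\cong h^*(\mathcal M,\Lambda_{\mathcal M})$.

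Next I would construct $h$ and $a$ as formal power series in $s$ by induction on the total degree $|\mu|$. Collecting the terms of degree $\mu$ in the gauge-equivalence condition yields, at each step, an equation of the shape
\begin{align*}
L\,a_\mu+\sum_i \Bigl(\tfrac{\partial u}{\partial t_i}\Bigr)_{t=0}\, h^i_\mu = r_\mu ,
\end{align*}
where $r_\mu\in g_1$ is a polynomial in the already-determined lower-order coefficients and the Taylor coefficients of $u,w$ (assembled via the Schouten bracket), and is therefore known. A standard induction using the Maurer--Cartan equations satisfied by $u$ and $w$ shows that each $r_\mu$ is $L$-closed. Applying the harmonic projection $H$ onto the harmonic space $\mathbb H^1\cong\mathbb H^1(M,\Theta_M^\bullet)$ and using $H(L a_\mu)=0$, solvability forces $H r_\mu=\sum_i [(\partial_{t_i}u)_0]\,h^i_\mu$; since the classes $[(\partial_{t_i}u)_0]=\varphi_0(\partial/\partial t_i)$ span $\mathbb H^1$ by the assumed surjectivity of the Poisson Kodaira--Spencer map $\varphi_0$, we may choose the constants $h^i_\mu$ so that this cohomological identity holds, so that $h(s)=\sum_\mu h_\mu s^\mu$ is holomorphic. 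With the harmonic part removed and $r_\mu$ being $L$-closed, the residual term is $L$-exact, and $a_\mu=L^*G\bigl(r_\mu-\sum_i(\partial_{t_i}u)_0 h^i_\mu\bigr)$ solves the equation, $a(s)$ being merely smooth in $s$. Note that, exactly as in the complex-analytic case, no vanishing of $\mathbb H^2$ is needed here: the obstruction to the comparison lives only in $\mathbb H^1$ and is killed by surjectivity of $\varphi_0$.

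The main obstacle, as in Kodaira--Spencer, is convergence of the formal series for $h$ and $a$. Here I rely on the Hodge decomposition $g_i=\mathbb H^i\oplus L g_{i-1}\oplus L^* g_{i+1}$ and the Green operator $G$ for the second-order Laplacian $\Box$ associated with $L$; as in Section \ref{section5}, these are available once $\Box$ is strongly elliptic and of diagonal type, together with the elliptic a priori (H\"older) estimates $\|G\phi\|_{k+2,\alpha}\le C\|\phi\|_{k,\alpha}$ and $\|L^*G\phi\|_{k+1,\alpha}\le C\|\phi\|_{k,\alpha}$. Using these, the multiplicativity of the Schouten bracket in H\"older norms, and the convergence of the given series $u(t)$ and $w(s)$, one bounds $\|a_\mu\|$ and $|h_\mu|$ by the coefficients of a classical majorant series of the form $A(s)=\tfrac{b}{16c}\sum_{\nu\ge1}\tfrac{c^{\nu}}{\nu^{2}}(s_1+\cdots+s_m)^{\nu}$, which converges for small $|s|$. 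I expect controlling the nonlinear remainders $r_\mu$ and propagating their bounds through $G$ and $L^*$ to be the technical heart of the argument.

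Finally, having established convergence on a neighborhood $D'$ of $0$, I would assemble the time-one flow of $a(s)$ into a genuine fiber-preserving map $\Phi\colon\mathcal N|_{D'}\to\mathcal M$ over the holomorphic map $h$, and verify that on each fiber it is a biholomorphism intertwining the Poisson bivectors. This last point is automatic from the gauge equivalence of Maurer--Cartan elements, since the gauge action of $g_0$ on solutions is precisely the action of fiberwise Poisson diffeomorphisms; consequently $(\mathcal N,\Lambda_{\mathcal N})|_{D'}$ is isomorphic to the induced family $h^*(\mathcal M,\Lambda_{\mathcal M})$. As the deformation $(\mathcal N,\Lambda_{\mathcal N},D,\pi)$ was arbitrary, this shows that $(\mathcal M,\Lambda_{\mathcal M},B,\omega)$ is complete at $0\in B$.
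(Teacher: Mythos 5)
Your route is genuinely different from the paper's: you recast both families as Maurer--Cartan elements of the DGLA \eqref{tt76} and seek a gauge equivalence $w(s)=e^{a(s)}\ast u(h(s))$, solving order by order with the harmonic projection $H$ and the operator $L^{*}G$. The paper instead runs Kodaira's elementary method entirely at the level of local holomorphic data: it builds power series $g_j(z_j,s)$ and $h(s)$ subject to the transition-function condition (\ref{aa90}) and the Poisson-map condition (\ref{aa91}), identifies the order-$v$ obstruction as a \u{C}ech $1$-cocycle for $\Theta_{M_0}^\bullet$ (Lemmas \ref{lemmai} and \ref{lemma11}), kills it using surjectivity of $\varphi_0$ (Lemma \ref{lemma10}), and proves convergence via a majorant series together with Lemma \ref{lemma3}. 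Your overall induction scheme (each residual $r_\mu$ is $L$-closed by the Maurer--Cartan equations; its harmonic part is absorbed by the classes $\varphi_0(\partial/\partial t_i)$, which span $\mathbb H^1(M,\Theta_M^\bullet)$) is the standard and correct skeleton for this kind of argument.

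However, there is a genuine gap: your proof leans on the Hodge decomposition, the Green operator $G$, and H\"older estimates for $L^{*}G$ associated with $\Box=LL^{*}+L^{*}L$. In this paper those tools are only available under the hypothesis that $\Box$ is strongly elliptic and of diagonal type --- an assumption that appears in Theorem \ref{theorem of existence} precisely because Kuranishi's method needs it, but which is deliberately \emph{absent} from the completeness theorem you are proving. The paper's proof avoids all elliptic theory: the solvability-with-bounds statement you would get from $L^{*}G$ is replaced by Lemma \ref{lemma3}, which is proved by a normal-families contradiction argument on bounded \u{C}ech cochains of holomorphic vector and bivector fields. As written, your argument therefore establishes completeness only under an extra analytic hypothesis not present in the statement. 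A secondary gap is your final step: you declare it ``automatic'' that gauge equivalence of Maurer--Cartan elements yields a fiberwise Poisson biholomorphism assembling into a holomorphic map over $h$. But $a(s)\in A^{0,0}(M,T_M)$ is a $(1,0)$-vector field, so its ``time-one flow'' is not literally a diffeomorphism of the underlying real manifold; translating the formal gauge action into an actual holomorphic Poisson map $g:\pi^{-1}(\Delta)\to\mathcal M$ satisfying $\omega\circ g=h\circ\pi$ (which is what the definition of the induced family requires, cf.\ Remark \ref{remark55}) is a substantive verification that the paper sidesteps by constructing $g$ directly through its local components $g_j(z_j,s)$ satisfying (\ref{aa90}) and (\ref{aa91}).
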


\section{Families of compact holomorphic Poisson manifolds}\label{section2}

\begin{definition}$($compare \cite{Kod05} p.59$)$\label{definition}
Suppose that given a domain $B\subset \mathbb{C}^m$, there is a set $\{(M_t,\Lambda_t)|t \in B\}$ of $n$-dimensional compact holomorphic Poisson manifolds $(M_t,\Lambda_t)$, depending on $t=(t_1,...,t_m)\in B$. We say that $\{(M_t,\Lambda_t)|t\in B\}$ is a family of compact holomorphic Poisson manifolds or a Poisson analytic family of compact holomorphic Poisson manifolds if there exists a holomorphic Poisson manifold $(\mathcal{M},\Lambda)$ and a holomorphic map $\omega:\mathcal{M}\to B$ satisfing the following properties
\begin{enumerate}
\item $\omega^{-1}(t)$ is a compact holomorphic Poisson submanifold of $(\mathcal{M},\Lambda)$ for each $t\in B$.
\item $(M_t,\Lambda_t)=\omega^{-1}(t)(M_t$ has the induced Poisson holomorphic structure $\Lambda_t$ from $\Lambda)$.
\item The rank of Jacobian of $\omega$ is equal to $m$ at every point of $\mathcal{M}$.
\end{enumerate}
We will denote a Poisson analytic family by $(\mathcal{M},\Lambda,B,\omega)$. We also call $(\mathcal{M},\Lambda,B,\omega)$ a Poisson analytic family of deformations of a compact holomorphic Poisson manifold $(M_{t_0},\Lambda_{t_0})$ for each fixed $t_0\in B$.
\end{definition}

\begin{remark}
When we ignore Poisson structures, a Poisson analytic family $(\mathcal{M},\Lambda,B,\omega)$ is a complex analytic family $(\mathcal{M},B,\omega)$ in the sense of Kodaira-Spencer $($see \cite{Kod05} p.59$)$.
\end{remark}

\begin{remark}\label{tt61}
Given a Poisson analytic family $(\mathcal{M},\Lambda,B,\omega)$ as in Definition $\ref{definition}$,  we can choose a locally finite open covering $\mathcal{U}=\{\mathcal{U}_j\}$ of $\mathcal{M}$ such that $\mathcal{U}_j$ are coordinate polydisks with a system of local complex coordinates $\{z_1,...,z_j,...\}$, where a local coordinate function $z_j:p\to z_j(p)$ on $\mathcal{U}_j$ satisfies $z_j(p)=(z_j^1(p),...,z_j^n(p),t_1,...,t_m)$, and $t=(t_1,...,t_m)=\omega(p)$. Then for a fixed $t_0\in B$, $\{p\mapsto (z_j^1(p),...,z_j^n(p))| \mathcal{U}_j \cap M_{t_0}\ne \emptyset\}$ gives a system of local complex coordinates on $M_{t_0}$. In terms of these coordinates, $\omega$ is the projection given by $(z_j,t)=(z_j^1,...,z_j^n,t_1,...,t_m)\to (t_1,...,t_m)$. For $j,k$ with $\mathcal{U}_j\cap \mathcal{U}_k\ne \emptyset$, we denote the coordinate transformations from $z_k$ to $z_j$ by $f_{jk}:(z_k^1,...,z_k^n,t)\to (z_j^1,...,z_j^n,t)=f_{jk}(z_k^1,...,z_k^n,t)$$($for the detail, see \cite{Kod05} p.60$)$. 
 
On the other hand, since $(M_t,\Lambda_t) \hookrightarrow (\mathcal{M},\Lambda)$ is a holomorphic Poisson submanifold for each $t\in B$ and $\mathcal{M}=\bigcup_t M_t$, the holomorphic Poisson structure $\Lambda$ on $\mathcal{M}$ can be expressed in terms of local coordinates as $\Lambda=\sum_{\alpha,\beta=1}^n g_{\alpha \beta}^j(z_j^1,...,z_j^n,t)\frac{\partial}{\partial{z_j^{\alpha}}}\wedge \frac{\partial}{\partial{z_j^{\beta}}}$ on $\mathcal{U}_j$, where $g_{\alpha\beta}^j(z_j,t)=g_{\alpha\beta}^j(z_j^1,...,z_n^n,t)$ is holomorphic with respect to $(z_j,t)$ with $g_{\alpha\beta}^j(z_j,t)=-g_{\beta\alpha}^j(z_j,t)$. For a fixed $t^0$, the holomorphic Poisson structure $\Lambda_{t^0}$ on $M_{t_0}$ is given by $\sum_{\alpha,\beta=1}^n g_{\alpha \beta}^j(z_j^1,...,z_j^n,t^0)\frac{\partial}{\partial{z_j^{\alpha}}}\wedge \frac{\partial}{\partial{z_j^{\beta}}}$ on $\mathcal{U}_j\cap M_{t_0}$.
\end{remark}

\begin{remark}\label{restriction}
Let $(\mathcal{M},\Lambda,B,\omega)$ be a Poisson analytic family. Let $\Delta$ be an open set of $B$. Then the restriction $(\mathcal{M}_{\Delta}=\omega^{-1}(\Delta),\Lambda|_{M_{\Delta}},\Delta,\omega|_{\mathcal{M}_{\Delta}})$ is also a Poisson analytic family. We will denote the family by $(\mathcal{M}_{\Delta},\Lambda_{\Delta},\Delta,\omega)$.
\end{remark}

\begin{example}[complex tori]$($\cite{Kod58} $p.408$$)$
Let $S$ be the space of $n\times n$ matrices $s=(s_{\beta}^{\alpha})$ with $\det(Im(s) ) >0$, where $\alpha$ denotes the row index and $\beta$ the column index, and $Im(s)$ is the imaginary part of $s$. For each matrix $s\in S$ we define an $n\times 2n$ matrix $\omega(s)=(\omega_j^{\alpha}(s))$ by
\begin{equation*} 
\omega_j^{\alpha}(s)=
\begin{cases}
\delta_j^{\alpha},\,\,\,\,\,\,\,\,\,\, $\text{for $1\leq j\leq n$}$\\
s_\beta^{\alpha},\,\,\,\,\,\,\,\,\ $\text{for $j=n+\beta, 1\leq \beta \leq n$}$
\end{cases}
\end{equation*}

Let $G$ be the discontinuous abelian group of analytic automorphisms of $\mathbb{C}^n\times S$ generated by $g_j:(z,s)\to (z+\omega_j(s),s),\,\,\,\,\, j=1,...,2n,$
where $\omega_j(s)=(\omega_j^1(s),...,\omega_j^{\alpha}(s),...,\omega_j^n(s))$ is th $j$-th column vector of $\omega(s)$. The quotient space $\mathcal{M}=\mathbb{C}^n\times S/G$ and $\pi:\mathcal{M}\to S$ induced from the canonical projection $\mathbb{C}^n\times S\to S$ forms a complex analytic family of complex tori. We will put a holomorphic Poisson structure on $\mathcal{M}$ to make a Poisson analytic family. A holomorphic bivector field of the form $\Lambda=\sum_{i,j=1}^nf_{ij}(s)\frac{\partial}{\partial z_i}\wedge \frac{\partial}{\partial z_j}$ on $\mathbb{C}^n\times S$ where $f_{ij}(s)=f_{ij}(z,s)$ are holomorphic functions on $\mathbb{C}^n\times S$, independent of $z$, is a $G$-invariant bivector field on $\mathbb{C}^n\times S$. So this induces a holomorphic bivector field on $\mathcal{M}$. Since $f_{ij}(s)$ are independent of $z$, we have $[\Lambda,\Lambda]=0$. So $(\mathcal{M},\Lambda,S, \pi)$ is a Poisson analytic family.

\end{example}

\begin{example}[Hirzebruch-Nagata surface]$($\cite{Uen99} $p.13$$)$
Take two $\mathbb{C}\times \mathbb{P}_{\mathbb{C}}^1\times \mathbb{C}$ and write the coordinates as $(u,(\xi_0:\xi_1),t), (v,(\eta_0:\eta_1),t))$, respectively, where $u,v,t$ are the coordinates of $\mathbb{C}$ and $(\xi_0:\xi_1),(\eta_0:\eta_1)$ are the homogeneous coordinates of $\mathbb{P}_{\mathbb{C}}^1$.
By patching two $\mathbb{C}\times \mathbb{P}_{\mathbb{C}}^1\times \mathbb{C}$ together by relation
\begin{equation*}\label{relation}
\begin{cases}
u=1/v, \\
(\xi_0:\xi_1)=(\eta_0:v^m\eta_1+tv^k\eta_0), \,\,\,\,\,m-2\leq 2k \leq  m,\,\,\, \text{where} \,\,\, m,k\,\,\, \text{are natural numbers}\\
t=t,
\end{cases}
\end{equation*}
we obtain a complex analytic family $\pi:\mathcal{S}\to \mathbb{C}$ which is induced from the natural projection $\mathbb{C}\times \mathbb{P}_{\mathbb{C}}^1\times \mathbb{C}\to \mathbb{C}$ to the third component. We will put a holomorphic Poisson structure $\Lambda$ on $\mathcal{S}$ so that $(\mathcal{S},\Lambda,\mathbb{C},\pi)$ is a Poisson analytic family. $S$ has four affine covers. For one $\mathbb{C}\times \mathbb{P}_\mathbb{C}^1\times\mathbb{C}$ with coordinate $(u,(\xi_0:\xi_1),t)$, we have two affine covers, namely, $\mathbb{C}\times \mathbb{C}\times \mathbb{C}$ and $\mathbb{C}\times \mathbb{C}\times \mathbb{C}$. They are glued via  $\mathbb{C}\times (\mathbb{C}-\{0\})\times \mathbb{C}$ and  $\mathbb{C}\times (\mathbb{C}-\{0\})\times \mathbb{C}$ by $(u,x=\frac{\xi_1}{\xi_0},t)\mapsto (u,y=\frac{\xi_0}{\xi_1},t)=(u,\frac{1}{x},t)$. Similarly for another $\mathbb{C}\times \mathbb{P}_\mathbb{C}^1\times \mathbb{C}$, two affine covers are glued via $\mathbb{C}\times (\mathbb{C}-\{0\})\times \mathbb{C}$ and $\mathbb{C}\times (\mathbb{C}-\{0\})\times \mathbb{C}$ by $(v,w=\frac{\eta_1}{\eta_0},t)\mapsto (v,z,t)=(v,\frac{1}{w}=\frac{\eta_0}{\eta_1},t)$. We put holomorphic Poisson structures on each four affine covers which define a global bivector field  $\Lambda$ with $[\Lambda,\Lambda]=0$  on $\mathcal{S}$. On $(u,x,t)$ coordinate, we give $g(t)x^2\frac{\partial}{\partial u}\wedge \frac{\partial}{\partial x}$, where $g(t)$ is any holomorphic function depending only on $t$. On $(u,y,t)$ coordinate, we give $-g(t)\frac{\partial}{\partial u}\wedge \frac{\partial}{\partial y}$. On $(v,w,t)$ coordinate, we give $-g(t)v^{2k-m+2}(wv^{m-k}+t)^2\frac{\partial}{\partial v}\wedge\frac{\partial}{\partial w}$. On $(v,z,t)$ coordinate, we give $g(t)v^{2k-m+2}(v^{m-k}+tz)^2\frac{\partial}{\partial v}\wedge \frac{\partial}{\partial z}$. Then $(\mathcal{S},\Lambda,\mathbb{C},\pi)$ is a Poisson analytic family.
\end{example}

\begin{example}[Hopf surfaces]

We construct an one parameter Poisson analytic family of general Hopf surfaces.
An automorphism of $W\times \mathbb{C}$ given by $g:(z_1,z_2,t)\to(az_1+tz_2^m,bz_2,t)$ where $0<|a|\leq |b| <1$ and $b^m-a=0$ $($i.e $a=b^m$$)$, generates an infinite cyclic group $G$, which properly discontinuous and fixed point free. Hence $\mathcal{M}:=W\times \mathbb{C}/G$ is a complex manifold. Since the projection of $W\times \mathbb{C}$ to $\mathbb{C}$ commutes with $g$, it induces a holomorphic map $\omega$ of $\mathcal{M}$ to $\mathbb{C}$. So $(\mathcal{M},\mathbb{C},\omega)$ is a complex analytic family. Since $g^n$ is given by $g^n:(z_1,z_2,t)\to(z_1',z_z',t')=(a^n z_1+na^{n-1}t z_2^m,b^n z_2,t)$,
we have 
\begin{equation*}
\frac{\partial}{\partial z_1}=a^n\frac{\partial}{\partial z_1'},\,\,\,\,\, \frac{\partial}{\partial z_2}=mna^{n-1}t z_2^{m-1}\frac{\partial}{\partial z_1'}+b^n\frac{\partial}{\partial z_2'},\,\,\,\,\, \frac{\partial}{\partial z_1}\wedge \frac{\partial}{\partial z_2}=a^nb^n\frac{\partial}{\partial z_1'}\wedge \frac{\partial}{\partial z_2'}
\end{equation*}
Then $f(t)z_2^{m+1}\frac{\partial}{\partial z_1}\wedge \frac{\partial}{\partial z_2}$ where $f(t)$ is any holomorphic function, independent of $z$, is a $G$-invariant holomorphic bivector field on $W\times \mathbb{C}$ and so define a holomorphic Poisson structure on $\mathcal{M}$. Hence $(\mathcal{M},f(t)z_2^{m+1}\frac{\partial}{\partial z_1}\wedge \frac{\partial}{\partial z_2},\mathbb{C},\omega)$ is a Poisson analytic family of Poisson Hopf surfaces. 
\end{example}

\section{Infinitesimal deformations}\label{section3}

\subsection{Infinitesimal deformations and truncated holomorphic Poisson cohomology}\

In this subsection, we show that given a Poisson analytic family $(\mathcal{M},\Lambda,B,\omega)$, an infinitesimal deformation of a compact holomorphic Poisson manifold $\omega^{-1}(t)=(M_t,\Lambda_t)$ with dimension $n$ is captured by an element in the first hypercohomology group of the complex of sheaves $\Theta_{M_t}^\bullet: \Theta_{M_t}\to \wedge^2 \Theta_{M_t}\to \cdots \to \wedge^n \Theta_{M_t}\to 0$ induced by $[\Lambda_t,-]$ analogously to how an infinitesimal deformation of a compact complex manifold $M_t$ is captured by an element in the first cohomology group $H^1(M_t,\Theta_t)$.

Let $(M,\Lambda_0)$ be a compact holomorphic Poisson manifold and consider the complex of sheaves 
\begin{align}\label{complex}
\Theta_M^\bullet:\Theta_M\xrightarrow{[\Lambda_0,-]}\wedge^2 \Theta_M\xrightarrow{[\Lambda_0,-]}\cdots \xrightarrow{[\Lambda_0,-]} \wedge^n \Theta_M\to 0
\end{align}
where $\Theta_M$ is the sheaf of germs of holomorphic vector fields on $M$. Let $\mathcal{U}=\{U_j\}$ be sufficiently fine open covering of $M$ such that $U_j$ are coordinate polydisks of $M$, that is, $U_j=\{(z_j^1,...,z_j^n)\in \mathbb{C}^n||z_j^{\alpha}|<r_j^{\alpha},\alpha=1,...,n\}$ where $z_j=(z_j^1,...z_j^n)$ is a local coordinate on $U_j$ and $r_j^{\alpha}>0$ is a constant. Then we can compute the hypercohomology group of the complex of sheaves $(\ref{complex})$ by the following \u{C}ech resolution (see \cite{EV92} Appendix). Here $\delta$ is the \u{C}ech map.
\begin{center}
$\begin{CD}
@A[\Lambda_0,-]AA\\
C^0(\mathcal{U},\wedge^3 \Theta_M)@>-\delta>>\cdots\\
@A[\Lambda_0,-]AA @A[\Lambda_0,-]AA\\
C^0(\mathcal{U},\wedge^2 \Theta_M)@>\delta>> C^1(\mathcal{U},\wedge^2 \Theta_M)@>-\delta>>\cdots\\
@A[\Lambda_0,-]AA @A[\Lambda_0,-]AA @A[\Lambda_0,-]AA\\
C^0(\mathcal{U},\Theta_M)@>-\delta>>C^1(\mathcal{U},\Theta_M)@>\delta>>C^2(\mathcal{U},\Theta_M)@>-\delta>>\cdots\\
\end{CD}$
\end{center}

\begin{definition}
We say that the $i$-th `degree-shifted by $1$' truncated holomorphic Poisson cohomology group of a holomorphic Poisson manifold $(M,\Lambda_0)$ is the $i$-th hypercohomology group associated with the complex of sheaves $(\ref{complex})$, and is denoted by $\mathbb{H}^i(M,\Theta_M^\bullet)$.
\end{definition}

\begin{remark}
In \cite{Wei99}, the holomorphic Poisson cohomology for a holomorphic Poisson manifold $(M,\Lambda_0)$ is defined by the $i$-th hypercohomology group of complex of sheaves $\mathcal{O}_M\to \Theta_M\to \wedge^2 \Theta_M \to \cdots \to\wedge^n \Theta_M\to 0$ induced by $[\Lambda_0,-]$.  Since there is no role of the structure sheaf $\mathcal{O}_M$ in deformations of compact holomorphic Poisson manifolds, we truncate the complex of sheaves to get $0\to \Theta_M\to \wedge^2 \Theta_M\to \cdots \wedge^n \Theta_M\to 0$. In \cite{Kim14}, the author used the expression $HP^i(M,\Lambda_0)$ for the $i$-th truncated holomorphic Poisson cohomology group to maintain notational consistency with \cite{Nam09} by which this present work was inspired. However we shift the degree after truncation to get $\Theta_M\to \wedge^2 \Theta_M\to \cdots \wedge^n \Theta_M\to 0$ since it looks more natural by the general philosophy of deformation theory so that the $0$-th cohomology group corresponds to infinitesimal Poisson automorphisms, the first cohomology group corresponds to infinitesimal Poisson deformations and the third cohomology group corresponds to obstructions $($see the third part of the author's Ph.D. thesis \cite{Kim14}$)$.
\end{remark}

 We will relate the first `degree-shifted by $1$' truncated holomorphic Poisson cohomology group $\mathbb{H}^1(M_t,\Theta_{M_t}^\bullet)$ to infinitesimal deformations of $\omega^{-1}(t)=(M_t,\Lambda_t)$ in a Poisson  analytic family $(\mathcal{M},\Lambda,B,\omega)$ for each $t\in B$. As in Remark $\ref{tt61}$, let $\mathcal{U}=\{\mathcal{U}_j\}$ be an open covering of $\mathcal{M}$ such that $\mathcal{U}_j$ are coordinate polydisks of $\mathcal{M}$, $\{(z_j,t)\}=\{(z_j^1,...,z_j^n,t_1,...,t_m)\}$ is a local complex coordinate system on $\mathcal{U}_j$, and $z_j^{\alpha}=f_{jk}^{\alpha}(z_k^1,...,z_k^n,t_1,...,t_m),\alpha=1,...,n$ 
is a holomorphic transition function from $z_k$ to $z_j$. The Poisson structure $\Lambda$ is expressed in terms of local complex coordinate system on $\mathcal{U}_j$ as 
\begin{align}\label{poisson}
\Lambda=\Lambda_j=\sum_{\alpha,\beta=1}^n g_{\alpha \beta}^{j}(z_j,t)\frac{\partial}{\partial z_{j}^{\alpha}}\wedge \frac{\partial}{\partial z_{j}^{\beta}}
\end{align}
where $g^{j}_{\alpha \beta}(z_j,t)$ is a holomorphic function on $\mathcal{U}_j$ with $g_{\alpha\beta}^j(z_j,t)=-g_{\beta\alpha}^j(z_j,t)$ and we have
\begin{align}\label{tt67}
[\Lambda,\Lambda]=[\sum_{\alpha,\beta=1}^n g_{\alpha \beta}^{j}(z_j,t)\frac{\partial}{\partial z_{j}^{\alpha}}\wedge \frac{\partial}{\partial z_{j}^{\beta}},\sum_{\alpha,\beta=1}^n g_{\alpha \beta}^{j}(z_j,t)\frac{\partial}{\partial z_{j}^{\alpha}}\wedge \frac{\partial}{\partial z_{j}^{\beta}}]=0
\end{align}
Since $f_{jk}(z_k,t)=(f_{jk}^1(z_k,t),...,f_{jk}^n(z_k,t),t_1,...,t_m)$ is a Poisson map, we have
\begin{align}\label{tt56}
 g_{\alpha \beta}^j(f_{jk}^1(z_k,t),...,f_{jk}^n(z_k,t))=\sum_{r,s=1}^n g_{rs}^k(z_k,t)\frac{\partial f_{jk}^{\alpha}}{\partial z_k^r}\frac{\partial f_{jk}^{\beta}}{\partial z_k^s}
\end{align}
 on $\mathcal{U}_j\cap \mathcal{U}_k$. Set $\mathcal{U}_j^t:=\mathcal{U}_j\cap M_t$. Then for each $t\in B$, $\mathcal{U}^t:=\{\mathcal{U}_j^t\}$ is an open covering of $M_t$. Recall that $\Lambda_t$ is the Poisson structure on $M_t$ induced from $(\mathcal{M},\Lambda)$. Let $\frac{\partial}{\partial t}=\sum_{\lambda=1}^m c_{\lambda}\frac{\partial}{\partial t_{\lambda}}$, $c_{\lambda}\in \mathbb{C}$ be a tangent vector of $B$. Then we have

\begin{proposition}\label{gg}
\begin{align*}
(\{\lambda_j(t)=\sum_{\alpha,\beta=1}^n \frac{\partial g_{\alpha \beta}^{j}(z_j,t)}{\partial t}\frac{\partial}{\partial z_{j}^{\alpha}}\wedge \frac{\partial}{\partial z_{j}^{\beta}}\}, \{\theta_{jk}(t)=\sum_{\alpha=1}^n \frac{\partial f_{jk}^{\alpha}(z_k,t)}{\partial t}\frac{\partial}{\partial z_j^{\alpha}}\})\in C^0(\mathcal{U}^t,\wedge^2 \Theta_{M_t})\oplus C^1(\mathcal{U}^t,\Theta_{M_t})
\end{align*}
define a 1-cocycle and call its cohomology class in $\mathbb{H}^1(M_t,\Theta_{M_t}^\bullet)$ the infinitesimal $($Poisson$)$ deformation along $\frac{\partial}{\partial t}$. This expression is independent of the choice of system of local coordinates.
\end{proposition}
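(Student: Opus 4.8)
The plan is to read the cocycle conditions directly off the Čech double complex
$K^{p,q}=C^p(\mathcal{U}^t,\wedge^{q+1}\Theta_{M_t})$ pictured above, whose horizontal differential is the signed Čech coboundary $\delta$ and whose vertical differential is $[\Lambda_t,-]$, and then to verify each condition by differentiating, with respect to $t$, an identity that the family already satisfies. An element of total degree $1$ is exactly a pair $(\{\lambda_j\},\{\theta_{jk}\})\in K^{0,1}\oplus K^{1,0}$, and for it to be a total cocycle three conditions must hold: (A) $\delta\{\theta_{jk}\}=0$ in $C^2(\mathcal{U}^t,\Theta_{M_t})$; (B) a mixed relation $[\Lambda_t,\theta_{jk}]=\pm(\lambda_j-\lambda_k)$ in $C^1(\mathcal{U}^t,\wedge^2\Theta_{M_t})$; and (C) $[\Lambda_t,\lambda_j]=0$ in $C^0(\mathcal{U}^t,\wedge^3\Theta_{M_t})$. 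The precise signs in (B) are dictated by the sign pattern on the $\delta$-arrows in the diagram, and I would fix them at the end.

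For (C) I would differentiate the identity $[\Lambda,\Lambda]=0$ recorded in $(\ref{tt67})$ with respect to $t$. Writing $\Lambda=\Lambda_t$ with $t$ a parameter and using that $\frac{\partial}{\partial t}$ commutes with the fiber derivatives $\frac{\partial}{\partial z_j^\alpha}$ entering the Schouten bracket, bilinearity together with the symmetry $[A,B]=[B,A]$ of the Schouten bracket on bivectors gives $\frac{\partial}{\partial t}[\Lambda_t,\Lambda_t]=2[\frac{\partial\Lambda_t}{\partial t},\Lambda_t]=2[\lambda_j,\Lambda_t]$ on $\mathcal{U}_j^t$, whence $[\Lambda_t,\lambda_j]=0$. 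For (A) I would differentiate the cocycle relation $f_{jk}\circ f_{kl}=f_{jl}$ for the coordinate transitions exactly as in the classical Kodaira--Spencer argument: the chain rule, combined with the change of frame $\frac{\partial}{\partial z_k^\beta}=\sum_\alpha \frac{\partial f_{jk}^\alpha}{\partial z_k^\beta}\frac{\partial}{\partial z_j^\alpha}$, turns the differentiated relation into $\theta_{jl}=\theta_{jk}+\theta_{kl}$, which is $\delta\{\theta_{jk}\}=0$.

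The heart of the proof, and the step I expect to be the main obstacle, is (B). Here I would differentiate the Poisson-compatibility law $(\ref{tt56})$ with respect to $t$ while holding $z_k$ fixed. On the left, the chain rule produces a pure $\frac{\partial g_{\alpha\beta}^j}{\partial t}$ term, which after evaluation at $z_j=f_{jk}(z_k,t)$ assembles into $\lambda_j$, together with a term $\sum_\gamma \frac{\partial g_{\alpha\beta}^j}{\partial z_j^\gamma}\frac{\partial f_{jk}^\gamma}{\partial t}$; on the right, the product rule produces a pure $\frac{\partial g_{rs}^k}{\partial t}$ term, which assembles into $\lambda_k$, together with terms in which $g_{rs}^k$ multiplies $z_k$-derivatives of $\frac{\partial f_{jk}}{\partial t}$. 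The real work is to recognize the non-pure terms, after transporting every bivector to the common frame $\frac{\partial}{\partial z_j^\alpha}\wedge\frac{\partial}{\partial z_j^\beta}$, as precisely the components of the Schouten bracket $[\Lambda_t,\theta_{jk}]$ with $\theta_{jk}=\sum_\alpha \frac{\partial f_{jk}^\alpha}{\partial t}\frac{\partial}{\partial z_j^\alpha}$; this uses the coordinate expression of $[\Lambda_t,-]$ as (minus) the Lie derivative $\mathcal{L}_{\theta_{jk}}\Lambda_t$, in which the coefficient-transport terms and the basis-action terms match, respectively, the $\frac{\partial g}{\partial z}\cdot\frac{\partial f}{\partial t}$ terms and the $g\cdot\frac{\partial}{\partial z}\frac{\partial f}{\partial t}$ terms. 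Keeping track of the frame changes and of all signs will yield (B).

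Finally, for the independence of the cohomology class under a change of admissible local coordinate system or a refinement of the covering, I would show that passing to a second system alters $(\{\lambda_j\},\{\theta_{jk}\})$ by the total differential $D$ of a degree-$0$ cochain in $C^0(\mathcal{U}^t,\Theta_{M_t})$ built from the $t$-derivatives of the transition maps relating the two systems. Hence the two cocycles differ by a coboundary and define the same class in $\mathbb{H}^1(M_t,\Theta_{M_t}^\bullet)$; this is the Poisson analogue of the well-definedness argument for the ordinary Kodaira--Spencer class, and it is routine once the three cocycle identities above are in place.
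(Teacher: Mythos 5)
Your proposal follows essentially the same route as the paper's proof: the Čech condition $\delta\{\theta_{jk}\}=0$ is inherited from the classical Kodaira--Spencer argument, the condition $[\Lambda_t,\lambda_j]=0$ comes from differentiating $[\Lambda,\Lambda]=0$ in $t$, the mixed relation $\lambda_k-\lambda_j+[\Lambda_t,\theta_{jk}]=0$ comes from differentiating the Poisson-compatibility law $(\ref{tt56})$ and identifying the leftover terms with the coordinate expression of the Schouten bracket (the paper does this by matching coefficients of $\frac{\partial}{\partial z_j^a}\wedge\frac{\partial}{\partial z_j^b}$ rather than invoking the Lie derivative, but the computation is the same), and coordinate-independence is established by exhibiting the difference of the two cocycles as the total coboundary of a degree-$0$ cochain built from $t$-derivatives of the transition between the two systems. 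The plan is correct and matches the paper step for step.
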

\begin{proof}
First we note that $\delta(\{\theta_{jk}(t)\})=0$ (See \cite{Kod05} p.201). Second, by taking the derivative of $(\ref{tt67})$ with respect to $t$,  we have $[\sum_{\alpha,\beta=1}^n g_{\alpha \beta}^{j}(z_j,t)\frac{\partial}{\partial z_{j}^{\alpha}}\wedge \frac{\partial}{\partial z_{j}^{\beta}},\sum_{\alpha,\beta=1}^n \frac{\partial g_{\alpha \beta}^{j}(z_j,t)}{\partial t}\frac{\partial}{\partial z_{j}^{\alpha}}\wedge \frac{\partial}{\partial z_{j}^{\beta}}]=0$. It remains to show that $\delta(\{\lambda_j(t)\})+[\Lambda_t,\{\theta_{jk}\}]=0$. More precisely, on $\mathcal{U}_{j}^t\cap \mathcal{U}_k^t\ne \emptyset$, we show that $\lambda_{k}(t)-\lambda_{j}(t)+[\Lambda_t,\theta_{jk}(t)]=0$. In other words,
\begin{align}\label{equ1}
\sum_{r,s=1}^n \frac{\partial g^k_{rs}}{\partial t}\frac{\partial}{\partial z^{r}_k}\wedge\frac{\partial}{\partial z_k^{s}}-\sum_{\alpha,\beta=1}^n \frac{\partial g^j_{\alpha \beta}}{\partial t}\frac{\partial}{\partial z^{\alpha}_j}\wedge\frac{\partial}{\partial z_j^{\beta}}+[\sum_{r,s=1}^n g_{rs}^{j}(z_j,t)\frac{\partial}{\partial z_{j}^{r}}\wedge \frac{\partial}{\partial z_{j}^{s}},\sum_{c=1}^n \frac{\partial f_{jk}^{c}(z_k,t)}{\partial t}\frac{\partial}{\partial z_j^{c}}]=0
\end{align}
Since $z_j^{\alpha}=f_{jk}^{\alpha}(z_k^1,...,z_k^n,t_1,...,t_m)$ for $\alpha=1,...,n$, we have $\frac{\partial}{\partial z_k^{r}}=\sum_{a=1}^{n}\frac{\partial f_{jk}^a}{\partial z_k^{r}}\frac{\partial}{\partial z_j^a}$ for $r=1,...,n$. Hence the first term of $(\ref{equ1})$ is
\begin{align*}
\sum_{r,s=1}^n \frac{\partial g^k_{rs}}{\partial t}\frac{\partial}{\partial z^{r}_k}\wedge\frac{\partial}{\partial z_k^{s}}=\sum_{r,s,a,b=1}^n \frac{\partial g_{rs}^k}{\partial t}\frac{\partial f_{jk}^a}{\partial z_k^r}\frac{\partial f_{jk}^b}{\partial z_k^s}\frac{\partial}{\partial z_j^a}\wedge \frac{\partial}{\partial z_j^b}
\end{align*}
We compute the third term of $(\ref{equ1})$:
\begin{align*}
&\sum_{r,s,c=1}^n [g_{rs}^{j}(z,t)\frac{\partial}{\partial z_{j}^{r}}\wedge \frac{\partial}{\partial z_{j}^{s}},\frac{\partial f_{jk}^{c}(z_k,t)}{\partial t}\frac{\partial}{\partial z_j^{c}}]=\sum_{r,s,c=1}^n ([g_{rs}^j \frac{\partial}{\partial z_j^r},\frac{\partial f_{jk}^c}{\partial t} \frac{\partial}{\partial z_j^c}]\wedge \frac{\partial}{\partial z_j^s}-g_{rs}^j[\frac{\partial}{\partial z_j^s},\frac{\partial f_{jk}^c}{\partial t}\frac{\partial}{\partial z_j^c}]\wedge \frac{\partial}{\partial z_j^r})\\
&=\sum_{r,s,c=1}^n (g_{rs}^j\frac{\partial}{\partial z_j^r}\left(\frac{\partial f_{jk}^c}{\partial t}\right) \frac{\partial}{\partial z_j^c}\wedge \frac{\partial}{\partial z_j^s}-\frac{\partial f_{jk}^c}{\partial t}\frac{\partial g_{rs}^j}{\partial z_j^c}\frac{\partial}{\partial z_j^r}\wedge \frac{\partial}{\partial z_j^s}+g_{rs}^j\frac{\partial}{\partial z_j^s}\left(\frac{\partial f_{jk}^c}{\partial t}\right)\frac{\partial}{\partial z_j^r}\wedge \frac{\partial}{\partial z_j^c})
\end{align*}
By considering the coefficients of $\frac{\partial}{\partial z_j^a}\wedge \frac{\partial}{\partial z_j^b}$, $(\ref{equ1})$ is equivalent to
\begin{align}\label{equ2}
\sum_{r,s=1}^n \frac{\partial g_{rs}^k}{\partial t}\frac{\partial f_{jk}^a}{\partial z_k^r}\frac{\partial f_{jk}^b}{\partial z_k^s}-\frac{\partial g_{ab}^j}{\partial t}-\sum_{c=1}^n \frac{\partial g_{ab}^j}{\partial z_j^c}\frac{\partial f_{jk}^c}{\partial t}+\sum_{c=1}^n (g_{cb}^j\frac{\partial}{\partial z_j^c}\left(\frac{\partial f_{jk}^a}{\partial t}\right)+g_{ac}^j\frac{\partial}{\partial z_j^c}\left(\frac{\partial f_{jk}^b}{\partial t}\right))=0
\end{align}

On the other hand, from $(\ref{tt56})$, we have
\begin{align}\label{tt77}
g_{ab}^j(f_{jk}^1(z_k,t),...,f_{jk}^n(z_k,t),t_1,...,t_m)=\sum_{r,s=1}^n g_{rs}^k \frac{\partial f_{jk}^a}{\partial z_k^r}\frac{\partial f_{jk}^b}{\partial z_k^s}\,\,\,\,\,\,\text{on}\,\,\,\mathcal{U}_j \cap \mathcal{U}_k
\end{align} 
By taking the derivative of $(\ref{tt77})$ with respect to $t$, we have
\begin{align*}
\sum_{c=1}^n \frac{\partial g_{ab}^j}{\partial z_j^c}\frac{\partial f_{jk}^c}{\partial t}+\frac{\partial g_{ab}^j}{\partial t}=\sum_{r,s=1}^n \frac{\partial g_{rs}^k}{\partial t}\frac{\partial f_{jk}^a}{\partial z_k^r}\frac{\partial f_{jk}^b}{\partial z_k^s}+\sum_{r,s=1}^ng_{rs}^k(\frac{\partial}{\partial z_k^r}\left(\frac{\partial f_{jk}^a}{\partial t}\right)\frac{\partial f_{jk}^b}{\partial z_k^s}+\frac{\partial f_{jk}^a}{\partial z_k^r}\frac{\partial}{\partial z_k^s}\left(\frac{\partial f_{jk}^b}{\partial t}\right) )
\end{align*}
Hence $(\ref{equ2})$ is equivalent to
\begin{align}\label{tt57}
\sum_{c=1}^n (g_{cb}^j\frac{\partial}{\partial z_j^c}\left(\frac{\partial f_{jk}^a}{\partial t}\right)+g_{ac}^j\frac{\partial}{\partial z_j^c}\left(\frac{\partial f_{jk}^b}{\partial t}\right))=\sum_{r,s=1}^n g_{rs}^k(\frac{\partial}{\partial z_k^r}\left(\frac{\partial f_{jk}^a}{\partial t}\right)\frac{\partial f_{jk}^b}{\partial z_k^s}+\frac{\partial f_{jk}^a}{\partial z_k^r}\frac{\partial}{\partial z_k^s}\left(\frac{\partial f_{jk}^b}{\partial t}\right) )
\end{align}
Indeed, the left hand side and right hand side of $(\ref{tt57})$ coincide: from $(\ref{tt56})$
{\small{\begin{align*}
\sum_{c=1}^n (g_{cb}^j\frac{\partial}{\partial z_j^c}\left(\frac{\partial f_{jk}^a}{\partial t}\right)+g_{ac}^j\frac{\partial}{\partial z_j^c}\left(\frac{\partial f_{jk}^b}{\partial t}\right))&=\sum_{r,s,c=1}^n (g_{rs}^k\frac{\partial f_{jk}^c}{\partial z_k^r}\frac{\partial f_{jk}^b}{\partial z_k^s}\frac{\partial}{\partial z_j^c}\left(\frac{\partial f_{jk}^a}{\partial t}\right)+g_{rs}^k\frac{\partial f_{jk}^a}{\partial z_k^r}\frac{\partial f_{jk}^c}{\partial z_k^s}\frac{\partial}{\partial z_j^c}\left(\frac{\partial f_{jk}^b}{\partial t}\right))\\
&=\sum_{r,s=1}^n g_{rs}^k(\frac{\partial}{\partial z_k^r}\left(\frac{\partial f_{jk}^a}{\partial t}\right)\frac{\partial f_{jk}^b}{\partial z_k^s}+\frac{\partial f_{jk}^a}{\partial z_k^r}\frac{\partial}{\partial z_k^s}\left(\frac{\partial f_{jk}^b}{\partial t}\right) )
\end{align*}}}
This proves the first claim. It remains to show that $(\{\lambda_j(t)\},\{\theta_{jk}(t)\})$ is independent of the choice of systems of local coordinates. We can show that the infinitesimal deformation does not change under the refinement of the open covering (See \cite{Kod05} p.190). Since we can choose a common refinement for two system of local coordinates, it is sufficient to show that given two local coordinates $x_j=(z_j,t)$ and $u_j=(w_j,t)$ on each $\mathcal{U}_j$, the infinitesimal Poisson deformation $(\{\pi_j(t)\},\{\eta_{jk}(t)\})$ with respect to $\{u_j\}$ coincides with $(\{\lambda_j(t)\},\{\theta_{jk}(t)\})$ with respect to $\{x_j\}$. Let the Poisson structure $\Lambda$ in (\ref{poisson}) be expressed in terms of local coordinates $u_j$ as $\Lambda=\Pi_j=\sum_{\alpha,\beta=1}^n \Pi_{\alpha \beta}^{j}(w_j,t)\frac{\partial}{\partial w_{j}^{\alpha}}\wedge \frac{\partial}{\partial w_{j}^{\beta}}$. Let $(w_k,t)\to (w_j,t)=(e_{jk}(w_k,t),t)$ be the coordinate transformation of $\{u_j\}$ on $\mathcal{U}_j\cap \mathcal{U}_k\ne \emptyset$. Now we set
\begin{align*}
\eta_{jk}(t)=\sum_{\alpha=1}^n \frac{\partial e_{jk}^\alpha(w_k,t)}{\partial t}\frac{\partial}{\partial w_j^{\alpha}},\,\,\,w_k=e_{kj}(w_j,t),\,\,\,\,\,\,\,\,\pi_j(t)=\sum_{\alpha,\beta=1}^n \frac{\partial \Pi_{\alpha \beta}^{j}(w_j,t)}{\partial t}\frac{\partial}{\partial w_{j}^{\alpha}}\wedge \frac{\partial}{\partial w_{j}^{\beta}}
\end{align*}
We show that $(\{\lambda_j(t)\}),\{\theta_{jk}(t)\})$ is cohomologous to $(\{\pi_j(t)\},\{\eta_{jk}(t)\})$. Let $w_j^{\alpha}=h_j^{\alpha}(z_j^1,...,z_j^n,t),\alpha=1,...,n$,  define the coordinate transformation from $x_j=(z_j,t)$ to $u_j=(w_j,t)$ which is a Poisson map.
So we have $\frac{\partial}{\partial z_j^r}=\sum_{a=1}^n \frac{\partial h_j^a}{\partial z_j^r}\frac{\partial}{\partial w_j^a}$ and the following relation holds 
\begin{align}\label{yh}
\Pi_{\alpha\beta}^j(h_j^1(z_j,t),...,h_{j}^n(z_j,t),t)=\sum_{r,s=1}^n g_{rs}^j(z_j,t)\frac{\partial h_{j}^{\alpha}}{\partial z_j^r}\frac{\partial h_{j}^{\beta}}{\partial z_j^s}.
\end{align}
Set $\theta_j(t)=\sum_{\alpha=1}^n \frac{\partial h_j^{\alpha}(z_j,t)}{\partial t} \frac{\partial }{\partial w_j^{\alpha}}, \,\,\,\,\,w_j^{\alpha}=h_j^{\alpha}(z_j,t)$. Then we claim that $(\lambda_j(t),\theta_{jk}(t))-(\pi_j(t),\eta_{jk}(t))=\theta_k(t)-\theta_j(t)-[\Lambda_t, \theta_j(t)]=-\delta(-\theta_j(t))+[\Lambda_t,-\theta_j(t)]$, which means $(\{\lambda_j(t)\},\{\theta_{jk}(t)\})$ is cohomologous to $(\{\pi_j(t)\},\{\eta_{jk}(t)\})$. Since $\delta(\{\theta_j(t)\})=\{\theta_{jk}(t)\}-\{\eta_{jk}(t)\}$(for the detail, see \cite{Kod05} p.191-192), we only need to see $\lambda_j(t)-\pi_j(t)+[\Lambda_t(=\Pi_t),\theta_j(t)]=0$. Equivalently,
{\small{\begin{align*}
\sum_{r,s=1}^n \frac{\partial g_{rs}^{j}(z_j,t)}{\partial t}\frac{\partial }{\partial z_j^r}\wedge \frac{\partial}{\partial z_j^s}-\sum_{\alpha,\beta=1}^n \frac{\partial \Pi_{\alpha\beta}^j(w_j,t)}{\partial t}\frac{\partial}{\partial w_j^{\alpha}}\wedge \frac{\partial}{\partial w_j^{\beta}}+[\sum_{\alpha,\beta=1}^n \Pi_{\alpha\beta}^j(w_j,t)\frac{\partial}{\partial w_j^{\alpha}}\wedge \frac{\partial}{\partial w_j^{\beta}},\sum_{c=1}^n \frac{\partial h_j^{c}(z_j,t)}{\partial t} \frac{\partial }{\partial w_j^{c}}]=0
\end{align*}}}
which follows from taking the derivative (\ref{yh}) with respect to $t$ as in the proof of the first claim. 
\end{proof}

\begin{definition}[(holomorphic) Poisson Kodaira-Spencer map]\label{mapping}
Let $(\mathcal{M},\Lambda,B,\omega)$ be a Poisson analytic family, where $B$ is a domain of $\mathbb{C}^m$. As in Remark $\ref{tt61}$, let $\mathcal{U}=\{\mathcal{U}_j\}$ be an open covering of $\mathcal{M}$, and $(z_j,t)$ a local complex coordinate system on $\mathcal{U}_j$. The Poisson structure $\Lambda$ is expressed as $\sum_{\alpha,\beta=1}^n g_{\alpha \beta}^{j}(z_j,t)\frac{\partial}{\partial z_{j}^{\alpha}}\wedge \frac{\partial}{\partial z_{j}^{\beta}}$ on $\mathcal{U}_j$ where $g^{j}_{\alpha \beta}(z_j,t)$ is a holomorphic function with $g_{\alpha\beta}^j(z_j,t)=-g_{\beta\alpha}^j(z_j,t)$. For a tangent vector $\frac{\partial}{\partial t}=\sum_{\lambda=1}^{m} c_{\lambda}\frac{\partial}{\partial t_{\lambda}},c_{\lambda} \in \mathbb{C}$, of $B$, we put  
\begin{align*}
\frac{\partial \Lambda_t}{\partial t}:=\sum_{\alpha,\beta=1}^n\left[\sum_{\lambda=1}^{m}c_{\lambda}\frac{\partial g_{\alpha \beta}^{j}(z_j,t)}{\partial t_{\lambda}}\right] \frac{\partial}{\partial z_{j}^{\alpha}}\wedge \frac{\partial}{\partial z_{j}^{\beta}}
\end{align*}
The $($holomorphic$)$ Poisson Kodaira-Spencer map is defined to be a $\mathbb{C}$-linear map  
\begin{align*}
\varphi_t:T_t(B) &\to \mathbb{H}^1(M_t,\Theta_{M_t}^\bullet)\\
\frac{\partial}{\partial t} &\mapsto \left[\rho_t\left(\frac{\partial}{\partial t}\right)\left(=\frac{\partial{M}_t}{\partial t}\right), \frac{\partial{\Lambda_t}}{\partial t}\right]=\frac{\partial (M_t,\Lambda_t)}{\partial t}
\end{align*}
where $\rho_t:T_t(B)\to H^1(M_t,\Theta_t)$ is the Kodaira-Spencer map of the complex analytic family $(\mathcal{M},B,\omega)$ $($see \cite{Kod05} $p.201$$)$.
\end{definition}

\section{Integrability condition}\label{section4}

In a complex analytic family $(\mathcal{M},B,\omega)$ of deformations of  a complex manifold $M=\omega^{-1}(0)$, the deformations near $M$ are represented by  $C^{\infty}$ vector $(1,0)$-forms $\varphi(t) \in A^{0,1}(M,T_M)$ on $M$ satisfying $\varphi(0)=0$ and the integrability condition $\bar{\partial} \varphi(t)-\frac{1}{2}[\varphi(t),\varphi(t)]=0$ where $t \in \Delta$ a sufficiently small polydisk in $B$ (see \cite{Kod05} section \S 5.3). In this section, we show that in a Poisson analytic family $(\mathcal{M},B,\Lambda,\omega)$ of deformations of a compact holomorphic Poisson manifold $(M,\Lambda_0)=\omega^{-1}(0)$, the deformations near $(M,\Lambda_0)$ are represented by $C^{\infty}$ vector $(0,1)$-forms $\varphi(t)\in A^{0,1}(M,T_M)$ and $C^{\infty}$ bivectors $\Lambda(t)\in A^{0,0}(M,\wedge^2 T_M)$ satisfying $\varphi(0)=0$, $\Lambda(0)=\Lambda_0$ and the integrability condition $\bar{\partial}(\varphi(t)+\Lambda(t))+\frac{1}{2}[\varphi(t)+\Lambda(t),\varphi(t)+\Lambda(t)]=0$. To deduce the integrability condition, we extend Kodaira's approach (\cite{Kod05} section \S 5.3) in the context of a Poisson analytic family.

\subsection{Preliminaries}\label{prill} \

We extend the argument of \cite{Kod05} p.259-261 (to which we refer for the detail) in the context of a Poisson analytic family. We tried to maintain notational consistency with \cite{Kod05}. 

Let $(\mathcal{M}, \Lambda, B,\omega)$ be a Poisson analytic family of compact Poisson holomorphic manifolds, where $B$ is a domain of $\mathbb{C}^m$ containing the origin $0$. Define $|t|=\max_{\lambda}|t_{\lambda}|$ for $t=(t_1,...,t_m)\in \mathbb{C}^m$, and let $\Delta=\Delta_r =\{t\in \mathbb{C}^m||t|<r\}$ the polydisk of radius $r>0$. If we take a sufficiently small $\Delta \subset B$, then $(\mathcal{M}_{\Delta},\Lambda_\Delta)=\omega^{-1}(\Delta)$ is represented in the form
\begin{align*}
(\mathcal{M}_{\Delta},\Lambda_\Delta)=\bigcup_j (U_j\times \Delta,\Lambda|_{U_j\times \Delta})
\end{align*}
We denote a point of $U_j$ by $\xi_j=(\xi_j^1,...,\xi_j^n)$ and its holomorphic Poisson structure $\Lambda|_{U_j\times \Delta}$ by $\sum_{\alpha,\beta=1}^ng_{\alpha \beta}^j(\xi_j,t) \frac{\partial}{\partial \xi_j^{\alpha}}\wedge \frac{\partial}{\partial \xi_j^{\beta}}$ on $U_j\times \Delta$ with $g_{\alpha\beta}^j(\xi_j,t)=-g_{\beta\alpha}^j(\xi_j,t)$. For simplicity, we assume that $U_j=\{\xi_j\in \mathbb{C}^m||\xi_j|<1\}$ where $|\xi|=\max_a|\xi_j^a|$. $(\xi_j,t)\in U_j\times \Delta$ and $(\xi_k,t)\in U_k\times \Delta$ are the same point on $\mathcal{M}_{\Delta}$ if $\xi_j^{\alpha}=f_{jk}^{\alpha}(\xi_k,t)$, $\alpha=1,...,n$ where $f_{jk}(\xi_k,t)$ is a Poisson holomorphic map of $\xi_{k}^1,...,\xi_k^n,t_1,...,t_m$, defined on $U_k\times \Delta \cap U_j\times \Delta$, and so we have the following relation
\begin{align}\label{vv4}
g_{\alpha \beta}^j(f_{jk}^1(\xi_k,t),...,f_{jk}^n(\xi_k,t))=\sum_{r,s=1}^n g_{rs}^k(\xi_k,t)\frac{\partial f_{jk}^{\alpha}}{\partial \xi_k^r}\frac{\partial f_{jk}^{\beta}}{\partial \xi_k^s}
\end{align}
We note that $\omega^{-1}(t_0)=(M_{t_0},\Lambda_{t_0})=\bigcup_j (U_j,\sum_{\alpha,\beta=1}^ng_{\alpha \beta}^j(\xi_j,t_0) \frac{\partial}{\partial \xi_j^{\alpha}}\wedge \frac{\partial}{\partial \xi_j^{\beta}})$ for $t_0\in\Delta$

By \cite{Kod05} Theorem 2.3, when we ignore complex structures and Poisson structures, $M_t$ is diffeomorphic to $M_0=\omega^{-1}(0)$ as differentiable manifolds for each $t\in \Delta$. We put $M:=M_0$. By \cite{Kod05} Theorem 2.5, if we take a sufficiently small $\Delta$, there is a diffeomorphism $\Psi$ of $M\times \Delta$ onto $\mathcal{M}_{\Delta}$ as differentiable manifolds such that $\omega\circ \Psi$ is the projection $M\times \Delta \to \Delta$. Let $z=(z_1,...,z_n)$ be local complex coordinates of $M=M_0$. Then we have $\omega\circ \Psi(z,t)=t,\,\,\,\,\, t\in \Delta$. For $\Psi(z,t)\in U_j\times \Delta$, put 
\begin{align}\label{pp00}
\Psi(z,t)=(\xi_j^1(z,t),...,\xi_j^n(z,t),t_1,...,t_m).
\end{align}
Then each component $\xi_j^{\alpha}=\xi_j^{\alpha}(z,t)$, $\alpha=1,...,n$ is a $C^{\infty}$ function. If we identify $\mathcal{M}_{\Delta}=\Psi(M\times \Delta)$ with $M\times \Delta$ via $\Psi$, $(\mathcal{M}_{\Delta},\Lambda_\Delta)$ is considered as a holomorphic Poisson manifold with the complex structure defined on the $C^{\infty}$ manifold $M\times \Delta$ by the system of local coordinates on $U_j\times \Delta$
\begin{align*}
\{(\xi_j,t)|j=1,2,3,...\},\,\,\,\,\, (\xi_j,t)=(\xi_j^1(z,t),...,\xi_j^n(z,t),t_1,...,t_m).
\end{align*}
and the holomorphic Poisson structure given by on $U_j\times \Delta$
\begin{align}\label{tt23}
\{\sum_{\alpha,\beta=1}^n g_{\alpha \beta}^j(\xi_j(z,t),t)\frac{\partial}{\partial \xi_j^{\alpha}}\wedge \frac{\partial}{\partial \xi_j^{\beta}}|j=1,2,3,...\}
\end{align}

We note that since $(z_1,...,z_n)$ and $(\xi_j^1(z,0),...,\xi_j^n(z,0))$ are local complex coordinates on $M=M_0$,
\begin{align}\label{holomorphic}
\text{$\xi_j^{\alpha}(z,0)$ are holomorphic functions of $z_1,...,z_n$, $\alpha=1,...,n$}
\end{align}
 We also note that if we take $\Delta$ sufficiently small, we have
\begin{align}\label{det}
\det\left(\frac{\partial \xi_j^{\alpha}(z,t)}{\partial z_{\lambda}}\right)_{\alpha,\lambda=1,...,n}\ne 0
\end{align}
for any $t\in \Delta$.

With this preparation, we identify the holomorphic Poisson deformations near $(M,\Lambda_0)$ in the Poisson analytic family $(\mathcal{M},\Lambda,B,\omega)$ with $\varphi(t)+\Lambda(t)$ where $\varphi(t)$ is a $C^{\infty}$ vector $(0,1)$-form and $\Lambda(t)$ is a $C^{\infty}$ bivector on $M$ for $t\in \Delta$.

\subsection{Identification of the deformations of complex structures with $\varphi(t)\in A^{0,1}(M,T_M)$}\

Put $\mathcal{U}_j=\Psi^{-1}(U_j\times \Delta)$. Then $\mathcal{U}_j\subset M\times \Delta$ is the domain of $\xi_j^{\alpha}(z,t)$. From $(\ref{det})$, we can define a $(0,1)$-form $\varphi^{\lambda}_j(z,t)=\sum_{v=1}^n \varphi^{\lambda}_{jv}(z,t)d\bar{z}_v$ in the following way:
 
\begin{equation*}
\left(
\begin{matrix}
\varphi_j^1(z,t)\\
\vdots \\
\varphi_j^n(z,t)
\end{matrix}
\right)
:=
\left(
\begin{matrix}
\frac{\partial \xi_j^1}{\partial z_1} & \dots & \frac{\partial \xi_j^1}{\partial z_n}\\
\vdots & \vdots\\
\frac{\partial \xi_j^n}{\partial z_1} & \dots & \frac{\partial \xi_j^n}{\partial z_n}
\end{matrix}
\right)^{-1}
\left(
\begin{matrix}
\bar{\partial} \xi_j^1\\
\vdots \\
\bar{\partial} \xi_j^n
\end{matrix}
\right)
\end{equation*}

Then the coefficients $\varphi_{jv}^{\alpha}(z,t)$ are $C^{\infty}$ functions on $\mathcal{U}_j$ and $\bar{\partial}\xi_j^{\alpha}(z,t)=\sum_{\lambda=1}^{n} \varphi_j^{\lambda}(z,t)\frac{\partial \xi_j^{\alpha}(z,t)}{\partial z_{\lambda}},\alpha=1,...,n$. So we have

\begin{equation}\label{matrix1}
\frac{\partial \xi_j^{\alpha}}{\partial \bar{z}_v}=\sum_{\lambda=1}^n \varphi_{jv}^\lambda(z,t) \frac{\partial \xi_j^\alpha}{\partial z_\lambda}
\end{equation}
\begin{lemma}\label{c}
On $\mathcal{U}_j \cap \mathcal{U}_k$, we have
\begin{align*}
\sum_{\lambda=1}^n \varphi_j^{\lambda}(z,t)\frac{\partial}{\partial z_{\lambda}}=\sum_{\lambda=1}^n \varphi_k^{\lambda}(z,t)\frac{\partial}{\partial z_{\lambda}}
\end{align*}
\end{lemma}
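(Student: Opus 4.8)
The plan is to exploit the one structural fact about the overlap $\mathcal{U}_j\cap\mathcal{U}_k$ that has not yet entered the discussion: the fibrewise coordinate change $\xi_j^{\alpha}=f_{jk}^{\alpha}(\xi_k,t)$ (from which (\ref{vv4}) was derived) is \emph{holomorphic} in $\xi_k$, so that $\partial f_{jk}^{\alpha}/\partial \bar{\xi}_k^{\beta}=0$. Working in a chart of $M=M_0$ on which the reference coordinate $z=(z_1,\dots,z_n)$ is defined, both $\xi_j^{\alpha}(z,t)$ and $\xi_k^{\alpha}(z,t)$ are available on $\mathcal{U}_j\cap\mathcal{U}_k$ and are related by $\xi_j^{\alpha}(z,t)=f_{jk}^{\alpha}(\xi_k^1(z,t),\dots,\xi_k^n(z,t),t)$. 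I would differentiate this composite in the $z_\lambda$ and $\bar z_v$ directions and feed the outcome into the defining relation (\ref{matrix1}).

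Carrying out the chain rule at fixed $t$, and discarding every $\partial f_{jk}^{\alpha}/\partial\bar\xi_k^{\beta}$ term because $f_{jk}$ is holomorphic, I obtain
\begin{align*}
\frac{\partial \xi_j^{\alpha}}{\partial z_{\lambda}}=\sum_{\beta=1}^n \frac{\partial f_{jk}^{\alpha}}{\partial \xi_k^{\beta}}\frac{\partial \xi_k^{\beta}}{\partial z_{\lambda}},\qquad \frac{\partial \xi_j^{\alpha}}{\partial \bar z_{v}}=\sum_{\beta=1}^n \frac{\partial f_{jk}^{\alpha}}{\partial \xi_k^{\beta}}\frac{\partial \xi_k^{\beta}}{\partial \bar z_{v}}.
\end{align*}
Substituting the relation (\ref{matrix1}) for the index $k$, namely $\partial\xi_k^{\beta}/\partial\bar z_v=\sum_{\mu}\varphi_{kv}^{\mu}\,\partial\xi_k^{\beta}/\partial z_{\mu}$, into the second identity and then re-collapsing the $\beta$-sum via the first identity yields $\partial\xi_j^{\alpha}/\partial\bar z_v=\sum_{\mu}\varphi_{kv}^{\mu}\,\partial\xi_j^{\alpha}/\partial z_{\mu}$. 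On the other hand (\ref{matrix1}) for the index $j$ reads $\partial\xi_j^{\alpha}/\partial\bar z_v=\sum_{\lambda}\varphi_{jv}^{\lambda}\,\partial\xi_j^{\alpha}/\partial z_{\lambda}$.

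Comparing the two expressions, for each fixed $v$ the vector $(\varphi_{kv}^{\mu}-\varphi_{jv}^{\mu})_{\mu}$ lies in the kernel of the Jacobian matrix $(\partial\xi_j^{\alpha}/\partial z_{\mu})_{\alpha,\mu}$, which is invertible on $\Delta$ by (\ref{det}). Hence $\varphi_{jv}^{\lambda}=\varphi_{kv}^{\lambda}$ for all $\lambda,v$, and since the $\partial/\partial z_{\lambda}$ are linearly independent this is exactly the asserted identity $\sum_{\lambda}\varphi_j^{\lambda}\partial/\partial z_{\lambda}=\sum_{\lambda}\varphi_k^{\lambda}\partial/\partial z_{\lambda}$; in particular the local expressions glue to a global $\varphi(t)\in A^{0,1}(M,T_M)$. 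There is no serious obstacle here: the argument is a bookkeeping exercise in the chain rule, and the single point that must be handled with care is that the holomorphy of $f_{jk}$ is what kills the anti-holomorphic terms and makes the two chain-rule identities share the common factor $\partial f_{jk}^{\alpha}/\partial\xi_k^{\beta}$, which is precisely what lets the invertible Jacobian be cancelled at the end.
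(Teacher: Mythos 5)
Your argument is correct and is essentially the same as the paper's, which simply cites \cite{Kod05} p.262 for this lemma: the standard proof there is exactly your chain-rule computation, using holomorphy of $f_{jk}$ in $\xi_k$ to kill the antiholomorphic terms and the invertibility of the Jacobian $(\partial\xi_j^{\alpha}/\partial z_{\lambda})$ from $(\ref{det})$ to conclude $\varphi_{jv}^{\lambda}=\varphi_{kv}^{\lambda}$.
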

\begin{proof}
See \cite{Kod05} p.262.
\end{proof}

If for $(z,t)\in \mathcal{U}_j$, we define
\begin{align}\label{b}
\varphi(z,t):=\sum_{\lambda=1}^n \varphi_j^{\lambda}(z,t) \frac{\partial}{\partial z_{\lambda}}=\sum_{\lambda=1}^n\varphi^\lambda(z,t)\frac{\partial}{\partial z_\lambda}=\sum_{v,\lambda=1}^n  \varphi_v^{\lambda}(z,t) d\bar{z}_v \frac{\partial}{\partial z_{\lambda}}
\end{align}
 By Lemma \ref{c}, $\varphi(t)=\varphi(z,t)\in A^{0,1}(M,T_M)$ is a $C^{\infty}$ vector $(0,1)$-form on $M$ for every $t\in\Delta$ and we have
\begin{align}\label{tt07}
\text{$\varphi(0)=0$,\,\,\,\,\,\,\,\,$\bar{\partial}\varphi(t)-\frac{1}{2}[\varphi(t),\varphi(t)]=0$}
\end{align}
(see \cite{Kod05} p.263,p.265). We also point out that
 
\begin{theorem}\label{text}
If we take a sufficiently small polydisk $\Delta$ as in subsection $\ref{prill}$, then for $t\in \Delta$, a local $C^{\infty}$ function $f$ on $M$ is holomorphic  with respect to the complex structure $M_t$ if and only if $f$ satisfies the equation
\begin{align*}
(\bar{\partial}-\varphi(t))f=0
\end{align*}
\end{theorem}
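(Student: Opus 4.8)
The plan is to characterize holomorphicity on $M_t$ through the $(1,0)$-cotangent space. Being holomorphic with respect to the complex structure $M_t$ means, in the local holomorphic coordinates $\xi_j^1,\dots,\xi_j^n$ of $M_t$, that $\partial f/\partial \bar\xi_j^\alpha = 0$ for all $\alpha$; equivalently, that the $1$-form $df$ is of type $(1,0)$ with respect to $M_t$, i.e. $df$ lies in the span of $d\xi_j^1,\dots,d\xi_j^n$. So the first step is to identify this $(1,0)$-cotangent space in terms of the standard coframe $dz_1,\dots,dz_n,d\bar z_1,\dots,d\bar z_n$ of $M=M_0$ and the form $\varphi(t)$.

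First I would expand $d\xi_j^\alpha = \sum_\lambda \frac{\partial \xi_j^\alpha}{\partial z_\lambda} dz_\lambda + \sum_v \frac{\partial \xi_j^\alpha}{\partial \bar z_v} d\bar z_v$ and substitute the defining relation $(\ref{matrix1})$, namely $\frac{\partial \xi_j^\alpha}{\partial \bar z_v} = \sum_\lambda \varphi_{jv}^\lambda \frac{\partial \xi_j^\alpha}{\partial z_\lambda}$. This yields $d\xi_j^\alpha = \sum_\lambda \frac{\partial \xi_j^\alpha}{\partial z_\lambda}\bigl(dz_\lambda + \varphi_j^\lambda(t)\bigr)$, where $\varphi_j^\lambda = \sum_v \varphi_{jv}^\lambda d\bar z_v$. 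By the nondegeneracy $(\ref{det})$ of the Jacobian matrix $(\partial \xi_j^\alpha/\partial z_\lambda)$, the forms $d\xi_j^1,\dots,d\xi_j^n$ and the forms $\omega^\lambda := dz_\lambda + \varphi^\lambda(t)$, $\lambda=1,\dots,n$, span the same subspace at each point of $\mathcal{U}_j$. Hence the $(1,0)$-cotangent space of $M_t$ is precisely the span of $\{dz_\lambda + \varphi^\lambda(t)\}$, which is independent of $j$ by Lemma $\ref{c}$.

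The second step is to translate the condition ``$df$ lies in $\mathrm{span}\{\omega^\lambda\}$'' into the differential equation. Writing $df = \sum_\lambda \frac{\partial f}{\partial z_\lambda} dz_\lambda + \sum_v \frac{\partial f}{\partial \bar z_v} d\bar z_v$ and comparing with a general combination $\sum_\mu c_\mu \omega^\mu = \sum_\mu c_\mu\, dz_\mu + \sum_v \bigl(\sum_\mu c_\mu \varphi_v^\mu\bigr) d\bar z_v$, the coefficients of $dz_\mu$ force $c_\mu = \partial f/\partial z_\mu$, and then matching the coefficients of $d\bar z_v$ gives the condition $\frac{\partial f}{\partial \bar z_v} = \sum_\mu \varphi_v^\mu \frac{\partial f}{\partial z_\mu}$ for all $v$. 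Since $\varphi(t) f = \sum_{v,\mu} \varphi_v^\mu \frac{\partial f}{\partial z_\mu}\, d\bar z_v$, this is exactly $(\bar\partial - \varphi(t)) f = 0$, so both implications are obtained at once.

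I expect the only delicate point to be the clean identification, in the first step, that the span of the holomorphic differentials $d\xi_j^\alpha$ coincides with the span of $dz_\lambda + \varphi^\lambda(t)$; this is where the defining relation $(\ref{matrix1})$ for $\varphi(t)$ and the invertibility $(\ref{det})$ (valid for sufficiently small $\Delta$) are both used, and where one must check that the $(1,0)/(0,1)$-decomposition induced by $M_t$ is genuinely spanned by $\{\omega^\lambda\}$ and their conjugates. This holds for $t$ in a small enough $\Delta$ because $\varphi(0)=0$, so $\{\omega^\lambda,\overline{\omega^\lambda}\}$ remains a coframe; the remaining coefficient comparison is then routine linear algebra.
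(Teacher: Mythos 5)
Your argument is correct: the identity $d\xi_j^\alpha=\sum_\lambda \frac{\partial \xi_j^\alpha}{\partial z_\lambda}\,(dz_\lambda+\varphi_j^\lambda(t))$ obtained from $(\ref{matrix1})$, together with the invertibility $(\ref{det})$, does identify the $(1,0)$-cotangent space of $M_t$ with $\mathrm{span}\{dz_\lambda+\varphi^\lambda(t)\}$, and the coefficient comparison then yields exactly $\partial f/\partial\bar z_v=\sum_\mu\varphi^\mu_v\,\partial f/\partial z_\mu$, i.e.\ $(\bar\partial-\varphi(t))f=0$. The paper gives no proof but defers to Kodaira's Theorem 5.3, whose argument is the same computation carried out dually via the chain rule on the partial derivatives $\partial f/\partial\bar z_v$ and $\partial f/\partial z_\lambda$ rather than on the coframe, so your route is essentially the standard one.
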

\begin{proof}
See \cite{Kod05} Theorem 5.3 p.263.
\end{proof}

\subsection{Identification of the deformations of Poisson structures with $\Lambda(t)\in A^{0,0}(M,\wedge^2 T_M)$}\

For the holomorphic Poisson structure $\sum_{\alpha,\beta=1}^n g_{\alpha\beta}^j(\xi_j(z,t),t) \frac{\partial}{\partial \xi_j^{\beta}}\wedge \frac{\partial}{\partial \xi_j^{\beta}}$ on each $U_j\times \Delta$ from $(\ref{tt23})$, there exists the unique bivector field  $\Lambda_j(z,t):=\sum_{r,s=1}^n h_{rs}^j(z,t)\frac{\partial}{\partial z_r}\wedge \frac{\partial}{\partial z_s}$ on  $\mathcal{U}_j=\Psi^{-1}(U_j\times \Delta)$ such that 
\begin{align}\label{tt304}
\sum_{r,s=1}^n h_{rs}^j(z,t)\frac{\partial \xi_j^{\alpha}}{\partial z_r}\frac{\partial \xi_j^{\beta}}{\partial z_s}=g_{\alpha\beta}^j(\xi_j(z,t),t).
\end{align}
Indeed, from $(\ref{det})$,
we set
{\tiny{\begin{equation*}
\left(
\begin{matrix}
h_{11}^j(z,t)& \dots & h_{1n}^j(z,t)\\
\vdots & \vdots &\vdots\\
h_{n1}^j(z,t)& \dots & h_{nn}^j(z,t)
\end{matrix}
\right)
:=
\left(
\begin{matrix}
\frac{\partial \xi_j^1}{\partial z_1} & \dots & \frac{\partial \xi_j^1}{\partial z_n}\\
\vdots & \vdots &\vdots\\
\frac{\partial \xi_j^n}{\partial z_1} & \dots & \frac{\partial \xi_j^n}{\partial z_n}
\end{matrix}
\right)^{-1}
\left(
\begin{matrix}
g_{11}^j(\xi_j(z,t))& \dots & g_{1n}^j(\xi_j(z,t))\\
\vdots & \vdots &\vdots\\
g_{n1}^j(\xi_j(z,t)) & \dots & g_{nn}^j(\xi_j(z,t))
\end{matrix}
\right)
\left(
\begin{matrix}
\frac{\partial \xi_j^1}{\partial z_1} & \dots & \frac{\partial \xi_j^n}{\partial z_1}\\
\vdots & \vdots &\vdots\\
\frac{\partial \xi_j^1}{\partial z_n} & \dots & \frac{\partial \xi_j^n}{\partial z_n}
\end{matrix}
\right)^{-1}
\end{equation*}}}
We note that since $g_{\alpha\beta}^j(\xi_j(z,t))=-g_{\beta\alpha}^j(\xi_j(z,t))$, we have $h_{rs}^j(z,t)=-h_{sr}^j(z,t)$.
\begin{lemma}\label{e}
On $\mathcal{U}_j\cap \mathcal{U}_k$, we have $h_{rs}^j(z,t)=h_{rs}^k(z,t)$. 
 
\end{lemma}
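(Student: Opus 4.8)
The plan is to show that $h^j$ and $h^k$ satisfy one and the same equation of the form (\ref{tt304}), sharing the common invertible Jacobian $(\partial\xi_j^\alpha/\partial z_r)$, and then to cancel that Jacobian to force $h^j=h^k$. This is the bivector analogue of Lemma \ref{c}: conceptually, both $\Lambda_j$ and $\Lambda_k$ are merely the local expressions, in the fixed $C^\infty$ coordinates $z$ on $M$, of the single globally defined holomorphic Poisson bivector $\Lambda$, so they are bound to agree on the overlap. The only genuine inputs will be that the transition map $f_{jk}$ is a holomorphic Poisson map; the rest is the chain rule and elementary linear algebra.

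First I would record the chain rule on $\mathcal{U}_j\cap\mathcal{U}_k$. Since $\xi_j^\alpha=f_{jk}^\alpha(\xi_k(z,t),t)$ and $f_{jk}^\alpha$ is holomorphic in $\xi_k$, the derivatives $\partial f_{jk}^\alpha/\partial\bar\xi_k^p$ vanish, so no antiholomorphic terms survive and
\begin{align*}
\frac{\partial\xi_j^\alpha}{\partial z_r}=\sum_{p=1}^n\frac{\partial f_{jk}^\alpha}{\partial\xi_k^p}\frac{\partial\xi_k^p}{\partial z_r}.
\end{align*}
Next I would rewrite $g_{\alpha\beta}^j(\xi_j(z,t),t)$ using the Poisson transition relation (\ref{vv4}) and then the defining relation (\ref{tt304}) for $\Lambda_k$:
\begin{align*}
g_{\alpha\beta}^j(\xi_j(z,t),t)=\sum_{p,q=1}^n g_{pq}^k(\xi_k(z,t),t)\frac{\partial f_{jk}^\alpha}{\partial\xi_k^p}\frac{\partial f_{jk}^\beta}{\partial\xi_k^q}=\sum_{p,q,r,s=1}^n h_{rs}^k\frac{\partial\xi_k^p}{\partial z_r}\frac{\partial\xi_k^q}{\partial z_s}\frac{\partial f_{jk}^\alpha}{\partial\xi_k^p}\frac{\partial f_{jk}^\beta}{\partial\xi_k^q}.
\end{align*}
Using the chain rule from the previous display to collapse the sums over $p$ and $q$, the right-hand side becomes $\sum_{r,s}h_{rs}^k\,\frac{\partial\xi_j^\alpha}{\partial z_r}\frac{\partial\xi_j^\beta}{\partial z_s}$. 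Comparing with the defining relation (\ref{tt304}) for $\Lambda_j$ gives, for every $\alpha,\beta$,
\begin{align*}
\sum_{r,s=1}^n\bigl(h_{rs}^j-h_{rs}^k\bigr)\frac{\partial\xi_j^\alpha}{\partial z_r}\frac{\partial\xi_j^\beta}{\partial z_s}=0.
\end{align*}

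Finally, writing $A=(\partial\xi_j^\alpha/\partial z_r)_{\alpha,r}$ and $D=(h_{rs}^j-h_{rs}^k)_{r,s}$, the last display reads $A\,D\,A^{\mathsf T}=0$; since $A$ is invertible by (\ref{det}), we conclude $D=0$, that is, $h_{rs}^j=h_{rs}^k$ on $\mathcal{U}_j\cap\mathcal{U}_k$. I do not expect a real obstacle here beyond careful index bookkeeping: the two places where the hypotheses are used essentially are the holomorphicity of $f_{jk}$, which kills the antiholomorphic chain-rule terms in the first step, and its Poisson property, which furnishes (\ref{vv4}) in the second. If one prefers to avoid the matrix transpose, the same cancellation can be phrased invariantly: the bivector $\sum_{r,s}(h_{rs}^j-h_{rs}^k)\,\partial_{z_r}\wedge\partial_{z_s}$ pushes forward to zero under the invertible linear map $\partial_{z_r}\mapsto\sum_\alpha(\partial\xi_j^\alpha/\partial z_r)\,\partial_{\xi_j^\alpha}$, and hence vanishes.
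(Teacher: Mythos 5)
Your proof is correct and follows essentially the same route as the paper's: both use the chain rule for the holomorphic transition $\xi_j^\alpha=f_{jk}^\alpha(\xi_k,t)$ together with the Poisson relation (\ref{vv4}) and the defining equation (\ref{tt304}) to show $\sum_{r,s}(h_{rs}^j-h_{rs}^k)\frac{\partial\xi_j^\alpha}{\partial z_r}\frac{\partial\xi_j^\beta}{\partial z_s}=0$, then invoke (\ref{det}) to cancel the Jacobian. Your explicit $ADA^{\mathsf T}=0$ formulation just spells out the final linear-algebra step that the paper leaves implicit.
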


\begin{proof}
From $(\ref{tt304})$, $(\ref{vv4})$ and $\frac{\partial \xi_j^{\alpha}}{\partial z_r}=\sum_{p=1}^n\frac{\partial \xi_k^p}{\partial z_r}\frac{\partial \xi_j^{\alpha}}{\partial \xi_k^p}$, we have
\begin{align*}
\sum_{r,s=1}^n h_{rs}^j(z,t)\frac{\partial \xi_j^{\alpha}}{\partial z_r}\frac{\partial \xi_j^{\beta}}{\partial z_s}&=g_{\alpha\beta}^j(\xi_j(z,t),t)=\sum_{p,q=1}^n g_{pq}^k(\xi_k(z,t),t)\frac{\partial \xi_j^{\alpha}}{\partial \xi_k^p}\frac{\partial \xi_j^{\beta}}{\partial \xi_k^q}\\
&=\sum_{p,q,r,s=1}^n h_{rs}^k(z,t)\frac{\partial \xi_k^{p}}{\partial z_r}\frac{\partial \xi_k^{q}}{\partial z_s}\frac{\partial \xi_j^{\alpha}}{\partial \xi_k^p}\frac{\partial \xi_j^{\beta}}{\partial \xi_k^q}=\sum_{r,s=1}^n h_{rs}^k(z,t)\frac{\partial \xi_j^{\alpha}}{\partial z_r}\frac{\partial \xi_j^{\beta}}{\partial z_s}.
\end{align*}
From $(\ref{det})$, we have $h_{rs}^j(z,t)=h_{rs}^k(z,t)$.
\end{proof}

If for $(z,t)\in \mathcal{U}_j$, we define 
\begin{align}\label{f}
\Lambda(z,t):=\sum_{r,s=1}^n h_{rs}^j(z,t)\frac{\partial}{\partial z_r}\wedge \frac{\partial}{\partial z_s}=\sum_{r,s=1}^n h_{rs}(z,t)\frac{\partial}{\partial z_r}\wedge \frac{\partial}{\partial z_s}.
\end{align}
By Lemma \ref{e}, $\Lambda(t):=\Lambda(z,t)\in A^{0,0}(M,\wedge^2 T_M)$ is a $C^{\infty}$ bivector field on $M$ for every $t\in \Delta$ with $\Lambda(0)=\Lambda_0$.

\begin{theorem}\label{1thm}
If we take a sufficiently small polydisk $\Delta$ as in subsection $\ref{prill}$, then for the Poisson structure $\sum_{\alpha,\beta=1}^n g_{\alpha\beta}^j(\xi_j,t) \frac{\partial}{\partial \xi_j^{\beta}}\wedge \frac{\partial}{\partial \xi_j^{\beta}}$ on $U_j\times \Delta$ for each $j$, there exists the unique bivector field  $\Lambda_j(t)=\sum_{r,s=1}^n h_{rs}^j(z,t)\frac{\partial}{\partial z_r}\wedge \frac{\partial}{\partial z_s}$ on $\mathcal{U}_j$ satisfying
\begin{enumerate}
\item $\sum_{r,s=1}^n h_{rs}^j(z,t)\frac{\partial \xi_j^{\alpha}}{\partial z_r}\frac{\partial \xi_j^{\beta}}{\partial z_s}=g_{\alpha\beta}^j(\xi_j(z,t),t)$
\item $\Lambda_j(t)$ are glued together to define a $C^{\infty}$ bivector field $\Lambda(t)$ on $M\times \Delta$ 
\item for each $j$, $[\Lambda_j(t),\Lambda_j(t)]=0$. Hence we have $[\Lambda(t),\Lambda(t)]=0$
\end{enumerate}
\end{theorem}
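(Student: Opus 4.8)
Parts (1) and (2) of the statement are already in hand. The explicit matrix formula displayed just before the theorem, together with the nonvanishing $(\ref{det})$ of $\det(\partial\xi_j^\alpha/\partial z_r)$, produces a unique $\Lambda_j(t)$ satisfying the relation in (1); Lemma \ref{e} shows the local pieces agree on overlaps, so by $(\ref{f})$ they glue to a global $C^\infty$ bivector $\Lambda(t)$ on $M\times\Delta$, giving (2). Thus the entire content is item (3), the integrability $[\Lambda_j(t),\Lambda_j(t)]=0$; once this holds on each $\mathcal{U}_j$, the global identity $[\Lambda(t),\Lambda(t)]=0$ follows from (2).

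My plan is to encode the vanishing of the Schouten bracket as a Jacobi identity and then reduce it, by a tensoriality argument, to the Poisson condition already known in the $\xi_j$-coordinates. Fix $t$ and on $\mathcal{U}_j$ define, on $C^\infty$ functions, the bracket $\{f,g\}_j:=\sum_{r,s=1}^n h_{rs}^j(z,t)\,\frac{\partial f}{\partial z_r}\frac{\partial g}{\partial z_s}$, using only holomorphic $z$-derivatives. A standard computation shows that in the Jacobiator $J(f,g,h):=\{f,\{g,h\}_j\}_j+\{g,\{h,f\}_j\}_j+\{h,\{f,g\}_j\}_j$ all second-order derivatives of $f,g,h$ cancel, so that $J(f,g,h)=[\Lambda_j(t),\Lambda_j(t)](\partial f,\partial g,\partial h)$ is $C^\infty$-multilinear in the holomorphic differentials $\partial f=\sum_r\frac{\partial f}{\partial z_r}dz_r$, etc.; in particular $[\Lambda_j(t),\Lambda_j(t)]=0$ is equivalent to $J\equiv 0$.

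By this tensoriality it suffices to test $J$ on functions whose holomorphic differentials span the holomorphic cotangent space at each point. The functions $\xi_j^\alpha=\xi_j^\alpha(z,t)$ are exactly such a family, since $\partial\xi_j^\alpha=\sum_r\frac{\partial\xi_j^\alpha}{\partial z_r}dz_r$ and $(\partial\xi_j^\alpha/\partial z_r)$ is invertible by $(\ref{det})$. Using the defining relation $(\ref{tt304})$ I first compute $\{\xi_j^\alpha,\xi_j^\beta\}_j=\sum_{r,s}h_{rs}^j\frac{\partial\xi_j^\alpha}{\partial z_r}\frac{\partial\xi_j^\beta}{\partial z_s}=g_{\alpha\beta}^j(\xi_j(z,t),t)$. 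The point making the iteration clean is that $g_{\alpha\beta}^j$ is holomorphic in $\xi_j$, so the chain rule gives $\frac{\partial}{\partial z_s}\bigl(g_{\beta\gamma}^j(\xi_j(z,t),t)\bigr)=\sum_\mu\frac{\partial g_{\beta\gamma}^j}{\partial\xi_j^\mu}\frac{\partial\xi_j^\mu}{\partial z_s}$ with no antiholomorphic contribution, even though $\xi_j^\mu$ is only $C^\infty$ in $z$. Applying $(\ref{tt304})$ once more yields $\{\xi_j^\alpha,\{\xi_j^\beta,\xi_j^\gamma\}_j\}_j=\sum_\mu g_{\alpha\mu}^j\frac{\partial g_{\beta\gamma}^j}{\partial\xi_j^\mu}$, and summing cyclically,
\begin{align*}
J(\xi_j^\alpha,\xi_j^\beta,\xi_j^\gamma)=\sum_{\mu=1}^n\left(g_{\alpha\mu}^j\frac{\partial g_{\beta\gamma}^j}{\partial\xi_j^\mu}+g_{\beta\mu}^j\frac{\partial g_{\gamma\alpha}^j}{\partial\xi_j^\mu}+g_{\gamma\mu}^j\frac{\partial g_{\alpha\beta}^j}{\partial\xi_j^\mu}\right).
\end{align*}
This is precisely the coordinate expression of $[\Lambda,\Lambda]$ in the $\xi_j$-coordinates, which vanishes by the holomorphic Poisson integrability $[\Lambda,\Lambda]=0$ on $\mathcal{M}$ (cf. $(\ref{tt67})$). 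Hence $J(\xi_j^\alpha,\xi_j^\beta,\xi_j^\gamma)\equiv 0$ for all $\alpha,\beta,\gamma$; since the $\partial\xi_j^\alpha$ form a coframe for the holomorphic cotangent bundle, the trivector $[\Lambda_j(t),\Lambda_j(t)]$ pairs to zero against a spanning set and therefore vanishes.

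The main obstacle here is conceptual rather than computational: the substitution $z\mapsto\xi_j(z,t)$ is not holomorphic for $t\neq 0$, so one cannot simply invoke coordinate-invariance of the Schouten bracket to transport $[\Lambda,\Lambda]=0$ from the $\xi_j$- to the $z$-coordinates. The two ingredients that circumvent this are (i) the tensoriality of $J$ in the holomorphic differentials, which licenses testing on the non-holomorphic coordinates $\xi_j^\alpha$ themselves, and (ii) the holomorphy of the $g_{\alpha\beta}^j$ in $\xi_j$, which kills the antiholomorphic terms in the chain rule so that only $\partial\xi_j^\alpha/\partial z_r$, and never $\partial\xi_j^\alpha/\partial\bar z_r$, enters. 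Verifying (i) carefully—that the second-order jets genuinely cancel in $J$—is the single place where I would write out the cancellation in full.
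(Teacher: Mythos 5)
Your proof is correct, and it rests on exactly the same two pillars as the paper's: the holomorphy of $g_{\alpha\beta}^j$ in $\xi_j$ (the observation recorded in $(\ref{nb1})$, which kills the $\partial/\partial\bar{\xi}$ terms in the chain rule) and the invertibility $(\ref{det})$ of the Jacobian $(\partial\xi_j^{\alpha}/\partial z_r)$. The difference is organizational, and it buys real economy. The paper starts from the $\xi_j$-coordinate Jacobi identity $(\ref{tt55})$, substitutes $\frac{\partial}{\partial z_a}=\sum_l\frac{\partial\xi_l}{\partial z_a}\frac{\partial}{\partial\xi_l}+\sum_l\frac{\partial\bar\xi_l}{\partial z_a}\frac{\partial}{\partial\bar\xi_l}$, and cancels all second-derivative and antiholomorphic terms by hand in a page-long display before reading off the cyclic identity for the $h_{rs}^j$ via Lemma $\ref{formula}$. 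You run the same identity in the opposite direction: you package the second-order cancellation once and for all in the statement that the Jacobiator of $\{-,-\}_j$ is tensorial and equals $[\Lambda_j(t),\Lambda_j(t)]$ paired with the holomorphic differentials --- which is precisely the content of Lemma $\ref{formula}$, stated without proof in the paper as well --- and then evaluate that tensor on the non-holomorphic test functions $\xi_j^{\alpha}$, where $(\ref{tt304})$ and the chain rule collapse the iterated bracket to $\sum_\mu g_{\alpha\mu}^j\,\partial g_{\beta\gamma}^j/\partial\xi_j^\mu$ and the $\xi$-side Jacobi identity finishes the job. You correctly flag that the substitution $z\mapsto\xi_j(z,t)$ is not holomorphic, so coordinate-invariance of the Schouten bracket cannot be invoked directly; your tensoriality-plus-spanning argument is the right way around this, and it is the same loophole the paper exploits implicitly. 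The one step you defer --- writing out that the second-order jets cancel in $J$ --- is the standard antisymmetry computation underlying Lemma $\ref{formula}$ (the mixed $\partial/\partial z_r$ derivatives commute on $C^\infty$ functions, so nothing breaks for non-holomorphic arguments), so there is no gap.
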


We will use the following lemma to prove the theorem.

\begin{lemma}\label{formula}
If $\sigma=\sum_{\alpha,\beta=1}^n \sigma_{\alpha\beta}\frac{\partial }{\partial z_\alpha}\wedge \frac{\partial}{\partial z_\beta}$ with $\sigma_{\alpha\beta}=-\sigma_{\beta\alpha}$, then 
$[\sigma,\sigma]=0$ is equivalent to 
\begin{align*}
\sum_{l=1}^n (\sigma_{lk}\frac{\partial \sigma_{ij}}{\partial z_l}+\sigma_{li}\frac{\partial \sigma_{jk}}{\partial z_l}+\sigma_{lj}\frac{\partial \sigma_{ki}}{\partial z_l})=0
\end{align*}
for each $1\leq i,j,k \leq n$.
\end{lemma}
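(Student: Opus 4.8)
The plan is to compute the Schouten self-bracket $[\sigma,\sigma]$ directly in the coordinate frame and read off its components, exploiting that the coordinate fields $\partial_a := \partial/\partial z_a$ commute. I will use three standard properties of the Schouten bracket on $\bigoplus_q \wedge^q \Theta_M$: on functions and vector fields it reduces to $[X,f]=X(f)$ and the Lie bracket (so $[\partial_a,\partial_b]=0$ and $[\partial_a,f]=\partial_a f$); it is graded anti-symmetric; and it satisfies the graded Leibniz rule $[P,Q\wedge R]=[P,Q]\wedge R+(-1)^{(|P|-1)|Q|}Q\wedge[P,R]$. With these in hand I would expand $[\sigma,\sigma]=\sum_{a,b,c,d}[\sigma_{ab}\partial_a\wedge\partial_b,\ \sigma_{cd}\partial_c\wedge\partial_d]$ by bilinearity and peel off the two wedge factors of each bivector using the Leibniz rule.

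Because $[\partial_a,\partial_b]=0$, every surviving term arises from the bracket differentiating a coefficient function, i.e. from $[\partial_a,\sigma_{cd}\,\cdot\,]=(\partial_a\sigma_{cd})\,\cdot$\,; collecting these yields a trivector whose contributions all have the form $\pm\,\sigma_{ab}(\partial_a\sigma_{cd})\,\partial_b\wedge\partial_c\wedge\partial_d$. I would then fix a triple $i<j<k$ and extract the coefficient of the basis trivector $\partial_i\wedge\partial_j\wedge\partial_k$; using the antisymmetry $\sigma_{ab}=-\sigma_{ba}$ together with the skewness of $\wedge$ to relabel the summation indices, this coefficient equals a fixed nonzero constant times the cyclic expression $\sum_{l}(\sigma_{lk}\partial_l\sigma_{ij}+\sigma_{li}\partial_l\sigma_{jk}+\sigma_{lj}\partial_l\sigma_{ki})$. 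Since the trivectors $\partial_i\wedge\partial_j\wedge\partial_k$ with $i<j<k$ are linearly independent over the $C^\infty$ functions, $[\sigma,\sigma]=0$ holds precisely when each such coefficient vanishes, which is the asserted identity.

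The substantive difficulty is purely bookkeeping: keeping the signs coming from the graded Leibniz rule straight and, more delicately, accounting for the multiplicity with which a given ordered basis trivector is produced as one sums over all ordered quadruples $(a,b,c,d)$ rather than over increasing tuples. The cleanest way to neutralize this is to recall that $[\sigma,\sigma]=0$ is equivalent to the Jacobi identity for the bracket $\{f,g\}:=\langle\sigma,df\wedge dg\rangle$, for which $\{z_i,z_j\}=2\sigma_{ij}$ and each Hamiltonian operator $\{f,-\}$ is a derivation. The Jacobiator $J(f,g,h)=\{f,\{g,h\}\}+\{g,\{h,f\}\}+\{h,\{f,g\}\}$ is then itself a derivation in every argument, hence tensorial, so $J\equiv 0$ if and only if $J$ vanishes on every coordinate triple $(z_i,z_j,z_k)$; a one-line computation of $J(z_i,z_j,z_k)$ reproduces the cyclic sum above up to the overall constant, delivering the equivalence without any sign gymnastics.
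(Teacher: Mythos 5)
Your proposal is correct. Note that the paper states Lemma \ref{formula} with no proof at all (the proof environment that follows it is already the proof of Theorem \ref{1thm}), so there is no argument in the text to compare against; the lemma is being treated as the standard coordinate form of the Jacobi identity for a bivector. Of your two routes, the Jacobiator one is the cleaner and is essentially complete as sketched: with $\{f,g\}=\langle\sigma,df\wedge dg\rangle$ one gets $\{z_i,z_j\}=2\sigma_{ij}$ and $\{z_i,\{z_j,z_k\}\}=-4\sum_l\sigma_{li}\frac{\partial\sigma_{jk}}{\partial z_l}$, so $J(z_i,z_j,z_k)$ is $-4$ times the asserted cyclic sum. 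The one step you should spell out is why $J$ is tensorial: in $J(f_1f_2,g,h)$ the non-derivation cross terms $\{g,f_1\}\{h,f_2\}+\{g,f_2\}\{h,f_1\}+\{h,f_1\}\{f_2,g\}+\{h,f_2\}\{f_1,g\}$ cancel in pairs by antisymmetry of the bracket, so $J$ is a derivation in each slot. A further small point worth noting, since the paper applies the lemma to the $C^\infty$ (not holomorphic) type $(2,0)$ bivectors $h^j_{rs}$: the bracket $\{f,-\}$ differentiates only in the holomorphic directions, so the trivector $J$ is already determined by its values on the coordinate functions $z_i$ alone, which is exactly what your reduction requires. Your first route (direct Schouten expansion) also works and, as you say, reduces to sign bookkeeping because $[\partial_a,\partial_b]=0$ leaves only terms of the form $\pm\sigma_{ab}(\partial_a\sigma_{cd})\,\partial_b\wedge\partial_c\wedge\partial_d$, whose collected coefficient on $\partial_i\wedge\partial_j\wedge\partial_k$ is the same cyclic sum up to a nonzero constant.
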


\begin{proof}[Proof of Theorem $\ref{1thm}$]
We have already showed $(1)$ and $(2)$. It remains to show $(3)$. We note that 
\begin{align}\label{tt55}
[\sum_{\alpha,\beta=1}^n g_{\alpha\beta}^j(\xi_j,t) \frac{\partial}{\partial \xi_j^{\alpha}}\wedge\frac{\partial}{\partial \xi_j^{\beta}},\sum_{\alpha,\beta=1}^n g_{\alpha\beta}^j(\xi_j,t) \frac{\partial}{\partial \xi_j^{\alpha}}\wedge\frac{\partial}{\partial \xi_j^{\beta}}]=0.
\end{align}
 Since $g_{\alpha\beta}^j(\xi_j(z,t),t)=\sum_{a,b=1}^n h_{ab}^j(z,t)\frac{\partial \xi_j^{\alpha}}{\partial z_a}\frac{\partial \xi_j^{\beta}}{\partial z_b}$ is holomorphic with respect to $\xi_j=(\xi_j^{\alpha}),\alpha=1,...,n$, we have
\begin{align}\label{nb1}
\frac{\partial}{\bar{\partial} \xi_j^{\alpha}}\left( \sum_{a,b=1}^n h_{ab}^j(z,t)\frac{\partial \xi_j^{\alpha}}{\partial z_a}\frac{\partial \xi_j^{\beta}}{\partial z_b}\right)=\sum_{a,b=1}^n \frac{\partial}{\bar{\partial}\xi_j^{\alpha}}\left(h_{ab}^j(z,t)\frac{\partial \xi_j^{\alpha}}{\partial z_a}\frac{\partial \xi_j^{\beta}}{\partial z_b} \right)=0
\end{align}
 In the following, for simplicity, we denote $\xi_j^{\alpha}(z_j,t)$ by $\xi_{\alpha}$ and $h_{ab}^j(z,t)$ by $h_{ab}$. By (\ref{tt55}), Lemma \ref{formula} and $(\ref{nb1})$, and by the property $h_{ab}=-h_{ba}$ and $\frac{\partial}{\partial z_a}=\sum_{l=1}^n \frac{\partial \xi_l}{\partial z_a}\frac{\partial}{\partial \xi_l}+\sum_{l=1}^n \frac{\partial \bar{\xi}_l}{\partial z_a}\frac{\partial}{\partial \bar{\xi}_l}$, we have

\begin{align*}
0=&\sum_{a,b,c,d,l=1}^n( h_{ab}\frac{\partial \xi_l}{\partial z_a}\frac{\partial \xi_k}{\partial z_b}\frac{\partial}{\partial \xi_l}\left(h_{cd}\frac{\partial \xi_i}{\partial z_c}\frac{\partial \xi_j}{\partial z_d}\right)+h_{ab}\frac{\partial \xi_l}{\partial z_a}\frac{\partial \xi_i}{\partial z_b}\frac{\partial}{\partial \xi_l}\left(h_{cd}\frac{\partial \xi_j}{\partial z_c}\frac{\partial \xi_k}{\partial z_d}\right)+h_{ab}\frac{\partial \xi_l}{\partial z_a}\frac{\partial \xi_j}{\partial z_b}\frac{\partial}{\partial \xi_l}\left(h_{cd}\frac{\partial \xi_k}{\partial z_c}\frac{\partial \xi_i}{\partial z_d}\right))\\
+&\sum_{a,b,c,d,l=1}^n (h_{ab}\frac{\partial \bar{\xi}_l}{\partial z_a}\frac{\partial \xi_k}{\partial z_b}\frac{\partial}{\partial \bar{\xi}_l}\left(h_{cd}\frac{\partial \xi_i}{\partial z_c}\frac{\partial \xi_j}{\partial z_d}\right)+h_{ab}\frac{\partial \bar{\xi}_l}{\partial z_a}\frac{\partial \xi_i}{\partial z_b}\frac{\partial}{\partial \bar{\xi}_l}\left(h_{cd}\frac{\partial \xi_j}{\partial z_c}\frac{\partial \xi_k}{\partial z_d}\right)+h_{ab}\frac{\partial \bar{\xi}_l}{\partial z_a}\frac{\partial \xi_j}{\partial z_b}\frac{\partial}{\partial \bar{\xi}_l}\left(h_{cd}\frac{\partial \xi_k}{\partial z_c}\frac{\partial \xi_i}{\partial z_d}\right))\\
=&\sum_{a,b,c,d,l=1}^n (h_{ab}\frac{\partial \xi_l}{\partial z_a}\frac{\partial \xi_k}{\partial z_b}\frac{\partial h_{cd}}{\partial \xi_l}\frac{\partial \xi_i}{\partial z_c}\frac{\partial \xi_j}{\partial z_d}+h_{ab}\frac{\partial \xi_l}{\partial z_a}\frac{\partial \xi_k}{\partial z_b}h_{cd}\frac{\partial}{\partial \xi_l}\left(\frac{\partial \xi_i}{\partial z_c}\right)\frac{\partial \xi_j}{\partial z_d}+h_{ab}\frac{\partial \xi_l}{\partial z_a}\frac{\partial \xi_k}{\partial z_b}h_{cd}\frac{\partial \xi_i}{\partial z_c}\frac{\partial}{\partial \xi_l}\left(\frac{\partial \xi_j}{\partial z_d}\right))\\
+&\sum_{a,b,c,d,l=1}^n(h_{ab}\frac{\partial \xi_l}{\partial z_a}\frac{\partial \xi_i}{\partial z_b}\frac{\partial h_{cd}}{\partial \xi_l}\frac{\partial \xi_j}{\partial z_c}\frac{\partial \xi_k}{\partial z_d}+h_{ab}\frac{\partial \xi_l}{\partial z_a}\frac{\partial \xi_i}{\partial z_b}h_{cd}\frac{\partial}{\partial \xi_l}\left(\frac{\partial \xi_j}{\partial z_c}\right)\frac{\partial \xi_k}{\partial z_d}+h_{ab}\frac{\partial \xi_l}{\partial z_a}\frac{\partial \xi_i}{\partial z_b}h_{cd}\frac{\partial \xi_j}{\partial z_c}\frac{\partial}{\partial \xi_l}\left(\frac{\partial \xi_k}{\partial z_d}\right))\\
+&\sum_{a,b,c,d,l=1}^n(h_{ab}\frac{\partial \xi_l}{\partial z_a}\frac{\partial \xi_j}{\partial z_b}\frac{\partial h_{cd}}{\partial \xi_l}\frac{\partial \xi_k}{\partial z_c}\frac{\partial \xi_i}{\partial z_d}+h_{ab}\frac{\partial \xi_l}{\partial z_a}\frac{\partial \xi_j}{\partial z_b}h_{cd}\frac{\partial}{\partial \xi_l}\left(\frac{\partial \xi_k}{\partial z_c}\right)\frac{\partial \xi_i}{\partial z_d}+h_{ab}\frac{\partial \xi_l}{\partial z_a}\frac{\partial \xi_j}{\partial z_b}h_{cd}\frac{\partial \xi_k}{\partial z_c}\frac{\partial}{\partial \xi_l}\left(\frac{\partial \xi_i}{\partial z_d}\right))\\
+&\sum_{a,b,c,d,l=1}^n(h_{ab}\frac{\partial \bar{\xi}_l}{\partial z_a}\frac{\partial \xi_k}{\partial z_b}\frac{\partial h_{cd}}{\partial \bar{\xi}_l}\frac{\partial \xi_i}{\partial z_c}\frac{\partial \xi_j}{\partial z_d}+h_{ab}\frac{\partial \bar{\xi}_l}{\partial z_a}\frac{\partial \xi_k}{\partial z_b}h_{cd}\frac{\partial}{\partial \bar{\xi}_l}\left(\frac{\partial \xi_i}{\partial z_c}\right)\frac{\partial \xi_j}{\partial z_d}+h_{ab}\frac{\partial \bar{\xi}_l}{\partial z_a}\frac{\partial \xi_k}{\partial z_b}h_{cd}\frac{\partial \xi_i}{\partial z_c}\frac{\partial}{\partial \bar{\xi}_l}\left(\frac{\partial \xi_j}{\partial z_d}\right))\\
+&\sum_{a,b,c,d,l=1}^n(h_{ab}\frac{\partial \bar{\xi}_l}{\partial z_a}\frac{\partial \xi_i}{\partial z_b}\frac{\partial h_{cd}}{\partial \bar{\xi}_l}\frac{\partial \xi_j}{\partial z_c}\frac{\partial \xi_k}{\partial z_d}+h_{ab}\frac{\partial \bar{\xi}_l}{\partial z_a}\frac{\partial \xi_i}{\partial z_b}h_{cd}\frac{\partial}{\partial \bar{\xi}_l}\left(\frac{\partial \xi_j}{\partial z_c}\right)\frac{\partial \xi_k}{\partial z_d}+h_{ab}\frac{\partial \bar{\xi}_l}{\partial z_a}\frac{\partial \xi_i}{\partial z_b}h_{cd}\frac{\partial \xi_j}{\partial z_c}\frac{\partial}{\partial \bar{\xi}_l}\left(\frac{\partial \xi_k}{\partial z_d}\right))\\
+&\sum_{a,b,c,d,l=1}^n(h_{ab}\frac{\partial \bar{\xi}_l}{\partial z_a}\frac{\partial \xi_j}{\partial z_b}\frac{\partial h_{cd}}{\partial \bar{\xi}_l}\frac{\partial \xi_k}{\partial z_c}\frac{\partial \xi_i}{\partial z_d}+h_{ab}\frac{\partial \bar{\xi}_l}{\partial z_a}\frac{\partial \xi_j}{\partial z_b}h_{cd}\frac{\partial}{\partial \bar{\xi}_l}\left(\frac{\partial \xi_k}{\partial z_c}\right)\frac{\partial \xi_i}{\partial z_d}+h_{ab}\frac{\partial \bar{\xi}_l}{\partial z_a}\frac{\partial \xi_j}{\partial z_b}h_{cd}\frac{\partial \xi_k}{\partial z_c}\frac{\partial}{\partial \bar{\xi}_l}\left(\frac{\partial \xi_i}{\partial z_d}\right))\\
=&\sum_{a,b,c,d=1}^n(h_{ab}\frac{\partial h_{cd}}{\partial z_a}\frac{\partial \xi_k}{\partial z_b}\frac{\partial \xi_i}{\partial z_c}\frac{\partial \xi_j}{\partial z_d}+h_{ab}\frac{\partial \xi_k}{\partial z_b}h_{cd}\frac{\partial^2 \xi_i}{\partial z_a\partial z_c}\frac{\partial \xi_j}{\partial z_d}+h_{ab}\frac{\partial \xi_k}{\partial z_b}h_{cd}\frac{\partial \xi_i}{\partial z_c}\frac{\partial^2 \xi_j}{\partial z_a\partial z_d})\\
+&\sum_{a,b,c,d=1}^n (h_{ab}\frac{\partial h_{cd}}{\partial z_a}\frac{\partial \xi_i}{\partial z_b}\frac{\partial \xi_j}{\partial z_c}\frac{\partial \xi_k}{\partial z_d}+h_{ab}\frac{\partial \xi_i}{\partial z_b}h_{cd}\frac{\partial^2 \xi_j}{\partial z_a\partial z_c}\frac{\partial \xi_k}{\partial z_d}+h_{ab}\frac{\partial \xi_i}{\partial z_b}h_{cd}\frac{\partial \xi_j}{\partial z_c}\frac{\partial^2 \xi_k}{\partial z_a\partial z_d})\\
+&\sum_{a,b,c,d=1}^n(h_{ab}\frac{\partial h_{cd}}{\partial z_a}\frac{\partial \xi_j}{\partial z_b}\frac{\partial \xi_k}{\partial z_c}\frac{\partial \xi_i}{\partial z_d}+h_{ab}\frac{\partial \xi_j}{\partial z_b}h_{cd}\frac{\partial^2 \xi_k}{\partial z_a\partial z_c}\frac{\partial \xi_i}{\partial z_d}+h_{ab}\frac{\partial \xi_j}{\partial z_b}h_{cd}\frac{\partial \xi_k}{\partial z_c}\frac{\partial^2 \xi_i}{\partial z_a\partial z_d})\\
=&\sum_{a,b,c,d=1}^n(h_{ab}\frac{\partial h_{cd}}{\partial z_a}\frac{\partial \xi_k}{\partial z_b}\frac{\partial \xi_i}{\partial z_c}\frac{\partial \xi_j}{\partial z_d}+h_{ab}\frac{\partial h_{cd}}{\partial z_a}\frac{\partial \xi_i}{\partial z_b}\frac{\partial \xi_j}{\partial z_c}\frac{\partial \xi_k}{\partial z_d}+h_{ab}\frac{\partial h_{cd}}{\partial z_a}\frac{\partial \xi_j}{\partial z_b}\frac{\partial \xi_k}{\partial z_c}\frac{\partial \xi_i}{\partial z_d})\\
=&\sum_{a,b,c,d=1}^n\left(h_{ab}\frac{\partial h_{cd}}{\partial z_a}+h_{ac}\frac{\partial h_{db}}{\partial z_a}+h_{ad}\frac{\partial h_{bc}}{\partial z_a}\right)\frac{\partial \xi_i}{\partial z_c}\frac{\partial \xi_j}{\partial z_d}\frac{\partial \xi_k}{\partial z_b}
\end{align*}
From $(\ref{det})$, we have $\sum_{a=1}^n h_{ab}\frac{\partial h_{cd}}{\partial z_a}+h_{ac}\frac{\partial h_{db}}{\partial z_a}+h_{ad}\frac{\partial h_{bc}}{\partial z_a}=0$ for each $b,c,d$. So by Lemma \ref{formula}, $[\Lambda_j(t),\Lambda_j(t)]=0$.
\end{proof}

\begin{remark}\label{renj}
For the compact holomorphic Poisson manifold $(M_t,\Lambda_t)$ for each $t\in \Delta$ in the Poisson analytic family $(\mathcal{M},\Lambda,B,\omega)$, we showed that there exists a bivector field $\Lambda(t)$ on $M=M_0$ with $[\Lambda(t),\Lambda(t)]=0$ for $t\in \Delta$ by Theorem $\ref{1thm}$. Let $J_t:T_{\mathbb{R}}M\to T_{\mathbb{R}}M$ with $J_t^2=-id$ be the almost complex structure associated to the complex structure $M_t$ $($induced by $\varphi(t))$ where $T_{\mathbb{R}}M$ is a real tangent bundle of the underlying differentiable manifold $M$. Then $J_t$ induces a type decomposition of complexified tangent bundle $T_{\mathbb{C}} M=T^{1,0}_{M_t}\oplus T^{0,1}_{M_t}$ $($see \cite{Kob69} Chapter IX section 2$)$ so that we have $\wedge^2 T_{\mathbb{C}} M=\wedge^2 T_{M_t}^{1,0} \oplus T_{M_t}^{1,0}\otimes T_{M_t}^{0,1} \oplus \wedge^2 T_{M_t}^{0,1}$. If $\Lambda$ is a $C^{\infty}$ section of $\wedge^2 T_{\mathbb{C}} M $ on $M$, then we denote by $\Lambda^{2,0}$ the component of $\wedge^2 T_{M_t}^{1,0}$, by $\Lambda^{1,1}$ the component of $T_{M_t}^{0,1}\otimes T_{M_t}^{0,1}$, and by $\Lambda^{0,2}$ the component of $\wedge^2 T_{M_t}^{0,1}$.  So we have $\Lambda=\Lambda^{2,0}+\Lambda^{1,1}+\Lambda^{0,2}$. We call $\Lambda^{2,0}$ the type $(2,0)$-part of $\Lambda$. With this notation, the type $(2,0)$-part of  $\Psi_{*} \Lambda(t)$ is $\Lambda_t$ for $t\in \Delta$, where $\Psi_*\Lambda(t)$ is the bivector field induced from $\Lambda(t)$ via diffeomorphism $\Psi$ in $(\ref{pp00})$. So we can say that $\Lambda(t)^{2,0}=\Lambda_t$.
\end{remark}

\begin{remark}\label{tt45}
Let $\Lambda$ be a $C^{\infty}$-section of $\wedge^k T_\mathbb{C}M$. From $\wedge^k T_\mathbb{C} M=\bigoplus_{p+q=k} \wedge^p T_{M_t}^{1,0}\otimes \wedge^q T_{M_t}^{0,1}$, we can define the type $(p,q)$ part $\Lambda^{p,q}$ of $\Lambda$ in an obvious way as in Remark $\ref{renj}$.
\end{remark}

Next we discuss the condition when a given $C^{\infty}$ bivector field $\Lambda\in A^{0,0}(M, \wedge^2 T_M)$ on $M$ with $[\Lambda,\Lambda]=0$ gives a holomorphic bivector field $\Lambda^{2,0}\in A^{0,0}(M_t, \wedge^2 T_{M_t})$ with respect to the complex structure $M_t$ induced by $\varphi(t)$. Before proceeding our discussion, we recall the Schouten bracket $[-,-]$ on $\bigoplus_{i\geq 0}\bigoplus_{p+q-1=i,p\geq 0,q\geq 1} A^{0,p}(M,\wedge^q T_M)$ (see (\ref{tt76})) which we need for the computation of the integrability condition (\ref{yghb}). The Schouten bracket $[-,-]$ is defined in the following way:
\begin{align*}
[-,-]:A^{0,p}(M,\wedge^q T_M)\times A^{0,p'}(M,\wedge^{q'} T_M)\to A^{0,p+p'}(M,\wedge^{q+q'-1} T_M)
\end{align*}
In local coordinates it is given by
\begin{align}\label{tt00}
[fd\bar{z}_I\frac{\partial}{\partial z_J},gd\bar{z}_K\frac{\partial}{\partial z_L}]=(-1)^{|K|(|J|+1)} d\bar{z}_I\wedge d\bar{z}_K [f\frac{\partial}{\partial z_J},g\frac{\partial}{\partial z_L}]\end{align}
where $f,g$ are $C^{\infty}$ functions on $M$ and $d\bar{z}_I=d\bar{z}_{i_1}\wedge \cdots \wedge d\bar{z}_{i_{|I|}}$, $\frac{\partial}{\partial z_J}=\frac{\partial}{\partial z_{j_1}}\wedge \cdots \wedge \frac{\partial}{\partial z_{j_{|L|}}}$ (similarly for $d\bar{z}_K, \frac{\partial}{\partial z_L})$. Then
\begin{align}
\mathfrak{g}=(\bigoplus_{i\geq0} g_i, g_i=\bigoplus_{p+q-1=i,p\geq 0, q\geq 1} A^{0,p}(M,\wedge^q T_M),L=\bar{\partial}+[\Lambda,-],[-,-]),
\end{align}
is a differential graded Lie algebra. So we have the following properties: for $a\in A^{0,p}(M,\wedge^q T_M), b\in A^{0,p'}(M,\wedge^{q'} T_M)$, and $c\in A^{0,p''}(M,\wedge^{q''} T_M)$
\begin{enumerate}
\item $[a,b]=-(-1)^{(p+q+1)(p'+q'+1)}[b,a]$
\item $[a,[b,c]]=[[a,b],c]+(-1)^{(p+q+1)(p'+q'+1)}[b,[a,c]]$
\item $\bar{\partial}[a,b]=[\bar{\partial} a,b]+(-1)^{p+q+1}[a,\bar{\partial} b]$
\end{enumerate}

\begin{theorem}\label{m}
If we take a sufficiently small polydisk $\Delta$ as in subsection $\ref{prill}$, then for $t\in \Delta$, a type $(2,0)$-part $\Lambda^{2,0}$ of a $C^{\infty}$ bivector field  $\Lambda=\sum_{r,s=1}^n h_{rs}(z)\frac{\partial}{\partial z_{r}}\wedge \frac{\partial}{\partial z_{s}}$ on $M$ is holomorphic with respect to the complex structure $M_t$ induced by $\varphi(t)$ if and only if it satisfies the equation
\begin{align*}
\bar{\partial}\Lambda-[\Lambda,\varphi(t)]=0
\end{align*}
Moreover, if $[\Lambda,\Lambda]=0$, then $[\Lambda^{2,0},\Lambda^{2,0}]=0$.

\end{theorem}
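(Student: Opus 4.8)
The plan is to convert the holomorphicity of $\Lambda^{2,0}$ into a scalar differential equation through Theorem \ref{text}, and then to match that equation, coefficient by coefficient, with the tensorial equation $\bar\partial\Lambda-[\Lambda,\varphi(t)]=0$. On $\mathcal{U}_j$ I would write the type $(2,0)$-part with respect to $M_t$ as $\Lambda^{2,0}=\sum_{\alpha,\beta=1}^{n}G_{\alpha\beta}^{j}\,\frac{\partial}{\partial\xi_j^{\alpha}}\wedge\frac{\partial}{\partial\xi_j^{\beta}}$, where $G_{\alpha\beta}^{j}:=\sum_{r,s=1}^{n}h_{rs}\frac{\partial\xi_j^{\alpha}}{\partial z_r}\frac{\partial\xi_j^{\beta}}{\partial z_s}$ is obtained by inserting $\frac{\partial}{\partial z_r}=\sum_{\alpha}\frac{\partial\xi_j^{\alpha}}{\partial z_r}\frac{\partial}{\partial\xi_j^{\alpha}}+\sum_{\alpha}\frac{\partial\bar\xi_j^{\alpha}}{\partial z_r}\frac{\partial}{\partial\bar\xi_j^{\alpha}}$ and retaining the $(1,0)$-part. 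Since $\Lambda^{2,0}$ is holomorphic on $M_t$ exactly when each coefficient $G_{\alpha\beta}^{j}$ is a holomorphic function on $M_t$, Theorem \ref{text} reduces the first assertion to showing that ``$(\bar\partial-\varphi(t))G_{\alpha\beta}^{j}=0$ for all $\alpha,\beta,j$'' is equivalent to $\bar\partial\Lambda-[\Lambda,\varphi(t)]=0$.

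To establish this equivalence I would first expand the left-hand side as $\bar\partial\Lambda-[\Lambda,\varphi(t)]=\sum_v d\bar z_v\wedge\Theta_v$, computing $[\Lambda,\varphi(t)]$ from the local formula (\ref{tt00}) and the bivector-with-vector expansion already used in Proposition \ref{gg}; the resulting coefficient is the genuine bivector field
\[
\Theta_v=\sum_{r,s}(\bar\partial_v^{\varphi}h_{rs})\frac{\partial}{\partial z_r}\wedge\frac{\partial}{\partial z_s}+\sum_{r,s,\lambda}h_{rs}\frac{\partial\varphi_v^{\lambda}}{\partial z_r}\frac{\partial}{\partial z_{\lambda}}\wedge\frac{\partial}{\partial z_s}+\sum_{r,s,\lambda}h_{rs}\frac{\partial\varphi_v^{\lambda}}{\partial z_s}\frac{\partial}{\partial z_r}\wedge\frac{\partial}{\partial z_{\lambda}},
\]
where $\bar\partial_v^{\varphi}:=\frac{\partial}{\partial\bar z_v}-\sum_{\lambda}\varphi_v^{\lambda}\frac{\partial}{\partial z_{\lambda}}$ is the antiholomorphic derivation of $M_t$. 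In parallel I would differentiate $G_{\alpha\beta}^{j}$ by $\bar\partial_v^{\varphi}$ using the Leibniz rule; the decisive identity is $\bar\partial_v^{\varphi}\!\left(\frac{\partial\xi_j^{\alpha}}{\partial z_r}\right)=\sum_{\mu}\frac{\partial\varphi_v^{\mu}}{\partial z_r}\frac{\partial\xi_j^{\alpha}}{\partial z_{\mu}}$, which follows by applying $\frac{\partial}{\partial z_r}$ to (\ref{matrix1}) and cancelling the symmetric holomorphic second derivatives of $\xi_j^{\alpha}$. Substituting it shows that $\bar\partial_v^{\varphi}G_{\alpha\beta}^{j}$ is precisely the coefficient of $\frac{\partial}{\partial\xi_j^{\alpha}}\wedge\frac{\partial}{\partial\xi_j^{\beta}}$ in the type $(2,0)$-part of $\Theta_v$, that is, $\bar\partial_v^{\varphi}G_{\alpha\beta}^{j}=\sum_{a,b}\Theta_v^{ab}\frac{\partial\xi_j^{\alpha}}{\partial z_a}\frac{\partial\xi_j^{\beta}}{\partial z_b}$.

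The first assertion then follows from invertibility: by (\ref{det}) the matrix $\left(\frac{\partial\xi_j^{\alpha}}{\partial z_a}\right)$ is nonsingular, so the passage from the antisymmetric array $\Theta_v^{ab}$ to $\left(\bar\partial_v^{\varphi}G_{\alpha\beta}^{j}\right)_{\alpha\beta}$ is a linear isomorphism; hence $\Theta_v=0$ if and only if $\bar\partial_v^{\varphi}G_{\alpha\beta}^{j}=0$ for all $\alpha,\beta$. Letting $v$ and $j$ vary gives $\bar\partial\Lambda-[\Lambda,\varphi(t)]=0\Longleftrightarrow\Lambda^{2,0}$ is holomorphic with respect to $M_t$.

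For the final assertion I would decompose $\Lambda=\Lambda^{2,0}+\Lambda^{1,1}+\Lambda^{0,2}$ with respect to $M_t$ (Remark \ref{renj}), expand $[\Lambda,\Lambda]$ by bilinearity and graded symmetry, and read off the type $(3,0)$-component in $\wedge^3 T_{\mathbb{C}}M$. A type count shows that, among the resulting brackets, only $[\Lambda^{2,0},\Lambda^{2,0}]$ and $[\Lambda^{2,0},\Lambda^{1,1}]$ can feed type $(3,0)$. The self-bracket $[\Lambda^{2,0},\Lambda^{2,0}]$ is of pure type $(3,0)$, and by Lemma \ref{formula} transcribed into the coordinates $\xi_j$ its vanishing is the Jacobi identity for the $G_{\alpha\beta}^{j}$. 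The step that needs care, and that I expect to be the main obstacle, is the cross term: the Schouten bracket is not additive for the $(p,q)$-grading of $M_t$, and $[\Lambda^{2,0},\Lambda^{1,1}]$ does acquire a type $(3,0)$-piece when the antiholomorphic leg of $\Lambda^{1,1}$ differentiates the coefficients of $\Lambda^{2,0}$, producing terms in $\frac{\partial G_{\alpha\beta}^{j}}{\partial\bar\xi_j^{\delta}}$. This piece vanishes precisely because $\Lambda^{2,0}$ is holomorphic (equivalently, by the first part, because $\bar\partial\Lambda-[\Lambda,\varphi(t)]=0$ holds), so those antiholomorphic derivatives are zero. Hence projecting $[\Lambda,\Lambda]=0$ onto type $(3,0)$ leaves exactly $[\Lambda^{2,0},\Lambda^{2,0}]=0$, which is the desired conclusion.
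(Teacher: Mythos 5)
Your proposal is correct and follows essentially the same route as the paper: reduce holomorphicity of the coefficients $G^j_{\alpha\beta}=\sum_{r,s}h_{rs}\frac{\partial\xi_j^{\alpha}}{\partial z_r}\frac{\partial\xi_j^{\beta}}{\partial z_s}$ to $(\bar{\partial}-\varphi(t))G^j_{\alpha\beta}=0$ via Theorem \ref{text}, match this coefficientwise with $\bar{\partial}\Lambda-[\Lambda,\varphi(t)]=0$ using the differentiated form of $(\ref{matrix1})$ and the nonsingularity $(\ref{det})$, and obtain $[\Lambda^{2,0},\Lambda^{2,0}]=0$ by projecting $[\Lambda,\Lambda]=0$ onto type $(3,0)$ and killing the cross term $[\Lambda^{2,0},\Lambda^{1,1}]^{3,0}$ by holomorphicity. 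Your packaging of the computation through the operator $\bar{\partial}_v^{\varphi}$ and the identity $\bar{\partial}_v^{\varphi}\bigl(\frac{\partial\xi_j^{\alpha}}{\partial z_r}\bigr)=\sum_{\mu}\frac{\partial\varphi_v^{\mu}}{\partial z_r}\frac{\partial\xi_j^{\alpha}}{\partial z_{\mu}}$ is only a cleaner bookkeeping of the cancellation the paper carries out explicitly in $(\ref{mm5})$--$(\ref{mm8})$.
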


\begin{proof}
We note that the type $(2,0)$-part of $\Lambda=\sum_{r,s=1}^n h_{rs}(z)\frac{\partial}{\partial z_{r}}\wedge \frac{\partial}{\partial z_{s}}$ with respect to complex structure $M_t$ is $\sum_{r,s,\alpha,\beta=1}^n h_{rs}\frac{\partial \xi_j^\alpha}{\partial z_{r}}\frac{\partial \xi_j^\beta}{\partial z_{s}}\frac{\partial}{\partial \xi_j^\alpha}\wedge \frac{\partial}{\partial \xi_j^\beta}$. Hence by Theorem \ref{text}, it suffices to show that 
\begin{align}\label{mm9}
\text{For each $\alpha,\beta$, $(\bar{\partial}-\varphi(t))(\sum_{r,s=1}^n h_{rs}\frac{\partial \xi_j^\alpha}{\partial z_{r}}\frac{\partial \xi_j^\beta}{\partial z_{s}})=0$ if and only if $\bar{\partial}\Lambda-[\Lambda,\varphi(t)]=0$}
\end{align}
 First we note that from $(\ref{b})$ and $(\ref{tt00})$, we have
 \begin{align}\label{tt01}
 &\bar{\partial}\Lambda-[\Lambda,\varphi(t)]\\
 &=\sum_{r,s, v=1}^n \frac{\partial h_{rs}}{\partial \bar{z}_v}d\bar{z}_v\frac{\partial}{\partial z_{r}}\wedge \frac{\partial}{\partial z_{s}}-\sum_{r,s, v,\lambda=1}^n [h_{rs}\frac{\partial}{\partial z_{r}}\wedge \frac{\partial}{\partial z_{s}},\varphi_v^{\lambda} d\bar{z}_v \frac{\partial}{\partial z_{\lambda}}]\notag\\
 &=\sum_{r,s, v=1}^n \frac{\partial h_{rs}}{\partial \bar{z}_v}d\bar{z}_v\frac{\partial}{\partial z_{r}}\wedge \frac{\partial}{\partial z_{s}}+\sum_{r,s, v,\lambda=1}^n [h_{rs}\frac{\partial}{\partial z_{r}}\wedge \frac{\partial}{\partial z_{s}},\varphi_v^{\lambda} \frac{\partial}{\partial z_{\lambda}}]d\bar{z}_v \notag\\
 &=\sum_{r,s, v=1}^n \frac{\partial h_{rs}}{\partial \bar{z}_v}d\bar{z}_v\frac{\partial}{\partial z_{r}}\wedge \frac{\partial}{\partial z_{s}}+\sum_{r,s, v,\lambda=1}^n(h_{rs}\frac{\partial \phi_v^{\lambda}}{\partial z_{r}}\frac{\partial}{\partial z_{\lambda}}\wedge \frac{\partial}{\partial z_{s}}-\varphi_{v}^{\lambda} \frac{\partial h_{rs}}{\partial z_{\lambda}}\frac{\partial}{\partial z_{r}}\wedge \frac{\partial}{\partial z_{s}}+h_{rs} \frac{\partial \varphi_v^{\lambda}}{\partial z_{s}}\frac{\partial}{\partial z_{r}}\wedge \frac{\partial}{\partial z_{\lambda}})d\bar{z}_v.\notag
\end{align} 
  By considering the coefficients of $d\bar{z}_v\frac{\partial}{\partial z_{r}}\wedge \frac{\partial}{\partial z_{s}}$ in $(\ref{tt01})$, $\bar{\partial}\Lambda-[\Lambda,\varphi(t)]=0$ is equivalent to 
\begin{align}\label{mm3}
\frac{\partial h_{rs}}{\partial \bar{z}_v}+\sum_{c=1}^n (h_{cs}\frac{\partial \varphi^{r}_v}{\partial z_c}-\varphi^c_v\frac{\partial h_{rs}}{\partial z_c}+h_{r c}\frac{\partial \varphi^{s}_v}{\partial z_c})]=0\,\,\,\,\,\text{for each $r,s,v$.}
\end{align}
On the other hand, from $(\ref{b})$, $(\bar{\partial}-\varphi(t))(\sum_{r,s=1}^n h_{rs}\frac{\partial \xi_j^\alpha}{\partial z_{r}}\frac{\partial \xi_j^\beta}{\partial z_{s}})=0$ for each $\alpha,\beta$ is equivalent to
\begin{align}\label{mm5}
\sum_{r,s=1}^n (\frac{\partial h_{rs}}{\partial \bar{z}_v}\frac{\partial \xi_j^\alpha}{\partial z_{r}}\frac{\partial \xi_j^\beta}{\partial z_{s}}+&h_{rs}\frac{\partial}{\partial z_{r}}\left(\frac{\partial \xi_j^\alpha}{\partial \bar{z}_v}\right)\frac{\partial \xi_j^\beta}{\partial z_{s}}+h_{rs}\frac{\partial \xi_j^\alpha}{\partial z_{r}}\frac{\partial}{\partial z_{s}}\left(\frac{\partial \xi_j^\beta}{\partial \bar{z}_v}\right))\\
&-\sum_{r,s,c=1}^n \varphi^{c}_v(\frac{\partial h_{rs}}{\partial z_{c}}\frac{\partial \xi_j^\alpha}{\partial z_{r}}\frac{\partial \xi_j^\beta}{\partial z_{s}}+h_{rs}\frac{\partial^2 \xi_j^\alpha}{\partial z_{r} \partial z_{c}}\frac{\partial \xi_j^\beta}{\partial z_{s}}+h_{rs}\frac{\partial \xi_j^\alpha}{\partial z_{r}}\frac{\partial^2 \xi_j^\beta}{\partial z_{s}\partial z_{c}})=0\,\,\,\,\,\,\,\text{for each $\alpha,\beta,v$}\notag
\end{align}
From $(\ref{matrix1})$, we have $\frac{\partial \xi_j^\alpha}{\partial \bar{z}_v}=\sum_{c=1}^n \frac{\partial \xi_j^\alpha}{\partial z_c}\varphi^c_v$ and $\frac{\partial \xi_j^\beta}{\partial \bar{z}_v}=\sum_{c=1}^n \frac{\partial \xi_j^\beta}{\partial z_c}\varphi^c_v$.
So $(\ref{mm5})$ is equivalent to
\begin{align}\label{mm8}
&\sum_{r,s=1}^n \frac{\partial h_{rs}}{\partial \bar{z}_v}\frac{\partial \xi_j^\alpha}{\partial z_{r}}\frac{\partial \xi_j^\beta}{\partial z_{s}}+\sum_{r,s,c=1}^n(h_{rs}\left(\frac{\partial^2 \xi_j^\alpha}{\partial z_{r} \partial z_c}\varphi^c_v+\frac{\partial \xi_j^\alpha}{\partial z_c}\frac{\partial \varphi^c_v}{\partial z_{r}}\right)\frac{\partial \xi_j^\beta}{\partial z_{s}}+h_{rs}\frac{\partial \xi_j^\alpha}{\partial z_{r}}\left(\frac{\partial^2 \xi_j^\beta}{\partial z_{s} \partial z_c}\varphi^c_v+\frac{\partial \xi_j^\beta}{\partial z_c}\frac{\partial \varphi^c_v}{\partial z_{s}}\right))\\
&-\sum_{r,s,c=1}^n \varphi^{c}_v(\frac{\partial h_{rs}}{\partial z_{c}}\frac{\partial \xi_j^\alpha}{\partial z_{r}}\frac{\partial \xi_j^\beta}{\partial z_{s}}+h_{rs}\frac{\partial^2 \xi_j^\alpha}{\partial z_{r} \partial z_{c}}\frac{\partial \xi_j^\beta}{\partial z_{s}}+h_{rs}\frac{\partial \xi_j^\alpha}{\partial z_{r}}\frac{\partial^2 \xi_j^\beta}{\partial z_{s}\partial z_{c}})\notag\\
&=\sum_{r,s=1}^n \frac{\partial h_{rs}}{\partial \bar{z}_v}\frac{\partial \xi_j^\alpha}{\partial z_{r}}\frac{\partial \xi_j^\beta}{\partial z_{s}}+\sum_{r,s,c=1}^n(h_{rs}\frac{\partial \xi_j^\alpha}{\partial z_c}\frac{\partial \varphi^c_v}{\partial z_{r}}\frac{\partial \xi_j^\beta}{\partial z_{s}}+h_{rs}\frac{\partial \xi_j^\alpha}{\partial z_{r}}\frac{\partial \xi_j^\beta}{\partial z_c}\frac{\partial \varphi^c_v}{\partial z_{s}})-\sum_{r,s,c=1}^n \varphi^{c}_v\frac{\partial h_{rs}}{\partial z_{c}}\frac{\partial \xi_j^\alpha}{\partial z_{r}}\frac{\partial \xi_j^\beta}{\partial z_{s}}
=0 \notag
\end{align}
So $(\ref{mm8})$ is equivalent to
\begin{align}\label{tt62}
\sum_{r,s=1}^n [\frac{\partial h_{rs}}{\partial \bar{z}_v}+\sum_{c=1}^n (h_{cs}\frac{\partial \varphi^{r}_v}{\partial z_c}-\varphi^c_v\frac{\partial h_{rs}}{\partial z_c}+h_{r c}\frac{\partial \varphi^{s}_v}{\partial z_c})]\frac{\partial \xi_j^\alpha}{\partial z_{r}}\frac{\partial \xi_j^\beta}{\partial z_{s}}=0 \,\,\,\,\,\text{for each $\alpha,\beta,v$.}
\end{align}
From $(\ref{det})$, the equation $(\ref{tt62})$ is equivalent to
\begin{align}\label{mm4}
\frac{\partial h_{rs}}{\partial \bar{z}_v}+\sum_{c=1}^n (h_{cs}\frac{\partial \varphi^{r}_v}{\partial z_c}-\varphi^c_v\frac{\partial h_{rs}}{\partial z_c}+h_{r c}\frac{\partial \varphi^{s}_v}{\partial z_c})=0\,\,\,\,\,\text{for each $r,s,v$}.
\end{align}
Note that $(\ref{mm3})$ is same to $(\ref{mm4})$, which proves $(\ref{mm9})$.

For the second statement of Theorem $\ref{m}$, we note that 
\begin{align*}
\Lambda&=\sum_{r,s=1}^n h_{rs}\frac{\partial}{\partial z_r}\wedge \frac{\partial}{\partial z_s}=\sum_{r,s,i,j=1}^n( h_{rs}\frac{\partial \xi_j^\alpha}{\partial z_r}\frac{\partial \xi_j^\beta}{\partial z_s}\frac{\partial}{\partial \xi_j^\alpha}\wedge \frac{\partial}{\partial \xi_j^\beta}+ 2h_{rs}\frac{\partial \xi_j^\alpha}{\partial z_r}\frac{\partial \bar{\xi}_j^\beta}{\partial z_s}\frac{\partial}{\partial \xi_j^\alpha}\wedge \frac{\partial}{\partial \bar{\xi}_j^\beta}+ h_{rs}\frac{\partial \xi_j^\alpha}{\partial z_r}\frac{\partial \bar{\xi}_j^\beta}{\partial z_s}\frac{\partial}{\partial \bar{\xi}_j^\alpha}\wedge \frac{\partial}{\partial \bar{\xi}_j^\beta})\\
&=\Lambda^{2,0}+\Lambda^{1,1}+\Lambda^{2,0}.
\end{align*}
 Since $[\Lambda,\Lambda]=0$, the type $(3,0)$ part $[\Lambda,\Lambda]^{3,0}=[\Lambda^{2,0},\Lambda^{2,0}]+[\Lambda^{2,0},\Lambda^{1,1}]^{3,0}=0$ (see Remark \ref{tt45}). Since $\Lambda^{2,0}$ is holomorphic with respect to the complex structure induced by $\varphi(t)$, we have $[\Lambda^{2,0},\Lambda^{1,1}]^{3,0}=0$. Hence $[\Lambda^{2,0},\Lambda^{2,0}]=0$.
\end{proof}

\begin{remark}
A $C^{\infty}$ complex bivector field $\Lambda\in A^{0,0}(M,\wedge^2 T_M)$ on $M$ with $[\Lambda,\Lambda]=0$ gives a Poisson bracket $\{-,-\}$ on $C^{\infty}$ complex valued functions on $M$. We point out that when we restrict the Poisson bracket $\{-,-\}$ to holomorphic functions with respect to the complex structure $M_t$ induced by $\varphi(t)$, this is exactly the $($holomorphic$)$ Poisson bracket induced from $\Lambda^{2,0}$ when $\bar{\partial}\Lambda-[\Lambda,\varphi(t)]=0$. 
\end{remark}

\begin{remark}
By Theorem $\ref{m}$, $\varphi(t)$ in $(\ref{b})$  and $\Lambda(t)$ in $(\ref{f})$ satisfy
\begin{align}\label{ll00}
\bar{\partial}\Lambda(t)-[\Lambda(t),\varphi(t)]=0 \,\,\,\,\,\,\,\, \text{for each $t$}
\end{align}
and $\Lambda(t)^{2,0}=\Lambda_t$ for each $t$ $($see Remark $\ref{renj}$$)$.
\end{remark}

\subsection{Expression of  infinitesimal deformations in terms of $\varphi(t)$ and $\Lambda(t)$}\

In this subsection, we study how an infinitesimal deformation of $(M,\Lambda_0)=\omega^{-1}(0)$ in the Poisson analytic family $(\mathcal{M},\Lambda,B,\omega)$ (in subsection \ref{prill}) is represented in terms of $\varphi(t)$ (\ref{b}) and $\Lambda(t)$ (\ref{f}). Recall that an infinitesimal deformation at $(M,\Lambda_0)$ is captured by an element $\left(\frac{\partial (M_t,\Lambda_t)}{\partial t}\right)_{t=0}\in \mathbb{H}^1(M,\Theta_M^\bullet)$ of the complex of sheaves (\ref{complex}) by using the following \u{C}ech resolution associated with the open covering $\mathcal{U}^0=\{U_j^0:=U_j\times 0\}$ (see Proposition \ref{gg} and Definition \ref{mapping}).

\begin{center}
$\begin{CD}
@A[\Lambda_0,-]AA\\
C^0(\mathcal{U}^0,\wedge^3 \Theta_M)@>-\delta>>\cdots\\
@A[\Lambda_0,-]AA @A[\Lambda_0,-]AA\\
C^0(\mathcal{U}^0,\wedge^2 \Theta_M)@>\delta>> C^1(\mathcal{U}^0,\wedge^2 \Theta_M)@>-\delta>>\cdots\\
@A[\Lambda_0,-]AA @A[\Lambda_0,-]AA @A[\Lambda_0,-]AA\\
C^0(\mathcal{U}^0,\Theta_M)@>-\delta>>C^1(\mathcal{U}^0,\Theta_M)@>\delta>>C^2(\mathcal{U}^0,\Theta_M)@>-\delta>>\cdots\\
\end{CD}$
\end{center}

We can also compute the hypercohomology group of the complex of sheaves $(\ref{complex})$ by using the following Dolbeault resolution.
\begin{center}
$\begin{CD}
@A[\Lambda_0,-]AA\\
A^{0,0}(M,\wedge^3 T_M)@>\bar{\partial}>>\cdots\\
@A[\Lambda_0,-]AA @A[\Lambda_0,-]AA\\
A^{0,0}(M,\wedge^2 T_M)@>\bar{\partial}>> A^{0,1}(M,\wedge^2 T_M)@>\bar{\partial}>>\cdots\\
@A[\Lambda_0,-]AA @A[\Lambda_0,-]AA @A[\Lambda_0,-]AA\\
A^{0,0}(M,T_M)@>\bar{\partial}>>A^{0,1}(M,T_M)@>\bar{\partial}>>A^{0,2}(M, T_M)@>\bar{\partial}>>\cdots\\
\end{CD}$
\end{center}
We describe how a $1$-cocycle in the \u{C}ech resolution look like in the Dolbeault resolution.
In the picture below, we connect two resolutions. We only depict a part of resolutions that we need in the following diagram. Recall that $\mathscr{A}^{0,p}(\wedge^q T_M)$ is the sheaf of germs of $C^{\infty}$-section of $\wedge^p \bar{T}_M^*\otimes \wedge^q T_M$ (see (\ref{tt76})).

{\tiny
\[
\xymatrixrowsep{0.2in}
\xymatrixcolsep{0.1in}
\xymatrix{
& & H^0(M,\wedge^3 \Theta_M) \ar[ld]  \ar[rr]  &  & C^0(\wedge^3 \Theta_M) \ar[ld]\\
&A^{0,0}(M,\wedge^3 T_M)  \ar[rr] & & C^0(\mathscr{A}^{0,0}(\wedge^3 T_M))\\
&& H^0(M,\wedge^2 \Theta_M) \ar@{.>}[uu] \ar@{.>}[ld] \ar@{.>}[rr] & & C^0(\wedge^2 \Theta_M) \ar[uu] \ar[ld] \ar[rr]^{\delta} && C^1(\wedge^2 \Theta_X) \ar[ld]\\
& A^{0,0}(M,\wedge^2  T_M)  \ar[uu] \ar[ld] \ar[rr] && C^0(\mathscr{A}^{0,0}(\wedge^2 T_M)) \ar[uu] \ar[ld] \ar[rr]^{\delta} && C^1(\mathscr{A}^{0,0}(T_M))   \\
A^{0,1}(M,\wedge^2 T_M) \ar[rr]  && C^0(A^{0,1}(\wedge^2 T_M)) && C^0(\Theta_M) \ar@{.>}[ld]\ar@{.>}[uu] \ar@{.>}[rr]^{-\delta} & & C^1(\Theta_X) \ar[uu] \ar[ld]\\
& A^{0,0}(M,T_M) \ar@{.>}[uu] \ar@{.>}[rr]  \ar@{.>}[ld] && C^0(\mathscr{A}^{0,0}(T_M)) \ar[uu]^{[\Lambda_0,-]} \ar[rr]^{-\delta}  \ar[ld]^{\bar{\partial}} && C^1(\mathscr{A}^{0,0}(T_M)) \ar[ld]^{\bar{\partial}} \ar[uu]\\
A^{0,1}(M,T_M) \ar[uu] \ar[rr] & & C^0(\mathscr{A}^{0,1}(T_M)) \ar[uu] \ar[rr]^{-\delta} & & C^1(\mathscr{A}^{0,1}(T_M))
}\]}
Note that each horizontal complex is exact except for edges of the ``real wall".

Now we explicitly construct the isomorphism of the first hypercohomology group from \u{C}ech resolution to the first hypercohomology group from Dolbeault resolution, namely
\begin{align}\label{isomorphism}
&\frac{ker( \mathcal{C}^0(\mathcal{U}^0, \wedge^2 \Theta_M)\oplus \mathcal{C}^1(\mathcal{U}^0,\Theta_M)\to  \mathcal{C}^0(\mathcal{U}^0, \wedge^3 \Theta_M)\oplus \mathcal{C}^1(\mathcal{U}^0,\wedge^2\Theta_M)\oplus  \mathcal{C}^2(\mathcal{U}^0, \Theta_M))}{  im(\mathcal{C}^0(\mathcal{U}^0, \Theta_M)\to\mathcal{C}^0(\mathcal{U}^0, \wedge^2 \Theta_M)\oplus \mathcal{C}^1(\mathcal{U}^0,\Theta_M))}\\
&\cong \frac{ker(A^{0,0}(M,\wedge^2 T_M)\oplus A^{0,1}(M, T_M)\to A^{0,0}(M,\wedge^3 T_M)\oplus A^{0,1}(M,\wedge^2 T_M)\oplus A^{0,2}(M, T_M))}{im(A^{0,0}(M,T_M)\to A^{0,0}(M,\wedge^2 T_M)\oplus A^{0,1}(M,T_M))}\notag\\
(b,a) &\mapsto   \notag ([\Lambda_0,c]-b,\bar{\partial} c)
\end{align}

We define the map in the following way:
let $(b,a) \in \mathcal{C}^0(\mathcal{U}^0, \wedge^2 \Theta_M)\oplus \mathcal{C}^1(\mathcal{U}^0,\Theta_M)$ be a cohomology class of \u{C}ech resolution. Since $\delta a=0$, there exists a $c\in C^0(\mathcal{U}^0,\mathscr{A}^{0,0}(T_M))$ such that $-\delta c=a$. Since $a$ is holomorphic $(\bar{\partial}a=0)$, by the commutativity $\bar{\partial} c\in A^{0,1}(M, T_M)$. We claim that $[\Lambda_0,c]-b\in A^{0,0}(M,\wedge^2 T_M)$. Indeed, $\delta([\Lambda_0,c]-b)=-[\Lambda_0,-\delta c]-\delta b=-[\Lambda_0,a]-\delta b=0$. We show that $(\bar{\partial} c, [\Lambda_0,c]-b)$ is a cohomology class from Dolbeault resolution. Indeed, $\bar{\partial}(\bar{\partial}c)=0.$ $[\Lambda_0,[\Lambda_0,c]-b]=0$. $\bar{\partial} ([\Lambda_0,c]-b)+[\Lambda_0, \bar{\partial} c]=-[\Lambda_0,\bar{\partial} c]+[\Lambda_0,\bar{\partial c}]=0$. We define the map by $(b,a)\mapsto ([\Lambda_0,c]-b,\bar{\partial} c)$. We show that this map is well defined. Indeed, let $(b',a')$ define the same class given by $(b,a)$. Then there exists $d\in C^0(\mathcal{U}^0,\Theta_M)$ such that $a-a'=-\delta d$ and $b-b'=[\Lambda_0,d]$. Let $-\delta c'=a'$. Then $\bar{\partial} c-\bar{\partial}c'=\bar{\partial}(c-c'-d)$, and $[\Lambda_0,c]-b-([\Lambda_0,c']-b')=[\Lambda_0,c-c']-(b-b')=[\Lambda_0,c-c'-d]$.

For the inverse map, let $(\beta,\alpha) \in A^{0,0}(M,\wedge^2  T_M)\oplus A^{0,1}(M,  T_M) $ be a cohomology class from Dolbeault resolution. 
Then there exists a $c\in C^0(\mathcal{U}^0,\mathscr{A}^{0,0}(T_M))$ such that $\bar{\partial} c =\alpha$. We define the inverse map $(\beta,\alpha) \mapsto ([\Lambda_0,c]-\beta,-\delta c)$.

\begin{theorem}\label{n}
$\left(- \left(\frac{\partial \Lambda(t)}{\partial t}\right)_{t=0},\left(\frac{\partial \varphi(t)}{\partial t}\right)_{t=0}\right)\in A^{0,0}(M,\wedge^2 T_M)\oplus A^{0,1}(M,T_M)$ satisfies 

$[\Lambda_0, -\left(\frac{\partial \Lambda(t)}{\partial t}\right)_{t=0}]=0$,  $\bar{\partial} \left( -(\frac{\partial \Lambda(t)}{\partial t})_{t=0}\right) +[\Lambda_0,\left(\frac{\partial \varphi(t)}{\partial t}\right)_{t=0}]=0$, $\bar{\partial} \left(\frac{\partial \varphi(t)}{\partial t}\right)_{t=0}=0$, and under the isomorphism $(\ref{isomorphism})$, $\left(\frac{\partial(M_t,\Lambda_t)}{\partial t}\right)_{t=0}\in \mathbb{H}^1(M, \Theta_M^\bullet)$ corresponds to $\left( -\left(\frac{\partial \Lambda(t)}{\partial t}\right)_{t=0},\left(\frac{\partial \varphi(t)}{\partial t}\right)_{t=0}\right)$

\end{theorem}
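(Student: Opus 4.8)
The plan is to read off the three displayed identities by differentiating, at $t=0$, the three integrability relations already established for the family, namely $[\Lambda(t),\Lambda(t)]=0$ (Theorem \ref{1thm}), $\bar\partial\Lambda(t)-[\Lambda(t),\varphi(t)]=0$ (equation (\ref{ll00})) and $\bar\partial\varphi(t)-\frac12[\varphi(t),\varphi(t)]=0$ (equation (\ref{tt07})), and then to exhibit the explicit connecting cochain realizing the isomorphism (\ref{isomorphism}). Writing $\dot\Lambda:=\left(\frac{\partial\Lambda(t)}{\partial t}\right)_{t=0}$ and $\dot\varphi:=\left(\frac{\partial\varphi(t)}{\partial t}\right)_{t=0}$, I differentiate each relation using bilinearity of the Schouten bracket and the Leibniz rule, then set $t=0$, where $\varphi(0)=0$ and $\Lambda(0)=\Lambda_0$. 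The first relation gives $2[\dot\Lambda,\Lambda_0]=0$, hence $[\Lambda_0,-\dot\Lambda]=0$ by the graded antisymmetry (property (1) of the differential graded Lie algebra $\mathfrak g$); the second gives $\bar\partial\dot\Lambda-[\dot\Lambda,\varphi(0)]-[\Lambda_0,\dot\varphi]=0$, and since $\varphi(0)=0$ this is $\bar\partial(-\dot\Lambda)+[\Lambda_0,\dot\varphi]=0$; the third gives $\bar\partial\dot\varphi-[\dot\varphi,\varphi(0)]=0$, i.e. $\bar\partial\dot\varphi=0$. These are exactly the three asserted identities, and they also certify that $(-\dot\Lambda,\dot\varphi)$ lies in the kernel appearing in the numerator of the Dolbeault side of (\ref{isomorphism}).

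For the correspondence under (\ref{isomorphism}), recall that $\left(\frac{\partial(M_t,\Lambda_t)}{\partial t}\right)_{t=0}$ is represented in the \u{C}ech resolution by the cocycle $(b,a)=(\{\lambda_j(0)\},\{\theta_{jk}(0)\})$ of Proposition \ref{gg}. Following the definition of the map, I must produce a $0$-cochain $c=\{c_j\}\in C^0(\mathcal{U}^0,\mathscr{A}^{0,0}(T_M))$ with $-\delta c=a$ and then verify $([\Lambda_0,c]-b,\bar\partial c)=(-\dot\Lambda,\dot\varphi)$. The natural choice is $c_j:=\sum_{\alpha=1}^n\left(\frac{\partial\xi_j^\alpha(z,t)}{\partial t}\right)_{t=0}\frac{\partial}{\partial\zeta_j^\alpha}$, where $\zeta_j^\alpha:=\xi_j^\alpha(z,0)$ are the holomorphic coordinates of $M=M_0$ (see (\ref{holomorphic})). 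Differentiating the chart-compatibility identity $\xi_j^\alpha(z,t)=f_{jk}^\alpha(\xi_k(z,t),t)$ at $t=0$ and passing to the $\zeta_j$-frame yields $c_j-c_k=\theta_{jk}(0)$, i.e. $-\delta c=a$; this is the computation underlying the complex-structure Kodaira-Spencer map (\cite{Kod05} p.201, p.262). Differentiating the defining relation (\ref{matrix1}) for $\varphi(t)$ at $t=0$, where the factor $\varphi_j^\lambda(z,0)=0$ kills the unwanted term, gives $\bar\partial c_j=\dot\varphi$ after using the holomorphic chain rule $\sum_\alpha\frac{\partial\zeta_j^\alpha}{\partial z_\lambda}\frac{\partial}{\partial\zeta_j^\alpha}=\frac{\partial}{\partial z_\lambda}$; in particular $\bar\partial c_j$ is independent of $j$, recovering the global $(0,1)$-form $\dot\varphi$.

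The main work, and the step I expect to be the real obstacle, is the Poisson component $[\Lambda_0,c]-b=-\dot\Lambda$. I will differentiate the fundamental relation (\ref{tt304}), $\sum_{r,s}h_{rs}(z,t)\frac{\partial\xi_j^\alpha}{\partial z_r}\frac{\partial\xi_j^\beta}{\partial z_s}=g_{\alpha\beta}^j(\xi_j(z,t),t)$, with respect to $t$ and evaluate at $t=0$. Transported to the $\zeta_j$-frame, the coefficient of $\frac{\partial}{\partial\zeta_j^\alpha}\wedge\frac{\partial}{\partial\zeta_j^\beta}$ in $\dot\Lambda$ is exactly $\sum_{r,s}\dot h_{rs}\frac{\partial\zeta_j^\alpha}{\partial z_r}\frac{\partial\zeta_j^\beta}{\partial z_s}$, the term carrying $\dot h_{rs}$ from differentiating the left side. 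The remaining terms must be matched: the $t$-derivative $\frac{\partial g_{\alpha\beta}^j}{\partial t}|_0$ of the right side is the coefficient of $\lambda_j(0)$, the transport term is $\sum_\gamma\frac{\partial g_{\alpha\beta}^j}{\partial\xi_j^\gamma}\dot\xi_j^\gamma$, and the two mixed terms $\sum_{r,s}h_{rs}\left(\frac{\partial\dot\xi_j^\alpha}{\partial z_r}\frac{\partial\zeta_j^\beta}{\partial z_s}+\frac{\partial\zeta_j^\alpha}{\partial z_r}\frac{\partial\dot\xi_j^\beta}{\partial z_s}\right)$ collapse, via (\ref{tt304}) at $t=0$ in the form $\sum_{r,s}h_{rs}\frac{\partial\zeta_j^c}{\partial z_r}\frac{\partial\zeta_j^\beta}{\partial z_s}=g_{c\beta}^j$, into $\sum_c\left(g_{c\beta}^j\frac{\partial\dot\xi_j^\alpha}{\partial\zeta_j^c}+g_{\alpha c}^j\frac{\partial\dot\xi_j^\beta}{\partial\zeta_j^c}\right)$. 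Reading the local formula (\ref{tt00}) for $[\Lambda_0,c_j]$ in the $\zeta_j$-frame, whose coefficient of $\frac{\partial}{\partial\zeta_j^\alpha}\wedge\frac{\partial}{\partial\zeta_j^\beta}$ is precisely $\sum_c(g_{c\beta}^j\partial_{\zeta_j^c}\dot\xi_j^\alpha+g_{\alpha c}^j\partial_{\zeta_j^c}\dot\xi_j^\beta)-\sum_c\dot\xi_j^c\partial_{\zeta_j^c}g_{\alpha\beta}^j$, one sees that the differentiated (\ref{tt304}) rearranges into $\dot\Lambda=-[\Lambda_0,c_j]+\lambda_j(0)$, that is $[\Lambda_0,c]-b=-\dot\Lambda$. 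This is formally the same manipulation as in the proof of Proposition \ref{gg}, now applied to the change of frame between the fixed differentiable coordinates $z$ and the holomorphic coordinates $\xi_j(\cdot,t)$ rather than between two charts of $\mathcal M$; the delicate point is the bookkeeping of the antisymmetry $h_{rs}=-h_{sr}$, the factor coming from the wedge, and the passage between the $z$- and $\zeta_j$-frames so that the three families of terms line up with the Schouten-bracket formula. Since $\bar\partial c=\dot\varphi$ and $[\Lambda_0,c]-b=-\dot\Lambda$, the image of $(b,a)$ under (\ref{isomorphism}) is $(-\dot\Lambda,\dot\varphi)$, completing the identification.
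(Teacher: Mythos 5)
Your proposal is correct and follows essentially the same route as the paper: the three identities are obtained by differentiating the integrability relations at $t=0$, and the correspondence under the isomorphism is established by taking the connecting cochain $c_j=\sum_\alpha\dot\xi_j^\alpha\,\partial/\partial\xi_j^\alpha$, with $-\delta c=a$ and $\bar\partial c=\dot\varphi$ as in Kodaira, and with the Poisson component $[\Lambda_0,c]-b=-\dot\Lambda$ verified by differentiating $(\ref{tt304})$ exactly as in the paper's Lemma $\ref{beta}$.
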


\begin{proof}
By Theorem \ref{1thm} (3), (\ref{ll00}) and (\ref{tt07}), we have $[\Lambda(t),\Lambda(t)]=0$, $\bar{\partial} \Lambda(t)-[\Lambda(t),\varphi(t)]=0$ and $\bar{\partial}\varphi(t)-\frac{1}{2}[\varphi(t),\varphi(t)]=0$ with $\varphi(0)=0, \Lambda(0)=\Lambda_0$. By taking the derivative of these equations with respect to $t$ and plugging $0$, we get the first claim. Next we show the second claim. Put
\begin{align*}
\theta_{jk}=\sum_{\alpha=1}^n \left(\frac{\partial f_{jk}^{\alpha}(\xi_k,t)}{\partial t}\right)_{t=0}\frac{\partial}{\partial \xi_j^{\alpha}},\,\,\,\,\,\,\,\,
\sigma_j=\sum_{r,s=1}^n \left(\frac{\partial g_{rs}(\xi,t)}{\partial t}\right)_{t=0} \frac{\partial}{\partial \xi_j^r}\wedge \frac{\partial}{\partial \xi_j^s}
\end{align*}
The infinitesimal deformation $\left(\frac{\partial (M_t,\Lambda_t)}{\partial t}\right)_{t=0}\in \mathbb{H}^1(M,\Theta_M^\bullet)$ is the cohomology class of the $(\{\sigma_j\},\{\theta_{jk}\},)\in C^0(\mathcal{U}^0,\wedge^2 \Theta_M)\oplus C^1(\mathcal{U}^0,\Theta_M)$ (see Proposition \ref{gg} and Definition \ref{mapping}). We fix a tangent vector $\frac{\partial}{\partial t}\in T_0(\Delta)$, denote $\left(\frac{\partial f(t)}{\partial t}\right)_{t=0}$ by $\dot{f}$ for a $C^{\infty}$ function $f(t),t\in \Delta$. With this notation, we put
\begin{align*}
\xi_j=\sum_{\alpha=1}^n \dot{\xi_j}^{\alpha}\frac{\partial}{\partial \xi_j^{\alpha}},\,\,\,\,\,\text{where}\,\,\,\,\dot{\xi_j}^{\alpha}=\left(\frac{\partial \xi_j^{\alpha}(z,t)}{\partial t}\right)_{t=0}
\end{align*}
for each $j$. Then we have 
\begin{align}\label{hh89}
\text{$\delta \{ \xi_j \}=-\{ \theta_{jk} \}$ and $\bar{\partial} \xi_j =\sum_{\lambda=1}^n \left( \frac{\partial \varphi^{\lambda}(z,t)}{\partial t}\right)_{t=0}\frac{\partial}{\partial z_{\lambda}}=\sum_{\lambda=1}^n \dot{\varphi}^{\lambda}\frac{\partial}{\partial z_{\lambda}}=\dot{\varphi}$} 
\end{align}
(for the detail, see \cite{Kod05} Theorem 5.4 p.266). On the other hand, 
\begin{lemma}\label{beta}
We have $\dot{\Lambda}-{\sigma_j}+[\Lambda_0, \xi_j]=0$. More precisely,

\begin{align*}
\sum_{r,s=1}^n \left(\frac{\partial h_{rs} (z,t)}{\partial t}\right)_{t=0} \frac{\partial}{\partial z_r}\wedge \frac{\partial}{\partial z_s}-\sum_{\alpha,\beta=1}^n \left(\frac{\partial g_{\alpha\beta}^j(\xi_j,t)}{\partial t}\right)_{t=0} \frac{\partial}{\partial \xi_j^{\alpha}} \wedge \frac{\partial}{\partial \xi_j^{\beta}}+[\sum_{r,s=1}^n g_{rs}^j(\xi_j,0)\frac{\partial}{\partial \xi_j^r}\wedge \frac{\partial}{\partial \xi_j^s},\sum_{c=1}^n \dot{\xi}_j^c\frac{\partial}{\partial \xi_j^c}]=0
\end{align*}
equivalently $($with the notation above$)$, 
\begin{align}\label{tt10}
\sum_{r,s=1}^n \dot{h_{rs}} \frac{\partial}{\partial z_r}\wedge \frac{\partial}{\partial z_s}-\sum_{\alpha,\beta=1}^n \dot{g}_{\alpha\beta}^j \frac{\partial}{\partial \xi_j^{\alpha}} \wedge \frac{\partial}{\partial \xi_j^{\beta}}+[\sum_{r,s=1}^n g_{rs}^j(\xi_j,0)\frac{\partial}{\partial \xi_j^r}\wedge \frac{\partial}{\partial \xi_j^s},\sum_{c=1}^n \dot{\xi}_j^c\frac{\partial}{\partial \xi_j^c}]=0
\end{align}
\end{lemma}

\begin{proof}
From $(\ref{holomorphic})$, the first term of $(\ref{tt10})$ is
\begin{align*}
\sum_{r,s=1}^n \dot{h_{rs}}\frac{\partial}{\partial z_r}\wedge \frac{\partial}{\partial z_s}=\sum_{r,s,a,b=1}^n \dot{h_{rs}} \frac{\partial \xi_j^a (z,0)}{\partial z_r}\frac{\partial \xi_j^b(z,0)}{\partial z_s}\frac{\partial}{\partial \xi_j^a}\wedge \frac{\partial}{\partial \xi_j^b}
\end{align*}
Let's compute the third term of $(\ref{tt10})$:
\begin{align*}
&\sum_{r,s,c=1}^n [g_{rs}^j(\xi_j,0) \frac{\partial}{\partial \xi_j^r}\wedge \frac{\partial}{\partial \xi_j^s}, \dot{\xi}_j^c\frac{\partial}{\partial \xi_j^c}]=\sum_{r,s,c=1}^n ([g_{rs}^j(\xi_j,0)\frac{\partial}{\partial \xi_j^r},\dot{\xi}_j^c \frac{\partial}{\partial \xi_j^c}]\wedge \frac{\partial}{\partial \xi_j^s}-g_{rs}^j(\xi_j,0)[\frac{\partial}{\partial \xi_j^s},\dot{\xi}_j^c \frac{\partial}{\partial \xi_j^c}]\wedge\frac{\partial}{\partial \xi_j^r})\\
&=\sum_{r,s,c=1}^n (g_{rs}^j(\xi_j,0)\frac{\partial \dot{\xi}_j^c}{\partial \xi_j^r}\frac{\partial}{\partial \xi_j^c}\wedge \frac{\partial}{\partial \xi_j^s}-\dot{\xi}_j^c\frac{\partial g_{rs}(\xi_j,0)}{\partial \xi_j^c}\frac{\partial}{\partial \xi_j^r}\wedge \frac{\partial}{\partial \xi_j^s}+g_{rs}^j(\xi_j,0)\frac{\partial \dot{\xi}_j^c}{\partial \xi_j^s}\frac{\partial}{\partial \xi_j^r}\wedge\frac{\partial}{\partial \xi_j^c})
\end{align*}
By considering the coefficients of $\frac{\partial}{\partial \xi_j^a}\wedge \frac{\partial}{\partial \xi_j^b}$, $(\ref{tt10})$ is equivalent to
\begin{align}\label{tt11}
\sum_{r,s=1}^n \dot{h_{rs}} \frac{\partial \xi_j^a (z,0)}{\partial z_r}\frac{\partial \xi_j^b(z,0)}{\partial z_s}-\dot{g}_{ab}^j-\sum_{c=1}^n \dot{\xi}_j^c\frac{\partial g_{ab}(\xi_j,0)}{\partial \xi_j^c}+\sum_{c=1}^n (g_{cb}^j(\xi_j,0)\frac{\partial \dot{\xi}_j^a}{\partial \xi_j^c}+g_{ac}^j(\xi_j,0)\frac{\partial \dot{\xi}_j^b}{\partial \xi_j^c})=0
\end{align}
On the other hand, from $(\ref{tt304})$, we have
\begin{align}\label{tt14}
g_{ab}^j(\xi_j^1(z,t),...,\xi_j^n(z,t),t_1,...,t_m)=\sum_{r,s=1}^n h_{rs}(z,t)\frac{\partial \xi_j^a(z,t)}{\partial z_r}\frac{\partial \xi_j^b(z,t)}{\partial z_s}
\end{align}
By taking the derivative of $(\ref{tt14})$ with respect to $t$ and putting $t=0$, we have
\begin{align*}
\sum_{c=1}^n \frac{\partial g_{ab}^j(\xi_j,0)}{\partial \xi_j^c}\dot{\xi}_j^c+\dot{g}_{ab}^j=\sum_{r,s=1}^n\dot{h}_{rs}\frac{\partial \xi_j^a(z,0)}{\partial z_r}\frac{\partial \xi_j^b(z,0)}{\partial z_s}+\sum_{r,s=1}^n h_{rs}(z,0)( \frac{\partial \dot{\xi}_j^a}{\partial z_r}\frac{\partial \xi_j^b(z,0)}{\partial z_s}+\frac{\partial \xi_j^a(z,0)}{\partial z_r}\frac{\partial \dot{\xi}_j^b}{\partial z_s})
\end{align*}
Hence $(\ref{tt11})$ is equivalent to
\begin{align}\label{vv8}
\sum_{c=1}^n g_{cb}^j(\xi_j,0)\frac{\partial \dot{\xi}_j^a}{\partial \xi_j^c}+g_{ac}^j(\xi_j,0)\frac{\partial \dot{\xi}_j^b}{\partial \xi_j^c}=\sum_{r,s=1}^n (h_{rs}(z,0)\frac{\partial \dot{\xi}_j^a}{\partial z_r}\frac{\partial \xi_j^b(z,0)}{\partial z_s}+ h_{rs}(z,0)\frac{\partial \xi_j^a(z,0)}{\partial z_r}\frac{\partial \dot{\xi}_j^b}{\partial z_s})
\end{align}
Indeed, the left hand side and right hand side of $(\ref{vv8})$ coincide: from $(\ref{tt14})$ and $(\ref{holomorphic})$,
\begin{align*}
\sum_{c=1}^n g_{cb}^j(\xi_j,0)\frac{\partial \dot{\xi}_j^a}{\partial \xi_j^c}+g_{ac}^j(\xi_j,0)\frac{\partial \dot{\xi}_j^b}{\partial \xi_j^c}&=\sum_{r,s,c=1}^n (h_{rs}(z,0)\frac{\partial \xi_j^c(z,0)}{\partial z_r}\frac{\partial \xi_j^b(z,0)}{\partial z_s}\frac{\partial \dot{\xi}_j^a}{\partial \xi_j^c}+ h_{rs}(z,0)\frac{\partial \xi_j^a(z,0)}{\partial z_r}\frac{\partial \xi_j^c(z,0)}{\partial z_s}\frac{\partial \dot{\xi}_j^b}{\partial \xi_j^c})\\
&=\sum_{r,s=1}^n (h_{rs}(z,0)\frac{\partial \dot{\xi}_j^a}{\partial z_r}\frac{\partial \xi_j^b(z,0)}{\partial z_s}+ h_{rs}(z,0)\frac{\partial \xi_j^a(z,0)}{\partial z_r}\frac{\partial \dot{\xi}_j^b}{\partial z_s})
\end{align*}
This completes Lemma \ref{beta}.
\end{proof}
Going back to the proof of Theorem \ref{n}, we defined the isomorphism $(\ref{isomorphism})$: $(b,a)\mapsto ([\Lambda_0,c]-b,\bar{\partial} c)$ where $-\delta c=a$. We take $(b,a)=(\{\sigma_j\},\{\theta_{jk}\})$ and $c=\{\xi_j\}$. Note $-\delta \{\xi_j\}=\{\theta_{jk}\}$ by $(\ref{hh89})$. Then by the isomorphism $(\ref{isomorphism})$, $(\{\sigma_j\},\{\theta_{jk}\})$ is mapped to $([\Lambda_0,\{\xi_j\}]-\{\sigma_j\}, \bar{\partial}\{\xi_j\})$ which is $(-\dot{\Lambda}, \dot{\varphi})$ by Lemma \ref{beta} and (\ref{hh89}). This completes the proof of Theorem \ref{n}.
\end{proof}

\subsection{Integrability condition}\label{ss00}\

We have showed that given a Poisson analytic family $(\mathcal{M},\Lambda,B,\omega)$, the deformations $(M_t,\Lambda_t)$ of $M=M_0$ near $(M_0,\Lambda_0)$ is represented by the $C^{\infty}$ vector $(0,1)$-form $\varphi(t)$ (\ref{b}) and the $C^{\infty}$ bivector field $\Lambda(t)$ of type $(2,0)$ (\ref{f}) on $M$ with $\varphi(0)=0$ and $\Lambda(0)=\Lambda_0$ satisfying the conditions: (1)$[\Lambda(t),\Lambda(t)]=0,(2)\bar{\partial} \Lambda(t)-[\Lambda(t),\varphi(t)]=0$ and (3)$\bar{\partial} \varphi(t)-\frac{1}{2}[\varphi(t),\varphi(t)]=0$ for each $t\in \Delta$ by Theorem \ref{1thm} (3), (\ref{ll00}) and (\ref{tt07}).

Conversely, we will show that on a compact holomorphic Poisson manifold $(M,\Lambda_0)$, a $C^{\infty}$ vector $(0,1)$-form $\varphi\in A^{0,1}(M,T_M)$ and a $C^{\infty}$ type $(2,0)$ bivector field $\Lambda\in A^{0,0}(M,\wedge^2 T_M)$ on $M$ such that $\varphi$ and $\Lambda_0+\Lambda$ satisfying the integrability condition (1),(2),(3) define another holomorphic Poisson structure on the underlying differentiable manifold $M$. Indeed, let $\varphi=\sum_{\lambda=1}^n \varphi^{\lambda}_{v}(z)d\bar{z}_v\frac{\partial}{\partial z_{\lambda}}$ be a $C^{\infty}$ vector $(0,1)$-form  and $\Lambda=\sum_{r,s=1}^n h_{rs}(z)\frac{\partial}{\partial z_r}\wedge \frac{\partial}{\partial z_s}$ be a $C^{\infty}$ bivector field of type $(2,0)$ on a compact holomorphic Poisson manifold $(M,\Lambda_0)$. Suppose $\det(\delta_v^{\lambda}-\sum_{\mu=1}^n \varphi_{v}^{\mu}(z)\overline{\varphi_{\mu}^{\lambda}}(z))_{\lambda,\mu=1,...,n}\ne 0$, and $\varphi$, $\Lambda$ satisfy the integrability condition:
\begin{align}
&[\Lambda_0+\Lambda,\Lambda_0+\Lambda]=0 \label{bb1}\\
&\bar{\partial} (\Lambda_0+\Lambda)-[\Lambda_0+\Lambda,\varphi]=0\label{bb2}\\
&\bar{\partial}\varphi-\frac{1}{2}[\varphi,\varphi]=0\label{bb3}
\end{align}

Then by the Newlander-Nirenberg theorem(\cite{New57},\cite{Kod05}), the condition (\ref{bb3}) gives a finite open covering $\{U_j\}$ of $M$  and $C^{\infty}$-functions $\xi_j^{\alpha}=\xi_j^{\alpha}(z),\alpha=1,...,n$ on each $U_j$ such that $\xi_j:z\to \xi_j(z)=(\xi_j^1(z),...,\xi_j^n(z))$ gives complex coordinates on $U_j$, and $\{\xi_1,...,\xi_j,...\}$ defines another complex structure on $M$, which we denote by $M_{\varphi}$. By Theorem \ref{m}, the conditions $(\ref{bb1})$ and $(\ref{bb2})$ gives a holomorphic Poisson structure $(\Lambda_0+\Lambda)^{2,0}$ on $M_\varphi$. Recall that $(\Lambda_0+\Lambda)^{2,0}$ means the type $(2,0)$-part of $\Lambda_0+\Lambda$ with respect to the complex structure induced by $\varphi$ (see Remark \ref{renj}).

\begin{remark}
If we replace $\varphi$ by $-\varphi$, then $(\ref{bb1}),(\ref{bb2})$, and $(\ref{bb3})$ are equivalent to
\begin{align}\label{yghb}
L(\Lambda+\varphi)+\frac{1}{2}[\Lambda+\varphi,\Lambda+\varphi]=0 \,\,\,\,\text{where}\,\,L=\bar{\partial}+[\Lambda_0,-]
\end{align}
which is a solution of the Maurer-Cartan equation of a differential graded Lie algebra $(\mathfrak{g}=\bigoplus_{i\geq 0} g^i=\bigoplus_{p+q-1=i, q \geq1} A^{0,p}(M,\wedge^q T_M),L,[-,-])$. This differential graded Lie algebra controls deformations of compact holomorphic Poisson manifolds in the language of functor of Artin rings $($see the second part of the author's Ph.D. thesis \cite{Kim14}$)$.
\end{remark}

\begin{example}[Hitchin-Goto Poisson analytic family] 
Let $(M,\sigma)$ be a compact holomorphic Poisson manifold which satisfies the $\partial\bar{\partial}$-lemma. Then any class $\sigma([\omega])\in H^1(M,\Theta_M)$ for $[\omega]\in H^1(M,\Theta_M^*)$ is tangent to a deformation of complex structure induced by  $\phi(t)=\sigma(\alpha)$ where $\alpha=t\omega+\partial (t^2\beta_2+t^3\beta_3+\cdots)$ for $(0,1)$-forms $\beta_i$ with respect to the original complex structure $($see \cite{Hit12} Theorem 1$)$. Suppose that $\phi(t)=\sigma(\alpha)$ converges for $t\in \Delta \subset \mathbb{C}$. We can consider $\psi=\psi(t):=-\phi(t)$ as a $C^{\infty}$ vector $(0,1)$-form on $M\times \Delta$, and $\sigma$ as a $C^{\infty}$ type $(2,0)$ bivector on $M\times \Delta$. We note that $(\psi(t),\sigma)$ satisfies $[\sigma,\sigma]=0,\bar{\partial}\sigma-[\sigma,\psi(t)]=0$ and $\partial \psi(t)-\frac{1}{2}[\psi(t),\psi(t)]=0$. Then by Newlander-Nirenberg Theorem$($\cite{New57},\cite{Kod05} $p.268$$)$, we can give a holomorphic coordinate on $M\times \Delta$ induced by $\psi$. Let's denote the complex manifold induced by $\psi$ by $\mathcal{M}$. On the other hand, the type $(2,0)$ part $\sigma^{2,0}$ of $\sigma$ with respect to the complex structure $\mathcal{M}$ defines a holomorphic Poisson structure on $\mathcal{M}$. Then the natural projection $\pi:(\mathcal{M},\sigma^{2,0})\to \Delta$ defines a Poisson analytic family of deformations of $(M,\sigma)$. Since $\sigma$ does not depend on $t$, we have $0$ in the Poisson direction under the Poisson Kodaira-Spencer map $\varphi_0: T_0\Delta\to \mathbb{H}^1(M, \Theta_M^\bullet)$ by Theorem $\ref{n}$. More precisely, we have $\varphi_0(\frac{\partial}{\partial t})=(0,-\sigma([\omega]))$.

\end{example}

\section{Theorem of existence for holomorphic Poisson structures}\label{section5}
In this section, we prove `Theorem of existence for holomorphic Poisson structures' as an analogue of `Theorem of existence for complex analytic structures' by Kodaira-Spencer under the assumption that the associated Laplacian operator $\Box$ (induced from the operator $\bar{\partial}+[\Lambda_0,-]$) is strongly elliptic and of diagonal type.
\subsection{Statement of Theorem of existence for holomorphic Poisson structures}\

\begin{theorem}[Theorem of existence for holomorphic Poisson structures]\label{theorem of existence}
Let $(M,\Lambda_0)$ be a compact holomorphic Poisson manifold such that the associated Laplacian operator $\Box$ $($induced from the operator $\bar{\partial}+[\Lambda_0,-]$$)$ is strongly elliptic and of diagonal type. Suppose that $\mathbb{H}^2(M,\Theta_M^\bullet)=0$. Then there exists a Poisson analytic family $(
\mathcal{M},\Lambda,B,\omega)$ with $0\in B\subset \mathbb{C}^m$ satisfying the following conditions:
\begin{enumerate}
\item $\omega^{-1}(0)=(M,\Lambda_0)$
\item The Poisson Kodaira-Spencer map $\varphi_0:\frac{\partial}{\partial t}\to \left(\frac{\partial (M_t,\Lambda_t)}{\partial t}\right)_{t=0}$ with $(M_t,\Lambda_t)=\omega^{-1}(t)$ is an isomorphism of $T_0(B)$ onto $\mathbb{H}^1(M,\Theta_M^\bullet):T_0 B\xrightarrow{\varphi_0} \mathbb{H}^1(M,\Theta_M^\bullet)$.
\end{enumerate}
\end{theorem}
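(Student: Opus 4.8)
The plan is to reduce the construction to solving the Maurer--Cartan equation of the differential graded Lie algebra $\mathfrak g$ of (\ref{tt76}) and then to recover the geometric family by the converse construction of subsection \ref{ss00}. Writing $\psi=\varphi+\Lambda'$, I seek a convergent family of solutions of
\[
L\psi+\tfrac12[\psi,\psi]=0,\qquad L=\bar\partial+[\Lambda_0,-],
\]
with $\psi(0)=0$ and prescribed linear part. The hypothesis that $\Box=LL^{*}+L^{*}L$ is strongly elliptic is what makes Hodge theory available on the total complex $(\mathfrak g,L)$: it yields a finite-dimensional harmonic space in each degree, a Green operator $G$ commuting with $L$ and $L^{*}$, the orthogonal decomposition $I=H+\Box G=H+LL^{*}G+L^{*}LG$ with harmonic projector $H$, and the identification of the degree-$i$ harmonic space with $\mathbb H^i(M,\Theta_M^\bullet)$.

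Following Kuranishi's method as presented in \cite{Mor71}, I would fix a basis $\beta_1,\dots,\beta_m$ of harmonic representatives of $\mathbb H^1(M,\Theta_M^\bullet)\subset g_1=A^{0,0}(M,\wedge^2 T_M)\oplus A^{0,1}(M,T_M)$, set $\psi_1(t)=\sum_{i=1}^m t_i\beta_i$, and replace the Maurer--Cartan equation by the Kuranishi equation
\[
\psi(t)=\psi_1(t)-\tfrac12 L^{*}G[\psi(t),\psi(t)].
\]
Expanding $\psi(t)=\sum_{|\nu|\geq1}\psi_\nu t^\nu$ and matching coefficients, each $\psi_\nu$ with $|\nu|\geq2$ is obtained by applying $-\tfrac12 L^{*}G$ to a Schouten bracket of strictly lower-order terms, so the recursion is unobstructed and determines a unique formal solution; applying $H$ and $L^{*}$ to the equation shows automatically that $H\psi=\psi_1$ and $L^{*}\psi=0$.

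It then remains to verify that a solution of the Kuranishi equation genuinely solves Maurer--Cartan and that the series converges. Setting $\Xi:=L\psi+\tfrac12[\psi,\psi]$, the identity $L^2=0$ and the graded Jacobi identity give $L\Xi=[\psi,\Xi]$, while $L^{*}\psi=0$ forces $L^{*}\Xi=0$; since $\psi$ starts in order one, $\Xi$ starts in order at least two, and comparing the lowest-order nonvanishing coefficient $\Xi_{\nu_0}$ in $L\Xi=[\psi,\Xi]$ forces $L\Xi_{\nu_0}=0$, so that $\Xi_{\nu_0}$ is harmonic and defines a class in $\mathbb H^2(M,\Theta_M^\bullet)=0$; hence $\Xi_{\nu_0}=0$ and, inductively, $\Xi\equiv0$. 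The step I expect to be the main obstacle is convergence, and it is precisely here that the diagonal-type hypothesis is indispensable: because $L$ mixes the summands $A^{0,p}(M,\wedge^q T_M)$ (the $\bar\partial$-part raises $p$, the $[\Lambda_0,-]$-part raises $q$), the operator $\Box$ is a genuinely coupled system, and the diagonal-type assumption guarantees its principal symbol is scalar, so that the Schauder estimate $\|Gu\|_{k+2,\alpha}\leq C\|u\|_{k,\alpha}$ holds. Together with the product estimate $\|[u,v]\|_{k,\alpha}\leq C\|u\|_{k+1,\alpha}\|v\|_{k+1,\alpha}$ for the Schouten bracket, this lets me run Kodaira's majorant-series comparison and conclude that $\psi(t)$ converges in a Hölder norm on a polydisk $B=\Delta_r$ for $r$ small.

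Finally, from the convergent solution $\psi(t)=\varphi(t)+\Lambda'(t)$ on $B$, the pair $(\varphi(t),\Lambda_0+\Lambda'(t))$ gives, up to the sign normalization of subsection \ref{ss00}, a $C^{\infty}$ vector $(0,1)$-form and a type $(2,0)$ bivector satisfying the integrability conditions (\ref{bb1})--(\ref{bb3}); since $\varphi(0)=0$ I may shrink $r$ so that the nondegeneracy condition of subsection \ref{ss00} holds for all $t\in B$. Hence by the Newlander--Nirenberg theorem $\varphi(t)$ defines a complex structure $M_t$, and by Theorem \ref{m} the type $(2,0)$ part $(\Lambda_0+\Lambda'(t))^{2,0}$ is a holomorphic Poisson structure $\Lambda_t$ on $M_t$; assembling these over $t\in B$ yields a Poisson analytic family $(\mathcal M,\Lambda,B,\omega)$ with $\omega^{-1}(0)=(M,\Lambda_0)$, establishing (1). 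For (2), Theorem \ref{n} identifies the Poisson Kodaira--Spencer image of $\partial/\partial t_i$ with the class of the $t_i$-linear term of $\psi$, namely $[\beta_i]\in\mathbb H^1(M,\Theta_M^\bullet)$; as $\beta_1,\dots,\beta_m$ form a basis of harmonic representatives, $\varphi_0\colon T_0(B)\to\mathbb H^1(M,\Theta_M^\bullet)$ is an isomorphism.
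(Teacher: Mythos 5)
Your proposal is correct and follows essentially the same route as the paper: the Kuranishi equation $\psi=\psi_1-\tfrac12 L^{*}G[\psi,\psi]$ for the differential graded Lie algebra $(\mathfrak g,L=\bar\partial+[\Lambda_0,-])$, vanishing of the obstruction via $\mathbb H^2(M,\Theta_M^\bullet)=0$, majorant-series convergence in H\"older norms using the a priori estimate supplied by the strongly-elliptic, diagonal-type hypothesis on $\Box$, and recovery of the geometric family through Newlander--Nirenberg together with Theorems \ref{m} and \ref{n}. The only step you pass over silently is the elliptic-regularity statement (the paper's Proposition \ref{pr}) that the solution is $C^{\infty}$ in $(z,t)$ and \emph{holomorphic} in $t$, which is what allows $\varphi$ and $\Lambda$ to be viewed as data on $M\times\Delta_\epsilon$ satisfying the integrability conditions there, so that the pointwise structures $(M_{\varphi(t)},\Lambda(t)^{2,0})$ actually assemble into a Poisson analytic family.
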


 Let $\{(\pi_{1},\eta_{1}),...,(\pi_{m},\eta_{m})\}$ be a basis of $\mathbb{H}^1(M,\Theta_M^\bullet)$ where $(\pi_{\lambda},\eta_{\lambda})\in A^{0,0}(M,\wedge^2 T_M)\oplus A^{0,1}(M,T_M)$ for $\lambda=1,...,m$. Let $\Delta_\epsilon =\{t\in \mathbb{C}^m||t|<\epsilon\}$ for some $\epsilon>0$. Assume that there is a family $\{(\varphi(t),\Lambda(t))|t\in \Delta_\epsilon \}$ of $C^{\infty}$ vector $(0,1)$-forms $\varphi(t)=\sum_{\lambda=1}^n\sum_{v=1}^n \varphi^{\lambda}_v(z,t) d\bar{z}_v\frac{\partial}{\partial z_{\lambda}}\in A^{0,1}(M,T_M)$ and $C^{\infty}$ type $(2,0)$ bivectors $\Lambda(t)=\sum_{\alpha,\beta=1}^n \Lambda_{\alpha\beta}(z,t)\frac{\partial}{\partial z_\alpha}\wedge \frac{\partial}{\partial z_\beta}\in A^{0,0}(M,\wedge^2 T_M)$ on $M$, which satisfy 
\begin{enumerate}
\item $[\Lambda(t),\Lambda(t)]=0$\\
\item $\bar{\partial} \Lambda(t)-[\Lambda(t),\varphi(t)]=0$\\
\item $\bar{\partial}\varphi(t)-\frac{1}{2}[\varphi(t),\varphi(t)]=0$
\end{enumerate}

and the initial conditions
\begin{align*}
\varphi(0)=0, \Lambda(0)=\Lambda_0,( -\left(\frac{\partial \Lambda(t)}{\partial t_{\lambda}}\right)_{t=0},\left(\frac{\partial \varphi(t)}{\partial t_{\lambda}}\right)_{t=0})=(-\pi_{\lambda},-\eta_{\lambda}),\,\,\,\,\, \lambda=1,...,m,
\end{align*}

Since $\varphi(0)=0$, we may assume $\det(\delta^{\lambda}_v-\sum_{\mu=1}^n\varphi^{\mu}_v(z,t)\overline{\varphi^{\lambda}_{\mu}(z,t)})_{\lambda,\mu=1,...,n}\ne 0$ if $\Delta_\epsilon$ is sufficiently small. Therefore, as in subsection \ref{ss00}, by the Newlander-Nirenberg theorem(\cite{New57},\cite{Kod05} p.268), each $\varphi(t)$ determines a complex structure $M_{\varphi(t)}$ on $M$. The conditions $(2)$ and $(3)$ imply that $(2,0)$-part $\Lambda(t)^{2,0}$ of $\Lambda(t)$ with respect to the complex structure induced from $\varphi(t)$ is a holomorphic Poisson structure on $M_{\varphi(t)}$. If the family $\{(M_{\varphi(t)},\Lambda(t)^{2,0})|t\in \Delta_\epsilon\}$  is a Poisson analytic family, it satisfies the conditions (1) and (2) in Theorem \ref{theorem of existence} by Theorem \ref{n}. We will construct such a family $\{(\varphi(t),\Lambda(t))|t\in \Delta_\epsilon \}$ under the assumption $\mathbb{H}^2(M,\Theta_M^\bullet)=0$ and then show that  $\{(M_{\varphi(t)},\Lambda(t)^{2,0})|t\in \Delta_\epsilon\}$ is a Poisson analytic family in the subsection \ref{subsection}, which completes the proof of Theorem $\ref{theorem of existence}$. 
\begin{remark}
By replacing $\varphi(t)$ by $-\varphi(t)$, it is sufficient to construct $\varphi(t)$ and $\Lambda(t)$ satisfying
\begin{enumerate}
\item $[\Lambda(t),\Lambda(t)]=0$\\
\item $\bar{\partial} \Lambda(t)+[\Lambda(t),\varphi(t)]=0$\\
\item $\bar{\partial}\varphi(t)+\frac{1}{2}[\varphi(t),\varphi(t)]=0$
\end{enumerate}

and the initial conditions
\begin{align}\label{initial1}
\varphi(0)=0, \Lambda(0)=\Lambda_0,( \left(\frac{\partial \Lambda(t)}{\partial t_{\lambda}}\right)_{t=0},\left(\frac{\partial \varphi(t)}{\partial t_{\lambda}}\right)_{t=0})=(\pi_{\lambda},\eta_{\lambda}),\,\,\,\,\, \lambda=1,...,m,
\end{align}
We note that $(1),(2),(3)$ are equivalent to 

\begin{equation}\label{qet}
\bar{\partial} (\varphi(t)+\Lambda(t))+\frac{1}{2}[\varphi(t)+\Lambda(t),\varphi(t)+\Lambda(t)]=0 
\end{equation}
\end{remark}

We construct such $\alpha(t):=\varphi(t)+\Lambda(t)$ in the following subsection.

\subsection{Construction of $\alpha(t)=$$\varphi(t)+\Lambda(t)$}\

We use the Kuranishi's method presented in \cite{Mor71} to construct $\alpha(t)$. First we note the following: let $A^{p}=A^{0,p}(M,T_M)\oplus \cdots \oplus A^{0,0}(M,\wedge^{p+1} T_M)$ and $L=\bar{\partial} +[\Lambda_0,-]$. Then the sequence
\begin{align*}
A^0\xrightarrow{L} A^1\xrightarrow{L} A^2 \xrightarrow{L} \cdots
\end{align*}
is an elliptic complex. So we have the adjoint operator $L^*$, Green's operator $G$, Laplacian operator $\Box:=LL^*+L^*L$ and $H$ where $H$ is the orthogonal projection to the $\Box$-harmonic subspace $\mathbb{H}$ of $\bigoplus_{p\geq 0} A^p$. In particular we have $H:A^p\to \mathbb{H}^p\cong \mathbb{H}^p(M,\Theta_M^\bullet)$. For the detail, we refer to \cite{Wel08}.

We  introduce the H\"{o}lder norms in the spaces $A^{p}=A^{0,p}(M,T_M)\oplus \cdots \oplus A^{0,0}(M, \wedge^{p+1} T_M)$ in the following way: we fix a finite open covering $\{U_j\}$ of $M$ such that $z_j=(z_j^1,...,z_j^n)$ are local coordinates  on $U_j$. Let $\phi\in A^{p}$ which is locally expressed on $U_j$ as
\begin{align*}
\phi=\sum_{r+s=p+1, s\geq 1} \phi_{j \alpha_1\cdots\alpha_r\beta_1\cdots\beta_s}(z)d\bar{z}_j^{\alpha_1}\wedge \cdots \wedge d\bar{z}_j^{\alpha_r}\wedge \frac{\partial}{\partial z_j^{\beta_1}}\wedge\cdots \wedge\frac{\partial}{\partial z_j^{\beta_s}}
\end{align*}
Let $k\in \mathbb{Z},k\geq 0,\theta\in \mathbb{R},0<\theta<1$. Let $h=(h_1,...,h_{2n}),h_i\geq 0,|h|:=\sum_{i=1}^{2n} h_i$ where $n=\dim\, M$. Then denote
\begin{align*}
D_j^h=\left(\frac{\partial}{\partial x_j^1}\right)^{h_1}\cdots \left(\frac{\partial}{\partial x_j^{2n}}\right)^{h_{2n}},\,\,\,\,\,z_j^{\alpha}=x_j^{2\alpha-1}+ix_j^{2\alpha}
\end{align*}

Then the H\"{o}lder norm $||\varphi||_{k+\theta}$ is defined as follows:
{\small{\begin{align*}
||\phi||_{k+\theta}=\max_j \{ \sum_{h, |h|\leq k}\left( \sup_{z\in U_j}|D_j^h \phi_{j \alpha_1\cdots\alpha_r\beta_1\cdots\beta_s}(z)|\right)+\sup_{y,z\in U_j,|h|=k} 
\frac{|D_j^h \phi_{j \alpha_1\cdots\alpha_r\beta_1\cdots\beta_s}(y)-D_j^h \phi_{j \alpha_1\cdots\alpha_r\beta_1\cdots\beta_s}(z)|}{|y-z|^{\theta}} \},
\end{align*}}}
where the sup is over all $\alpha_1,...,\alpha_r,\beta_1,...,\beta_s$.

Now suppose that the associated Laplacian operator $\Box$ induced from  $L=\bar{\partial}+[\Lambda_0,-]$ is a strongly elliptic operator whose principal part is of diagonal type. Then by \cite{Kod05} Appendix Theorem 4.3 page 436, we have a $priori$ estimate
\begin{equation}\label{assumption}
||\phi||_{k+\theta}\leq C(|| \Box \phi||_{k-2+\theta}+||\phi||_0)
\end{equation}
where $k\geq 2$, $C$ is a constant which is independent of $\varphi$ and
\begin{center}
$||\phi||_0=\max_{j,\alpha_1,...,\beta_s} \sup_{z\in U_j} |\phi_{j \alpha_1\cdots\alpha_r\beta_1\cdots\beta_s}(z)|$.
\end{center}

 We will use the following two lemmas.

\begin{lemma}\label{lemma5.2.2}
For $\phi,\psi\in A^1$, we have $||[\phi,\psi]||_{k+\theta}\leq C||\phi||_{k+1+\theta}||\psi||_{k+1+\theta}$, where $C$ is independent of $\phi$ and $\psi$.
\end{lemma}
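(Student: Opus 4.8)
The plan is to reduce the estimate to the elementary product (Leibniz) estimate for the H\"{o}lder norms, exploiting the fact that the Schouten bracket is a bilinear \emph{first-order} bidifferential operator. Writing $\phi=\phi_1+\phi_2$ and $\psi=\psi_1+\psi_2$ with $\phi_1,\psi_1\in A^{0,1}(M,T_M)$ and $\phi_2,\psi_2\in A^{0,0}(M,\wedge^2 T_M)$, the bilinearity of $[-,-]$ reduces the claim to the four brackets $[\phi_a,\psi_b]$. By the local formula $(\ref{tt00})$, on each chart $U_j$ of the fixed finite covering used to define $||\cdot||_{k+\theta}$, every coefficient of $[\phi,\psi]$ is a finite sum of terms of the shape $p\cdot\partial_\gamma q$, where $p$ is a coefficient of one factor, $\partial_\gamma q=\partial q/\partial z_j^\gamma$ is a first-order holomorphic derivative of a coefficient of the other factor, and the indices range over a bounded finite set; the antiholomorphic form parts $d\bar z_I\wedge d\bar z_K$ are merely wedged and carry no differentiation.

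Next I would record two elementary facts about the norms $||\cdot||_{k+\theta}$ defined above. First, the derivative-shift inequality $||\partial_\gamma u||_{k+\theta}\le ||u||_{k+1+\theta}$, immediate from the definition since $\partial/\partial z_j^\gamma$ is a combination of the real derivatives $D_j$, so that the control of derivatives of $u$ up to order $k+1$ (together with the $\theta$-H\"{o}lder seminorm of the top-order derivatives) bounds the corresponding quantities for $\partial_\gamma u$ at order $k$. Second, the product estimate
\[
||uv||_{k+\theta}\le C\big(||u||_{k+\theta}\,||v||_0+||u||_0\,||v||_{k+\theta}\big)\le C\,||u||_{k+\theta}\,||v||_{k+\theta},
\]
which follows from the Leibniz rule $D_j^h(uv)=\sum_{h'+h''=h}\binom{h}{h'}D_j^{h'}u\,D_j^{h''}v$, the triangle inequality, and the elementary bound for the $\theta$-H\"{o}lder seminorm of a product of two bounded functions; here $C$ depends only on $k$, $\theta$, and $n=\dim M$ (compare the analogous estimate for vector-valued forms in \cite{Kod05}).

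Combining these, each term $p\cdot\partial_\gamma q$ occurring in a coefficient of $[\phi,\psi]$ on $U_j$ is estimated by
\[
||\,p\,\partial_\gamma q\,||_{k+\theta}\le C\,||p||_{k+\theta}\,||\partial_\gamma q||_{k+\theta}\le C\,||p||_{k+\theta}\,||q||_{k+1+\theta}\le C\,||\phi||_{k+1+\theta}\,||\psi||_{k+1+\theta},
\]
where the last step uses $||p||_{k+\theta}\le ||p||_{k+1+\theta}\le ||\phi||_{k+1+\theta}$; the symmetric terms, in which the derivative falls on a coefficient of $\phi$, are bounded in the same way. Summing the finitely many terms in each coefficient and taking the maximum over the charts $U_j$ then yields $||[\phi,\psi]||_{k+\theta}\le C\,||\phi||_{k+1+\theta}\,||\psi||_{k+1+\theta}$ with $C$ independent of $\phi$ and $\psi$.

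I expect the only point requiring care, rather than a genuine obstacle, to be the bookkeeping that isolates the gain of exactly one derivative: the Schouten bracket differentiates each argument at most once, which is precisely what forces the index $k+1+\theta$ on the right while leaving $k+\theta$ on the left. Everything else is the product estimate applied termwise, together with the fact that the covering $\{U_j\}$ defining the norm is the same one in which the bracket is computed, so that no change of chart, and hence no additional Jacobian factors, enters the argument.
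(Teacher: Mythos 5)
Your argument is correct, and it is the standard one: the Schouten bracket is a first-order bilinear bidifferential operator in the local coefficients, so the termwise combination of the derivative-shift inequality $\|\partial_\gamma u\|_{k+\theta}\le \|u\|_{k+1+\theta}$ with the H\"older product estimate $\|uv\|_{k+\theta}\le C\|u\|_{k+\theta}\|v\|_{k+\theta}$, computed chart by chart on the fixed covering defining the norm, gives exactly the stated bound. The paper itself states Lemma \ref{lemma5.2.2} without proof (implicitly deferring to the analogous estimate for $[\varphi,\psi]$ in \cite{Mor71} and \cite{Kod05}), so there is nothing to compare against; your write-up supplies the omitted details correctly, including the one point that genuinely matters, namely that the bracket differentiates each argument at most once, which is what produces the index $k+1+\theta$ on the right-hand side.
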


\begin{lemma}\label{lemma5.2.3}
For $\phi\in A^1$, we have $||G\phi||_{k+\theta}\leq C||\phi||_{k-2+\theta},k\geq 2$, where $C$ depends only on $k$ and $\theta$, not on $\phi$.
\end{lemma}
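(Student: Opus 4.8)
The plan is to derive the estimate from the a priori estimate (\ref{assumption}) for $\Box$, which is available precisely because of the standing hypothesis that $\Box=LL^*+L^*L$ is strongly elliptic and of diagonal type. Applying (\ref{assumption}) to $u=G\phi\in A^1$ gives
\begin{equation*}
\|G\phi\|_{k+\theta}\leq C\big(\|\Box G\phi\|_{k-2+\theta}+\|G\phi\|_0\big),\qquad k\geq 2,
\end{equation*}
so it suffices to bound the two terms on the right by a constant multiple of $\|\phi\|_{k-2+\theta}$.

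For the first term I would use the Hodge decomposition $I=H+\Box G$ attached to the elliptic complex $A^\bullet$, so that $\Box G\phi=\phi-H\phi$ and hence $\|\Box G\phi\|_{k-2+\theta}\leq \|\phi\|_{k-2+\theta}+\|H\phi\|_{k-2+\theta}$. Since $\mathbb H$ is a finite-dimensional space of $C^\infty$ harmonic forms, $H$ is a finite-rank operator $H\phi=\sum_i\langle\phi,e_i\rangle e_i$ with fixed smooth $e_i$; therefore $\|H\phi\|_{k-2+\theta}\leq C\|\phi\|_0\leq C\|\phi\|_{k-2+\theta}$, and the first term is $\leq C\|\phi\|_{k-2+\theta}$.

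The second term $\|G\phi\|_0$ is the only non-formal point, and I would control it by a compactness argument. If the desired bound $\|G\phi\|_0\leq C\|\phi\|_{k-2+\theta}$ failed, there would be $\phi_n$ with $\|G\phi_n\|_0=1$ and $\|\phi_n\|_{k-2+\theta}\to 0$. Writing $u_n=G\phi_n$ and combining the displayed estimate with the bound just obtained for $\|\Box u_n\|_{k-2+\theta}=\|\phi_n-H\phi_n\|_{k-2+\theta}\to 0$, the sequence $u_n$ is bounded in $\|\cdot\|_{k+\theta}$. By Arz\`ela--Ascoli (the embedding $C^{k+\theta}\hookrightarrow C^2$ is compact for $k\geq 2$) a subsequence converges in $C^2$ to some $u$ with $\|u\|_0=1$; passing to the limit in $\Box u_n=\phi_n-H\phi_n\to 0$ gives $\Box u=0$, so $u$ is harmonic, while $u_n\perp\mathbb H$ forces $u\perp\mathbb H$, whence $u=0$, a contradiction. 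This yields $\|G\phi\|_0\leq C\|\phi\|_{k-2+\theta}$.

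Combining the three estimates gives $\|G\phi\|_{k+\theta}\leq C\|\phi\|_{k-2+\theta}$ with $C=C(k,\theta)$. I expect the main obstacle to be the $\|G\phi\|_0$ bound, since everything else is a formal consequence of (\ref{assumption}) and $I=H+\Box G$; the compactness step is where the ellipticity of $\Box$ (through the $C^{k+\theta}$ bound furnished by (\ref{assumption})) is genuinely used, and one must check that the diagonal-type strong ellipticity of the Poisson Laplacian $\Box$ built from $L=\bar\partial+[\Lambda_0,-]$ indeed places us within the scope of (\ref{assumption}), exactly as assumed in Theorem \ref{theorem of existence}.
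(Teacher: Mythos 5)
Your argument is correct and is essentially the same as the paper's, which simply cites \cite{Mor71} Proposition 2.3 (p.160) as a consequence of the a priori estimate (\ref{assumption}); the cited proof is exactly your scheme of applying (\ref{assumption}) to $G\phi$, using $\Box G=I-H$ with $H$ of finite rank, and removing the $\|G\phi\|_0$ term by a compactness/contradiction argument using $G\phi\perp\mathbb H$. No gaps.
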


\begin{proof}
This follows from (\ref{assumption}). See \cite{Mor71} p.160 Proposition 2.3.
\end{proof}

With this preparation, we construct $\alpha(t):=\varphi(t)+\Lambda(t)=\Lambda_0+\sum_{\mu=1}^{\infty} (\varphi_{\mu}(t)+\Lambda_{\mu}(t))$, where
\begin{align*}
\varphi_{\mu}(t)+\Lambda_{\mu}(t)=\sum_{v_1+\cdots+v_m=\mu} (\varphi_{v_1\cdots v_m}+\Lambda_{v_1\cdots v_m})t_1^{v_1}\cdots t_m^{v_m}
\end{align*}
with $\varphi_{v_1\cdots v_m}+\Lambda_{v_1\cdots v_m}\in A^{0,1}(M,T_M)\oplus A^{0,0}(M,\wedge^2  T_M)$ such that

\begin{align}\label{qetyu}
\bar{\partial} \alpha(t)+\frac{1}{2}[\alpha(t),\alpha(t)]=0,\,\,\,\,\,\,\,
\alpha_1(t)=\varphi_1(t)+\Lambda_1(t)=\sum_{v=1}^m (\eta_v+\pi_v)t_v,
\end{align}
where $\{\eta_v+\pi_v\}$ is a basis for $\mathbb{H}^1\cong \mathbb{H}^1(M,\Theta_M^\bullet)$ (see (\ref{initial1})). Let $\beta(t):=\alpha(t)-\Lambda_0=\sum_{\mu=1}^{\infty} (\varphi_{\mu}(t)+\Lambda_{\mu}(t))$. Then $(\ref{qetyu})$ is equivalent to
\begin{align}\label{ghj}
L\beta(t)+\frac{1}{2}[\beta(t),\beta(t)]=0,\,\,\,\,\,\beta_1(t)=\alpha_1(t)
\end{align}

Constructing $\alpha(t)$ is equivalent to constructing $\beta(t)$. We will construct $\beta(t)$ satisfying $(\ref{ghj})$.
Consider the equation
\begin{equation}\label{qety}
\beta(t)=\beta_1(t)-\frac{1}{2}L^*G[\beta(t),\beta(t)],
\end{equation}
where $\beta_1(t)=\alpha_1(t)$.
Then $(\ref{qety})$ has a unique formal power series solution $\beta(t)=\sum_{\mu=1}^{\infty} \beta_{\mu}(t)$, and there exists a $\epsilon>0$ such that  for $t\in \Delta_{\epsilon}=\{t\in \mathbb{C}^m||t|<\epsilon\}$, $\beta(t)=\sum_{\mu=1}^{\infty} \beta_{\mu}(t)$ converges in the norm $||\cdot||_{k+\theta}$ (for the detail, see \cite{Mor71} p.162 Proposition 2.4. By virtue of the integrability condition (\ref{ghj}), we can formally apply their argument.).

\begin{proposition}\label{yui}
$\beta(t)$ satisfies $L\beta(t)+\frac{1}{2}[\beta(t),\beta(t)]=0$ if and only if $H[\beta(t),\beta(t)]=0$, where $H:A^2=A^{0,2}(M,T_M)\oplus A^{0,1}(M,\wedge^2 T_M)\oplus A^{0,0}(M,\wedge^3 T_M)\to \mathbb{H}^2\cong \mathbb{H}^2(M,\Theta_M^\bullet)$ is the orthogonal projection to the harmonic subspace of $A^2$.
\end{proposition}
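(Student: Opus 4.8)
The plan is to follow the standard Kuranishi argument, now carried out in the differential graded Lie algebra $\mathfrak{g}$ of the excerpt with differential $L=\bar\partial+[\Lambda_0,-]$. Write $\psi:=L\beta(t)+\tfrac12[\beta(t),\beta(t)]\in A^2$; the assertion is $\psi=0\iff H[\beta(t),\beta(t)]=0$. First I would extract an explicit formula for $\psi$ from the defining equation $(\ref{qety})$. Applying $L$ to $\beta=\beta_1-\tfrac12 L^*G[\beta,\beta]$ and using $L\beta_1=0$ (since $\beta_1=\alpha_1(t)$ is harmonic) together with the Hodge identity $\Box G=\mathrm{id}-H$, i.e. $LL^*G=\mathrm{id}-H-L^*LG$, gives \[ \psi=\tfrac12 H[\beta,\beta]+\tfrac12 L^*LG[\beta,\beta]. \] Since $HL^*=0$, applying $H$ yields $H\psi=\tfrac12 H[\beta,\beta]$, which immediately settles the easy direction: if $\psi=0$ then $H[\beta,\beta]=0$.

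For the converse, assume $H[\beta,\beta]=0$, so that $\psi=\tfrac12 L^*LG[\beta,\beta]$. This already shows $\psi\in\operatorname{Im}L^*$, whence $H\psi=0$ and, because $(L^*)^2=0$ (a consequence of $L^2=0$ for the elliptic complex), $L^*\psi=0$. The next ingredient is the Bianchi-type identity $L\psi=[\psi,\beta]$. I would obtain it from the DGLA structure: by the graded Leibniz rule (property (3) of the bracket) together with the graded Jacobi identity for $[\Lambda_0,-]$ one checks $L[\beta,\beta]=2[L\beta,\beta]$ for $\beta$ of degree $1$, and the graded Jacobi identity forces $[[\beta,\beta],\beta]=0$. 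Hence, using $L^2=0$, \[ L\psi=\tfrac12 L[\beta,\beta]=[L\beta,\beta]=[\psi-\tfrac12[\beta,\beta],\beta]=[\psi,\beta]. \] Now combining $H\psi=0$, $L^*\psi=0$ and the Hodge decomposition $\psi=H\psi+LL^*G\psi+L^*LG\psi$, a short computation (apply $L^*$, then pair $L^*LL^*G\psi=0$ with $L^*G\psi$) gives $LL^*G\psi=0$, so that $\psi=L^*LG\psi=L^*G\,L\psi=L^*G[\psi,\beta]$, using that $G$ commutes with $L$.

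It remains to run the a priori estimate on the self-consistent equation $\psi=L^*G[\psi,\beta(t)]$. Since $L^*G$ nets a gain of one derivative, Lemma \ref{lemma5.2.3} and Lemma \ref{lemma5.2.2} give \[ \|\psi\|_{k+\theta}=\|L^*G[\psi,\beta(t)]\|_{k+\theta}\le C\|[\psi,\beta(t)]\|_{k-1+\theta}\le C'\|\psi\|_{k+\theta}\,\|\beta(t)\|_{k+\theta}. \] Because $\beta(0)=0$ and $\beta(t)$ converges in $\|\cdot\|_{k+\theta}$, after shrinking $\epsilon$ we may assume $C'\|\beta(t)\|_{k+\theta}<1$ for all $t\in\Delta_\epsilon$, which forces $\|\psi\|_{k+\theta}=0$ and hence $\psi=0$. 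I expect the main obstacle to be precisely this last closing step: one must verify that the bracket estimate of Lemma \ref{lemma5.2.2} still applies with $\psi\in A^2$ (not only $A^1$), and, more importantly, that the elliptic estimate $(\ref{assumption})$ and therefore Lemma \ref{lemma5.2.3} are available at all — this is exactly where the hypothesis that the associated Laplacian $\Box$ induced from $\bar\partial+[\Lambda_0,-]$ is strongly elliptic and of diagonal type is used, so that the Hodge theory of the complex $A^\bullet$ and the Schauder-type estimates carry over verbatim from the classical case. The sign bookkeeping in establishing $L\psi=[\psi,\beta]$ is the other point demanding care, but it is purely formal once the three DGLA properties listed after $(\ref{tt00})$ are invoked.
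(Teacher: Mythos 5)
Your argument is correct and is essentially the paper's own proof: the paper simply notes that $\mathfrak{g}$ is a differential graded Lie algebra and defers to the standard Kuranishi argument of Morrow--Kodaira (Prop.\ 2.5, p.\ 163), which is exactly the computation you carry out — the identity $\psi=\tfrac12 H[\beta,\beta]+\tfrac12 L^*LG[\beta,\beta]$, the Bianchi-type relation $L\psi=[\psi,\beta]$, the reduction to $\psi=L^*G[\psi,\beta]$, and the closing estimate via Lemmas \ref{lemma5.2.2} and \ref{lemma5.2.3}. Your caveats (extending the bracket estimate from $A^1$ to $A^2\times A^1$, and the role of the ellipticity/diagonal-type hypothesis on $\Box$) are precisely the points the paper handles by assumption, so nothing further is needed.
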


\begin{proof}
We simply note that $(\bigoplus_{i\geq0} g_i, g_i=\bigoplus_{p+q-1=i,p\geq 0, q\geq 1} A^{0,p}(M,\wedge^q T_M),L=\bar{\partial}+[\Lambda_0,-],[-,-])$ is a differential graded Lie algebra and so the argument in the proof of \cite{Mor71} p.163 Proposition 2.5 is formally applied to our case by Lemma \ref{lemma5.2.2} and Lemma \ref{lemma5.2.3}.
\end{proof}

Now suppose that $\mathbb{H}^2(M,\Theta_M^\bullet)=0$. Then by Proposition \ref{yui}, $\beta(t)$ satisfies $(\ref{ghj})$ for $t\in \Delta_{\epsilon}$. Hence $\alpha(t)=\beta(t)+\Lambda_0=\varphi(t)+\Lambda(t)$ is the desired one satisfying $(\ref{qetyu})$. We note that $\alpha(t)$ has the following property which we need in the construction of a Poisson analytic family in the next subsection.
\begin{proposition}\label{pr}
$\alpha(t)=\beta(t)+\Lambda_0=\varphi(t)+\Lambda(t)$ is $C^{\infty}$ in $(z,t)$ and holomorphic in $t$.
\end{proposition}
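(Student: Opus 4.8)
The plan is to prove the two asserted properties separately, following the template of \cite{Mor71} (Proposition 2.4 and the regularity discussion around it) and exploiting that all the operators entering the fixed-point equation $(\ref{qety})$, namely $L=\bar\partial+[\Lambda_0,-]$, its adjoint $L^*$, the Green operator $G$ and the harmonic projection $H$, are built from a \emph{fixed} Hermitian metric on the reference manifold $M=M_0$ and are therefore independent of the parameter $t$. Since $\beta(t)=\sum_{\mu\ge1}\beta_\mu(t)$ is the unique formal solution of $(\ref{qety})$ and converges in $\|\cdot\|_{k+\theta}$ on $\Delta_\epsilon$, it suffices to read off holomorphy in $t$ and $C^\infty$-dependence on $(z,t)$ from this series, after which $\alpha(t)=\beta(t)+\Lambda_0$ inherits both.

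First I would establish holomorphy in $t$. By construction $\beta_1(t)=\sum_{v}(\eta_v+\pi_v)t_v$ is linear in $t$ with no dependence on $\bar t$, and matching homogeneous degrees in $(\ref{qety})$ expresses $\beta_\mu$ as $-\tfrac12 L^*G$ applied to the sum of Schouten brackets $[\beta_\nu,\beta_{\mu-\nu}]$. As $L^*$, $G$ and the bracket act only on the $z$-coefficients and not on $t$, induction shows each $\beta_\mu(t)$ is a homogeneous polynomial of degree $\mu$ in $t_1,\dots,t_m$ with $C^{k+\theta}$ coefficients and no antiholomorphic $t$-dependence. Because $\|\cdot\|_0\le\|\cdot\|_{k+\theta}$, the series converges uniformly on compact subsets of $\Delta_\epsilon$, so $\beta(t)$ is a locally uniform limit of $t$-polynomials and hence holomorphic in $t$ (as a $C^{k+\theta}(M)$-valued map).

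Next I would prove $C^\infty$-regularity. Each Taylor coefficient is smooth on $M$: the coefficients $\eta_v+\pi_v$ of $\beta_1$ are $\Box$-harmonic, hence $C^\infty$, and inductively $\beta_{v_1\cdots v_m}=-\tfrac12 L^*G(\sum[\beta_\cdot,\beta_\cdot])$ is $C^\infty$ because the Schouten bracket of smooth sections is smooth, $G$ maps $C^\infty$ to $C^\infty$ by elliptic regularity for the strongly elliptic operator $\Box$, and the first-order operator $L^*$ preserves $C^\infty$. To upgrade per-coefficient smoothness to joint smoothness in $(z,t)$ I would run the convergence estimate of $(\ref{qety})$ at \emph{every} order $k\ge2$: combining Lemma \ref{lemma5.2.3} (so that $L^*G$ gains one derivative, $\|L^*G\psi\|_{k+\theta}\le C\|\psi\|_{k-1+\theta}$) with Lemma \ref{lemma5.2.2} (so that $\|[\beta,\beta]\|_{k-1+\theta}\le C\|\beta\|_{k+\theta}^2$) yields the quadratic bound $\|\beta\|_{k+\theta}\le\|\beta_1\|_{k+\theta}+C_k\|\beta\|_{k+\theta}^2$, whence the majorant argument of \cite{Mor71} Proposition 2.4 gives convergence in $\|\cdot\|_{k+\theta}$ for each $k$. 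Together with the holomorphy in $t$, which supplies the $t$-derivatives of all orders via Cauchy estimates for the $C^{k+\theta}(M)$-valued holomorphic map, this shows every mixed $(z,t)$-derivative of $\beta$ exists and is continuous, i.e. $\beta$, and hence $\alpha$, is $C^\infty$ in $(z,t)$.

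The main obstacle is precisely this regularity step. Applying $H$ and $L^*$ to $(\ref{qety})$ gives $H\beta=\beta_1$ and $L^*\beta=0$, and under $\mathbb{H}^2(M,\Theta_M^\bullet)=0$ Proposition \ref{yui} turns $(\ref{ghj})$ into the elliptic equation $\Box\beta=-\tfrac12 L^*[\beta,\beta]$; this equation is \emph{critical}, since the nonlinearity $L^*[\beta,\beta]$ has the same differential order as $\Box$, so a naive Schauder bootstrap gains no derivatives and smoothness cannot be extracted from a single fixed norm. This is why one must instead produce genuine convergence of the series in every Hölder norm, which is exactly where the standing hypotheses enter: the Poisson Laplacian $\Box=LL^*+L^*L$ is strongly elliptic and of diagonal type, so the a priori estimate $(\ref{assumption})$ holds at all orders, and $\mathfrak{g}$ is a differential graded Lie algebra whose Schouten bracket obeys Lemma \ref{lemma5.2.2}. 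With these two Poisson-specific inputs secured, the remaining analysis is formally identical to the complex-analytic case of \cite{Mor71} and \cite{Kod05}.
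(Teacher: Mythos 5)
Your first step (holomorphy in $t$) is fine and agrees with the standard argument: each $\beta_\mu(t)$ is a homogeneous polynomial in $t_1,\dots,t_m$ because $L^*$, $G$ and the Schouten bracket act only on the $z$-variables, and locally uniform convergence in $\|\cdot\|_{k+\theta}$ then gives holomorphy of the $C^{k+\theta}(M)$-valued map $t\mapsto\beta(t)$. The gap is in the regularity step. You propose to rerun the majorant argument for $(\ref{qety})$ in $\|\cdot\|_{k+\theta}$ for every $k$, but the radius of convergence produced by that argument depends on the constants in Lemma \ref{lemma5.2.2} and Lemma \ref{lemma5.2.3}, which grow with $k$ (Leibniz), and the majorant series has radius comparable to the reciprocal of a constant that must dominate them. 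Hence for each $k$ you only obtain convergence on a polydisk $\Delta_{\epsilon_k}$ with $\epsilon_k$ possibly shrinking to $0$, and you cannot conclude that $\beta(t)$ is smooth at any fixed $t\neq 0$ of the original $\Delta_\epsilon$; nothing in your write-up controls the $k$-dependence of these constants.

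Moreover, the reason you give for abandoning the elliptic bootstrap --- that $\Box\beta=-\tfrac12 L^*[\beta,\beta]$ is ``critical'' because the nonlinearity has the same differential order as $\Box$ --- is not correct, and that bootstrap is exactly the paper's (cited) proof. The second-order terms in $L^*[\beta,\beta]$ occur with a coefficient which is $\beta$ itself, undifferentiated, so the equation is a \emph{quasilinear} strongly elliptic system whose principal part is an $O(\beta)$ perturbation of that of $\Box$; writing it as $\sum A(z,\beta)D^2\beta=Q(z,\beta,D\beta)$ with $Q$ quadratic in $D\beta$, membership $\beta\in C^{k+\theta}$ puts the coefficients in $C^{k+\theta}$ and the right-hand side in $C^{k-1+\theta}$, and the Schauder estimate for the strongly elliptic diagonal-type operator then yields $\beta\in C^{k+1+\theta}$ on the \emph{same} domain. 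Iterating from the single $C^{2+\theta}$ convergence already established gives $C^\infty$ on all of $\Delta_\epsilon$; joint smoothness in $(z,t)$ and holomorphy in $t$ are obtained by adjoining $\sum_\lambda\partial^2/\partial t_\lambda\partial\bar t_\lambda$ (which annihilates $\beta$, a locally uniform limit of $t$-polynomials) so that the operator is elliptic on $M\times\Delta_\epsilon$. This is precisely the content of Proposition 2.6 of \cite{Mor71} and of \cite{Kod05}, Appendix \S 8, which the paper invokes, and it is the place where the standing hypothesis that $\Box$ is strongly elliptic and of diagonal type is actually used.
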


\begin{proof}
We note that $\Box$ is a strongly elliptic differential operator whose principal part is of diagonal type by our assumption. So we can formally apply the argument of \cite{Mor71} p.163 Proposition 2.6. See also \cite{Kod05} Appendix p.452 \S 8.
\end{proof}

\subsection{Construction of a Poisson analytic family}\label{subsection}\

 In the previous subsection, we have constructed a family $\{(\varphi(t),\Lambda(t))|t\in \Delta_{\epsilon}\}$ of $C^{\infty}$ vector $(0,1)$-forms $\varphi(t)$ and $C^{\infty}$ type $(2,0)$ bivectors $\Lambda(t)$
 \begin{align*}
 \varphi(t)=\sum_{\lambda=1}^n\sum_{v=1}^n \varphi_v^{\lambda}(z,t)d\bar{z}_v\frac{\partial}{\partial z_{\lambda}},\,\,\,\,\,
 \Lambda(t)=\sum_{\alpha,\beta=1}^n \Lambda_{\alpha\beta}(z,t)\frac{\partial}{\partial z_{\alpha}}\wedge \frac{\partial}{\partial z_{\beta}}
 \end{align*}
 satisfying the integrability condition $[\Lambda(t),\Lambda(t)]=0,\bar{\partial}\Lambda(t)=[\Lambda(t),\varphi(t)],\bar{\partial}\varphi(t)=\frac{1}{2}[\varphi(t),\varphi(t)]$ and the initial conditions $\varphi(0)=0, \Lambda(0)=\Lambda_0, (-\left(\frac{\partial \Lambda(t)}{\partial t_{\lambda}}\right)_{t=0},\left(\frac{\partial \varphi(t)}{\partial t_{\lambda}}\right)_{t=0})=(-\pi_{\lambda},-\eta_{\lambda}),\lambda=1,...,m$, where $\varphi_v^{\lambda}(z,t)$ and $\Lambda_{\alpha\beta}(z,t)$ are $C^{\infty}$ functions of $z^1,...,z^n,t_1,...,t_m$ and holomorphic in $t_1,...,t_m$.

 $(\varphi(t),\Lambda(t))$ determines a holomorphic Poisson structure $(M_{\varphi(t)},\Lambda(t)^{2,0})$ on $M$ for each $t\in \Delta_\epsilon$. In order to show that $\{(M_{\varphi(t)},\Lambda(t)^{2,0})|t\in \Delta_{\epsilon}\}$ is a Poisson analytic family, we consider $\varphi:=\varphi(t)$ as a vector $(0,1)$-form on the complex manifold $M\times \Delta_{\epsilon}$, and $\Lambda:=\Lambda(t)$ as a $(2,0)$ bivector on $M\times \Delta_{\epsilon}$. Then since $\varphi_v^{\lambda}=\varphi_v^{\lambda}(z,t)$ are holomorphic in $t_1,...,t_m$ (Proposition \ref{pr}), we have $\frac{\partial \varphi_v^{\lambda}}{\partial \bar{t}_{\mu}}=0$ in
\begin{align*}
\bar{\partial}\varphi=\sum_{\lambda,v=1}^n \left(\sum_{\beta=1}^n\frac{\partial \varphi_v^{\lambda}}{\partial \bar{z}_{\beta}}d\bar{z}_{\beta}+\sum_{\mu=1}^m \frac{\partial \varphi_v^{\lambda}}{\partial \bar{t}_{\mu}}d\bar{t}_{\mu}\right) \wedge d\bar{z}_v\frac{\partial}{\partial z_{\lambda}}
\end{align*}

Similarly since $\Lambda_{\alpha\beta}(z,t)$ is holomorphic in $t_1,...,t_m$ (Proposition \ref{pr}), we have $\frac{\partial \Lambda_{\alpha\beta}}{\partial \bar{t}_{\mu}}=0$ in

\begin{align*}
\bar{\partial} \Lambda=\sum_{\alpha,\beta} \left(\sum_{v=1}^n \frac{\partial \Lambda_{\alpha\beta}}{\partial \bar{z}_v}d\bar{z}_v+\sum_{\mu=1}^m \frac{\partial \Lambda_{\alpha\beta}}{\partial \bar{t}_{\mu}}d\bar{t}_{\mu}\right)\frac{\partial}{\partial z_{\alpha}}\wedge \frac{\partial}{\partial z_{\beta}}
\end{align*}
Hence $\varphi$ and $\Lambda$ satisfy $\bar{\partial}\varphi=\frac{1}{2}[\varphi,\varphi]$, $ \bar{\partial}\Lambda=[\Lambda,\varphi]$, and $[\Lambda,\Lambda]=0$. Then by the Newlander-Nirenberg theorem(\cite{New57},\cite{Kod05} p.268), $\varphi$ defines a complex structure $\mathcal{M}$ on $M\times \Delta_{\epsilon}$ and
$(2,0)$-part $\Lambda^{2,0}$ of $\Lambda$ defines a holomorphic Poisson structure on $\mathcal{M}$. Let $\omega: \mathcal{M}\to \Delta_\epsilon$ be the natural projection. Then $\{(M_{\varphi(t)},\Lambda(t)^{2,0})|t\in \Delta_{\epsilon}\}$ forms a Poisson analytic family $(\mathcal{M},\Lambda^{2,0},\Delta_{\epsilon},\omega)$ (for the detail, see \cite{Kod05} p.282). This completes the proof of Theorem $\ref{theorem of existence}$.

\section{Theorem of completeness for holomorphic Poisson structures}\label{section6}
\subsection{Statement of Theorem of completeness for holomorphic Poisson structures}
\subsubsection{Change of parameters}(compare \cite{Kod05} p.205)

Consider a Poisson analytic family $\{(M_t,\Lambda_t)|(M_t,\Lambda_t)=\omega^{-1}(t),t\in B\}=(\mathcal{M},\Lambda,B,\omega)$ of compact holomorphic Poisson manifolds, where $B$ is a domain of $\mathbb{C}^m$. Let $D$ be a domain of $\mathbb{C}^r$ and $h:s\to t=h(s),s\in D$, a holomorphic map of $D$ into $B$. Then by changing the parameter from $t$ to $s$, we will construct a Poisson analytic family $\{(M_{h(t)},\Lambda_{h(t)})|s\in D\}$ on the parameter space $D$ in the following.

Let $\mathcal{M}\times_B D:=\{(p,s)\in \mathcal{M}\times B|\omega(p)=h(s)\}$. Then we have the following commutative diagram
\begin{center}
$\begin{CD}
\mathcal{M}\times_B D @>p>> \mathcal{M}\\
@V\pi VV @VV\omega V\\
D @>h>> B
\end{CD}$
\end{center}
such that $(\mathcal{M}\times_B D,D,\pi)$ is a complex analytic family in the sense of Kodaira-Spencer and $\pi^{-1}(s)=M_{h(s)}$. We show that $(\mathcal{M}\times_B D,D,\pi)$ is naturally a Poisson analytic family such that $\pi^{-1}(s)=(M_{h(s)},\Lambda_{h(s)})$ and $p$ is a Poisson map. Note that the bivector field $\Lambda$ on $\mathcal{M}$ can be considered as a bivector field on $\mathcal{M}\times D$ which gives a holomorphic Poisson structure on $\mathcal{M}\times D$. So $(\mathcal{M}\times D,\Lambda)$ is a holomorphic Poisson manifold. We show that $\mathcal{M}\times_B D$ is a holomorphic Poisson submanifold of $(\mathcal{M}\times D,\Lambda)$ and defines a Poisson analytic family. Let $(p_0,s_0)\in \mathcal{M}\times_B D$. Taking a sufficiently small coordinate polydisk $\Delta$ with $h(s_0)\in \Delta$, we represent $(\mathcal{M}_{\Delta},\Lambda_{\Delta})=\omega^{-1}(\Delta)$ in the form of
\begin{align*}
(\mathcal{M}_{\Delta},\Lambda_{\Delta})=(\bigcup_{j=1}^l U_j\times \Delta, \sum_{\alpha,\beta=1}^n g_{\alpha\beta}^j(z_j,t)\frac{\partial}{\partial z_j^{\alpha}}\wedge\frac{\partial}{\partial z_j^{\beta}})
\end{align*}
where each $U_j$ is a polydisk independent of $t$, and $(z_j,t)\in U_j\times \Delta$ and $(z_k,t)\in U_k\times \Delta$ are the same point on $\mathcal{M}_{\Delta}$ if $z_j^{\alpha}=f_{jk}^{\alpha}(z_k,t), \alpha=1,...,n$. Let $E$ be a sufficiently small polydisk of $D$ such that $s_0\in E$ and $h(E)\subset \Delta$. Then we can represent $(\mathcal{M}\times D,\Lambda)$ around $(p_0,s_0)$ in the form of 
\begin{align*}
(\mathcal{M}_{\Delta}\times E,\Lambda|_{\mathcal{M}_{\Delta}\times E})=(\bigcup_{j=1}^l U_j\times \Delta\times E, \sum_{\alpha,\beta=1}^n g_{\alpha\beta}^j(z_j,t)\frac{\partial}{\partial z_j^{\alpha}}\wedge\frac{\partial}{\partial z_j^{\beta}})
\end{align*}
where $(z_j,t,s)\in U_j\times \Delta\times E$ and $(z_k,t,s)\in U_k\times \Delta\times E$ are the same point on $\mathcal{M}_{\Delta}\times E$ if $z_j=f_{jk}(z_k,t)$.
Then we can represent $\mathcal{M}\times_B D$ around $(p_0,s_0)$ in the form of $\bigcup_{j=1}^l U_j\times G_E$, where $G_E=\{(h(s),s)|s\in E\}\subset \Delta\times E$, and $(z_j,h(s),s)\in U_j\times G_E$ and $(z_k,h(s),s)\in U_k\times G_E$ are the same point if $z_j=f_{jk}(z_k,h(s))$. We note that at $(p_0,s_0)\in \mathcal{M}\times_B D\subset \mathcal{M}\times D$, we have $\Lambda_{(p_0,s_0)}=\sum_{\alpha,\beta=1}^n g_{\alpha\beta}^j(p_0,h(s_0))\frac{\partial}{\partial z_j^{\alpha}}|_{p_0}\wedge\frac{\partial}{\partial z_j^{\beta}}|_{p_0}\in \wedge^2 T_{\mathcal{M}\times_B D}$. Hence $\mathcal{M}\times_B D$ is a holomorphic Poisson submanifold of $(\mathcal{M}\times D,\Lambda)$, and $p:(\mathcal{M}\times_B D,\Lambda|_{\mathcal{M}\times_B D})\to (\mathcal{M},\Lambda)$ is a Poisson map.

Since $G_E$ is biholomorphic to $E$. The holomorphic Poisson manifold $(\mathcal{M}\times_B D,\Lambda|_{\mathcal{M}\times_B D})$ is represented locally by the form
\begin{align*}
(\bigcup_{j=1}^l U_j\times E, \sum_{\alpha,\beta=1}^n g_{\alpha\beta}^j(z_j,h(s))\frac{\partial}{\partial z_j^{\alpha}}\wedge\frac{\partial}{\partial z_j^{\beta}})
\end{align*}
where $(z_k,s)\in U_k\times E$ and $(z_j,s)\in U_j\times E$ are the same point if $z_j=f_{jk}(z_k,h(s))$, which shows that $(\mathcal{M}\times_B D,D, \Lambda|_{\mathcal{M}\times_B D},\pi)$ is a Poisson analytic family and $\pi^{-1}(s)=(M_{h(s)},\Lambda_{h(s)})$.

\begin{definition}
The Poisson analytic family $(\mathcal{M}\times_B D,D, \Lambda|_{\mathcal{M}\times_B D},\pi)$ is called the Poisson analytic family induced from $(\mathcal{M},B,\Lambda,\omega)$ by the holomorphic map $h:D\to B$.
\end{definition}

We point out that change of variable formula holds for infinitesimal Poisson deformations as in infinitesimal deformations of complex structures (\cite{Kod05} Theorem 4.7 p.207).

\begin{theorem}
For any tangent vector $\frac{\partial}{\partial s}=c_1\frac{\partial}{\partial s_1}+\cdots +c_r\frac{\partial}{\partial s_r}\in T_s(D)$, the infinitesimal Poisson deformation of $(M_{h(s)},\Lambda_{h(s)})$ along $\frac{\partial}{\partial s}$ is given by
\begin{align*}
\frac{\partial(M_{h(s)},\Lambda_{h(s)})}{\partial{s}}=(\sum_{\lambda=1}^{m} \frac{\partial t_{\lambda}}{\partial s} \frac{\partial M_t}{\partial t_{\lambda}},\sum_{\lambda=1}^{m} \frac{\partial t_{\lambda}}{\partial s}\frac{\partial{\Lambda_t}}{\partial t_{\lambda}})
\end{align*}
\end{theorem}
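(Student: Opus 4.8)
The plan is to compute the defining $1$-cocycle of the infinitesimal Poisson deformation of the induced family directly from its explicit local description, and then to obtain the formula as a formal consequence of the chain rule together with $\mathbb{C}$-linearity of the class map. First I would record the local data for the induced family $(\mathcal{M}\times_B D,\Lambda|_{\mathcal{M}\times_B D},D,\pi)$ established in the Change of parameters subsection: over a sufficiently small polydisk $E\subset D$ it is represented by $\bigcup_j U_j\times E$ with Poisson coefficients $g_{\alpha\beta}^j(z_j,h(s))$ and transition functions $z_j^{\alpha}=f_{jk}^{\alpha}(z_k,h(s))$, and $\pi^{-1}(s)=(M_{h(s)},\Lambda_{h(s)})$. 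Applying Proposition \ref{gg} to this family (with the parameter $s$ playing the role of $t$, and the cover $\{U_j\cap M_{h(s)}\}$ of the fiber), the infinitesimal Poisson deformation of $(M_{h(s)},\Lambda_{h(s)})$ along $\frac{\partial}{\partial s}$ is the class in $\mathbb{H}^1(M_{h(s)},\Theta_{M_{h(s)}}^\bullet)$ of the $1$-cocycle
\[
\left(\left\{\sum_{\alpha,\beta=1}^n\frac{\partial g_{\alpha\beta}^j(z_j,h(s))}{\partial s}\frac{\partial}{\partial z_j^{\alpha}}\wedge\frac{\partial}{\partial z_j^{\beta}}\right\},\ \left\{\sum_{\alpha=1}^n\frac{\partial f_{jk}^{\alpha}(z_k,h(s))}{\partial s}\frac{\partial}{\partial z_j^{\alpha}}\right\}\right).
\]

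Next I would apply the chain rule. Writing $t_\lambda=h_\lambda(s)$ and $\frac{\partial t_\lambda}{\partial s}=\sum_{i=1}^r c_i\frac{\partial h_\lambda}{\partial s_i}$, differentiation of the composite functions gives
\[
\frac{\partial g_{\alpha\beta}^j(z_j,h(s))}{\partial s}=\sum_{\lambda=1}^m\frac{\partial t_\lambda}{\partial s}\frac{\partial g_{\alpha\beta}^j(z_j,t)}{\partial t_\lambda},\qquad
\frac{\partial f_{jk}^{\alpha}(z_k,h(s))}{\partial s}=\sum_{\lambda=1}^m\frac{\partial t_\lambda}{\partial s}\frac{\partial f_{jk}^{\alpha}(z_k,t)}{\partial t_\lambda}.
\]
Substituting these into the cocycle above expresses it as $\sum_{\lambda=1}^m\frac{\partial t_\lambda}{\partial s}\,(\{\lambda_j^{(\lambda)}\},\{\theta_{jk}^{(\lambda)}\})$, where $(\{\lambda_j^{(\lambda)}\},\{\theta_{jk}^{(\lambda)}\})$ is precisely the cocycle that Proposition \ref{gg} and Definition \ref{mapping} associate to the original family $(\mathcal{M},\Lambda,B,\omega)$ and the direction $\frac{\partial}{\partial t_\lambda}$, representing $\frac{\partial(M_t,\Lambda_t)}{\partial t_\lambda}=\bigl(\frac{\partial M_t}{\partial t_\lambda},\frac{\partial\Lambda_t}{\partial t_\lambda}\bigr)$.

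Finally, since for fixed $s$ and fixed direction the coefficients $\frac{\partial t_\lambda}{\partial s}$ are constants and the passage to cohomology classes in $\mathbb{H}^1$ is $\mathbb{C}$-linear, taking classes of both sides yields
\[
\frac{\partial(M_{h(s)},\Lambda_{h(s)})}{\partial s}=\sum_{\lambda=1}^m\frac{\partial t_\lambda}{\partial s}\frac{\partial(M_t,\Lambda_t)}{\partial t_\lambda},
\]
which is the asserted identity after separating the complex-structure component and the Poisson component. I do not expect a genuine obstacle: the statement is a formal consequence of the explicit cocycle description and the chain rule. The only points demanding care are purely bookkeeping, namely that $\{U_j\cap M_{h(s)}\}$ is indeed the open cover of the fiber on which the cocycle is defined, and that the identification $\pi^{-1}(s)=(M_{h(s)},\Lambda_{h(s)})$ is compatible with the one used to define the Poisson Kodaira--Spencer map of $(\mathcal{M},\Lambda,B,\omega)$; both were arranged in the Change of parameters construction, so no further verification is needed beyond citing it.
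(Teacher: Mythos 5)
Your proposal is correct and is essentially the argument the paper intends: the paper states this theorem without proof immediately after the change-of-parameters construction, deferring to the analogous Theorem 4.7 in Kodaira's book, whose proof is exactly your chain-rule computation on the explicit cocycle $(\{\partial_s g^j_{\alpha\beta}(z_j,h(s))\},\{\partial_s f^{\alpha}_{jk}(z_k,h(s))\})$ of the induced family, here extended verbatim to the Poisson component. The two bookkeeping points you flag (the cover of the fiber and the compatibility of the identification $\pi^{-1}(s)=(M_{h(s)},\Lambda_{h(s)})$) are indeed settled by the construction of the induced family, so nothing further is needed.
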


With this preparation, we discuss a concept of completeness and `Theorem of completeness' in the context of deformations of compact holomorphic Poisson manifolds in the next subsection.

\subsubsection{Statement of `Theorem of completeness for holomorphic Poisson structures'}

\begin{definition}
Let $(\mathcal{M},\Lambda_{\mathcal{M}},B, \omega)$ be a Poisson analytic family of compact holomorphic Poisson manifolds, and $t^0\in B$. Then $(\mathcal{M},\Lambda_{\mathcal{M}},B,\omega)$ is called complete at $t^0\in B$ if for any Poisson analytic family $(\mathcal{N},\Lambda_{\mathcal{N}},D,\pi)$ such that $D$ is a domain of $\mathbb{C}^l$ containing $0$ and that $\pi^{-1}(0)=\omega^{-1}(t^0)$, there is a sufficiently small domain $\Delta$ with $0\in \Delta\subset D$, and a holomorphic map $h:s\to t=h(s)$ with $h(0)=t^0$ such that $(\mathcal{N}_{\Delta},{\Lambda_{\mathcal{N}}}_{\Delta},\Delta,\pi)$ is the Poisson analytic family induced from $(\mathcal{M},\Lambda_{\mathcal{M}},B,\omega)$ by $h$ where $(\mathcal{N}_{\Delta},{\Lambda_{\mathcal{N}}}_{\Delta},\Delta,\pi)$ is the restriction of $(\mathcal{N},\Lambda_{\mathcal{N}},D,\pi)$ to $\Delta$ $($see Remark $\ref{restriction}$$)$. 
\end{definition}

We will prove the following theorem which is an analogue of `Theorem of completeness' by Kodaira-Spencer (see Theorem \ref{kodairacomplete}).

\begin{theorem}[Theorem of completeness for holomorphic Poisson structures]\label{theorem of completeness}\label{complete9}
Let $(\mathcal{M},\Lambda_{\mathcal{M}},B,\omega)$ be a Poisson analytic family of deformations of a compact holomorphic Poisson manifold $(M_0,\Lambda_0)=\omega^{-1}(0)$, $B$ a domain of $\mathbb{C}^m$ containing $0$. If the Poisson Kodaira-Spencer map $\varphi_0:T_0 (B)\to \mathbb{H}^1(M_0,\Theta_{M_0}^\bullet)$ is surjective, the Poisson analytic family $(\mathcal{M},\Lambda_{\mathcal{M}}, B,\omega)$ is complete at $0\in B$.
\end{theorem}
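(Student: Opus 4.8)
The plan is to transport Kodaira--Spencer's completeness argument (\cite{Kod05} p.284) into the differential graded Lie algebra $\mathfrak{g}=(\bigoplus_i g_i,\,L=\bar\partial+[\Lambda_0,-],\,[-,-])$ of \eqref{tt76}, with the truncated hypercohomology $\mathbb{H}^\bullet(M,\Theta_M^\bullet)$ replacing $H^\bullet(M,\Theta_M)$. First I would restrict to a small polydisk and, exactly as in \S\ref{prill}, trivialise both families over the fixed underlying manifold $M=M_0$: after shrinking, $(\mathcal{M},\Lambda_{\mathcal M})$ near $(M,\Lambda_0)$ is represented by a $C^\infty$ family $\beta(t)=\varphi(t)+\Lambda'(t)\in g_1$ with $\beta(0)=0$ solving the Maurer--Cartan equation $L\beta+\tfrac12[\beta,\beta]=0$, and the comparison family $(\mathcal N,\Lambda_{\mathcal N})$ with $\pi^{-1}(0)=(M,\Lambda_0)$ is likewise represented by $\gamma(s)\in g_1$, $\gamma(0)=0$, $L\gamma+\tfrac12[\gamma,\gamma]=0$ (Theorem \ref{1thm}, \eqref{ll00}, \eqref{tt07}). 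By the change-of-parameters construction of \S\ref{section6} and Theorem \ref{m}, completeness at $0$ reduces to producing a holomorphic map $h\colon s\mapsto t=h(s)$ with $h(0)=0$ together with a $C^\infty$ family $v(s)\in A^{0,0}(M,T_M)=g_0$, $v(0)=0$, realising the Maurer--Cartan gauge equivalence
\[
\gamma(s)=e^{v(s)}\ast\beta(h(s)),
\]
where $\ast$ is the gauge action of $g_0$; such a $v(s)$ integrates to a smooth family of diffeomorphisms which, by Theorem \ref{m}, are fibrewise holomorphic Poisson isomorphisms $N_s\xrightarrow{\sim}M_{h(s)}$ and hence exhibit $\mathcal N_\Delta$ as the family induced from $(\mathcal M,\Lambda_{\mathcal M})$ by $h$.

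I would then solve the displayed equation recursively by homogeneous degree in $s$, writing $h(s)=\sum_{\mu\ge1}h_\mu(s)$ and $v(s)=\sum_{\mu\ge1}v_\mu(s)$. Since $e^{v}\ast\beta=\beta-Lv+(\text{brackets of lower order})$, at order $\mu$ the equation linearises to
\[
\gamma_\mu(s)+Lv_\mu(s)=\sum_{\lambda=1}^m\left(\frac{\partial\beta}{\partial t_\lambda}\right)_{\!0}h_{\mu,\lambda}(s)+\Gamma_\mu(s),
\]
with $\Gamma_\mu$ a universal expression in $\gamma$, the lower-order $h_\nu,v_\nu$ and the bracket, hence already determined. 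Using the Jacobi identities of $\mathfrak g$ and that $\beta,\gamma$ together with the partial sums satisfy Maurer--Cartan through order $\mu-1$, one verifies $L(\gamma_\mu-\Gamma_\mu)=0$, so $\gamma_\mu-\Gamma_\mu$ defines a class in $\mathbb{H}^1(M,\Theta_M^\bullet)$. Because the classes $\{[(\partial\beta/\partial t_\lambda)_0]\}$ are, up to the identification of Theorem \ref{n} and Definition \ref{mapping}, precisely the image of the Poisson Kodaira--Spencer map $\varphi_0$, the surjectivity hypothesis lets me choose $h_\mu(s)$ so that $\sum_\lambda[(\partial\beta/\partial t_\lambda)_0]\,h_{\mu,\lambda}(s)=[\gamma_\mu-\Gamma_\mu]$ in $\mathbb{H}^1$. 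The residue is then $L$-closed with vanishing harmonic part, hence $L$-exact, and I solve $v_\mu=-L^\ast G(\cdots)$ using the Green operator $G$ and harmonic projection $H$ of the elliptic complex $A^\bullet$, which exist on the compact $M$ by ellipticity. This determines $h_\mu,v_\mu$ at every order, the first order being fixed by a right inverse of $\varphi_0$.

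Convergence of the formal series $h(s),v(s)$ in a Hölder norm $\|\cdot\|_{k+\theta}$ on a possibly smaller polydisk I would establish by the majorant-series estimates used in the existence theorem, namely Lemmas \ref{lemma5.2.2} and \ref{lemma5.2.3} together with the Green and harmonic operators of $A^\bullet$, following \cite{Kod05},\cite{Mor71}. Once $h$ is holomorphic and $v(s)$ is a convergent $C^\infty$ family, the gauge equivalence $\gamma(s)=e^{v(s)}\ast\beta(h(s))$ is genuine, so for each $s\in\Delta$ the integrated diffeomorphism carries the holomorphic Poisson structure of $M_{h(s)}$ to that of $N_s$; holomorphy of $h$ guarantees that these isomorphisms assemble into the induced Poisson analytic family of \S\ref{section6}. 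Hence $(\mathcal N_\Delta,\Lambda_{\mathcal N}|_\Delta,\Delta,\pi)$ is the family induced from $(\mathcal M,\Lambda_{\mathcal M},B,\omega)$ by $h$, which is exactly completeness at $0$.

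The main obstacle is to arrange a \emph{single} gauge field $v(s)\in A^{0,0}(M,T_M)$ that simultaneously matches the complex-structure data (the $\varphi$-component) and the bivector data (the $\Lambda$-component); this is what forces the obstruction at each step to lie in the full $\mathbb{H}^1(M,\Theta_M^\bullet)$ rather than in $H^1(M,\Theta_M)$, and explains why surjectivity of $\varphi_0$ onto $\mathbb{H}^1$ is the correct hypothesis. Two further technical points require the care already present in \cite{Kod05}: reconciling the merely $C^\infty$ dependence of $\beta(t)$ on $t$ with the required holomorphy of $h$ (handled by tracking the holomorphic gluing data $f_{jk}(z_k,t),g^j_{\alpha\beta}(z_j,t)$ rather than the smooth forms alone), and the closedness check $L(\gamma_\mu-\Gamma_\mu)=0$ that makes each inductive step well posed.
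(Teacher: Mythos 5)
Your proposal takes a genuinely different route from the paper: you recast both families as Maurer--Cartan elements of the DGLA $\mathfrak{g}$ and seek a gauge field $v(s)\in g_0$ plus a parameter change $h(s)$ with $\gamma(s)=e^{v(s)}\ast\beta(h(s))$, solving order by order with the Green operator and harmonic projection of the complex $(A^\bullet,L)$. The paper instead follows Kodaira's elementary method throughout Section \ref{section6}: it works with the holomorphic gluing data directly, constructs local holomorphic maps $g_j(z_j,s)$ and $h(s)$ as formal power series subject to the two compatibility conditions (\ref{aa90}) and (\ref{aa91}), identifies the order-$v$ obstruction as a \u{C}ech $1$-cocycle $(\{\lambda_{j|v}\},\{\Gamma_{jk|v}\})$ for the complex (\ref{complex}) (Lemma \ref{lemmai}, Lemma \ref{lemma11}), solves it using surjectivity of $\varphi_0$ (Lemma \ref{lemma10}), and proves convergence by majorant series together with a normal-families compactness argument (Lemma \ref{lemma3}) and Cauchy integral estimates. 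Your identification of the per-order obstruction with a class in $\mathbb{H}^1(M,\Theta_M^\bullet)$, and your explanation of why surjectivity onto the full hypercohomology (rather than $H^1(M,\Theta_M)$) is the right hypothesis, match the paper's Lemmas \ref{lemma10} and \ref{lemma11} in substance.

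However, there is a genuine gap in your convergence step. You invoke the Green operator $G$, the harmonic projection $H$, and the H\"older estimates of Lemmas \ref{lemma5.2.2} and \ref{lemma5.2.3}. In this paper those tools are only available under the extra analytic hypothesis of Theorem \ref{theorem of existence}, namely that the Laplacian $\Box$ associated with $L=\bar{\partial}+[\Lambda_0,-]$ is strongly elliptic and of diagonal type; the a priori estimate (\ref{assumption}) on which Lemma \ref{lemma5.2.3} rests is derived from precisely that assumption. The completeness theorem is stated, and proved in the paper, \emph{without} any such hypothesis --- this is exactly why the paper abandons the Kuranishi-style machinery of Section \ref{section5} and reverts to Kodaira's elementary method, whose only analytic input is the compactness argument of Lemma \ref{lemma3} (Montel's theorem) and elementary Cauchy estimates. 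As written, your argument proves completeness only for those $(M,\Lambda_0)$ satisfying the extra ellipticity assumption, which is strictly weaker than the stated theorem. A secondary point that needs more care is your assertion that the gauge field $v(s)\in A^{0,0}(M,T_M)$ ``integrates to a smooth family of diffeomorphisms'': $v(s)$ is a $(1,0)$-vector field, and passing from the formal gauge equivalence of Maurer--Cartan elements to actual fibrewise Poisson biholomorphisms $N_s\to M_{h(s)}$ assembling into the induced family is nontrivial; the paper sidesteps this by constructing the holomorphic maps $g_j(z_j,s)$ directly and imposing (\ref{aa91}) as the Poisson-map condition at every order.
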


\begin{remark}\label{remark55}
 In order to prove Theorem $\ref{complete9}$, as in \cite{Kod05} Lemma $6.1$ $p.284$,  it suffices to show that for any given Poisson analytic family $(\mathcal{N},\Lambda_{\mathcal{N}},D,\pi)$ with $\pi^{-1}(0)=(M_0,\Lambda_0)$, if we take a sufficiently small domain $\Delta$ with $0\in \Delta \subset D$, we can construct a holomorphic map $h:s\to t=h(s),h(0)=0,$ of $\Delta$ into $B$, and a Poisson holomorphic map $g$ of $(\mathcal{N}_{\Delta},{\Lambda_{\mathcal{N}}}_\Delta)=\pi^{-1}(\Delta)$ into $(\mathcal{M},\Lambda_{\mathcal{M}})$ satisfying the following condition: $g$ is a Poisson holomorphic map extending the identity $g_0:\pi^{-1}(0)=(M_0,\Lambda_0)\to (M_0,\Lambda_0)$, and $g$ maps each $(N_s,\Lambda_{N_s})=\pi^{-1}(s)$ Poisson biholomorphically onto $(M_{h(s)},\Lambda_{M_{h(s)}})$. We will construct such $h$ and $g$ by extending Kodaira's elementary method $($see \cite{Kod05} Chapter $6$$)$.
 \end{remark}
 
 \subsection{Preliminaries}\label{preli}\

We extend the argument of \cite{Kod05} p.285-286 (to which we refer for the detail) in the context of a Poisson analytic family. We tried to keep notational consistency with \cite{Kod05}.  

Since the problem is local with respect to $B$, we may assume that $B=\{t\in \mathbb{C}^m||t|<1\}$, and $(\mathcal{M},\Lambda_{\mathcal{M}}, B,\omega)$ is written in the following form
\begin{align*}
(\mathcal{M},\Lambda_{\mathcal{M}})=\bigcup_j(\mathcal{U}_j,\Lambda_{M_j}),\,\,\,\,\, \mathcal{U}_j=\{(\xi_j,t)\in \mathbb{C}^n\times B||\xi_j|<1\}
\end{align*}
where the Poisson structure $\Lambda_{\mathcal{M}}$ is given by $\Lambda_{M_j}=\sum_{r,s=1}^n \Lambda_{M_j}^{r,s}(\xi_j,t)\frac{\partial}{\partial \xi_j^r}\wedge \frac{\partial}{\partial \xi_j^s}$ on $\mathcal{U}_j$ with $\Lambda_{M_j}^{r,s}(\xi_j,t)=-\Lambda_{M_j}^{s,r}(\xi_j,t)$, and $\omega(\xi_j,t)=t$. For $\mathcal{U}_j\cap \mathcal{U}_k\ne \emptyset,(\xi_j,t)$ and $(\xi_k,t)$ are the same point of $\mathcal{M}$ if
\begin{align}\label{vv2}
\xi_j=g_{jk}(\xi_k,t)=(g_{jk}^1(\xi_k,t),...,g_{jk}^n(\xi_k,t)),
\end{align}
where $g_{jk}^{\alpha}(\xi_k,t)$, $\alpha=1,...,n$, are holomorphic functions on $\mathcal{U}_j\cap \mathcal{U}_k$, and we have the following relations
\begin{align}\label{vv10}
\Lambda_{M_j}^{r,s}(g_{jk}(\xi_k,t),t)=\sum_{p,q=1}^n\Lambda_{M_k}^{p,q}(\xi_k,t)\frac{\partial g_{jk}^r}{\partial \xi_k^p}\frac{\partial g_{jk}^s}{\partial \xi_k^q}.
\end{align}

Similarly we assume that $D=\{s\in \mathbb{C}^l||s|<1\}$, and $(\mathcal{N},\Lambda_{\mathcal{N}},D,\pi)$ is written in the following form
\begin{align*}
(\mathcal{N},\Lambda_{\mathcal{N}})=\bigcup_j (\mathcal{W}_j,\Lambda_{N_j}),\,\,\,\,\, \mathcal{W}_j=\{(z_j,s)\in \mathbb{C}^n\times D||z_j|<1\}
\end{align*} 
where the Poisson structure $\Lambda_{\mathcal{N}}$ is given by $\Lambda_{N_j}=\sum_{\alpha,\beta=1}^n \Lambda_{N_j}^{\alpha,\beta}(z_j,t)\frac{\partial}{\partial z_j^{\alpha}}\wedge \frac{\partial}{\partial z_j^{\beta}}$ on $\mathcal{W}_j$ with $\Lambda_{N_j}^{\alpha,\beta}(z_j,t)=-\Lambda_{N_j}^{\beta,\alpha}(z_j,t)$, and $\pi(z_j,s)=s$. For $\mathcal{W}_j\cap \mathcal{W}_k\ne\emptyset, (z_j,s)$ and $(z_k,s)$ are the same point of $\mathcal{N}$ if
\begin{align}\label{vv1}
z_j=f_{jk}(z_k,s)=(f_{jk}^1(z_k,s),...,f_{jk}^n(z_k,s)).
\end{align}
and we have
\begin{align}\label{vv11}
\Lambda_{N_j}^{\alpha,\beta}(f_{jk}(z_k,s),s)=\sum_{a,b=1}^n\Lambda_{N_k}^{a,b}(z_k,s)\frac{\partial f_{jk}^{\alpha}}{\partial z_k^a}\frac{\partial f_{jk}^{\beta}}{\partial z_k^b}
\end{align}
Since $(N_0,\Lambda_{N_0})=\pi^{-1}(0)=(M_0,\Lambda_0)=\omega^{-1}(0)=(M_0, \Lambda_{M_0})$, we may assume $(\mathcal{W}_j\cap N_0,\Lambda_{N_{0j}})=(\mathcal{U}_j\cap M_0,\Lambda_{M_{0j}})$ where $\Lambda_{N_{0j}}:=\sum_{\alpha,\beta=1}^n \Lambda_{N_j}^{\alpha,\beta}(z_j,0)\frac{\partial}{\partial z_j^{\alpha}}\wedge \frac{\partial}{\partial z_j^{\beta}}$, and $\Lambda_{M_{0j}}:=\sum_{r,s=1}^n \Lambda_{M_j}^{r,s}(\xi_j,0)\frac{\partial}{\partial \xi_j^{r}}\wedge \frac{\partial}{\partial \xi_j^{s}}$, and assume that the local coordinates $(\xi_j,0)$ and $(z_j,0)$ coincide on $\mathcal{W}_j \cap N_0=\mathcal{U}_j\cap M_0$. In other words, if $\xi_j^1=z_j^1,...,\xi_j^n=z_j^n$, $(\xi_j,0)$ and $(z_j,0)$ are the same point of $\mathcal{W}_j\cap N_0=\mathcal{U}_j \cap M_0$, and we have $\Lambda_{N_j}^{\alpha,\beta}(z_j,0)=\Lambda_{M_j}^{\alpha,\beta}(\xi_j,0)$. Putting
\begin{align}\label{ii33}
b_{jk}(\xi_k):=g_{jk}(\xi_k,0),\,\,\,\,\,\,\, \Lambda_{M_{0j}}^{\alpha,\beta}(\xi_j):= \Lambda_{M_j}^{\alpha,\beta}(\xi_j,0)
\end{align}
Then from $(\ref{vv2})$ and $(\ref{vv1})$, we have
\begin{align}\label{ii34}
b_{jk}(z_k)=f_{jk}(z_k,0),\,\,\,\,\,\,\, \Lambda_{M_{0j}}^{\alpha,\beta}(z_j)=\Lambda_{M_j}^{\alpha,\beta}(z_j,0)=\Lambda_{N_j}^{\alpha,\beta}(z_j,0)
\end{align}
In conclusion, we have 
\begin{align}\label{covering}
(N_0,\Lambda_{N_0})=(M_0,\Lambda_{M_0}=\Lambda_0)=\bigcup_j (U_j,\Lambda_{M_{0j}}),\,\,\,\,\,U_j=\mathcal{W}_j\cap N_0=\mathcal{U}_j\cap M_0,
\end{align}
such that $\{z_j\},z_j=(z_j^1,...,z_j^n)$, is a system of local complex coordinates of the complex manifold $N_0=M_0$ with respect to $\{U_j\}$, and the Poisson structure is given by $\Lambda_{M_{0j}}=\sum_{\alpha,\beta=1}^n\Lambda_{M_{0j}}^{\alpha,\beta}(z_j)\frac{\partial}{\partial z_j^{\alpha}}\wedge \frac{\partial}{\partial z_j^{\beta}}$ on $U_j$ with $\Lambda_{M_{0j}}^{\alpha,\beta}(z_j)=-\Lambda_{M_{0j}}^{\beta,\alpha}(z_j)$. The coordinate transformation on $U_j\cap U_k$ is given by $z_j^{\alpha}=b_{jk}^{\alpha}(z_k),\alpha=1,...,n,$ and we have 
\begin{align}\label{vv3}
\Lambda_{M_{0j}}^{\alpha,\beta}(b_{jk}(z_k))=\sum_{a,b=1}^n\Lambda_{M_{0k}}^{a,b}(z_k)\frac{\partial b_{jk}^{\alpha}}{\partial z_k^a}\frac{\partial b_{jk}^{\beta}}{\partial z_k^b}.
\end{align}

\subsection{Construction of Formal Power Series}\

As in Remark \ref{remark55}, we have to define a holomorphic map $h:s\to t=h(s)$ with $h(0)=0$ of $\Delta=\{s\in D||s|<\epsilon\}$ into $B$ for a sufficiently small $\epsilon>0$, and to extend the identity $g_0:(N_0,\Lambda_0)\to (M_0=N_0,\Lambda_0)$ to a Poisson holomorphic map $g:\pi^{-1}(\Delta)=(\mathcal{N}_{\Delta},\Lambda_{N_{\Delta}})\to (\mathcal{M},\Lambda)$ such that $\omega\circ g=h\circ \pi$. 

 We begin with constructing formal power series $h(s)=\sum_{v=1}^\infty h_v(s)$ of $s_1,...,s_l$ where $h_v(s)$ is a homogenous polynomial of degree $v$ of $s_1,...,s_l$, and formal power series $g_j(z_j,s)=z_j+\sum_{v=1}^\infty g_{j|v}(z_j,s)$ in terms of $s_1,...,s_l$ for each $U_j$ in (\ref{covering}), whose coefficients are vector valued holomorphic functions on $U_j$ where $g_{j|v}(z_j,s)=\sum_{v_1+\cdots+v_l=v} g_{jv_1\cdots v_l}(z_j)s_1^{v_1}\cdots s_l^{v_l}$
is a homogeneous polynomial of degree $v$ of $s_1,...,s_l$, and each component $g_{jv_1\cdots v_n}^{\alpha}(z_j),\alpha=1,...,n$ of the coefficient $
g_{jv_1\dots v_l}(z_j)=(g_{jv_1\cdots v_l}^{1}(z_j),...,g_{jv_1\cdots v_l}^{n}(z_j))$ is a holomorphic function of $z_j^1,...,z_j^n$ defined on $U_j$. The formal power series $h(s)$ and $g_j(z_j,s)$ will satisfy
\begin{align}
g_j(f_{jk}(z_k,s),s)=g_{jk}(g_k(z_k,s),h(s)) \,\,\,\,\,\text{on}\,\,\,U_j\cap U_k\ne \emptyset \label{aa90}\\
\Lambda_{M_j}^{r,s}(g_j(z_j,s),h(s))=\sum_{\alpha,\beta=1}^n \Lambda_{N_j}^{\alpha,\beta}(z_j,s)\frac{\partial g_j^r}{\partial z_j^{\alpha}}\frac{\partial g_j^s}{\partial z_j^{\beta}}\,\,\,\,\,\text{on}\,\,\,U_j.\label{aa91}
\end{align}
For the meaning of (\ref{aa90}), we refer to \cite{Kod05} p.286-288. (\ref{aa90}) is a crucial condition for the proof of `Theorem of completeness for complex analytic structures' (Theorem \ref{kodairacomplete}). However, in order to prove `Theorem of completeness for holomorphic Poisson structures' (Theorem \ref{complete9}), we need to impose additional condition $(\ref{aa91})$ which means that $g_j(z_j,s)$ is a Poisson map.

We will write
\begin{align*}
h^v(s)&:=h_1(s)+\cdots+h_v(s).\\
g_j^v(z_j,s)&:=z_j+g_{j|1}(z_j,s)+\cdots g_{j|v}(z_j,s).
\end{align*}

The equalities (\ref{aa90}) and (\ref{aa91}) are equivalent to the following system of the infinitely many congruence:
\begin{align}\label{aa11}
g_j^v(f_{jk}(z_k,s),s)&\equiv_v g_{jk}(g_k^v(z_k,s),h^v(s))\\
\Lambda_{M_j}^{r,s}(g_j^v(z_j,s),h^v(s))&\equiv_{v}\sum_{\alpha,\beta=1}^n \Lambda_{N_j}^{\alpha,\beta}(z_j,s)\frac{\partial {g_j^r}^v}{\partial z_j^{\alpha}}\frac{\partial {g_j^s}^v}{\partial z_j^{\beta}}\label{aa12}
\end{align}
for $v=0,1,2,3,...$ where we indicate by $\equiv_v$ that the power series expansions with respect to $s$ of both sides of (\ref{aa11}) and (\ref{aa12}) coincide up to the term of degree $v$.

 We will construct $h^v(s), g_j^v(z_j,s)$ satisfying $(\ref{aa11})_v$ and $(\ref{aa12})_v$ inductively on $v$. Then the resulting formal power series $h(s)$ and $g_j(z_j,s)$ will satisfy $(\ref{aa90})$ and $(\ref{aa91})$. For $v=0$, since $h^0(s)=0$ and $g_j^0(z_j,s)=z_j$, $(\ref{aa11})_0$ and $(\ref{aa12})_0$ hold by $(\ref{ii33}),(\ref{ii34})$. Now suppose that $h^{v-1}(s)$ and $g_j^{v-1}(z_j,s)$ are already constructed in such a manner that, for each $U_j\cap U_k\ne \emptyset$,
\begin{align}
g_j^{v-1}(f_{jk}(z_k,s),s)\equiv_{v-1} g_{jk}(g_k^{v-1}(z_k,s),h^{v-1}(s))
\end{align}
and for each $U_j$,
\begin{align}\label{aa33}
\Lambda_{M_j}^{r,s}(g_j^{v-1}(z_j,s),h^{v-1}(s))\equiv_{v-1}\sum_{\alpha,\beta=1}^n \Lambda_{N_j}^{\alpha,\beta}(z_j,s)\frac{\partial {g_j^r}^{v-1}}{\partial z_j^{\alpha}}\frac{\partial {g_j^s}^{v-1}}{\partial z_j^{\beta}}
\end{align}
hold. We will find $h_{v}(s)$ and $g_{j|v}(z_j,s)$ such that  $h^v(s)=h^{v-1}(s)+h_v(s)$, and  $g_j^v(z_j,s)=g_j^{v-1}(z_j,s)+g_{j|v}(z_j,s)$ satisfy $(\ref{aa11})_v$ on each $U_j\cap U_k$ and $(\ref{aa12})_v$ on each $U_j$.

For this purpose, we start from finding the equivalent conditions to $(\ref{aa11})_v$ and $(\ref{aa12})_v$, and then interpret them cohomologically by using \u{C}ech resolution of the complex of sheaves (\ref{complex}) with respect to the open covering $(\ref{covering})$ of $M_0=N_0$ (see Lemma \ref{hu7} below).

 For the equivalent condition to $(\ref{aa11})_v$, we briefly summarize Kodaira's result in the following: if we let $\Gamma_{jk|v}$ denote the sum of the terms of degree $v$ of $g_j^{v-1}(f_{jk}(z_k,s),s)-g_{jk}(g_k^{v-1}(z_k,s),h^{v-1}(s))$:
 \begin{align}\label{tt002}
\Gamma_{jk}(z_j,s)\equiv_v g_j^{v-1}(f_{jk}(z_k,s),s)-g_{jk}(g_k^{v-1}(z_k,s),h^{v-1}(s)),
\end{align}
then $(\ref{aa11})_v$ is equivalent to the following:
\begin{align}\label{vv9}
\Gamma_{jk|v}(z_j,s)=\sum_{\beta=1}^n \frac{\partial z_j}{\partial z_k^{\beta}}g_{k|v}^{\beta}(z_k,s)-g_{j|v}(z_j,s)+\sum_{u=1}^m \left(\frac{\partial g_{jk}(z_k,t)}{\partial t_u}\right)_{t=0} h_{u|v}(s)
\end{align}
where $z_k$ and $z_j=b_{jk}(z_k)$ are the local coordinates of the same point of $N_0=M_0$ (for the detail, see \cite{Kod05} p.289-290).

On the other hand, let's find the equivalent condition to $(\ref{aa12})_v$. We note that
\begin{align}\label{aa13}
\Lambda_{M_j}^{r,s}(g_j^v(z_j,s),h^v(s))=\Lambda_{M_j}^{r,s}(g_j^{v-1}(z_j,s)+g_{j|v}(z_j,s),h^{v-1}(s)+h_v(s))
\end{align}
By expanding $\Lambda_{M_j}^{r,s}(\xi_j+\xi, t+\omega)$ into power series of $\xi^1,...,\xi^n,\omega_1,...,\omega_m$, we obtain
\begin{align}\label{aa14}
\Lambda_{M_j}^{r,s}(\xi_j+\xi,t+\omega)=\Lambda_{M_j}^{r,s}(\xi_j,t)+\sum_{\beta=1}^n\frac{\partial \Lambda_{M_j}^{r,s}}{\partial \xi_j^{\beta}}(\xi_j,t)\xi^{\beta}+\sum_{u=1}^m \frac{\partial \Lambda_{M_j}^{r,s}}{\partial t_u}(\xi_j,t)\omega_u+\cdots
\end{align}
where $\cdots$ denotes the terms of degree $\geq 2$ in $\xi^1,...,\xi^n,\omega_1,...,\omega_m$. Let's consider the left hand side of $(\ref{aa12})_v$. Then from $(\ref{aa13})$, $(\ref{aa14})$, and $(\ref{ii34})$, we have
\begin{align}\label{aa30}
&\Lambda_{M_j}^{r,s}(g_j^v(z_j,s),h^v(s))-\Lambda_{M_j}^{r,s}(g_j^{v-1}(z_j,s),h^{v-1}(s))\\ &\equiv_v \sum_{\beta=1}^n\frac{\partial \Lambda_{M_j}^{r,s}}{\partial \xi_j^{\beta}}(g_j^{v-1}(z_j,s),h^{v-1}(s))g_{j|v}^{\beta}(z_j,s)+\sum_{u=1}^m \frac{\partial \Lambda_{M_j}^{r,s}}{\partial t_u}(g_j^{v-1}(z_j,s),h^{v-1}(s))h_{u|v}(s)\notag\\
&\equiv_v \sum_{\beta=1}^n\frac{\partial \Lambda_{M_j}^{r,s}}{\partial \xi_j^{\beta}}(g_j^{v-1}(z_j,0),h^{v-1}(0))g_{j|v}^{\beta}(z_j,s)+\sum_{u=1}^m \frac{\partial \Lambda_{M_j}^{r,s}}{\partial t_u}(g_j^{v-1}(z_j,0),h^{v-1}(0))h_{u|v}(s)\notag\\
&=\sum_{\beta=1}^n\frac{\partial \Lambda_{M_{0j}}^{r,s}}{\partial z_j^{\beta}}g_{j|v}^{\beta}(z_j,s)+\sum_{u=1}^m \left( \frac{\partial\Lambda_{M_{j}}^{r,s}(z_j,t)}{\partial t_u}\right)_{t=0} h_{u|v}(s)\notag
\end{align}
On the other hand, let's consider the right hand side of $(\ref{aa12})_v$. Then from $(\ref{ii34})$, we have
\begin{align}\label{aa31}
&\sum_{\alpha,\beta=1}^n \Lambda_{N_j}^{\alpha,\beta}(z_j,s)\frac{\partial {g_j^r}^{v}}{\partial z_j^{\alpha}}\frac{\partial {g_j^s}^{v}}{\partial z_j^{\beta}}=\sum_{\alpha,\beta=1}^n \Lambda_{N_j}^{\alpha,\beta}(z_j,s)\frac{\partial ({g_j^r}^{v-1}+g_{j|v}^r)}{\partial z_j^{\alpha}}\frac{\partial ({g_j^s}^{v-1}+g_{j|v}^s)}{\partial z_j^{\beta}}\\
&\equiv_v \sum_{\alpha,\beta=1}^n\Lambda_{N_j}^{\alpha,\beta}(z_j,s)\frac{\partial {g_j^r}^{v-1}}{\partial z_j^{\alpha}}\frac{\partial {g_j^s}^{v-1}}{\partial z_j^{\beta}}+\sum_{\alpha,\beta=1}^n \Lambda_{N_j}^{\alpha,\beta}(z_j,s)\frac{\partial {g_j^r}^{v-1}}{\partial z_j^{\alpha}}\frac{\partial g_{j|v}^s}{\partial z_j^{\beta}}+\sum_{\alpha,\beta=1}^n \Lambda_{N_j}^{\alpha,\beta}(z_j,s)\frac{\partial g_{j|v}^r}{\partial z_j^{\alpha}}\frac{\partial {g_j^s}^{v-1}}{\partial z_j^{\beta}}\notag\\
&\equiv_v \sum_{\alpha,\beta=1}^n\Lambda_{N_j}^{\alpha,\beta}(z_j,s)\frac{\partial {g_j^r}^{v-1}}{\partial z_j^{\alpha}}\frac{\partial {g_j^s}^{v-1}}{\partial z_j^{\beta}}+\sum_{\beta=1}^n\Lambda_{M_{0j}}^{r,\beta}(z_j)\frac{\partial g_{j|v}^s}{\partial z_j^{\beta}}+\sum_{\alpha=1}^n \Lambda_{M_{0j}}^{\alpha,s} (z_j)\frac{\partial g_{j|v}^r}{\partial z_j^{\alpha}}\notag
\end{align}
Then from $(\ref{aa30})$ and $(\ref{aa31})$,  the congruence $(\ref{aa12})_v$ is equivalent to the following:
\begin{align}\label{aa32}
&-\Lambda_{M_j}^{r,s}(g_j^{v-1}(z_j,s),h^{v-1}(s))+\sum_{\alpha,\beta=1}^n\Lambda_{N_j}^{\alpha,\beta}(z_j,s)\frac{\partial {g_j^r}^{v-1}}{\partial z_j^{\alpha}}\frac{\partial {g_j^s}^{v-1}}{\partial z_j^{\beta}}\\
&\equiv_v \sum_{\beta=1}^n\frac{\partial \Lambda_{M_{0j}}^{r,s}}{\partial z_j^{\beta}}g_{j|v}^{\beta}(z_j,s)+\sum_{u=1}^m \left( \frac{\partial\Lambda_{M_{j}}^{r,s}(z_j,t)}{\partial t_u}\right)_{t=0} h_{u|v}(s)-\sum_{\beta=1}^n\Lambda_{M_{0j}}^{r,\beta}(z_j)\frac{\partial g_{j|v}^s}{\partial z_j^{\beta}}-\sum_{\alpha=1}^n \Lambda_{M_{0j}}^{\alpha,s} (z_j)\frac{\partial g_{j|v}^r}{\partial z_j^{\alpha}}\notag
\end{align}
By induction hypothesis (\ref{aa33}), the left hand side of (\ref{aa32}) $\equiv_{v-1} 0$. Hence if we let $\lambda_{j|v}^{r,s}$ denote the terms of degree $v$ of the left hand side of (\ref{aa32}), we have
\begin{align}\label{aa35}
\lambda_{j|v}^{r,s} (z_j,s) \equiv_v -\Lambda_{M_j}^{r,s}(g_j^{v-1}(z_j,s),h^{v-1}(s))+\sum_{\alpha,\beta=1}^n \Lambda_{N_j}^{\alpha,\beta}(z_j,s)\frac{\partial {g_j^r}^{v-1}}{\partial z_j^{\alpha}}\frac{\partial {g_j^s}^{v-1}}{\partial z_j^{\beta}}
\end{align}
Hence from $(\ref{aa32})$ and $(\ref{aa35})$, the congruence $(\ref{aa12})_v$ is equivalent to the following:
\begin{align}\label{aa34}
\lambda_{j|v}^{r,s}(z_j,s)=\sum_{\beta=1}^n\frac{\partial \Lambda_{M_{0j}}^{r,s}}{\partial z_j^{\beta}}g_{j|v}^{\beta}(z_j,s)+\sum_{u=1}^m \left( \frac{\partial\Lambda_{M_{j}}^{r,s}(z_j,t)}{\partial t_u}\right)_{t=0} h_{u|v}(s)-\sum_{\beta=1}^n \Lambda_{M_{0j}}^{r,\beta}(z_j)\frac{\partial g_{j|v}^s}{\partial z_j^{\beta}}-\sum_{\alpha=1}^n \Lambda_{M_{0j}}^{\alpha,s} (z_j)\frac{\partial g_{j|v}^r}{\partial z_j^{\alpha}}
\end{align}
where $z_k$ and $z_j=b_{jk}(z_k)$ are the local coordinates of the same point of $N_0=M_0$. We note that $\lambda_{j|v}^{r,s}(z_j,s)=-\lambda_{j|v}^{s,r}(z_j,s)$.

As in \cite{Kod05} p.291, to interpret the meaning of $(\ref{vv9})_v$, and $(\ref{aa34})_v$ in terms of \u{C}ech resolution of the complex of sheaves $(\ref{complex})$ with respect to the open covering $(\ref{covering})$ of $M_0=N_0$, we introduce holomorphic vector fields and bivector fields as follows:
\begin{align}
\theta_{ujk}&=\sum_{\alpha=1}^n \theta_{ujk}^{\alpha}(z_j)\frac{\partial}{\partial z_j^{\alpha}}=\sum_{\alpha=1}^n \left(\frac{\partial g_{jk}^{\alpha}(z_k,t)}{\partial t_u}\right)_{t=0}\frac{\partial}{\partial z_j^{\alpha}},\,\,\,\,\, z_k=b_{jk}(z_j)\label{p1}\\
\Lambda_{uj}'&=\sum_{r,s=1}^n \Lambda_{uj}'^{r,s}(z_j)\frac{\partial}{\partial z_j^r}\wedge\frac{\partial}{\partial z_j^s}:=\sum_{r,s=1}^n \left( \frac{\partial\Lambda_{M_{j}}^{r,s}(z_j,t)}{\partial t_u}\right)_{t=0} \frac{\partial}{\partial z_j^r}\wedge \frac{\partial}{\partial z_j^s}\label{p2}\\
\Gamma_{jk|v}(s)&=\sum_{\alpha=1}^n \Gamma_{jk|v}^{\alpha}(z_j,s)\frac{\partial}{\partial z_j^{\alpha}}\label{p3}\\
g_{k|v}(s)&=\sum_{\beta=1}^n g_{k|v}^{\beta}(z_k,s)\frac{\partial}{\partial z_k^{\beta}}\label{p4}\\
\lambda_{j|v}(s)&=\sum_{r,s=1}^n \lambda_{j|v}^{r,s}(z_j,s)\frac{\partial}{\partial z_j^r}\wedge \frac{\partial}{\partial z_j^s}\label{p5}
\end{align}
By $(\ref{covering})$, $\mathcal{U}:=\{U_j\}$ is a finite open covering of $M_0=N_0$. Since we assume that $\xi_j^{\alpha}=z_j^{\alpha},\alpha=1,...,n$ in subsection \ref{preli}, the 1-cocycle $(\{\Lambda_{uj}'\},\{\theta_{ujk}\})\in C^0(\mathcal{U},\wedge^2\Theta_{M_0})\oplus C^1(\mathcal{U},\Theta_{M_0})$ in $(\ref{p1})$ and $(\ref{p2})$ represents the infinitesimal Poisson deformation $(\Lambda_u',\theta_u)=\varphi_0(\frac{\partial}{\partial t_u}) \in \mathbb{H}^1(M_0,\Theta_{M_0}^\bullet)$ where $\varphi_0$ is the Poisson Kodaira-Spencer map of the Poisson analytic family $(\mathcal{M},\Lambda_{\mathcal{M}},B, \omega)$ (see Proposition \ref{gg} and Definition \ref{mapping}). Since the coefficients $\Gamma_{jkv_1\cdots v_l}$ of the homogeneous polynomial $\Gamma_{jk|v}(s)=\sum_{v_1+\cdots v_l=v} \Gamma_{jkv_1\cdots v_l} s_1^{v_1}\cdots s_l^{v_l}$ are holomorphic vector fields on $U_j \cap U_k$, $\{\Gamma_{jk|v}(s)\}=\sum_{v_1+\cdots v_l=v} \{\Gamma_{jkv_1\cdots v_l} \} s_1^{v_1}\cdots s_l^{v_l}$ is a homogenous polynomial of degree $v$ whose coefficients are $\{\Gamma_{jkv_1\cdots v_l}\}\in C^1(\mathcal{U},\Theta_{M_0})$. Since the coefficients $\lambda_{jv_1\cdots v_l}$ of the homogenous polynomial $\lambda_{j|v}(s)=\sum_{v_1+\cdots +v_l=v} \lambda_{jv_1\cdots v_l} s_1^{v_1}\cdots s_l^{v_l}$ are holomorphic bivector fields on $U_j$, $\{\lambda_{j|v}(s)\}=\sum_{v_1+\cdots +v_l=v} \{\lambda_{jv_1\cdots v_l}\}s_1^{v_1}\cdots s_l^{v_l}$ is a homogenous polynomial of degree $v$ whose coefficients are $\{\lambda_{jv_1\cdots v_l}\}\in C^0(\mathcal{U},\wedge^2 \Theta_
{M_0})$. Similarly $\{g_{j|v}(s)\}=\sum_{v_1+\cdots +v_l=v} \{g_{jv_1\cdots v_l} \} s_1^{v_1}\cdots s_l^{v_l}$ is a homogenous polynomial of degree $v$ whose coefficients are $\{g_{jv_1\cdots v_l}\}\in C^0(\mathcal{U},\Theta_{M_0})$.  We claim that
\begin{lemma}\label{lemmai}
The following equation holds
\begin{align}\label{hu7}
(\{\lambda_{j|v}(s)\},\{\Gamma_{jk|v}(s)\})=\sum_{u=1}^m h_{u|v}(s)(\{\Lambda_{uj}'\},\{\theta_{ujk}\})-\delta_{HP}(\{g_{j|v}(s)\})
\end{align}
where $\delta_{HP}(\{g_{j|v}(s)\}):=(-\delta(\{g_{j|v}\})=\{g_{j|v}(s)-g_{k|v}(s)\}, \{[\sum_{r,s=1}^n \Lambda_{M_{0j}}^{r,s}(z_j)\frac{\partial}{\partial z_j^r}\wedge \frac{\partial}{\partial z_j^s},g_{j|v}(s)]\})$. Here $\delta$ is the \u{C}ech map.

\end{lemma}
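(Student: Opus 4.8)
The plan is to read the single equation (\ref{hu7}) as living in $C^0(\mathcal{U},\wedge^2\Theta_{M_0})\oplus C^1(\mathcal{U},\Theta_{M_0})$ and to verify it one component at a time, matching the bivector ($C^0$) part against the already-established reformulation (\ref{aa34}) of $(\ref{aa12})_v$ and the vector-field ($C^1$) part against (\ref{vv9}), the reformulation of $(\ref{aa11})_v$. In this way the lemma carries no new analytic content: it is purely a cohomological repackaging of the two coordinate identities we have already derived, with $\delta_{HP}$ read as the total differential of the \u{C}ech double complex of (\ref{complex}), i.e.\ producing the bivector part $\{[\Lambda_{M_{0j}},g_{j|v}]\}\in C^0(\mathcal{U},\wedge^2\Theta_{M_0})$ via $[\Lambda_0,-]$ and the vector part $-\delta(\{g_{j|v}\})\in C^1(\mathcal{U},\Theta_{M_0})$ via the \u{C}ech coboundary.

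First I would treat the $C^1(\mathcal{U},\Theta_{M_0})$-component. By (\ref{p1}) the term $\sum_{u=1}^m h_{u|v}(s)\{\theta_{ujk}\}$ contributes the vector field $\sum_{u=1}^m h_{u|v}(s)\sum_{\alpha=1}^n(\partial g_{jk}^{\alpha}/\partial t_u)_{t=0}\,\partial/\partial z_j^{\alpha}$, while the vector-field part of $-\delta_{HP}(\{g_{j|v}(s)\})$ is $g_{k|v}(s)-g_{j|v}(s)$. Here $g_{k|v}(s)$ is to be expressed in the $z_j$-coordinates along $z_j=b_{jk}(z_k)$, and its transform is exactly $\sum_{\beta=1}^n(\partial z_j/\partial z_k^{\beta})g_{k|v}^{\beta}(z_k,s)$. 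Comparing with (\ref{vv9}) shows that the $C^1$-component of (\ref{hu7}) is literally (\ref{vv9}), hence equivalent to $(\ref{aa11})_v$.

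Next I would treat the $C^0(\mathcal{U},\wedge^2\Theta_{M_0})$-component, which is the only part requiring a genuine computation. I would expand the Schouten bracket $[\Lambda_{M_{0j}},g_{j|v}(s)]$ in local coordinates by the Leibniz rule, exactly as in the evaluation of the third term of (\ref{equ1}) in the proof of Proposition \ref{gg}. Reading off the coefficient of $\partial/\partial z_j^r\wedge\partial/\partial z_j^s$ this gives
\begin{align*}
[\Lambda_{M_{0j}},g_{j|v}]^{r,s}=-\sum_{c=1}^n g_{j|v}^c\frac{\partial \Lambda_{M_{0j}}^{r,s}}{\partial z_j^c}+\sum_{c=1}^n\left(\Lambda_{M_{0j}}^{c,s}\frac{\partial g_{j|v}^r}{\partial z_j^c}+\Lambda_{M_{0j}}^{r,c}\frac{\partial g_{j|v}^s}{\partial z_j^c}\right).
\end{align*}
Negating this and adding the bivector part $\sum_{u=1}^m h_{u|v}(s)\left(\partial \Lambda_{M_j}^{r,s}/\partial t_u\right)_{t=0}$ of $\sum_{u}h_{u|v}(s)\{\Lambda_{uj}'\}$, read off from (\ref{p2}), reproduces precisely the right hand side of (\ref{aa34}). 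Since $\lambda_{j|v}^{r,s}$ was defined in (\ref{aa35}) to be the left hand side of (\ref{aa34}), the $C^0$-component of (\ref{hu7}) is literally (\ref{aa34}), hence equivalent to $(\ref{aa12})_v$. Combining the two components yields (\ref{hu7}).

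The main (and essentially only) obstacle is bookkeeping rather than ideas: I must check that the sign conventions for the \u{C}ech coboundary $\delta$ and for the Schouten bracket implicit in the definition of $\delta_{HP}$ are consistent with those used in (\ref{vv9}) and (\ref{aa34}), and that the antisymmetry $\Lambda_{M_{0j}}^{r,s}=-\Lambda_{M_{0j}}^{s,r}$ is correctly accounted for when extracting the coefficient of $\partial/\partial z_j^r\wedge\partial/\partial z_j^s$. Since the bracket expansion above is identical in structure to the one already carried out in Proposition \ref{gg}, no new difficulty arises once these conventions are pinned down.
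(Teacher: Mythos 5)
Your proposal is correct and follows essentially the same route as the paper: the $C^1(\mathcal{U},\Theta_{M_0})$-component is identified with $(\ref{vv9})$ (which the paper handles by citing Kodaira), and the $C^0(\mathcal{U},\wedge^2\Theta_{M_0})$-component is obtained by the same Leibniz-rule expansion of $[\Lambda_{M_{0j}},g_{j|v}(s)]$, whose coefficient of $\frac{\partial}{\partial z_j^r}\wedge\frac{\partial}{\partial z_j^s}$ you compute exactly as in the paper before matching against $(\ref{aa34})$.
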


\begin{proof}
First, we have $\{\Gamma_{jk|v}(s)\}=\sum_{u=1}^m h_{u|v}(s)\{\theta_{rjk}\}+\delta\{g_{j|v}(s)\}$ (see \cite{Kod05} p.291). 

It remains to show that $\{\lambda_{j|v}(s)\}=\sum_{u=1}^m h_{u|v}(s)\{\Lambda'_{uj}\}-\{[\sum_{r,s} \Lambda_{M_{0j}}^{r,s}(z_j)\frac{\partial}{\partial z_j^r}\wedge \frac{\partial}{\partial z_j^s},g_{j|v}(s)]\}$. 
Indeed,
\begin{align*}
&\sum_{u=1}^m h_{u|v}(s)\Lambda'_{uj}-\sum_{r,s,\beta=1}^n[\Lambda_{M_{0j}}^{r,s}(z_j)\frac{\partial}{\partial z_j^r}\wedge \frac{\partial}{\partial z_j^s}, g_{j|v}^{\beta}(z_j,s)\frac{\partial}{\partial z_j^{\beta}}]\\
&=\sum_{u=1}^m h_{u|v}(s)\Lambda'_{uj}-\sum_{r,s,\beta=1}^n \Lambda_{M_{0j}}^{r,s}\frac{\partial g_{j|v}^{\beta}}{\partial z_j^r}\frac{\partial}{\partial z_j^{\beta}}\wedge \frac{\partial}{\partial z_j^s}+\sum_{r,s,\beta=1}^n g_{j|v}^{\beta}\frac{\partial \Lambda_{M_{0j}}^{r,s}}{\partial z_j^{\beta}}\frac{\partial}{\partial z_j^r}\wedge \frac{\partial}{\partial z_j^s}-\sum_{r,s,\beta=1}^n \Lambda_{M_{0j}}^{r,s}\frac{\partial g_{j|v}^{\beta}}{\partial z_j^s}\frac{\partial }{\partial z_j^r}\wedge \frac{\partial}{\partial z_j^{\beta}}\\
&=\sum_{u=1}^m h_{u|v}(s)\Lambda'_{uj}-\sum_{r,s,\beta=1}^n \Lambda_{M_{0j}}^{\beta,s}\frac{\partial g_{j|v}^{r}}{\partial z_j^{\beta}}\frac{\partial}{\partial z_j^{r}}\wedge \frac{\partial}{\partial z_j^s}+\sum_{r,s,\beta=1}^n g_{j|v}^{\beta}\frac{\partial \Lambda_{M_{0j}}^{r,s}}{\partial z_j^{\beta}}\frac{\partial}{\partial z_j^r}\wedge \frac{\partial}{\partial z_j^s}-\sum_{r,s,\beta=1}^n \Lambda_{M_{0j}}^{r,\beta}\frac{\partial g_{j|v}^{s}}{\partial z_j^{\beta}}\frac{\partial }{\partial z_j^r}\wedge \frac{\partial}{\partial z_j^s}\\
&=\lambda_{j|v}(s)
\end{align*}
by $(\ref{aa34})$, $(\ref{p2})$ and $(\ref{p5})$.
\end{proof}

Thus in order to construct $h^v(s)=h^{v-1}(s)+h_v(s)$, $g_j^v(z_j,s)=g_j^{v-1}(z_j,s)+g_{j|v}(z_j,s)$ so that $(\ref{aa11})_v$ and $(\ref{aa12})_v$ hold, it suffices to obtain solutions $h_{u|v},u=1,...,m,\{g_{j|v}(s)\}$ of the equations (\ref{hu7}).

If solutions $h_{u|v}(s),u=1,...,m,\{g_{j|v}(s)\}$ exist, from (\ref{hu7}), we have 
\begin{equation}\label{equ5}
\begin{cases}
[\sum_{r,s=1}^n \Lambda_{M_{0j}}^{r,s}(z_j)\frac{\partial}{\partial z_j^r}\wedge \frac{\partial}{\partial z_j^s}, \lambda_{j|v}(s)]=0\\
\lambda_{k|v}(s)-\lambda_{j|v}(s)+[\sum_{r,s=1}^n \Lambda_{M_{0j}}^{r,s}(z_j)\frac{\partial}{\partial z_j^r}\wedge \frac{\partial}{\partial z_j^s},\Gamma_{jk|v}(s)]=0\\
\Gamma_{jk|v}(s)-\Gamma_{ik|v}(s)+\Gamma_{ij|v}(s)=0
\end{cases}
\end{equation}

Conversely, 
\begin{lemma}\label{lemma10}
If $(\{\lambda_{j|v}(s)\},\{\Gamma_{jk|v}(s)\})$ satisfies $(\ref{equ5})$, then 
\begin{equation}\label{eq22}
 (\{\lambda_{j|v}(s)\},\{\Gamma_{jk|v}(s)\})=\sum_{u=1}^m h_{u|v}(s)(\{\Lambda_{uj}'\},\{\theta_{ujk}\})-\delta_{HP}(\{g_{j|v}(s)\})
\end{equation}
has solutions $h_{u|v}(s),u=1,...,m,\{g_{j|v}(s)\}$ when the Poisson Kodaira-Spencer map $\varphi_0:T_0(B)\to \mathbb{H}^1(M_0,\Theta_{M_0}^\bullet)$ is surjective.
\end{lemma}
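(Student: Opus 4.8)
The plan is to read the three equations of $(\ref{equ5})$ as the statement that $(\{\lambda_{j|v}(s)\},\{\Gamma_{jk|v}(s)\})$ is a $1$-cocycle for the total complex of the \u{C}ech resolution of $(\ref{complex})$ relative to the covering $(\ref{covering})$, and then to use the surjectivity of $\varphi_0$ to realize its class as a combination of the generating classes $\varphi_0(\frac{\partial}{\partial t_u})$. The degree-$1$ part of the total complex is $C^0(\mathcal{U},\wedge^2\Theta_{M_0})\oplus C^1(\mathcal{U},\Theta_{M_0})$, and its coboundary lands in $C^0(\mathcal{U},\wedge^3\Theta_{M_0})\oplus C^1(\mathcal{U},\wedge^2\Theta_{M_0})\oplus C^2(\mathcal{U},\Theta_{M_0})$. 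The three lines of $(\ref{equ5})$ are precisely the vanishing of these three components: $[\Lambda_{M_{0j}},\lambda_{j|v}(s)]=0$, then $\delta\{\lambda_{j|v}(s)\}+[\Lambda_{M_{0j}},\{\Gamma_{jk|v}(s)\}]=0$, then $\delta\{\Gamma_{jk|v}(s)\}=0$. Hence the pair is a $1$-cocycle and defines a class in $\mathbb{H}^1(M_0,\Theta_{M_0}^\bullet)$.

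First I would pass to monomial coefficients in $s$. Since $\lambda_{j|v}(s)$ and $\Gamma_{jk|v}(s)$ are homogeneous of degree $v$ with coefficients $\{\lambda_{jv_1\cdots v_l}\}\in C^0(\mathcal{U},\wedge^2\Theta_{M_0})$ and $\{\Gamma_{jkv_1\cdots v_l}\}\in C^1(\mathcal{U},\Theta_{M_0})$, and since $(\ref{equ5})$ is linear, each coefficient $(\{\lambda_{jv_1\cdots v_l}\},\{\Gamma_{jkv_1\cdots v_l}\})$ is itself a $1$-cocycle and defines a class $c_{v_1\cdots v_l}\in \mathbb{H}^1(M_0,\Theta_{M_0}^\bullet)$. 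Recall from $(\ref{p1})$ and $(\ref{p2})$ that $(\{\Lambda_{uj}'\},\{\theta_{ujk}\})$ represents $\varphi_0(\frac{\partial}{\partial t_u})$; since $\frac{\partial}{\partial t_1},\dots,\frac{\partial}{\partial t_m}$ span $T_0(B)$ and $\varphi_0$ is surjective, these classes span $\mathbb{H}^1(M_0,\Theta_{M_0}^\bullet)$. Thus there are constants $c_{uv_1\cdots v_l}\in\mathbb{C}$ with $c_{v_1\cdots v_l}=\sum_{u=1}^m c_{uv_1\cdots v_l}[(\{\Lambda_{uj}'\},\{\theta_{ujk}\})]$, and I set $h_{u|v}(s):=\sum_{v_1+\cdots+v_l=v} c_{uv_1\cdots v_l}s_1^{v_1}\cdots s_l^{v_l}$, a homogeneous polynomial of degree $v$.

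By construction the difference $(\{\lambda_{jv_1\cdots v_l}\},\{\Gamma_{jkv_1\cdots v_l}\})-\sum_{u=1}^m c_{uv_1\cdots v_l}(\{\Lambda_{uj}'\},\{\theta_{ujk}\})$ is cohomologous to zero, hence lies in the image of the degree-$0$ coboundary $C^0(\mathcal{U},\Theta_{M_0})\to C^0(\mathcal{U},\wedge^2\Theta_{M_0})\oplus C^1(\mathcal{U},\Theta_{M_0})$, which is exactly $-\delta_{HP}$ from Lemma $\ref{lemmai}$. This yields $\{g_{jv_1\cdots v_l}\}\in C^0(\mathcal{U},\Theta_{M_0})$ realizing the difference as $-\delta_{HP}(\{g_{jv_1\cdots v_l}\})$, and assembling $\{g_{j|v}(s)\}:=\sum_{v_1+\cdots+v_l=v}\{g_{jv_1\cdots v_l}\}s_1^{v_1}\cdots s_l^{v_l}$ and summing the monomial identities gives $(\ref{eq22})$. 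The argument is formal once the cocycle interpretation is set up, and there is no analytic obstruction: surjectivity of $\varphi_0$ does all the work. The one point demanding care is to check that the total-complex coboundary out of degree $0$ coincides on the nose with the operator $\delta_{HP}$ defined in Lemma $\ref{lemmai}$ (including signs), so that abstract cohomological triviality becomes the precise equation $(\ref{eq22})$; this is the step I would write out most carefully.
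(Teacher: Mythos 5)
Your proposal is correct and follows essentially the same route as the paper's proof: reduce to monomial coefficients in $s$, observe that the cocycle conditions $(\ref{equ5})$ make each coefficient a $1$-cocycle in the \u{C}ech hypercomplex, use surjectivity of $\varphi_0$ to express its class as a linear combination of the classes of $(\{\Lambda_{uj}'\},\{\theta_{ujk}\})$, and realize the difference as $-\delta_{HP}$ of a $0$-cochain. The sign/identification check you flag at the end is exactly what Lemma $\ref{lemmai}$ already pins down, so nothing further is needed.
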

\begin{proof}
Let $h_{u|v}(s)=\sum_{v_1+\cdots v_l=v} h_{uv_1\cdots v_l} s_1^{v_1}\cdots s_l^{v_l}$. Then by considering the coefficients of $s_1^{v_1}\cdots s_l^{v_l}$, $(\ref{eq22})$ can be written as
\begin{align*}
(\{\lambda_{jv_1\cdots v_l}\},\{\Gamma_{jkv_1,...,v_l}\})=\sum_{u=1}^m h_{uv_1\cdots v_l}(\{\Lambda_{uj}'\},\{\theta_{ujk}\})-\delta_{HP}(\{g_{jv_1\cdots v_l}\}).
\end{align*}
Thus it suffices to prove that any $1$-cocycle $(\{\lambda_j\},\{\Gamma_{jk}\})\in C^0(\mathcal{U},\wedge^2 \Theta_0)\oplus C^1(\mathcal{U},\Theta_0)$ such that $[\Lambda_0,\lambda_j]=0, \lambda_k-\lambda_j+[\Lambda_0,\Gamma_{jk}]=0,\Gamma_{jk}-\Gamma_{ik}+\Gamma_{ij}=0$ can be written in the form
\begin{align*}
(\{\lambda_{j}\},\{\Gamma_{jk}\})=\sum_{u=1}^m h_{u}(\{\Lambda_{uj}'\},\{\theta_{ujk}\})-\delta_{HP}(\{g_{j}\}),\,\,\,\,\,\text{for some}\,\,\, h_u\in \mathbb{C}, \{g_j\}\in C^0(\mathcal{U},\Theta_0)
\end{align*}
Let $(\eta,\gamma)\in \mathbb{H}^1(M_0,\Theta_{M_0}^\bullet)$ be the cohomology class of $(\{\lambda_j\},\{\Gamma_{jk}\})$. Since $\varphi_0:T_0(B)\to \mathbb{H}^1(M_0,\Theta_{M_0}^\bullet)$ is surjective, $(\eta,\gamma)$ is written in the form of a linear combination of the $(\Lambda'_u,\theta_u)$(= the cohomology class of $(\{\Lambda'_{uj}\},\{\theta_{ujk}\})$,$u=1,...,m$ in $(\ref{p1}),(\ref{p2})$) as
\begin{align*}
(\eta,\gamma)=\sum_{u=1}^m h_u(\Lambda_u',\theta_u),\,\,\,\,\, h_u\in \mathbb{C}
\end{align*}
So $\sum_{u=1}^m h_u(\{\Lambda'_{uj}\},\{\theta_{ujk}\})$ is cohomologous to $(\{\lambda_j\},\{\Gamma_{jk}\})$. Therefore there exists $\{g_j\}\in C^0(\mathcal{U},\Theta_0)$ such that $\delta_{HP}(\{g_j\})=\sum_{u=1}^m h_u(\{\Lambda'_{uj}\},\{\theta_{ujk}\})-(\{\lambda_j\},\{\Gamma_{jk}\}).$
\end{proof}

Next we will prove that

\begin{lemma}\label{lemma11}
$(\{\lambda_{j|v}(s)\},\{\Gamma_{jk|v}(s)\})$ satisfies $(\ref{equ5})$. 
\end{lemma}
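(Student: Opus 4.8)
The plan is to read the three lines of $(\ref{equ5})$ as the three components of a single statement: that the obstruction cochain $(\{\lambda_{j|v}(s)\},\{\Gamma_{jk|v}(s)\})\in C^0(\mathcal{U},\wedge^2\Theta_{M_0})\oplus C^1(\mathcal{U},\Theta_{M_0})$ is a cocycle for the total differential $\delta\pm[\Lambda_0,-]$ of the \v{C}ech bicomplex computing $\mathbb{H}^\bullet(M_0,\Theta_{M_0}^\bullet)$ attached to the complex of sheaves $(\ref{complex})$. Concretely, the third line is the vanishing of the image in $C^2(\mathcal{U},\Theta_{M_0})$, the second line the vanishing of the image in $C^1(\mathcal{U},\wedge^2\Theta_{M_0})$, and the first line the vanishing of the image in $C^0(\mathcal{U},\wedge^3\Theta_{M_0})$. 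I would verify the three lines separately, since the first and third are local (resp. purely combinatorial) while the second couples them. The third line, and all \v{C}ech ($\delta$-)bookkeeping for $\{\Gamma_{jk|v}\}$, is identical to the complex-analytic case: it follows verbatim from the cocycle relations among the $f_{jk}$ and $g_{jk}$ together with the induction hypothesis $(\ref{aa11})_{v-1}$, exactly as in \cite{Kod05} p.292, so I would simply cite it.

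For the second line I would work on $U_j\cap U_k$ and compare the two instances of the defining relation $(\ref{aa35})$ for $\lambda_{j|v}$ and $\lambda_{k|v}$. Writing $\xi_j=b_{jk}(\xi_k)$ and differentiating the transformation laws $(\ref{vv10})$ for $\Lambda_{M}$ and $(\ref{vv11})$ for $\Lambda_{N}$ along the approximate solution, the degree-$v$ terms reorganize, via the relation $(\ref{vv9})$ that defines $\Gamma_{jk|v}$, into precisely $[\Lambda_{M_{0j}},\Gamma_{jk|v}]$. This is the bivector analogue of the overlap computation carried out in Proposition $\ref{gg}$ (the passage from $(\ref{tt56})$ to $(\ref{tt57})$) and in Lemma $\ref{beta}$; the same cancellations, now applied to the degree-$v$ part rather than to the $t$-derivative at $0$, yield $\lambda_{k|v}-\lambda_{j|v}+[\Lambda_{M_{0j}},\Gamma_{jk|v}]=0$.

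The first line, $[\Lambda_{M_{0j}},\lambda_{j|v}(s)]=0$ (with $\Lambda_{M_{0j}}=\Lambda_0$ locally), is where I expect the real work to lie, and it is a statement local on $U_j$. Recall from $(\ref{aa35})$ that $\lambda_{j|v}$ is the degree-$v$ part of $-\Lambda_{M_j}^{r,s}(g_j^{v-1},h^{v-1})+\sum_{\alpha,\beta}\Lambda_{N_j}^{\alpha,\beta}(z_j,s)\,\partial_{\alpha}{g_j^r}^{v-1}\,\partial_{\beta}{g_j^s}^{v-1}$, a difference of two coefficient arrays that agree mod degree $v$ by the induction hypothesis $(\ref{aa33})$ and both reduce to $\Lambda_0$ at $s=0$. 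I would expand the two Jacobi identities $[\Lambda_{M_j},\Lambda_{M_j}]=0$ and $[\Lambda_{N_j},\Lambda_{N_j}]=0$ in the form given by Lemma $\ref{formula}$, substitute $\xi=g_j^{v-1}(z_j,s)$ and $t=h^{v-1}(s)$, and collect the terms of degree $v$. Using the polarization $\tfrac12[\sigma+\tau,\sigma+\tau]=\tfrac12[\sigma,\sigma]+[\sigma,\tau]+\tfrac12[\tau,\tau]$ of the Schouten bracket together with the degree-$(v-1)$ agreement of the two arrays, the only surviving degree-$v$ cross term should be $[\Lambda_0,\lambda_{j|v}]$, which must therefore vanish.

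The main obstacle is that neither coefficient array is, by itself, a genuine Poisson bivector in the $z_j$-coordinates: passing $\Lambda_{M_j}$ and $\Lambda_{N_j}$ through the near-identity map $g_j^{v-1}$ and the parameter change $h^{v-1}$ produces chain-rule (Jacobian) correction terms, and one must show these corrections cancel in the difference to order $v$ — that is, reconcile the $\xi$-Jacobiator coming from $[\Lambda_{M_j},\Lambda_{M_j}]=0$ with the $z$-Jacobiator appearing in $[\Lambda_0,\lambda_{j|v}]$ by tracking the $\partial g_j^{v-1}/\partial z_j$ factors. The conceptual reason the cancellation occurs is the graded Jacobi (Bianchi) identity for the Maurer--Cartan residual $F$ of the approximate solution, $DF+[\beta,F]=0$, where $D$ is the total differential $\delta\pm[\Lambda_0,-]$ and $\beta$ the approximate solution: its degree-$v$ component collapses to $D(\text{obstruction})=0$ precisely because $F\equiv_{v-1}0$ by the induction hypotheses $(\ref{aa11})_{v-1}$ and $(\ref{aa12})_{v-1}$. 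Making this bookkeeping explicit is the one genuinely computational step; everything else is either cited from \cite{Kod05} or a direct transcription of the brackets already computed in Proposition $\ref{gg}$ and Lemma $\ref{beta}$. Once $(\ref{equ5})$ is established, Lemma $\ref{lemma10}$ produces the desired $h_{u|v}$ and $\{g_{j|v}(s)\}$, completing the inductive step.
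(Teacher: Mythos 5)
Your proposal matches the paper's proof in both structure and substance: the third line of $(\ref{equ5})$ is cited from \cite{Kod05} p.292, the second line is obtained by comparing the two instances of $(\ref{aa35})$ on overlaps via the transformation laws $(\ref{vv10})$, $(\ref{vv11})$ and the definition of $\Gamma_{jk|v}$, and the first line comes from the two Jacobi identities $[\Lambda_{M_j},\Lambda_{M_j}]=0$, $[\Lambda_{N_j},\Lambda_{N_j}]=0$ together with Lemma \ref{formula}, the induction hypothesis, and the chain-rule bookkeeping for the Jacobians of $g_j^{v-1}$ — exactly the computation the paper carries out by evaluating on coordinate triples. Your polarization framing $[A-B,A+B]\equiv_v 2[\Lambda_0,\lambda_{j|v}]$ is just a cleaner organization of the same argument, so this is essentially the paper's proof.
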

\begin{proof}
First, we have $\Gamma_{jk|v}(s)-\Gamma_{ik|v}(s)+\Gamma_{ij|v}(s)=0$ (see \cite{Kod05} p.292). Second, we show that $[\Lambda_0, \lambda_{j|v}(s)]=0$. We note that for $\Pi\in \Gamma(U_j,\wedge^3 \Theta_{M_0})$, $\Pi=0$ if and only if $\Pi(z_j^a,z_j^b,z_j^c):=\Pi(dz_j^a\wedge dz_j^b\wedge dz_j^c)$ for any $a,b,c$. Then from $[\Lambda_{N_j},\Lambda_{N_j}]=0$, $[\Lambda_{M_j},\Lambda_{M_j}]=0$, and Lemma \ref{formula},
 {\small{\begin{align*}
&[\Lambda_0, \lambda_{j|v}](z_j^a,z_j^b,z_j^c)\\
&=\Lambda_0(\lambda_{j|v}(z_j^a,z_j^b), z_j^c)-\Lambda_0(\lambda_{j|v}(z_j^a,z_j^c), z_j^b)+\Lambda_0(\lambda_{j|v}(z_j^b,z_j^c), z_j^a)\\
&+\lambda_{j|v}(\Lambda_0(z_j^a,z_j^b),z_j^c)-\lambda_{j|v}(\Lambda_0(z_j^a,z_j^c),z_j^b)+\lambda_{j|v}(\Lambda_0(z_j^b,z_j^c),z_j^a)\\
&\equiv_v \Lambda_{N_j}(\lambda_{j|v}(z_j^a,z_j^b), g_j^{cv-1})-\Lambda_{N_j}(\lambda_{j|v}(z_j^a,z_j^c), g_j^{bv-1})+\Lambda_{N_j}(\lambda_{j|v}(z_j^b,z_j^c), g_j^{cv-1})\\
&+\lambda_{j|v}(\Lambda_{M_j}(z_j^a,z_j^b),z_j^c)-\lambda_{j|v}(\Lambda_{M_j}(z_j^a,z_j^c),z_j^b)+\lambda_{j|v}(\Lambda_{M_j}(z_j^b,z_j^c),z_j^a)\\
&\equiv_v -2\Lambda_{N_j}(\Lambda_{M_j}^{a,b}(g_j^{v-1},h^{v-1}),g_j^{cv-1})+\Lambda_{N_j}(\Lambda_{N_j}(g_j^{av-1},g_j^{bv-1}),g_j^{cv-1})\\
&+2\Lambda_{N_j}(\Lambda_{M_j}^{a,c}(g_j^{v-1},h^{v-1}),g_j^{bv-1})-\Lambda_{N_j}(\Lambda_{N_j}(g_j^{av-1},g_j^{cv-1}),g_j^{bv-1})\\
&-2\Lambda_{N_j}(\Lambda_{M_j}^{b,c}(g_j^{v-1},h^{v-1}),g_j^{av-1})+\Lambda_{N_j}(\Lambda_{N_j}(g_j^{bv-1},g_j^{cv-1}),g_j^{av-1})\\
&+2\lambda_{j|v}(\Lambda_{M_j}^{a,b}(z_j,s),z_j^c)-2\lambda_{j|v}(\Lambda_{M_j}^{a,c}(z_j,s),z_j^b)+2\lambda_{j|v}(\Lambda_{M_j}^{b,c}(z_j,s),z_j^a)\\
&\equiv_v -2\Lambda_{N_j}(\Lambda_{M_j}^{a,b}(g_j^{v-1},h^{v-1}),g_j^{cv-1})
+2\Lambda_{N_j}(\Lambda_{M_j}^{a,c}(g_j^{v-1},h^{v-1}),g_j^{bv-1})
-2\Lambda_{N_j}(\Lambda_{M_j}^{b,c}(g_j^{v-1},h^{v-1}),g_j^{av-1})\\
&+4\sum_{r=1}^n \left( -\Lambda_{M_j}^{r,c}(g_j^{v-1},h^{v-1})+\sum_{\alpha,\beta=1}^n \Lambda_{N_j}^{\alpha,\beta}(z_j,s)\frac{\partial g_j^{rv-1}}{\partial z_j^\alpha}\frac{\partial g_j^{cv-1}}{\partial z_j^\beta}         \right)\frac{\partial \Lambda_{M_j}^{a,b}}{\partial z_j^r}(g_j^{v-1},h^{v-1})\\
&-4\sum_{r=1}^n \left( -\Lambda_{M_j}^{r,b}(g_j^{v-1},h^{v-1})+\sum_{\alpha,\beta=1}^n \Lambda_{N_j}^{\alpha,\beta}(z_j,s)\frac{\partial g_j^{rv-1}}{\partial z_j^\alpha}\frac{\partial g_j^{bv-1}}{\partial z_j^\beta}         \right)\frac{\partial \Lambda_{M_j}^{a,c}}{\partial z_j^r}(g_j^{v-1},h^{v-1})\\
&+4\sum_{r=1}^n \left( -\Lambda_{M_j}^{r,a}(g_j^{v-1},h^{v-1})+\sum_{\alpha,\beta=1}^n \Lambda_{N_j}^{\alpha,\beta}(z_j,s)\frac{\partial g_j^{rv-1}}{\partial z_j^\alpha}\frac{\partial g_j^{av-1}}{\partial z_j^\beta}         \right)\frac{\partial \Lambda_{M_j}^{b,c}}{\partial z_j^r}(g_j^{v-1},h^{v-1})\\
&\equiv_v -2\Lambda_{N_j}(\Lambda_{M_j}^{a,b}(g_j^{v-1},h^{v-1}),g_j^{cv-1})
+2\Lambda_{N_j}(\Lambda_{M_j}^{a,c}(g_j^{v-1},h^{v-1}),g_j^{bv-1})
-2\Lambda_{N_j}(\Lambda_{M_j}^{b,c}(g_j^{v-1},h^{v-1}),g_j^{av-1})\\
&-4\sum_{r=1}^n \left(   \Lambda_{M_j}^{r,c}(g_j^{v-1},h^{v-1})  \frac{\partial \Lambda_{M_j}^{a,b}}{\partial z_j^r}(g_j^{v-1},h^{v-1})  - \Lambda_{M_j}^{r,b}(g_j^{v-1},h^{v-1})  \frac{\partial \Lambda_{M_j}^{a,c}}{\partial z_j^r}(g_j^{v-1},h^{v-1})  + \Lambda_{M_j}^{r,a}(g_j^{v-1},h^{v-1})  \frac{\partial \Lambda_{M_j}^{b,c}}{\partial z_j^r}(g_j^{v-1},h^{v-1})            \right)\\
&+4\sum_{\alpha,\beta=1}^n\Lambda_{N_j}^{\alpha,\beta}(z_j,s)\frac{\partial \Lambda_{M_j}^{a,b}(g_j^{v-1},h^{v-1})}{\partial z_j^\alpha}\frac{\partial g_j^{cv-1}}{\partial z_j^\beta}-4\sum_{\alpha,\beta=1}^n\Lambda_{N_j}^{\alpha,\beta}(z_j,s)\frac{\partial \Lambda_{M_j}^{a,c}(g_j^{v-1},h^{v-1})}{\partial z_j^\alpha}\frac{\partial g_j^{bv-1}}{\partial z_j^\beta}\\
&+4\sum_{\alpha,\beta=1}^n\Lambda_{N_j}^{\alpha,\beta}(z_j,s)\frac{\partial \Lambda_{M_j}^{b,c}(g_j^{v-1},h^{v-1})}{\partial z_j^\alpha}\frac{\partial g_j^{av-1}}{\partial z_j^\beta}\\
&=0
\end{align*}}}
Next we will show that 
\begin{align}\label{eq45}
\lambda_{k|v}(s)-\lambda_{j|v}(s)+[\sum_{r,s=1}^n \Lambda_{M_{0j}}^{r,s}(z_j)\frac{\partial}{\partial z_j^r}\wedge \frac{\partial}{\partial z_j^s},\Gamma_{jk|v}(s)]=0.
\end{align}
First we compute the third term of (\ref{eq45})
\begin{align}\label{equ46}
&[\sum_{r,s=1}^n \Lambda_{M_{0j}}^{r,s}(z_j)\frac{\partial}{\partial z_j^r}\wedge \frac{\partial}{\partial z_j^s},\Gamma_{jk|v}(s)]=\sum_{r,s,\beta=1}^n [\Lambda_{M_{0j}}^{r,s}(z_j)\frac{\partial}{\partial z_j^r}\wedge \frac{\partial}{\partial z_j^s},\Gamma_{jk|v}^{\beta}\frac{\partial}{\partial z_j^{\beta}}]\\
&=\sum_{r,s,\beta=1}^n \left(\Lambda_{M_{0j}}^{r,s}\frac{\partial \Gamma_{jk}^{\beta}}{\partial z_j^{r}}\frac{\partial}{\partial z_j^{\beta}}\wedge \frac{\partial}{\partial z_j^s}
-\Gamma_{jk|v}^{\beta} \frac{\partial \Lambda_{M_{0j}}^{r,s}}{\partial z_j^{\beta}}\frac{\partial}{\partial z_j^r}\wedge \frac{\partial}{\partial z_j^s}+\Lambda_{M_{0j}}^{r,s}\frac{\partial \Gamma_{jk|v}^{\beta}}{\partial z_j^s}\frac{\partial}{\partial z_j^{r}}\wedge \frac{\partial}{\partial z_j^{\beta}}\right)\notag\\
&=\sum_{r,s,\beta=1}^n \left( \Lambda_{M_{0j}}^{\beta,s}\frac{\partial \Gamma_{jk}^{r}}{\partial z_j^{\beta}}\frac{\partial}{\partial z_j^{r}}\wedge \frac{\partial}{\partial z_j^s}
-\Gamma_{jk|v}^{\beta} \frac{\partial \Lambda_{M_{0j}}^{r,s}}{\partial z_j^{\beta}}\frac{\partial}{\partial z_j^r}\wedge \frac{\partial}{\partial z_j^s}+\Lambda_{M_{0j}}^{r,\beta}\frac{\partial \Gamma_{jk|v}^{s}}{\partial z_j^{\beta}}\frac{\partial}{\partial z_j^{r}}\wedge \frac{\partial}{\partial z_j^s}\right)
\notag
\end{align}

We consider the first term $\lambda_{k|v}(s)$ of ($\ref{eq45}$). From $(\ref{aa35})$ and $(\ref{ii34})$, we have 
\begin{align}\label{eq47}
\lambda_{k|v}(s)&\equiv_v \sum_{p,q=1}^n\left(-\Lambda_{M_k}^{p,q}(g_k^{v-1}(z_k,s), h^{v-1}(s))+\sum_{a,b=1}^n \Lambda_{N_k}^{a,b}(z_k,s)\frac{\partial {g_k^p}^{v-1}}{\partial z_k^a}\frac{\partial {g_k^q}^{v-1}}{\partial z_k^b}\right)\frac{\partial}{\partial z_k^p}\wedge\frac{\partial}{\partial z_k^q}\\
&\equiv_v \sum_{r,s=1}^n\left(\sum_{p,q=1}^n\left(-\Lambda_{M_k}^{p,q}(g_k^{v-1}(z_k,s), h^{v-1}(s))+\sum_{a,b=1}^n \Lambda_{N_k}^{a,b}(z_k,s)\frac{\partial {g_k^p}^{v-1}}{\partial z_k^a}\frac{\partial {g_k^q}^{v-1}}{\partial z_k^b}\right)\frac{\partial b_{jk}^r}{\partial z_k^p} \frac{\partial b_{jk}^s}{\partial z_k^q}\right)\frac{\partial}{\partial z_j^r}\wedge\frac{\partial}{\partial z_j^s}\notag
\end{align}

We consider the second term $-\lambda_{j|v}(s)$ of (\ref{eq45}). We note that since $z_j=b_{jk}(z_k)=f_{jk}(z_k,0)$ from $(\ref{ii34})$, we have $\lambda_{j|v}(z_j,s)\equiv_v \lambda_{j|v}(f_{jk}(z_k,s),s)$. Then from $(\ref{aa35})$ and by induction hypothesis $(\ref{aa33})$, we have
{\small{\begin{align}\label{eq48}
&-\lambda_{j|v}(z_j,s)\equiv_v\sum_{r,s=1}^n\left(\Lambda_{M_j}^{r,s}(g_j^{v-1}(z_j,s),h^{v-1}(s))-\sum_{\alpha,\beta=1}^n \Lambda_{N_j}^{\alpha,\beta}(z_j,s)\frac{\partial {g_j^{r}}^{v-1}}{\partial z_j^{\alpha}}\frac{\partial {g_j^{s}}^{v-1}}{\partial z_j^{\beta}}\right)\frac{\partial}{\partial z_j^{r}}\wedge \frac{\partial}{\partial z_j^s}\\
&\equiv_v \sum_{r,s=1}^n\left(\Lambda_{M_j}^{r,s}(g_j^{v-1}(f_{jk}(z_k,s),s),h^{v-1}(s))-\sum_{\alpha,\beta=1}^n \Lambda_{N_j}^{\alpha,\beta}(f_{jk}(z_k,s),s)\frac{\partial {g_j^{r}}^{v-1}}{\partial z_j^{\alpha}}(f_{jk}(z_k,s),s)\frac{\partial {g_j^{s}}^{v-1}}{\partial z_j^{\beta}}(f_{jk}(z_k,s),s)\right) \frac{\partial}{\partial z_j^{r}}\wedge \frac{\partial}{\partial z_j^s}\notag
\end{align}}}
We consider the first term of $(\ref{eq48})$. From $(\ref{tt002})$ and $(\ref{vv10})$, we have
\begin{align}\label{eq49}
&\Lambda_{M_j}^{r,s}(g_j^{v-1}(f_{jk}(z_k,s),s),h^{v-1}(s))\equiv_v \Lambda_{M_j}^{r,s}(g_{jk}(g_k^{v-1}(z_k,s),h^{v-1}(s))+\Gamma_{jk|v}(z_j,s),h^{v-1}(s))\\
&\equiv_v  \Lambda_{M_j}^{r,s}(g_{jk}(g_k^{v-1}(z_k,s),h^{v-1}(s)),h^{v-1}(s))+\sum_{\beta=1}^n \frac{\partial \Lambda_{M_{0j}}^{r,s}}{\partial z_j^{\beta}}\Gamma_{jk|v}^{\beta}(z_j,s)\notag\\
&=\sum_{p,q=1}^n \Lambda_{M_k}^{p,q}(g_k^{v-1}(z_k,s),h^{v-1}(s))\frac{\partial g_{jk}^r}{\partial \xi_k^p}(g_k^{v-1}(z_k,s),h^{v-1}(s))\frac{\partial g_{jk}^s}{\partial \xi_k^q}(g_k^{v-1}(z_k,s),h^{v-1}(s))+\sum_{\beta=1}^n \frac{\partial \Lambda_{M_{0j}}^{r,s}}{\partial z_j^{\beta}}\Gamma_{jk|v}^{\beta}(z_j,s)\notag
\end{align}
On the other hand, we consider the second term of $(\ref{eq48})$. We note that from $(\ref{vv11})$, $(\ref{tt002})$ and $(\ref{vv3})$, we have
\begin{align}\label{eq50}
&\sum_{\alpha,\beta=1}^n \Lambda_{N_j}^{\alpha,\beta}(f_{jk}(z_k,s),s)\frac{\partial {g_j^{r}}^{v-1}}{\partial z_j^{\alpha}}(f_{jk}(z_k,s),s)\frac{\partial {g_j^{s}}^{v-1}}{\partial z_j^{\beta}}(f_{jk}(z_k,s),s)\\
& =\sum_{\alpha,\beta,a,b=1}^n \Lambda_{N_k}^{a,b}(z_k,s)\frac{\partial f_{jk}^\alpha(z_k,s)}{\partial z_k^a}\frac{\partial f_{jk}^\beta(z_k,s)}{\partial z_k^b }\frac{\partial {g_j^{r}}^{v-1}}{\partial z_j^{\alpha}}(f_{jk}(z_k,s),s)\frac{\partial {g_j^{s}}^{v-1}}{\partial z_j^{\beta}}(f_{jk}(z_k,s),s)\notag\\
&=\sum_{a,b=1}^n \Lambda_{N_k}^{a,b}(z_k,s)\frac{\partial (g_j^{rv-1}(f_{jk}(z_k,s),s))}{\partial z_k^a}\frac{\partial (g_j^{sv-1}(f_{jk}(z_k,s),s))}{\partial z_k^b}\notag\\
&\equiv_v \sum_{a,b=1}^n\Lambda_{N_k}^{a,b}(z_k,s)\frac{\partial(g_{jk}^r(g_k^{v-1}(z_k,s),h^{v-1}(s))+\Gamma_{jk|v}^r(z_j,s))}{\partial z_k^a}\frac{\partial (g_{jk}^s(g_k^{v-1}(z_k,s),h^{v-1}(s))+\Gamma_{jk|v}^s(z_j,s))}{\partial z_k^b}\notag\\
&\equiv_v\sum_{a,b,p,q=1}^n \Lambda_{N_k}^{a,b}(z_k,s)\frac{\partial g_{jk}^r}{\partial \xi_k^p}\frac{\partial g_k^{pv-1}}{\partial z_k^a}\frac{\partial g_{jk}^s}{\partial \xi_k^q}\frac{\partial g_{k}^{qv-1}}{\partial z_k^b}+\sum_{a,b=1}^n \Lambda_{M_{0k}}^{a,b}(z_k)\frac{\partial \Gamma_{jk|v}^r}{\partial z_k^a}\frac{\partial z_j^s}{\partial z_k^b}+\sum_{a,b=1}^n \Lambda_{M_{0k}}^{a,b}(z_k)\frac{\partial z_j^r}{\partial z_k^a}\frac{\partial \Gamma_{jk|v}^s}{\partial z_k^b}\notag\\
&\equiv_v\sum_{a,b,p,q=1}^n \Lambda_{N_k}^{a,b}(z_k,s)\frac{\partial g_{jk}^r}{\partial \xi_k^p}\frac{\partial g_k^{pv-1}}{\partial z_k^a}\frac{\partial g_{jk}^s}{\partial \xi_k^q}\frac{\partial g_{k}^{qv-1}}{\partial z_k^b}+\sum_{a,b,\beta=1}^n \Lambda_{M_{0k}}^{a,b}(z_k)\frac{\partial \Gamma_{jk|v}^r}{\partial z_j^\beta}\frac{\partial z_j^\beta}{\partial z_k^a}\frac{\partial z_j^s}{\partial z_k^b}+\sum_{a,b,\beta=1}^n\Lambda_{M_{0k}}^{a,b}(z_k)\frac{\partial z_j^r}{\partial z_k^a}\frac{\partial \Gamma_{jk|v}^s}{\partial z_j^\beta}\frac{\partial z_j^{\beta}}{\partial z_k^b}\notag\\
&\equiv_v\sum_{a,b,p,q=1}^n \Lambda_{N_k}^{a,b}(z_k,s)\frac{\partial g_{jk}^r}{\partial \xi_k^p}\frac{\partial g_k^{pv-1}}{\partial z_k^a}\frac{\partial g_{jk}^s}{\partial \xi_k^q}\frac{\partial g_{k}^{qv-1}}{\partial z_k^b}+\sum_{\beta=1}^n \Lambda_{M_{0j}}^{\beta,s}(z_j)\frac{\partial \Gamma_{jk|v}^r}{\partial z_j^\beta}+\sum_{\beta=1}^n \Lambda_{M_{0j}}^{r,\beta}(z_j)\frac{\partial \Gamma_{jk|v}^s}{\partial z_j^\beta}\notag
\end{align}
where we mean $\frac{\partial g_{jk}^r}{\partial \xi_k^p}$ and $\frac{\partial g_{jk}^s}{\partial \xi_k^q}$ by 
\begin{align}\label{tt89}
\frac{\partial g_{jk}^r}{\partial \xi_k^p}:=\frac{\partial g_{jk}^r}{\partial \xi_k^p}(g_k^{v-1}(z_k,s),h^{v-1}(s)),\,\,\,\,\,\frac{\partial g_{jk}^s}{\partial \xi_k^q}:=\frac{\partial g_{jk}^s}{\partial \xi_k^q}(g_k^{v-1}(z_k,s),h^{v-1}(s))
\end{align}

Hence from (\ref{eq48}),(\ref{eq49}), and (\ref{eq50}), we have
\begin{align}\label{eq51}
&-\lambda_{j|v}(s)\equiv_v \sum_{r,s=1}^n\left(\sum_{p,q=1}^n(\Lambda_{M_k}^{p,q}(g_k^{v-1}(z_k,s), h^{v-1}(s))\frac{\partial g_{jk}^r}{\partial \xi_k^p}\frac{\partial g_{jk}^s}{\partial \xi_k^q}+\sum_{\beta=1}^n \frac{\partial \Lambda_{M_{0j}}^{r,s}}{\partial z_j^{\beta}}\Gamma_{jk|v}^{\beta}(z_j,s)\right)\frac{\partial}{\partial z_j^r}\wedge\frac{\partial}{\partial z_j^s}\\
&+\sum_{r,s=1}^n\left(-\sum_{a,b,p,q=1}^n \Lambda_{N_k}^{a,b}(z_k,s)\frac{\partial g_{jk}^r}{\partial \xi_k^p}\frac{\partial g_k^{pv-1}}{\partial z_k^a}\frac{\partial g_{jk}^s}{\partial \xi_k^q}\frac{\partial g_{k}^{qv-1}}{\partial z_k^b}-\sum_{\beta=1}^n \Lambda_{M_{0j}}^{\beta,s}(z_j)\frac{\partial \Gamma_{jk|v}^r}{\partial z_j^\beta}-\sum_{\beta=1}^n\Lambda_{M_{0j}}^{r,\beta}(z_j)\frac{\partial \Gamma_{jk|v}^s}{\partial z_j^\beta}\right)\frac{\partial}{\partial z_j^r}\wedge\frac{\partial}{\partial z_j^s}\notag
\end{align}
From (\ref{equ46}),(\ref{eq47}) and $(\ref{eq51})$, to show (\ref{eq45}) is equivalent to show that for each $r,s$,
\begin{align}\label{eq34}
&\sum_{p,q=1}^n\left(-\Lambda_{M_k}^{p,q}(g_k^{v-1}(z_k,s), h^{v-1}(s))+\sum_{a,b=1}^n \Lambda_{N_k}^{a,b}(z_k,s)\frac{\partial {g_k^p}^{v-1}}{\partial z_k^a}\frac{\partial {g_k^q}^{v-1}}{\partial z_k^b}\right) \frac{\partial b_{jk}^r}{\partial z_k^p} \frac{\partial b_{jk}^s}{\partial z_k^q}\\
&+\sum_{p,q=1}^n \Lambda_{M_k}^{p,q}(g_k^{v-1}(z_k,s),h^{v-1}(s))\frac{\partial g_{jk}^r}{\partial \xi_k^p}\frac{\partial g_{jk}^s}{\partial \xi_k^q}-\sum_{a,b,p,q=1}^n \Lambda_{N_k}^{a,b}(z_k,s)\frac{\partial g_{jk}^r}{\partial \xi_k^p}\frac{\partial g_k^{pv-1}}{\partial z_k^a}\frac{\partial g_{jk}^s}{\partial \xi_k^q}\frac{\partial g_{k}^{qv-1}}{\partial z_k^b}\equiv_v 0
\notag
\end{align}
$(\ref{eq34})$ is equivalent to
\begin{align}\label{poi}
\sum_{p,q=1}^n\left(-\Lambda_{M_k}^{p,q}(g_k^{v-1}(z_k,s), h^{v-1}(s))+\sum_{a,b=1}^n \Lambda_{N_k}^{a,b}(z_k,s)\frac{\partial {g_k^p}^{v-1}}{\partial z_k^a}\frac{\partial {g_k^q}^{v-1}}{\partial z_k^b}\right)\left( \frac{\partial b_{jk}^r}{\partial z_k^p} \frac{\partial b_{jk}^s}{\partial z_k^q}-\frac{\partial g_{jk}^r}{\partial \xi_k^p}\frac{\partial g_{jk}^s}{\partial \xi_k^q}\right)\equiv_v 0
\end{align}
By induction hypothesis $(\ref{aa33})$, we have $\Lambda_{M_k}^{p,q}(g_k^{v-1}(z_k,s),h^v(s))\equiv_{v-1}\sum_{a,b=1}^n \Lambda_{N_k}^{a,b}(z_k,s)\frac{\partial {g_k^p}^{v-1}}{\partial z_k^{a}}\frac{\partial {g_k^q}^{v-1}}{\partial z_k^{b}}$, and we have $\left( \frac{\partial b_{jk}^r}{\partial z_k^p} \frac{\partial b_{jk}^s}{\partial z_k^q}-\frac{\partial g_{jk}^r}{\partial \xi_k^p}\frac{\partial g_{jk}^s}{\partial \xi_k^q}\right)\equiv_0 0$ since from $(\ref{tt89})$, we have $\frac{\partial g_{jk}^r}{\partial \xi_k^p}(g_k^{v-1}(z_k,0), h^{v-1}(0))=\frac{\partial g_{jk}^r}{\partial \xi_k^p}(z_k,0)=\frac{\partial b_{jk}^r}{\partial z_k^p}$ and similarly for $\frac{\partial g_{jk}^s}{\partial \xi_k^q}$. Hence we have (\ref{poi}). This completes Lemma \ref{lemma11}.
\end{proof}

\subsection{Proof of Convergence}\

By Lemma \ref{lemma10} and Lemma \ref{lemma11}, we can find $h_{u|v},u=1,...,m$, and $\{g_{j|v}(z_j,s)\}$ inductively on $v$ such that $h^v(s)=h^{v-1}(s)+h_{u|v}(s)$ and $g_j^v(z_j,s)=g_j^{v-1}(z_j,s)+g_{j|v}(z_j,s)$ satisfy $(\ref{aa11})_v$ and $(\ref{aa12})_v$ so that we have formal power series  $h(s)$ and $g_j(z_j,s)$ satisfying (\ref{aa90}) and (\ref{aa91}). In this subsection, we will prove that we can choose appropriate solutions $h_{u|v}(s)$ and $\{g_{j|v}(z_j,s)\}$ in each inductive step so that $h(s)$ and $g_j(z_j,s)$ converge absolutely in $|s|<\epsilon$ if $\epsilon>0$ is sufficiently small. As in \cite{Kod05} p.294-302, our approach is to estimate $\Gamma_{jk|v}(z_j,s)$, $\lambda_{j|v}(s)$ and use Lemma \ref{lemma3} below concerning the ``magnitude" of the solutions $h_{u|v}(s),u=1...,m,\{g_{j|v}(z_j,s)\}$ of the equation $(\ref{hu7})$.

\begin{definition}
Let $\mathcal{U}:=\{U_j\}$ be a finite open covering of $M_0$ in $(\ref{covering})$. We may assume that $U_j=\{z_j\in\mathbb{C}^n|z_j|<1\}$ and $M_0=\bigcup_j U_j^\delta$, where $U_j^\delta=\{z_j\in U_j||z_j|<1-\delta\}$ for a sufficiently small number $\delta>0$. We denote a $1$-cocycle $(\{\lambda_j\},\{\Gamma_{jk}\})\in C^{0}(\mathcal{U},\wedge^2 \Theta_{M_0})\oplus C^1(\mathcal{U},\Theta_{M_0})$ by $(\lambda,\Gamma)$ in the \u{C}ech resolution of the complex of sheaves $(\ref{complex})$, and define its norm by
\begin{align}\label{yy3}
|(\lambda,\Gamma)|:=\max_j \sup_{z_j\in U_j^\delta} |\lambda_j(z_j)|+\max_{j,k}\sup_{z_j\in U_j\cap U_k}|\Gamma_{jk}(z_j)|
\end{align}
\end{definition}

\begin{remark}
We explain the meaning of $|\lambda_j(z_j)|$ and $|\Gamma_{jk}(z_j)|$ in $(\ref{yy3})$. We regard holomorphic vector field $\Gamma_{jk}(z_j)=\sum_{\alpha=1}^n \Gamma_{jk}^{\alpha}(z_j) \frac{\partial}{\partial z_j^{\alpha}}$ as a vector-valued holomorphic function $(\Gamma_{jk}^1(z_j),...,\Gamma_{jk}^n(z_j))$ and regard holomorphic bivector field $\lambda_j(z_j)=\sum_{r,s=1}^n \lambda_j^{r,s}(z_j)\frac{\partial}{\partial z_j^r}\wedge \frac{\partial}{\partial z_j^s}$ as a holomorphic vector valued function $(\lambda_j^{1,1}(z_j),\cdots,\lambda_j^{r,s}(z_j),\cdots, \lambda_j^{n,n}(z_j))$. For $z_j\in U_j\cap U_k$, we define $|\Gamma_{jk}(z_j)|:=\max_\alpha |\Gamma_{jk}^\alpha(z_j)|$. On the other hand, for $ z_j\in U_j^\delta$ we define $|\lambda_j(z_j)|:=\max_{r,s} |\lambda_j^{r,s}(z_j)|$.
\end{remark}

\begin{remark}
Since each $U_j=\{z_j\in \mathbb{C}^n||z_j|<1\}$ in $(\ref{covering})$ is a coordinate polydisk, we may assume that the coordinate function $z_j$ is defined on a domain of $M_0$ containing $\overline{U}_{j}$ $($the closure of $U_j$$)$. Hence there exists a constant $L_1>0$ such that for all $\alpha,\beta=1,...,n$, and for all $U_k\cap U_j\ne \emptyset$,
\begin{align}\label{hy1}
\left|\frac{\partial z_j^{\alpha}}{\partial z_k^{\beta}}(z_k)\right|=\left|\frac{\partial b_{jk}^\alpha}{\partial z_k^\beta}(z_k) \right|< L_1,\,\,\,\,\,z_k\in U_k\cap U_j,
\end{align}
and there exist constants $C,C'>0$ such that for all $r,s,\beta=1,...,n$ and for all $U_j$,
\begin{align}\label{ii99}
\left|\Lambda_{M_{0j}}^{r,s}(z_j)\right| < C,\,\,\,\,\,\, \left|\frac{\partial \Lambda_{M_{0j}}^{r,s}}{\partial z_j^{\beta}}(z_j)\right|<C',\,\,\,\,\,z_j\in U_j.
\end{align}
 We define the norm of the matrix $B_{jk}(z_j):=\left(\frac{\partial z_j^{\alpha}}{\partial z_k^{\beta}}(z_k)\right)_{\alpha,\beta=1,...,n}$ by $|B_{jk}(z_k)|=\max_{\alpha}\sum_{\beta}\left|\frac{\partial z_j^{\alpha}}{\partial z_k^{\beta}}(z_k)\right|$. Then there exists a constant $K_1>1$ such that for all $U_j\cap U_k\ne \emptyset$, 
\begin{align}\label{yt2}
|B_{jk}(z_k)|<K_1 \,\,\,\,\,\text{ $z_k\in U_k\cap U_j$}
\end{align}
 Since $\theta_{ujk}^{\alpha}(z_j)=\left(\frac{\partial g_{jk}^{\alpha}(z_k,t)}{\partial t_u}\right)_{t=0}$ in $(\ref{p1})$ are bounded on $U_j\cap U_k\ne \emptyset$, there exists a constant $K_2$ such that 
\begin{align}\label{yt1}
|\theta_{ujk}(z_j)|=|\sum_{\alpha=1}^n \theta_{ujk}^{\alpha}(z_j)\frac{\partial}{\partial z_j^{\alpha}}|:=\max_{\alpha}|\theta_{ujk}^{\alpha}(z_j)|<K_2
\end{align}

Since $\Lambda_{uj}'^{r,s}(z_j)=\left(\frac{\partial \Lambda_{M_j}^{r,s}(z_j,t)}{\partial t_u}\right)_{t=0}$ in $(\ref{p2})$ are bounded on $U_j$, there exists a constant $L_2$ such that 
\begin{align}\label{yt3}
|\Lambda_{uj}'(z_j)|=|\sum_{r,s=1}^n \Lambda_{uj}'^{r,s}(z_j)\frac{\partial}{\partial z_j^r}\wedge \frac{\partial}{\partial z_j^s}|:=\max_{r,s} |\Lambda_{uj}'^{r,s}(z_j)|<L_2
\end{align}
\end{remark}

\begin{lemma}[compare \cite{Kod05} Lemma 6.2 p.295]\label{lemma3}
There exist solutions $h_u,u=1,...,m$, and $\{g_j(z_j)=\sum_{\alpha=1}^n g_j^\alpha(z_j)\frac{\partial}{\partial z_j^\alpha}\}$ of the equation 
\begin{align}\label{p55}
(\{\lambda_j\},\{\Gamma_{jk}\})=\sum_{u=1}^m h_u(\{\Lambda'_{uj}\}, \{\theta_{ujk}\})-\delta_{HP}\{g_j(z_j)\}
\end{align}
which satisfy 
\begin{align*}
|h_u|\leq M|(\lambda,\Gamma)|,\,\,\,\,\,\,\, |g_j(z_j)|:=\max_\alpha |g_j^{\alpha}(z_j)| \leq M|(\lambda,\Gamma)| \,\,\,\,\text{for $z_j\in U_j$}
\end{align*}
where $M$ is a constant independent of a $1$-cocycle $(\lambda,\Gamma)=(\{\lambda_j\},\{\Gamma_{jk}\})$. 

\end{lemma}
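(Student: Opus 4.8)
The plan is to recast the assertion as a consequence of the open mapping theorem for Banach spaces, exactly in the spirit of Kodaira's proof of \cite[Lemma 6.2, p.295]{Kod05}, the genuinely new feature being the bivector component $\{\lambda_j\}$. First I would introduce the Banach space $\mathcal{Z}^1$ of bounded holomorphic $1$-cocycles $(\{\lambda_j\},\{\Gamma_{jk}\})$ for the \v{C}ech resolution of $(\ref{complex})$ with respect to the covering $(\ref{covering})$, equipped with the norm $|(\lambda,\Gamma)|$ of $(\ref{yy3})$; the cocycle conditions $[\Lambda_0,\lambda_j]=0$, $\lambda_k-\lambda_j+[\Lambda_0,\Gamma_{jk}]=0$, $\delta\{\Gamma_{jk}\}=0$ are closed conditions, so $\mathcal{Z}^1$ is indeed a Banach space. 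Let $\mathcal{C}^0$ denote the Banach space of bounded holomorphic $0$-cochains $\{g_j\}$, $g_j=\sum_\alpha g_j^\alpha \frac{\partial}{\partial z_j^\alpha}$ on $U_j$, normed by $\max_j\sup_{U_j}|g_j|$. The object of interest is the continuous linear map
\begin{align*}
L:\mathbb{C}^m\oplus \mathcal{C}^0\to \mathcal{Z}^1,\qquad (h_1,\dots,h_m,\{g_j\})\mapsto \sum_{u=1}^m h_u(\{\Lambda'_{uj}\},\{\theta_{ujk}\})-\delta_{HP}(\{g_j\}),
\end{align*}
and once $L$ is shown to be a bounded surjection, the open mapping theorem furnishes a bounded right inverse whose operator norm is the desired constant $M$.

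Next I would verify that $L$ is bounded. The contribution $\sum_u h_u(\{\Lambda'_{uj}\},\{\theta_{ujk}\})$ is estimated directly by $(\ref{yt1})$ and $(\ref{yt3})$, giving a bound $\le (\sum_u |h_u|)\max(K_2,L_2)$. For $\delta_{HP}(\{g_j\})=(\{[\Lambda_{0j},g_j]\},\{g_j-g_k\})$ the vector part satisfies $|g_j-g_k|\le 2\max_j\sup_{U_j}|g_j|$ on $U_j\cap U_k$, while the bivector part is the crucial point: expanding the Schouten bracket $[\Lambda_{0j},g_j]$ in local coordinates (as in Lemma \ref{lemmai}) produces the coefficients $\Lambda_{0j}^{r,s}$, their first derivatives, and the first derivatives $\partial g_j^\beta/\partial z_j^\gamma$. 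Bounding $\Lambda_{0j}^{r,s}$ and $\partial\Lambda_{0j}^{r,s}/\partial z_j^\beta$ by $(\ref{ii99})$ and estimating the first derivatives of $g_j$ on the shrunken polydisk $U_j^\delta$ by Cauchy's integral formula, namely $\sup_{U_j^\delta}|\partial g_j^\beta/\partial z_j^\gamma|\le \mathrm{const}\cdot \delta^{-1}\sup_{U_j}|g_j^\beta|$, yields $\sup_{U_j^\delta}|[\Lambda_{0j},g_j]|\le \mathrm{const}\cdot \max_j\sup_{U_j}|g_j|$. This is precisely why the norm $(\ref{yy3})$ measures the bivector component on the interior domains $U_j^\delta$ rather than on all of $U_j$. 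Altogether $|L(h,\{g_j\})|\le \mathrm{const}\,(|h|+\max_j\sup_{U_j}|g_j|)$.

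Surjectivity of $L$ is the heart of the matter and is where Lemma \ref{lemma10} and Lemma \ref{lemma11} enter. Given $(\lambda,\Gamma)\in\mathcal{Z}^1$, its class $(\eta,\gamma)\in\mathbb{H}^1(M_0,\Theta_{M_0}^\bullet)$ equals a linear combination $\sum_u h_u(\Lambda'_u,\theta_u)$ because the Poisson Kodaira--Spencer map is surjective; hence $(\lambda,\Gamma)-\sum_u h_u(\{\Lambda'_{uj}\},\{\theta_{ujk}\})$ is a coboundary $\delta_{HP}(\{g_j\})$ for some holomorphic $0$-cochain $\{g_j\}$, which is exactly the content of the argument in Lemma \ref{lemma10}. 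Since $\mathcal{U}=\{U_j\}$ is a covering of $M_0$ by polydisks (a Leray covering for the coherent sheaves $\wedge^q\Theta_{M_0}$) and $\mathbb{H}^1(M_0,\Theta_{M_0}^\bullet)$ is finite dimensional by the coherent finiteness theorem, the range of $\delta_{HP}$ on bounded cochains is closed and $\{g_j\}$ may be taken bounded; thus $L$ maps onto $\mathcal{Z}^1$.

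Finally, applying the open mapping theorem (equivalently, the standard corollary that a continuous linear surjection of Banach spaces admits a bounded right inverse) to $L$ produces a constant $M$, independent of $(\lambda,\Gamma)$, and solutions $h_u,\{g_j\}$ of $(\ref{p55})$ with $|h_u|\le M|(\lambda,\Gamma)|$ and $\sup_{U_j}|g_j|\le M|(\lambda,\Gamma)|$, as required. The main obstacle I anticipate is the bounded solvability of the coboundary equation used in the surjectivity step: one must guarantee that when $\delta_{HP}(\{g_j\})$ is a bounded cocycle the solution $\{g_j\}$ can be chosen bounded on all of $U_j$ with a uniform estimate. I would resolve this exactly as Kodaira does for the complex-structure case in \cite[pp.295--302]{Kod05}, combining the finite dimensionality of $\mathbb{H}^1(M_0,\Theta_{M_0}^\bullet)$ with the compactness (Montel's theorem via Cauchy estimates) of the restriction maps between the coverings $\{U_j\}$ and $\{U_j^\delta\}$, which forces the coboundary operator to have closed range.
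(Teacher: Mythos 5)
Your route differs from the paper's in packaging, but the analytic content you would still have to supply is exactly what the paper's proof consists of. The paper does not invoke the open mapping theorem: it sets $\iota(\lambda,\Gamma):=\inf\max_{u,j}\{|h_u|,\sup_{z_j\in U_j}|g_j(z_j)|\}$ over all solutions of \eqref{p55}, supposes no uniform constant $M$ exists, normalizes a sequence of counterexamples so that $\iota(\lambda^{(v)},\Gamma^{(v)})=1$ while $|(\lambda^{(v)},\Gamma^{(v)})|<1/v$, extracts (by Montel for the $g_j^{(v)}$ and compactness of bounded sets in $\mathbb{C}^m$ for the $h_u^{(v)}$) a convergent subsequence of solutions, passes to the limit to get a solution $(h_u,\{g_j\})$ of the homogeneous system, and subtracts it to make the solutions tend to zero, contradicting $\iota=1$. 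This is a direct, hands-on derivation of precisely the ``bounded right inverse'' conclusion you want to extract from the open mapping theorem. Your boundedness check for the operator $L$ is sound and matches the constants the paper records in \eqref{ii99} and \eqref{yt1}--\eqref{yt3}; in particular your observation that the Cauchy estimate on the shrunken polydisk is what makes $[\Lambda_{M_{0j}},g_j]$ controllable, and that this is why the norm \eqref{yy3} measures the bivector component only on $U_j^\delta$, is correct.

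The gap is in your surjectivity step. Lemma \ref{lemma10} produces a holomorphic $0$-cochain $\{g_j\}$ with $\delta_{HP}\{g_j\}=\sum_u h_u(\{\Lambda'_{uj}\},\{\theta_{ujk}\})-(\lambda,\Gamma)$, but gives no control on $\sup_{U_j}|g_j|$: a holomorphic vector field on the open polydisk $U_j$ need not be bounded, so it is not clear that a bounded cocycle $(\lambda,\Gamma)$ lies in the image of $L$ restricted to the Banach space of \emph{bounded} cochains. You flag this yourself and propose to close it with ``the coboundary operator on bounded cochains has closed range,'' but that statement, together with the uniform bound on preimages it would have to carry, is essentially a restatement of Lemma \ref{lemma3}; proving it requires the same normalization-plus-Montel compactness argument that is the paper's entire proof. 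So the open mapping theorem buys nothing here: once you have done the work needed to establish surjectivity onto bounded cocycles with bounded preimages, the quantitative conclusion is already in hand and Baire category is superfluous. To complete your version you should replace the appeal to closed range by the paper's contradiction argument (or an equivalent direct proof of bounded solvability with a uniform constant); as written, the proposal reduces the lemma to an unproved claim of the same strength.
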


\begin{proof} 
The proof is similar to \cite{Kod05} Lemma 6.2 p.295 to which we refer for the detail. For a $1$-cocycle $(\lambda,\Gamma)=(\{\lambda_j\},\{\Gamma_{jk}\})$ with $|(\lambda,\Gamma)|<\infty$, we define $\iota(\lambda,\Gamma)$ by
\begin{align*}
\iota(\lambda,\Gamma)=\inf \max_{u,j}\{|h_u|,\sup_{z_j\in U_j}|g_j(z_j)|\},
\end{align*}
where $\inf$ is taken with respect to all the solutions $h_u,u=1,...,m$, and $g_j(z_j)$ of (\ref{p55}). We will show that there exists a constant $M$ such that for all $1$-cocycles $(\lambda,\Gamma)\in \mathcal{C}^0(\mathcal{U},\wedge^2 \Theta_{M_0})\oplus \mathcal{C}^1(\mathcal{U},\Theta_{M_0})$, we have 
\begin{align*}
\iota(\lambda,\Gamma)\leq M|(\lambda,\Gamma)|
\end{align*}
Suppose there is no such constant $M$. Then we can find a sequence of $1$-cocycles $(\lambda^{(v)},\Gamma^{(v)})=(\{\lambda_j^{(v)}\},\{\Gamma_{jk}^{(v)}\})\in \mathcal{C}^0(\mathcal{U},\wedge^2 \Theta_{M_0})\oplus \mathcal{C}^1(\mathcal{U},\Theta_{M_0}),v=1,2,,3\cdots$, and their solutions $g_j^{(v)}(z_j), h_u^{(v)},u=1,...,m$ such that
\begin{align}\label{yt7}
\iota(\lambda^{(v)},\Gamma^{(v)})=1,\,\,\,\,\,\,\,\,\, |(\lambda^{(v)},\Gamma^{(v)})|<\frac{1}{v},
\end{align}
and the sequence $\{h_u^{(v)}\},u=1,...,m$ converge and $\{g_j^{(v)}(z_j)\}$ converges uniformly on $U_j$.

Put $h_u=\lim_{v\to \infty} h_u^{(v)}$, and $g_j(z_j)=\lim_{v\to \infty}g_j^{(v)}(z_j)$ and note that 
\begin{align*}
\Gamma_{jk}^{(v)}(z_j)&=\sum_{u=1}^m h_u^{(v)}\theta_{ujk}(z_j)+B_{jk}(z_k)g_k^{(v)}(z_k)-g_j^{(v)}(z_j),\\
\lambda_j^{(v)}(z_j)&=\sum_{u=1}^mh_u^{(v)}\Lambda_{uj}'(z_j)-[\sum_{r,s=1}^n \Lambda_{M_{0j}}^{r,s}(z_j)\frac{\partial}{\partial z_j^r}\wedge \frac{\partial}{\partial z_j^s},\sum_{\alpha=1}^n g_j^{\alpha(v)}(z_j)\frac{\partial}{\partial z_j^{\alpha}}]
\end{align*}
Since $|\Gamma_{jk}^{(v)}(z_j)|\leq |(\lambda^{(v)},\Gamma^{(v)})|\to 0$ for $z_j\in U_j\cap U_k$, and $|\lambda_j^{(v)}(z_j)|\leq |(\lambda^{(v)},\Gamma^{(v)})|\to 0$ for $z_j\in U_j^\delta$ as $v\to \infty$, we have
\begin{align*}
0&=\sum_{u=1}^m h_u\theta_{ujk}(z_j)+B_{jk}(z_k)g_k(z_k)-g_j(z_j),\,\,\,\,\,\text{$z_j\in U_j\cap U_k$}\\
0&=\sum_{u=1}^mh_u\Lambda_{uj}'(z_j)-[\sum_{r,s=1}^n \Lambda_{M_{0j}}^{r,s}(z_j)\frac{\partial}{\partial z_j^r}\wedge \frac{\partial}{\partial z_j^s},\sum_{\alpha=1}^n g_j^{\alpha}(z_j)\frac{\partial}{\partial z_j^{\alpha}}],\,\,\,\,\,\text{$z_j\in U_j^\delta$}
\end{align*}
By identity theorem, we have
\begin{align*}
0=\sum_{u=1}^mh_u\Lambda_{uj}'(z_j)-[\sum_{r,s=1}^n \Lambda_{M_{0j}}^{r,s}(z_j)\frac{\partial}{\partial z_j^r}\wedge \frac{\partial}{\partial z_j^s},\sum_{\alpha=1}^n g_j^{\alpha}(z_j)\frac{\partial}{\partial z_j^{\alpha}}],\,\,\,\,\,\text{$z_j\in U_j$}
\end{align*}

By putting $\tilde{h}_u^{(v)}=h_u^{(v)}-h_u$, and $\tilde{g}_j^{(v)}(z_j)=g_j^{(v)}(z_j)-g_j(z_j)$, we obtain
\begin{align*}
\Gamma_{jk}^{(v)}(z_j)&=\sum_{u=1}^m \tilde{h}_u^{(v)}\theta_{ujk}(z_j)+B_{jk}(z_k)\tilde{g}_k^{(v)}(z_k)-\tilde{g}_j^{(v)}(z_j),\\
\lambda_j^{(v)}(z_j)&=\sum_{u=1}^m\tilde{h}_u^{(v)}\Lambda_{uj}'(z_j)-[\sum_{r,s=1}^n \Lambda_{M_{0j}}^{r,s}(z_j)\frac{\partial}{\partial z_j^r}\wedge \frac{\partial}{\partial z_j^s},\sum_{\alpha=1}^n \tilde{g}_j^{\alpha(v)}(z_j)\frac{\partial}{\partial z_j^{\alpha}}]
\end{align*}
Hence $\tilde{h}_u^{(v)},u=1,...,m$, and $\{\tilde{g}_j^{(v)}(z_j)\}$ satisfy the equation (\ref{p55}) for $(\lambda,\Gamma)=(\{\lambda^{(v)}\},\{\Gamma^{(v)}\})$. This is a contradiction to $\iota(\lambda,\Gamma)=1$ ((\ref{yt7})) since we have $\tilde{h}_u^{(v)}\to 0$ and $\sup_{z_j\in U_j}|\tilde{g}_j^{(v)}(z_j)|\to 0$.

\end{proof}

Next we will prove that we can choose appropriate solutions $h_{u|v}(s),u=1,...,m$ and $\{g_{j|v}(z_j,s)\}$ in each inductive step by estimating $\Gamma_{jk|v}(z_j,s),\lambda_{j|k}(z_j,s)$ and using Lemma \ref{lemma3} so that the formal power series $h(s)$ and $g_j(z_j,s)$ converge absolutely in $|s|<\epsilon$ if $\epsilon>0$ is sufficiently small. Before the proof, we remark the following.
\begin{remark}\label{remark123}\
\begin{enumerate}
\item For two power series of $s_1,...,s_l$, 
\begin{align*}
P(s)&=\sum_{v_1,...,v_l=0}^\infty P_{v_1,...,v_l}s_1^{v_1}\cdots s_l^{v_l},\,\,\,\,\, P_{v_1,...,v_l}\in \mathbb{C}^n,\\
a(s)&=\sum_{v_1,...,v_l=0}^{\infty} a_{v_1,...,v_l}s_1^{v_1}\cdots s_l^{v_l},\,\,\,\,\,,a_{v_1,...,v_l}\geq 0,
\end{align*}
we write $P(s)\ll a(s)$ if $|P_{v_1,...,v_l}|\leq a_{v_1,...,v_l},\,\,\,\,\,v_1,...,v_l=0,1,2,...$.
\item For a power series $P(s)$, we denote by $[P(s)]_v$ the term of homogeneous part of degree $v$ with respect to $s$.
\item For $A(s)=\frac{b}{16c}\sum_{v=1}^{\infty}\frac{c^v(s_1+\cdots s_l)^v}{v^2},b>0,c>0$, we have $A(s)^v\ll \left(\frac{b}{c}\right)^{v-1}A(s),v=2,3,...$
\item Recall that for each $U_j=\{z_j\in \mathbb{C}^n||z_j|<1\}$, we set $U_j^\delta=\{z_j\in U_j||z_j|<1-\delta\}$ for a given $\delta$. Then $M_0=\bigcup_j U_j^\delta$ for a sufficiently small $\delta$.
\end{enumerate}
\end{remark}

To prove the convergence of $h(s)$ and $g_j(z_j,s)$, we will show the estimates $h(s)\ll A(s),\,\,\,g_j(z_j,s)-z_j\ll A(s)$ for suitable constants $b$ and $c$ in Remark \ref{remark123} (3), equivalently
\begin{align}\label{induction}
h^v(s)\ll A(s),\,\,\,\,\, g_j^v(z_j,s)-z_j\ll A(s)
\end{align}
for $v=1,2,3,...$. We will prove this by induction on $v=1,2,3,...$. For $v=1$, since the linear term of $A(s)$ is $\frac{b}{16}(s_1+\cdots +s_l)$, the estimate holds if $b$ is sufficiently large. Let $v\geq 2$ and assume that the induction holds for  $v-1$. In other words,
\begin{align}\label{induction2}
h^{v-1}(s)\ll A(s),\,\,\,\,\, g_j^{v-1}(z_j,s)-z_j\ll A(s)
\end{align}
 We will prove that (\ref{induction}) holds. For this, we estimate $\Gamma_{jk|v}(z_j,s)$ and $\lambda_{j|v}(z_j,s)$. For the estimation of $\Gamma_{jk|v}(z_j,s)$, we briefly summarize Kodaira's estimation presented in \cite{Kod05} p.298-302 in the following: since  $f_{jk}(z_k,s)=b_{jk}(z_k)+\sum_{v=1}^\infty f_{jk|v}(z_j,s)$ are given vector-valued holomorphic functions, we  may assume that
\begin{align*}
f_{jk}(z_k,s)-b_{jk}(z_k)\ll A_0(s),\,\,\,\,\,\, A_0(s)=\frac{b_0}{16c_0}\sum_{v=1}^\infty \frac{c_0^v(s_1+\cdots+s_l)^v}{v^2}
\end{align*}
holds for $z_k\in U_k\cap U_j$ with $b_0>0$ and $c_0>0$ such that $\frac{b_0}{c_0\delta}<\frac{1}{2}$, where $\delta$ from Remark \ref{remark123} (4). If we take $b$ and $c$ such that $b>b_0,c>c_0,\frac{ba_0(m+n)}{c}<\frac{1}{2}$, we can estimate
\begin{align}\label{yt6}
\Gamma_{jk|v}(z_j,s)\ll 2K_1K^*A(s),\,\,\,\,\, z_j\in U_j\cap U_k.
\end{align}
where $K^*=\frac{2^{n+1}b_0}{c\delta}+\frac{b_0}{b}+\frac{2ba_0^2(m+n)^2}{c}$ and $K_1$ from (\ref{yt2}) (for the detail, see page \cite{Kod05} 298-302).

Next we estimate $\lambda_{j|v}(z_j,s)$ (see (\ref{p5})). To estimate it, we estimate $\lambda_{j|v}^{r,s}(z_j,s)$ for each pair $(r,s)$ where $r,s=1,...,n$. We note that from (\ref{aa35}), we have
\begin{align}\label{jj11}
\lambda_{j|v}^{r,s} (z_j,s) =[ -\Lambda_{M_j}^{r,s}(g_j^{v-1}(z_j,s),h^{v-1}(s))]_v+[\sum_{p,q=1}^n\Lambda_{N_j}^{p,q}(z_j,s)\frac{\partial {g_j^r}^{v-1}}{\partial z_j^{p}}\frac{\partial {g_j^s}^{v-1}}{\partial z_j^{q}}]_v
\end{align}
First we estimate $[\Lambda_{M_j}^{r,s}(g_j^{v-1}(z_j,s),h^{v-1}(s))]_v$ in $(\ref{jj11})$. We expand $\Lambda_{M_{j}}^{r,s}(z_j+\xi,t)$ into power series in $\xi_1,...,\xi_n,t_1,...,t_m$, and let $L(\xi,t)$ be its linear term. Since $\Lambda_{M_j}^{r,s}(z_j,0)=\Lambda_{M_{0j}}^{r,s}(z_j)$ from $(\ref{ii34})$, we may assume that for all the pairs $(r,s)$,
\begin{align*}
\Lambda_{M_j}^{r,s}(z_j+\xi,t)-\Lambda_{M_{0j}}^{r,s}(z_j)-L(\xi,t)\ll \sum_{\mu=2}^{\infty} d_0^\mu(\xi_1+\cdots+\xi_n+t_1+\cdots +t_m)^\mu\,\,\,\,\,\text{for some constant $d_0>0$}
\end{align*}
Set $\xi=g_j^{v-1}(z_j,s)-z_j$, and $t=h^{v-1}(s)$. Since $\xi\ll A(s)$ and $t\ll A(s)$ by induction hypothesis (\ref{induction2}), we have from Remark \ref{remark123} (3)
\begin{align*}
&\Lambda_{M_j}^{r,s}(g_j^{v-1}(z_j,s),h^{v-1}(s))-\Lambda_{M_{0j}}^{r,s}(z_j)-L(g_j^{v-1}(z_j,s)-z_j,h^{v-1}(s))\\
&\ll \sum_{\mu=2}^\infty {d_0}^\mu(n+m)^\mu A(s)^\mu\ll \sum_{\mu=2}^\infty d_0^{\mu}(m+n)^\mu\left(\frac{b}{c}\right)^{\mu-1}A(s)= \frac{bd_0^2(m+n)^2}{c}\sum_{\mu=2}^\infty\left(\frac{bd_0(m+n)}{c}\right)^{\mu-2} A(s)
\end{align*}
Hence we have 
\begin{align*}
[\Lambda_{M_j}^{r,s}(g_j^{v-1}(z_j,s),h^{v-1}(s))]_v\ll \frac{bd_0^2(m+n)^2}{c}\sum_{\mu=0}^\infty\left(\frac{bd_0(m+n)}{c}\right)^\mu A(s)
\end{align*}
Choose a constant $c$ such that $\frac{bd_0(m+n)}{c}<\frac{1}{2}$. Then we have
\begin{align}\label{jj16}
[\Lambda_{M_j}^{r,s}(g_j^{v-1}(z_j,s),h^{v-1}(s))]_v\ll\frac{2bd_0^2(m+n)^2}{c}A(s),\,\,\,\,\,\,\,\, z_j\in U_j
\end{align}

Next we estimate $[\sum_{p,q=1}^n\Lambda_{N_j}^{p,q}(z_j,s)\frac{\partial g_j^{rv-1}(z_j,s)}{\partial z_j^p}\frac{\partial g_j^{sv-1}(z_j,s)}{\partial z_j^q}]_v$ in $(\ref{jj11})$. By induction hypothesis (\ref{induction2}), set 
\begin{align}\label{jj14}
\alpha_j^{rv-1}(z_j,s):=g_j^{rv-1}(z_j,s)-z_j^r \ll A(s)\,\,\,\,\,\text{for all $r=1,...,n$}
\end{align}
 Since $\Lambda_{N_j}^{p,q}(z_j,s)$ is holomorphic, and $\Lambda_{N_j}^{p,q}(z_j,0)=\Lambda_{M_{0j}}^{p,q}(z_j)$ from $(\ref{ii34})$, we may assume that for all the pairs $(p,q)$,
\begin{align} \label{jj13}
 \Pi_j^{p,q}(z_j,s):=\Lambda_{N_j}^{p,q}(z_j,s)-\Lambda_{M_{0j}}^{p,q}(z_j)\ll A_1(s)=\frac{b_1}{16c_1}\sum_{v=1}^\infty\frac{c_1^v(s_1+\cdots +s_l)^v}{v^2}
\end{align} 
for some constants $b_1,c_1>0$. If we choose $b>b_1$ and $c>c_1$, then we have 
\begin{align}\label{jj123}
\Pi_j^{p,q}(z_j,s)\ll \frac{b_1}{b}A(s).
\end{align}
Now assume that $z_j=(z_j^1,...,z_j^n) \in U_j^{\delta}$ from Remark \ref{remark123} (4). Then by Cauchy's integral formula, and $(\ref{jj14})$, we have, for $p=1,...,n$,
\begin{align*}
\frac{\partial \alpha_j^{rv-1}(z_j,s)}{\partial z_j^p}=\frac{1}{2\pi i}\int_{|\xi-z_j^p|=\delta}\frac{\alpha_j^{rv-1}(z_j^1,...,\overset{\text{$p$-th}}{\xi},...,z_j^n,s)}{(\xi-z_j^p)^2}d\xi
\end{align*}
Hence we have, for $p=1,...,n,$
\begin{align}\label{jj12}
\frac{\partial \alpha_j^{rv-1}(z_j,s)}{\partial z_j^p}\ll \frac{A(s)}{\delta}
\end{align}
Then from (\ref{jj14}),(\ref{jj13}), (\ref{jj123}), (\ref{jj12}) and $(\ref{ii99})$, we have 
\begin{align}\label{jj15}
&[\sum_{p,q=1}^n\Lambda_{N_j}^{p,q}(z_j,s)\frac{\partial g_j^{rv-1}(z_j,s)}{\partial z_j^p}\frac{\partial g_j^{sv-1}(z_j,s)}{\partial z_j^q}]_v\\
&=[\sum_{p,q=1}^n(\Pi_j^{p,q}(z_j,s)+\Lambda_{M_{0j}}^{p,q}(z_j))\frac{\partial (\alpha_j^{rv-1}(z_j,s)+z_j^r)}{\partial z_j^p}\frac{\partial (\alpha_j^{sv-1}(z_j,s)+z_j^s)}{\partial z_j^q}]_v \notag\\
&=[\sum_{p,q=1}^n\Pi_j^{p,q}(z_j,s)\frac{\partial \alpha_j^{rv-1}(z_j,s)}{\partial z_j^p}\frac{\partial \alpha_j^{sv-1}(z_j,s)}{\partial z_j^q}]_v+[\sum_{p,q=1}^n\Pi_j^{p,q}(z,s)\frac{\partial \alpha_j^{rv-1}(z,s)}{\partial z_j^p}\frac{\partial z_j^s}{\partial z_j^q}]_v \notag\\
&+[\sum_{p,q=1}^n\Pi_j^{p,q}(z_j,s)\frac{\partial z_j^r}{\partial z_j^p}\frac{\partial \alpha_j^{sv-1}(z_j,s)}{\partial z_j^q}]_v+[\sum_{p,q=1}^n\Pi_j^{p,q}(z_j,s)\frac{\partial z_j^r}{\partial z_j^p}\frac{\partial z_j^s}{\partial z_j^q}]_v+[\sum_{p,q=1}^n\Lambda_{M_{0j}}^{p,q}(z_j)\frac{\partial \alpha_j^{rv-1}(z,s)}{\partial z_j^p}\frac{\partial \alpha_j^{sv-1}(z_j,s)}{\partial z_j^q}]_v\notag\\
&\ll \frac{n^2b_1}{b\delta^2}A(s)^3+\frac{2nb_1}{b\delta} A(s)^2+\frac{b_1}{b}A(s)+\frac{Cn^2}{\delta^2}A(s)^2 \,\,\,\, \text{for $C>0$ from (\ref{ii99}) which does not depend on $b,c.$}\notag\\
& \ll \left(\frac{n^2b_1b^2}{bc^2\delta^2}+\frac{2nb_1b}{bc\delta}+\frac{b_1}{b}+\frac{Cn^2b}{c\delta^2}\right)A(s)=\left(\frac{n^2b_1b}{c^2\delta^2}+\frac{2nb_1}{c\delta}+\frac{b_1}{b}+\frac{Cn^2b}{c\delta^2}\right)A(s)\,\,\,\,\,\text{from Remark \ref{remark123} (3)}\notag
\end{align}

Hence from (\ref{jj11}),(\ref{jj16}), (\ref{jj15}), we have

\begin{align}\label{jj19}
\lambda^{r,s}_{j|v}(z_j,s)\ll LA(s),\,\,\,\,\,z_j\in U_j^\delta
\end{align}
where $L=\frac{2bd_0^2(m+n)^2}{c}+\frac{n^2b_1b}{c^2\delta^2}+\frac{2nb_1}{c\delta}+\frac{b_1}{b}+\frac{Cn^2b}{c\delta^2}$.

Then by Lemma \ref{lemma3}, $(\ref{yt6})$ and $(\ref{jj19})$, we can choose solutions $h_{u|v}(s)$, $u=1,...,m$, $\{g_{j|v}(s)\}$ such that
\begin{align*}
h_{u|v}(s)\ll NA(s),\,\,\,\,\, g_{j|v}(s) \ll NA(s), \,\,\,\text{where}\,\,\, N=M\left(2K_1K^*+    L\right)    
\end{align*}
Note that $N$ is independent of $v$ and $K^*=\frac{2^{n+1}b_0}{c\delta}+\frac{b_0}{b}+\frac{2ba_0^2(m+n)^2}{c}$, $L=\frac{2bd_0^2(m+n)^2}{c}+\frac{n^2b_1b}{c^2\delta^2}+\frac{2nb_1}{c\delta}+\frac{b_1}{b}+\frac{Cn^2b}{c\delta^2}$. If we first choose a sufficiently large $b$, and then choose $c$ so that $\frac{c}{b}$ be sufficiently large (so that $\frac{b}{c}$ is sufficiently small and $\frac{b}{c^2}$ is sufficiently small), then we obtain $N \leq 1$. Note that $b$ and $c$ satisfy $b>\max \{b_0,b_1\}$, $c> \max \{c_0,c_1\}$, $\frac{ba_0(m+n)}{c}< \frac{1}{2}$ and $\frac{bd_0(m+n)}{c}<\frac{1}{2}$.

Hence the above solution $h_{u|v}(s),u=1,...,m, \{ g_{j|v}(s) \}$ satisfy the inequalities
\begin{align*}
h_{v}(s)\ll A(s),\,\,\,\,\, g_{j|v}(s)\ll A(s)
\end{align*}
Since $h^v(s)=h^{v-1}(s)+h_v(s),\,\,\,\,\, g_j^v(z_j,s)=g_j^{v-1}(z_j,s)+g_{j|v}(z_j,s)$, we have $h^v(s)\ll A(s)$, and $g_j^v(z_j,s)\ll A(s)$. This completes the induction, and so we have $h(s)\ll A(s)$ and $g_j(z_j,s)-z_j\ll A(s)$. These inequalities imply that, if $|s|<\frac{1}{lc}$, $h(s)$ converges absolutely, and $g_j(z_j,s)$ converges absolutely and uniformly for $z_j\in U_j$.

\subsection{Proof of Theorem \ref{complete9}}\

 By the same argument presented in page 303-304, we can glue together each $g_j$ on $U_j^{\delta}\times \Delta_{\epsilon}$ to construct a Poisson holomorphic map $g:\pi^{-1}(\Delta_{\epsilon})=(\bigcup_j U_j^{\delta}\times \Delta_{\epsilon},\Lambda_{\mathcal{N}}|_{\Delta_{\epsilon}} )\to (\mathcal{M},\Lambda_{\mathcal{M}})$  which extends the identity map $g_0:(N_0,\Lambda_0)\to (M_0=N_0,\Lambda_0)$ (see \cite{Kod05} page 303-304 for the detail and notations). This completes the proof of Theorem $\ref{complete9}$.

 \begin{example}
Let $U_i=\{[z_0,z_1,z_2]|z_i\ne0\}$ $i=0,1,2$ be an open cover of complex projective plane $\mathbb{P}_{\mathbb{C}}^2$. Let $x=\frac{z_1}{z_0}$ and $w=\frac{z_2}{z_0}$ be coordinates on $U_0$. Then the holomorphic Poisson structures on $U_0$ are parametrized by $t=(t_1,...,t_{10})\in \mathbb{C}^{10}$
\begin{align*}
(t_1+t_2x+t_3w+t_4x^2+t_5xw+t_6w^2+t_7x^3+t_8x^2w+t_9xw^2+t_{10}w^3)\frac{\partial}{\partial x}\wedge \frac{\partial}{\partial w}
\end{align*}
This parametrizes the whole holomorphic Poisson structures on $\mathbb{P}_{\mathbb{C}}^2$ $($see \cite{Pin11} Proposition 2.2$)$. Let $\Lambda_0=x\frac{\partial}{\partial x}\wedge \frac{\partial}{\partial w}$ be the holomorphic Poisson structure on $\mathbb{P}_{\mathbb{C}}^2$.   Then $\mathbb{H}^1(\mathbb{P}_{\mathbb{C}}^2,\Theta_{\mathbb{P}_\mathbb{C}^2}^\bullet)=5$, $\mathbb{H}^2(\mathbb{P}_{\mathbb{C}}^2,\Theta_{\mathbb{P}_\mathbb{C}^2}^\bullet)=0$$($see \cite{Pin11} Example $3.5$ $)$. $w^2\frac{\partial}{\partial x}\wedge\frac{\partial}{\partial w}$, $x^3\frac{\partial}{\partial x}\wedge\frac{\partial}{\partial w}$, $x^2w\frac{\partial}{\partial x}\wedge\frac{\partial}{\partial w}$, $xw^2\frac{\partial}{\partial x}\wedge\frac{\partial}{\partial w}$  and $w^3\frac{\partial}{\partial x}\wedge\frac{\partial}{\partial w}$ are the representatives of the cohomology classes consisting of the basis of $\mathbb{H}^1(\mathbb{P}_{\mathbb{C}}^2,\Theta_{\mathbb{P}_{\mathbb{C}}}^2)$. Let $t=(t_1,t_2,t_3,t_4,t_5)\in \mathbb{C}^5$. Let $\Lambda(t)=(t_1w^2+x+t_2x^3+t_3x^2w+t_5xw^2+t_5w^3)\frac{\partial}{\partial x}\wedge \frac{\partial}{\partial w}$ be the holomorphic Poisson structure on $\mathbb{P}_{\mathbb{C}}^2\times \mathbb{C}^5$. Then $(\mathbb{P}_{\mathbb{C}}^2\times \mathbb{C}^5,\Lambda(t),\mathbb{C}^5, \omega)$, where $\omega$ is the natural projection, is a Poisson analytic family with $\omega^{-1}(0)=(\mathbb{P}_{\mathbb{C}}^2,\Lambda_0)$. Since the complex structure does not change in the family, the Poisson Kodaira-Spencer map is an isomorphism. Hence the Poisson analytic family is complete at $0$. \end{example}

\bibliographystyle{amsalpha}
\bibliography{References-Ret4}

\end{document}